\numberwithin{equation}{section} 
\font\got=eufm10 at 11pt
\font\posebni=msam10
\font\sevcyi=wncyi7
\font\eigcyi=wncyi8
\font\tencyr=wncyr10
\font\tencyb=wncyb10
\font\tencyi=wncyi10
\font\twlcyr=wncyr10 at 11pt
\font\twlcyb=wncyb10 at 11pt
\font\twlcyi=wncyi10 at 11pt
\providecommand{\Img}{\mathop{\rm Im}\nolimits}
\providecommand{\Ker}{\mathop{\rm Ker}\nolimits}
\providecommand{\Real}{\mathop{\rm Re}\nolimits}
\providecommand{\supp}{\mathop{\rm supp}\nolimits}
\newcommand{\C}[0]{{\mathbb C}}
\newcommand{\N}[0]{{\mathbb N}}
\newcommand{\R}[0]{\mathbb{R}}
\newcommand{\Z}[0]{{\mathbb Z}}
\newcommand{\EP}[0]{\mathbb{E}\mathcal{P}}
\newcommand\kA{\mathbf A}
\newcommand\kT{\mathbf T}
\newcommand{\cB}[0]{{\mathcal B}}
\newcommand{\cC}[0]{{\mathcal C}}
\newcommand{\cD}[0]{{\mathcal D}}
\newcommand{\cF}[0]{{\mathcal F}}
\newcommand{\cH}[0]{{\mathcal H}}
\newcommand{\cK}[0]{{\mathcal K}}
\newcommand{\cL}[0]{{\mathcal L}}
\newcommand{\cM}[0]{{\mathcal M}}
\newcommand{\cN}[0]{{\mathcal N}}
\newcommand{\cO}[0]{{\mathcal O}}
\newcommand{\cP}[0]{{\mathcal P}}
\newcommand{\cR}[0]{{\mathcal R}}
\newcommand{\cS}[0]{{\mathcal S}}
\newcommand{\cZ}[0]{{\mathcal Z}}
\newcommand{\leqsim}[0]{\,\text{\posebni \char46}\,}
\newcommand{\geqsim}[0]{\,\text{\posebni \char38}\,}
\newcommand{\e}[0]{\varepsilon}
\newcommand{\f}[0]{\varphi}
\newcommand{\wh}[0]{\widehat}
\newcommand{\pd}[0]{\partial}
\newcommand{\nor}[1]{\| #1 \|}
\newcommand{\Nor}[1]{\left |\hskip -1.5pt\left | #1 \right |\hskip -1.5pt\right |}
\newcommand{\av}[1]{\langle #1 \rangle}
\newcommand{\avg}[1]{\langle #1 \rangle}
\newcommand{\bavg}[1]{\left\langle #1 \right\rangle}
\newcommand{\mn}[2]{\{ #1\, ;\, #2 \}}
\newcommand{\Mn}[2]{\left\{ #1 ; #2 \right\}}
\newcommand{\sk}[2]{\langle #1 , #2\rangle}
\newcommand{\Sk}[2]{\left\langle #1 , #2\right\rangle}
\newcommand{\Do}{\noindent {\bf Proof.}\ }
\newtheorem{theorem}{Theorem}[section]
\newtheorem{exercise}[theorem]{\textcolor[rgb]{0.87,0.1,0.1}{Exercise}}
\newtheorem{proposition}[theorem]{Proposition}
\newtheorem{lemma}[theorem]{Lemma}
\newtheorem{corollary}[theorem]{Corollary}
\newtheorem{conjecture}[theorem]{Conjecture}
\renewcommand\leq[0]{\leqslant}
\renewcommand\geq[0]{\geqslant}
\renewcommand\epsilon[0]{\varepsilon}
\renewcommand\theta[0]{\vartheta}
\newtheorem{predefinition}[theorem]{Definition}
\newenvironment{definition}%
{\begin{predefinition}\rm}{\end{predefinition}}
\newcounter{magnolia blues}
\newtheorem{prequestion}[magnolia blues]{Question}  \newenvironment{question}%
{\begin{prequestion}\rm}{\end{prequestion}}
\newtheorem{preremark}[theorem]{Remark}  \newenvironment{remark}%
{\begin{preremark}\rm}{\end{preremark}}
\newtheorem{preexample}[theorem]{Example}  \newenvironment{example}%
{\begin{preexample}\rm}{\end{preexample}}
\begin{document}

\title[Ahlfors-Beurling transform]
{Analysis of the Ahlfors-Beurling transform\\ {\it Lecture notes for the summer school at the University of Seville, September 9-13, 2013}}

\author[Dragi\v{c}evi\'c]{Oliver Dragi\v{c}evi\'c}


\address{Oliver Dragi\v{c}evi\'c \\Department of Mathematics\\ Faculty of Mathematics and Physics\\ University of Ljubljana\\ Jadranska 21, SI-1000 Ljubljana\\ Slovenia}
\email{oliver.dragicevic@fmf.uni-lj.si}

\maketitle


\tableofcontents

\section*{Prologue}

These notes are based on the ten lectures their author held in September of 2013 at the University of Seville. The purpose of the course was to introduce basic concepts about Ahlfors-Beurling operator $T$ and explain a few of the recent (already published or known) results and techniques focused around it. 
The intended level of the course was primarily that for beginner graduate students in analysis.
In accordance with that, in this text we tried to give proofs of most of the statements {\it  characteristic}
of $T$, paying attention to details and leaving out as exercises a few of the less demanding proofs. Facts that hold for much larger classes of operators have for the most part only been cited or again left as exercises. 
Whereever possible, we gave references for statements that were not proven.

This text was by no means designed as a comprehensive survey of all the important results regarding the subject. 
In particular, the Bellman-function-heat-flow method that was the basis of the works \cite{PV,NV,DV2} is not presented here. 
Furthermore, the stochastic approach that has been utilized by Ba\~nuelos et al. is not discussed here either. It can be found, for example, in \cite{B} where many other interesting issues related to the Ahlfors-Beurling operator are treated. A brief recount of the probabilistic approach to the Ahlfors-Beurling operator can also be found in \cite[Section 4]{V2}. For a very interesting paper which combines both techniques see \cite{BJV}. Finally, there has recently been a revival of interest in finding sharp weighted estimates for general Calder\'on-Zygmund operators. It culminated in the work by Hyt\"onen \cite{Hy1} where the complete solution to the problem was described for the first time. While many results preceeding and following the resolution of the conjecture are beautiful and interesting for their own sake, they would exceed the scope of this note and are not included either.

\medskip\noindent{\bf Organization.} 
The bulk of this text is devoted to estimates of $T$ and its powers $T^n$ on the usual $L^p$ as well as $L^p(w)$ with $w$ from the so-called Muckenhoupt class $A_p$. We will also briefly discuss spectral theory for $T$. 
In Section \ref{namp} we introduce the principal objects we work with and state some of the most basic facts about them.
Section \ref{motivati} provides background for the results on $T$ we consider in this text. In particular, we give a very brief introduction to the theory of quasiconformal maps and explain the connection with weighted and ``unweighted'' estimates for $T$ and its powers. For the most recent and very thorough treatise of quasiconformal theory on $\C$ the reader is advised to consider the monograph by Astala, Iwaniec and Martin \cite{AIM}. 
Sections \ref{Evtushenko} and \ref{nas na babu promenyal} are mainly devoted to proofs of results announced in Section \ref{motivati}. Several different techniques will be treated in the process: most prominently (i) the Bellman function technique (though without direct association with heat flows) 
and (ii) the averaging method. Finally, in Section \ref{george szell} we address some spectral properties of $T$.

\subsection*{Acknowledgements}

I am most grateful to Carlos P\'erez for the generous opportunity to present one of my areas of research in the form of a 10-hour course. Further gratitude goes to the students and researchers who attended the course -- for their interest -- and to the University of Seville for its kind hospitality.

\vskip 15pt
\ \hfill Novo mesto, March 2014

\newpage

\section{Notation and main protagonists}
\label{namp}

Given two quantities $A$ and $B$, we adopt the convention whereby $A\leqsim B$ means that there exists an absolute constant $C>0$ such that $A\leqslant CB$. If both $A\leqsim B$ and $B\leqsim A$, then we write $A\sim B$. If $\{\lambda_1,\dots,\lambda_n\}$ is a set of parameters, $C(\lambda_1,\dots,\lambda_n)$ denotes a constant depending only on 
$\lambda_1,\dots,\lambda_n$. When $A\leq C(\lambda_1,\dots,\lambda_n)B$, we will often write $A\leqsim_{\lambda_1,\dots,\lambda_n} B$.

For $R>0$ and $z\in\C$ we will write $B(z,R)=\mn{w\in\C}{|z-w|<R}$.
The open unit disc $B(0,1)$ will also be denoted by $\Delta$.

For any $x=(x_1,x_2)\in\R^2$ and $p>0$ denote
$$
|x|_p=(|x_1|^p+|x_2|^p)^{1/p}
\hskip 40pt  
\text{and}
\hskip 40pt
|x|_\infty=\max\{|x_1|,|x_2|\}.
$$

Given $\psi\in\R$, introduce the rotation matrix on $\R^2$,
$$
\cO_\psi=\left[ {\begin{array}{rr}
  \cos\psi&-\sin\psi\\
  \sin\psi&\cos\psi
 \end{array} } \right].
$$
For a function $f$ on $\R^2$ we will introduce three basic families of transformations ($x\in\R^2$): 
\begin{equation}
\label{prokofjev}
\begin{array}{rllc}
\text{translations}:\ & {\displaystyle(\tau_t f)(x)=f(x-t)} & ; & t\in\R^2;\\
\text{dilations}:\ & {\displaystyle(\delta_a f)(x)=f(x/a)} & ; & a>0\,;\\
\text{rotations}:\ & {\displaystyle(U_\psi f)(x)=f(\cO_{-\psi} x)} & ; & \psi\in\R.
\end{array}
\end{equation}

 Function $\Omega:\C\backslash\{0\}\rightarrow\C$ which for any $r>0$ and $\zeta\in\pd\Delta$ satisfies 
 \begin{itemize}
 \item
 $\Omega(r\zeta)=\Omega(\zeta)$ \hskip 5.5pt
is called  {\it homogeneous of degree zero};

\item
 $\Omega(r\zeta)=\Omega(r)$ \hskip 5.7pt is called {\it radial};

\item
 $\Omega(r\zeta)=\Omega(r)\zeta$ is called a {\it (radial) stretch function} if, in addition, $\Omega|_{(0,\infty)}$ is  sctrictly increasing, continuous and extends continuously to zero \cite[p. 28]{AIM}.
\end{itemize}

By $m(E)$ or $|E|$ we shall denote the Lebesgue measure of a Borel set $E\subset\C$, while $dx$ in the integrals means just $dm(x)$. Sometimes even $dx$ will be omitted. We will use 
the standard pairing
\begin{equation}
\label{si mislyat}
\sk\f\psi=\int_{\R^n}\f\bar\psi,
\end{equation}
where 
$\f,\psi$ are complex functions on $\R^n$ such that the above integral makes sense.

For complex $C^1$ functions of two real variables $x,y$ denote
$$
\pd =\frac12(\pd_x-i\pd_y)\,\hskip 30pt {\rm and}\hskip 30pt \bar\pd =\frac12(\pd_x+i\pd_y)
\,.
$$
We may instead of $\pd f,\bar\pd f$ write $\pd f/\pd z, \pd f/\pd \bar z$ or $\pd_z f,\pd_{\bar z}f$ or simply $f_z,f_{\bar z}$.

Let ${\mathcal S}=\cS(\R^d)$ be the Schwartz class on $\R^d$. Recall that its topology is given by a family of seminorms
$$
\rho_{\alpha,\beta}(f)=\sup_{x\in\R^d}|x^\alpha\pd^\beta f(x)|\,,
$$
with $\alpha,\beta$ ranging over all $\N_0^d$; here $\N_0:=\N\cup\{0\}$. Here 
$$
(x_1,\hdots,x_d)^{(\alpha_1,\hdots,\alpha_d)}:=x_1^{\alpha_1},\hdots,x_d^{\alpha_d}
\,\hskip 30pt {\rm and}\hskip 30pt
\pd^{(\beta_1,\hdots,\beta_d)}:=\pd_{x_1}^{\beta_1}\hdots\pd_{x_d}^{\beta_d}\,.
$$

For $\f\in{\mathcal S}(\R^d)$ and $\xi\in\R^d$ define the {\it Fourier transform} $\widehat\f$ of $\f$ by
$$
\widehat\f(\xi)=\int_{\R^d}\f(x)e^{-2\pi i\sk{x}{\xi}}dx.
$$

We will throughout this note work with the obvious identification of $\C$ with $\R^2$ via $x+iy\equiv (x,y)$. Accordingly, a function on $\C$ will also be thought of as a function on $\R^2$ and vice versa.
An important notation to bear in mind will be 
$$
p^*=\max\Big\{p,\frac p{p-1}\Big\}\,.
$$

Among the standard tools we shall frequently need is the {\it 
Minkowski's integral inequality}, e.g. Stein \cite[\S A.1]{S}, Grafakos \cite[Exercise 1.1.6]{G1}, Duoandikoetxea \cite[p. xviii]{Du}: if $1\leq p<\infty$ and $F$ is measurable on $X\times Y$, where $(X,\mu)$ and $(Y,\nu)$ are $\sigma$-finite measure spaces, then
$$
\left(\int_Y\bigg|\int_XF(x,y)\,d\mu(x)\bigg|^p\,d\nu(y)\right)^{1/p}
\leq
\int_X\left(\int_Y|F(x,y)|^p\,d\nu(y)\right)^{1/p}\,d\mu(x)\,.
$$

In the context of operators, the asterisk $^*$ will typically denote their adjoints. 
Thus if $A\in B(L^p)$ then $A^*\in B(L^q)$, where $1/p+1/q=1$, and $\sk{Af}{g}=\sk{f}{A^*g}$ for all $f\in L^p$, $g\in L^q$.

Finally, for a linear operator $A$ on a vector space $X$ we will define its {\it null space} (or {\it kernel}) $\cN(A)$ and {\it range} (or {\it image}) $\cR(A)$ by
$$
\aligned
\cN(A) &:=\mn{x\in X}{Ax=0};\\
\cR(A) &:=\mn{Ax}{x\in X}\,.
\endaligned
$$

Recall the following basic facts.

\begin{exercise}
\label{alban}
Let $X$ be a Banach space and $A\in B(X)$. Suppose that both $\nor{Af}\,\geqsim\,\nor{f}$ for all $f\in X$ and $\cR(A)$ is dense in $X$. Then $A$ is surjective, i.e. $\cR(A)=X$. 
\end{exercise}

\begin{exercise}
\label{berg}
Let $X$ be a Banach space and $A\in B(X)$. Then $\cR(A)$ is dense if and only if $\cN(A^*)=\{0\}$.
\end{exercise}

\begin{exercise}
\label{filter}
Suppose $X$ is a $\sigma$-finite measure space and 
$$
A\in\bigcap_{p>1}L^p(X)\,.
$$
Then for every $p>1$ we have $\log\nor{A}_p=\f(1/p)$, where $\f:(0,1)\rightarrow\R_+$ is a convex function.
Consequently, function $p\mapsto\nor{A}_p$ is continuous on $(1,\infty)$.
\end{exercise}

\begin{exercise}
\label{blgaria}
The set of all $C_c^\infty$ functions on $\R^n$ whose average (i.e. integral over $\R^n$) is zero is dense in $L^p(\R^n)$ for any $1<p<\infty$. What about $p=1$?
\end{exercise}

\begin{exercise}
\label{ging heut morgen uebers feld}
Suppose $X$ is a locally compact Hausdorff space and $\mu$ a regular measure on $X$. Take $1<p<\infty$ and a set $\Psi:=\mn{\psi_\alpha}{0<\alpha<1}\subset L^2(\mu)\cap L^p(\mu)$. If $\Psi$ is bounded in $L^p$ and $\psi_\alpha\rightarrow 0$ in $L^2$ as $\alpha\rightarrow 0$, then also $\psi_\alpha\rightarrow 0$ weakly in $L^p$ as $\alpha\rightarrow 0$.
\end{exercise}

\begin{exercise}
\label{Thomas Quasthoff}
Suppose $K$ is a closed convex subset of a Hilbert space $\cH$. Prove that for any $h\in\cH$ there exists a unique $k\in K$ such that $d(h,K)=\nor{h-k}$. Is the map $\cH\rightarrow K$, defined by $h\mapsto k$, continuous, uniformly continuous, a contraction?
\end{exercise}

\subsection{Weak derivatives and Sobolev spaces}

There are plenty of sources on this most fundamental class of function spaces. Let us mention only
Gilbarg--Trudinger \cite[Chapter 7]{GT}, Evans \cite[Chapter 5]{E}, Stein \cite[V.\S 2]{S}, H\"ormander \cite[Section 7.9]{Hor} and Grafakos \cite[Section 6.2]{G2}. 

Suppose $ U\subset\R^n$ is an open set. 
Let $u\in L^1_{loc}( U)$. Function $v\in L^1_{loc}( U)$ is called the $\alpha^{\text{th}}$ {\it weak} or {\it distributional derivative} of $u$ provided that
$$
\int_ U u\,D^\alpha\f
=(-1)^{|\alpha|}\int_ U v\,\f
\hskip 40pt
\forall \f\in C_c^{|\alpha|}( U)\,.
$$
This notion is well-defined, i.e. if the $\alpha^{\text{th}}$  weak derivative of a function exists, it is uniquely determined up to a set of measure zero. Its {\it order} is by definition $|\alpha|$.  
We write $v=D^\alpha u$. 

We say that $u$ is
\begin{itemize}
\item
{\it weakly differentiable}, if all of its 
first-order weak derivatives exist;
\item
{\it $k$-times weakly differentiable}, if all of its derivatives of orders $1,\hdots,k$ exist.
\end{itemize}
The vector space of all $k$-times weakly differentiable functions on $ U$ is denoted by $W^k( U)$.
Clearly, $C^k( U)\subset W^k( U)$.

\begin{exercise}
\label{fop dop}
Suppose $D$ is a homogeneous first-order partial differential operator on $\R^n$, i.e.
$$
D=\sum_{j=1}^na_j\frac{\pd}{\pd x_j}
$$
for some $a_1,\hdots,a_n\in\C$. If $h\in W^1( U)$ and $g\in C^\infty( U)$ then $h\cdot g\in W^1( U)$ and 
$$
D(h\cdot g)=Dh\cdot g +h\cdot Dg\,.
$$
\end{exercise}

Next result is known as (the holomorphic version of) {\it Weyl's lemma}. See, for example, \cite[Lemma A.6.10]{AIM},  \cite[Theorem 4.1.6]{Hu} or \cite[p. 45]{A1}.

\begin{theorem}
\label{Weyl}
Suppose $ U\subset\C$ is open and $g\in L^1_{loc}( U)\cap W^1( U)$. If $g_{\bar z}=0$ weakly, then also $g_{\bar z}=0$ strongly, in the sense that $g$ coincides p.p. $ U$ with a holomorphic function.
\end{theorem}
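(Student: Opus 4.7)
The plan is a mollification argument: approximate $g$ by smooth convolutions that turn out to be honestly holomorphic, and then show these mollifications converge locally uniformly to a holomorphic representative of $g$.

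Fix a nonnegative $\f\in C_c^\infty(\C)$ supported in $\Delta$ with $\int_\C\f\wrt m=1$, and set $\f_\epsilon(z)=\epsilon^{-2}\f(z/\epsilon)$. On $U_\epsilon:=\mn{z\in U}{d(z,\pd U)>\epsilon}$ define $g_\epsilon:=g*\f_\epsilon$; since $g\in L^1_{loc}(U)$ and $\f_\epsilon\in C_c^\infty(\C)$, $g_\epsilon\in C^\infty(U_\epsilon)$. First I would verify that $g_\epsilon$ is actually holomorphic on $U_\epsilon$. Differentiating under the integral sign and using the chain-rule identity $\pd_{\bar z}[\f_\epsilon(z-y)]=-\pd_{\bar y}[\f_\epsilon(z-y)]$, one computes, for every $z\in U_\epsilon$,
$$
\pd_{\bar z}g_\epsilon(z)=\int_\C g(y)\,\pd_{\bar z}[\f_\epsilon(z-y)]\wrt y=-\int_\C g(y)\,\pd_{\bar y}[\f_\epsilon(z-y)]\wrt y.
$$
For such $z$, the function $y\mapsto\f_\epsilon(z-y)$ lies in $C_c^\infty(U)$ because its support is contained in $B(z,\epsilon)\subset U$. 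Applying the weak-derivative hypothesis $g_{\bar z}=0$ to this test function forces the last integral to vanish, so $\pd_{\bar z}g_\epsilon\equiv 0$ on $U_\epsilon$. Thus $g_\epsilon$ is a smooth classical solution of the Cauchy-Riemann equation, hence holomorphic on $U_\epsilon$.

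The next step is to pass to the limit. Standard mollifier theory gives $g_\epsilon\to g$ in $L^1_{loc}(U)$. Fix a compact set $K\subset U$ and choose $r>0$ so small that $K':=\mn{z\in\C}{d(z,K)\leq r}\subset U$. For all sufficiently small $\epsilon,\epsilon'>0$ the difference $g_\epsilon-g_{\epsilon'}$ is holomorphic on a neighborhood of $K'$, so the mean value property applied on balls of radius $r$ yields
$$
\sup_{z\in K}|g_\epsilon(z)-g_{\epsilon'}(z)|\leq\frac{1}{\pi r^2}\int_{K'}|g_\epsilon-g_{\epsilon'}|\wrt m,
$$
and the right-hand side tends to $0$ as $\epsilon,\epsilon'\to 0$. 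Hence $\{g_\epsilon\}$ is uniformly Cauchy on compacts in $U$, so it converges locally uniformly to a continuous function $h$; by the Weierstrass theorem $h$ is holomorphic on $U$, and by uniqueness of $L^1_{loc}$-limits $h=g$ p.p.\ $U$, which is the claim.

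The principal obstacle is the first step, namely passing from the weak hypothesis $g_{\bar z}=0$ to the pointwise identity $\pd_{\bar z}g_\epsilon=0$. That is the only place the assumption really enters, and the clean way is precisely the ``$y\mapsto\f_\epsilon(z-y)$ as a test function'' maneuver above. Everything downstream --- the upgrade from $L^1_{loc}$ to locally uniform convergence, and the holomorphy of the limit --- is routine once one invokes the mean value property and the Weierstrass theorem.
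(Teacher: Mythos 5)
Your proof is correct. The paper does not actually prove Theorem \ref{Weyl} --- it only cites \cite{AIM}, \cite{Hu} and \cite{A1} --- and your mollification argument is precisely the standard one found in those sources: the weak hypothesis applied to the test functions $y\mapsto\f_\epsilon(z-y)$ (valid since $\overline{B(z,\epsilon)}\subset U$ for $z\in U_\epsilon$) makes each $g_\epsilon$ holomorphic, and the mean value property upgrades $L^1_{loc}$ convergence to locally uniform convergence, so the limit is holomorphic and equals $g$ p.p. No gaps.
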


\begin{exercise}
\label{useful}
Suppose $p\geq1$ and $h\in L^p(\C)$ is holomorphic (entire). Then $h\equiv 0$.
\end{exercise}

\begin{remark}
Since $L^p\subset L^1_{loc}$, Weyl's lemma implies that Exercise \ref{useful} holds even if we only assume that $h\in L^p(\C)$ is weakly holomorphic, the latter meaning that $h_{\bar z}=0$ weakly.
\end{remark}

\begin{definition}
For $k\in\N_0$ and $p\geq1$ the corresponding {\it Sobolev space} is defined as
$$
W^{k,p}( U):=\mn{u\in W^k( U)}{D^\alpha u\in L^p( U) \text{ for all }|\alpha|\leq k}\,.
$$
\end{definition}

These are clearly vector spaces. On $W^{k,p}( U)$ we define the norm
$$
\nor{u}_{W^{k,p}( U)}:=\left(\int_ U\sum_{|\alpha|\leq k}|D^\alpha u|^p\right)^{1/p}\,,
$$
which makes it a Banach space.
Because of the equivalence of the euclidean norms we see that
$$
\nor{u}_{W^{k,p}( U)}\,\sim\,\sum_{|\alpha|\leq k}\nor{D^\alpha u}_p\,,
$$
where the implied constants depend on $k,p$.

The special case $p=2$ merits its own notation, namely $H^k( U):=W^{k,2}( U)$. Thus in particular $H^1( U)=W^{1,2}( U)$ and
\begin{equation}
\label{courage}
\nor{u}_{H^1( U)}^2=\nor{u}_2^2+\nor{\nabla u}_2^2\,.
\end{equation}
By $H_0^1( U)$ we denote the closure of  $C_c^\infty( U)$ in $H^1( U)$.

\medskip
The following is a special case of an essential result known as the {\it Sobolev embedding theorem}, see Stein \cite[V.2.2]{S}. It immediately follows from another fundamental theorem, the so-called {\it Gagliardo-Nirenberg-Sobolev inequality}, see 
Evans \cite[5.6.1]{E}, for example.

\begin{theorem}
\label{Johhny Shines}
If $1<p<n$ then $W^{1,p}(\R^n)\subset L^{p'}(\R^n)$, where
$$
\frac1{p'}=\frac1{p}-\frac1n\,.
$$
\end{theorem}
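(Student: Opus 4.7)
The plan is to obtain the embedding by first proving the Gagliardo--Nirenberg--Sobolev inequality
\[
\nor{u}_{p'}\leqsim_{n,p}\nor{\nabla u}_p
\qquad(u\in C_c^\infty(\R^n)),
\]
and then extending to all of $W^{1,p}(\R^n)$ by density, since $C_c^\infty(\R^n)$ is dense in $W^{1,p}(\R^n)$ (a fact one should cite from e.g.\ Evans or Stein).

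For the base case $p=1$, I would represent a compactly supported smooth function $u$ by integrating $\pd_{x_j}u$ along the $j^{\text{th}}$ coordinate line:
\[
|u(x)|\leq\int_{-\infty}^{\infty}|\pd_{x_j}u(x_1,\dots,t_j,\dots,x_n)|\wrt t_j\qquad (j=1,\dots,n).
\]
Taking the product over $j$ gives $|u(x)|^{n/(n-1)}\leq\prod_{j=1}^n\bigl(\int|\pd_j u|\wrt t_j\bigr)^{1/(n-1)}$. Integrating successively in $x_1,\dots,x_n$ and applying the generalized H\"older inequality for $n-1$ factors at each stage produces
\[
\nor{u}_{n/(n-1)}\,\leq\,\prod_{j=1}^n\nor{\pd_j u}_1^{1/n}\,\leq\,\nor{\nabla u}_1,
\]
which is exactly the claim for $p=1$, i.e.\ the exponent $1'=n/(n-1)$.

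To upgrade to $1<p<n$, apply the $p=1$ inequality to $v:=|u|^\gamma$ with $\gamma=\frac{p(n-1)}{n-p}>1$ chosen so that $\gamma\cdot\frac{n}{n-1}=p'=\frac{np}{n-p}$. Using Exercise \ref{fop dop} (or rather its analogue for the nonsmooth map $t\mapsto|t|^\gamma$, which is Lipschitz with derivative $\gamma|u|^{\gamma-1}\operatorname{sgn}u$) one gets $\nabla v=\gamma|u|^{\gamma-1}\nabla u$ in the weak sense, and then H\"older's inequality with exponents $p$ and $p/(p-1)$ yields
\[
\bigl(\,\int|u|^{p'}\,\bigr)^{(n-1)/n}
\leq \gamma\int|u|^{\gamma-1}|\nabla u|
\leq \gamma\,\nor{\nabla u}_p\,\bigl(\,\int|u|^{(\gamma-1)p/(p-1)}\,\bigr)^{(p-1)/p}.
\]
The choice of $\gamma$ ensures $(\gamma-1)p/(p-1)=p'$, so dividing through by the common power of $\nor{u}_{p'}$ completes the proof. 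The final step is to extend the inequality to arbitrary $u\in W^{1,p}(\R^n)$: take a sequence $u_k\in C_c^\infty(\R^n)$ with $u_k\to u$ in $W^{1,p}$, apply the inequality to differences $u_j-u_k$ to conclude $\{u_k\}$ is Cauchy in $L^{p'}$, and identify its limit with $u$ by passing to a.e.\ convergent subsequences.

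The main obstacle is the iterated H\"older argument at the $p=1$ stage; once that is in place, the rest is a routine algebraic scaling together with standard approximation. A minor technical point is justifying the weak chain rule for $|u|^\gamma$ without smoothness of $u$, which I would handle by approximating $t\mapsto|t|^\gamma$ by smooth convex majorants and passing to the limit.
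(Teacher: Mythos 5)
Your proof is correct and follows essentially the route the paper itself points to: the paper does not prove this theorem but cites it as an immediate consequence of the Gagliardo--Nirenberg--Sobolev inequality (Evans, Stein), and your argument is precisely the standard proof of that inequality (the $p=1$ case via the iterated H\"older argument, then the substitution $v=|u|^{\gamma}$ with $\gamma=p(n-1)/(n-p)$), followed by density of $C_c^\infty(\R^n)$ in $W^{1,p}(\R^n)$. The only remark worth making is that your chain-rule worry is moot at the stage where it is used, since $t\mapsto|t|^{\gamma}$ is $C^1$ for $\gamma>1$ and you apply it only to smooth compactly supported $u$.
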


Let us also introduce {\it local Sobolev spaces}, by setting \cite[p. 646]{AIM}
$$
W^{k,p}_{loc}(U):=\bigcap_{U'}W^{k,p}(U')\,,
$$
where $U\subset\C$ and $U'$ runs over all relatively compact subsets of $U$.

\begin{definition}
Suppose $f\in W^1( U)$ for some open set $ U\in\C$. If $f=u+iv$, $u,v$ real, then the {\it Jacobian (determinant)} of $f$ is defined as
$$
J_f:=
\bigg|
\begin{array}{ll}
u_x & u_y\\
v_x & v_y
\end{array}
\bigg|\,.
$$
\end{definition}
It is immediate that
\begin{equation}
\label{jacobi}
J_f=|f_z|^2-|f_{\bar z}|^2\,.
\end{equation}

\begin{exercise}
\label{left foot}
If $\f\in C^\infty(\C)$ with $\Re\f$ or $\Im\f$ compactly supported, and $\e\geq0$, then
$$
\int_{\{\f\ne0\}} |\f|^\e J_\f=0\,.
$$
Does the statement hold even for $\e>-1$?
\end{exercise}

We shall be only interested in functions $f$ with $J_f>0$, i.e. those that preserve the orientation. For a given $f$ of this kind define its {\it complex dilatation} or the {\it Beltrami coefficient} $\mu=\mu_f$ by
$$
\mu_f=\frac{f_{\bar z}}{f_z}\,.
$$
Clearly $|\mu_f|<1$ on $U$.

\subsection{Riesz transforms}
\label{riesz}
\cite[Section 4.2]{G1}
Let $\Omega$ be an integrable function on the unit sphere $S^{d-1}$ in $\R^d$ with zero average. 
The functional $W_\Omega$, defined by 
$$
W_\Omega(\f)=\,{\rm p.v.}\int_{\R^d}\frac{\Omega(y/|y|)}{|y|^d}\,\f(y)\,dy
\hskip 40pt 
\text{ for }\f\in{\mathcal S}(\R^d)\,,
$$
is a tempered distribution. We denote by $T_\Omega$ the {\it singular integral} that acts on ${\mathcal S}(\R^d)$ as a convolution with the distribution $W_\Omega$:
\begin{equation}
\label{hancock}
T_\Omega\f=\f*W_\Omega
\hskip 40pt 
\text{ for }\f\in{\mathcal S}(\R^d).
\end{equation}
References for general singular integrals are, for example, \cite{S, G1, G2, Du, MS}. 
We will be almost exclusively concerned with the case $d=2$ only.
The (first-order) {\it Riesz transforms} $R_1,R_2$ on $\R^2$ are the operators of the form \eqref{hancock} in the case of projections
$$
\Omega(\zeta_1,\zeta_2)=\frac1{2\pi}\,\zeta_j
$$
for $j=1,2$, respectively.
Explicitly,
$$
(R_j\f)(x)=\frac1{2\pi}\,{\rm p.v.}\int_{\R^2}\frac{y_j}{|y|^{3}}\,\f(x-y)\,dy\,.
$$
It is well known, see \cite[III. \S 1, eq. (8)]{S}, \cite[Proposition 4.1.14]{G1} or \cite[eq. (4.8)]{Du}, that $R_j$ is a Fourier multiplier with the symbol $-i\xi_j/|\xi|$:
for $j=1,2$ and any $\f\in{\mathcal S}(\R^2)$ we have
\begin{equation}
\label{carevac}
\widehat{R_j\f}(\xi)=-i\,\frac{\xi_j}{|\xi|}\widehat \f(\xi).
\end{equation}

As a special case of the central result on the $L^p$ boundedness of the Calder\'on-Zygmund singular integral operators, all operators $R_j$ are bounded on $L^p(\R^2)$ for $1<p<\infty$.
More precisely, for $p>1$ every $R_j$ admits an extension from $\cS$ to a bounded operator on (entire) $L^p$. This extension is denoted by the same symbol - $R_j$.

\medskip
We will much need the {\it complex Riesz transform} $R$ \cite[Section 4.2]{AIM}, sometimes also called  the ``complex Hilbert transform'' and denoted ${\bf H}_\C$ \cite{IM}, which is defined by
$$
R=R_2+iR_1,
$$
and its integer powers
\begin{equation}
\label{elegy}
R^k=(R_2+iR_1)^k.
\end{equation}
By \cite[Section 4.2]{AIM}, the convolution kernel of $R^k$ is given by
\begin{equation}
\label{coppy}
{\rm p.v.}\ \frac{\Omega_k(z/|z|)}{|z|^2}\,,
\end{equation}
where for $\zeta\in\pd\Delta$,
\begin{equation}
\label{chains and things}
\Omega_k(\zeta)=\frac{i^{|k|}|k|}{2\pi }\zeta^{-k}.
\end{equation}

On the Fourier side, by \eqref{carevac} we have
\begin{equation}
\label{dihat za ovratnik}
\widehat{R\f}(\xi)=
\frac{\overline\xi}{|\xi|}\widehat \f(\xi).
\end{equation}

\begin{exercise}
\label{tommy lee jones robert de niro}
For $\psi\in\R$ consider the rotation operator $U_\psi$ as in \eqref{prokofjev}. Prove that 
$$
R=e^{i\psi}U_{-\psi}RU_\psi.
$$

\end{exercise}

\subsection{Ahlfors-Beurling transform}
\label{to ne veter vetku klonit}
The central object of this note, the {\it Ahlfors-Beurling transform} $T$, is obtained by taking $k=2$ in \eqref{elegy}, i.e.
$$
T={\bf H}_\C^2=R^2=(R_2+iR_1)^2.
$$ 
Explicitly, for test functions $f$ we have
$$
Tf(z)=-\frac{1}{\pi}\,{\rm p.v.}
\int_{\C}\frac{f(\zeta)}{(\zeta-z)^2}\ dm(\zeta)\,.
$$
Alternatively, it can be introduced in terms of the Fourier transforms:
\begin{equation}
\label{infisa}
\widehat{Tf}(\xi)=\frac{\bar \xi}{\xi}\,\hat{f}(\xi)\,.
\end{equation}
From here and the Plancherel identity it immediately follows that $\nor{Tf}_2=\nor{f}_2$ for all $f\in L^2$.

For any $p>1$ and $f\in L^p$ define
$$
Sf:=\overline{T\overline f}\,.
$$
Since $R_1,R_2\in B(L^p)$, the same holds for $T$ and thus also for $S$.

\begin{exercise}
\label{lullaby}
Take $p\in(1,\infty)$ and let $q$ be its conjugate exponent. 
The operator $T$ is invertible on $L^p$. By a small abuse of notation (since $S,T^{-1}$ act on $L^p$, while $T^*$ acts on $L^q$) we have $S=T^{-1}=T^*$, in the sense that
\begin{enumerate}[i)]
\item
$TSf=STf=f$ for all $f\in L^p(\R^2)$;
\item
\label{A319}
$\sk{Tf}{g}=\sk{f}{Sg}$ for all $f\in L^p(\R^2)$ and $g\in L^q(\R^2)$.
\end{enumerate}
\end{exercise}

As an immediate consequence it follows that for all $p>1$, 
\begin{equation}
\label{simeon stilit}
\nor{Tf}_p\sim_p\nor{f}_p 
\hskip 20pt\text{and}\hskip 20pt  
\nor{Sf}_p\sim_p\nor{f}_p\,.
\end{equation}

\subsection{Muckenhoupt weights}

For any locally integrable function $f$ on $\C$ and any bounded set $Q\subset\C$ with positive Lebesgue measure $|Q|$, denote by $\avg f _Q$ the average of $f$ on $Q$,
$$
\avg{f}_Q=\frac{1}{|Q|}\int_Q f(x)\, dx
\,.
$$
If $w$ is a positive locally integrable function on $\C$, introduce
$$
[w]_p:=\sup_{Q\subset\R^2}\,
\avg{w}_Q\bavg{w^{-\frac{1}{p-1}}}^{p-1}_Q\,,
$$
where the supremum is taken over all squares in $\C$ regardless of their orientation.

Let $L^p(w)$ be the space of all functions $p$-integrable with respect to the weight $w$, i.e., $\int |f|^pw<\infty$. When $w\equiv 1$ we will simply write $L^p$.

From now on we assume that $w$ belongs to the Muckenhoupt class $A_p$, defined as
$$
w\in A_p\
\stackrel{def}{\Longleftrightarrow} [w]_p<\infty\,.
$$
This class is systematically discussed in the monographs by Garc\'ia-Cuerva and Rubio de Francia \cite{GC-RdF}, Stein \cite[Chapter V]{S1}, Duoandikoetxea \cite[Chapter 7]{Du}, Grafakos \cite[Chapter 9]{G2} and Torchinsky \cite[Chapter IX]{T}.

Both ${\mathcal S}$ and $C_c^\infty$ are dense in $L^p(w)$, for any $w\in A_p(\R^d)$, see \cite[Lemma 2.1]{M}.

\begin{exercise}
\label{violeta urmana}
Is the set $\mn{f\in \cS(\R^d)}{f(0)=0}$ dense 
\begin{itemize}
\item
in $L^p(w)$?
\item
in $\cS$?
\end{itemize}
\end{exercise}

The following sharp version of the Rubio-de-Francia extrapolation theorem was proven in \cite{DGPP}:

\begin{theorem}
\label{extrapol}
If an operator $T$ satisfies the estimates
$$
\nor{T}_{\cB(L^2(w))}\leq C[w]_{A_2}^\tau
$$
for some $C,\tau>0$ and all $w\in A_2$, then it also satisfies the estimates
$$
\nor{T}_{\cB(L^p(w))}\leq c_pC[w]_{A_p}^{(p^*/p)\tau}
$$
for some $c_p>0$ and all $p\in(1,\infty)$ and $w\in A_p$.
\end{theorem}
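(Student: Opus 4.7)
The plan is to execute the sharp Rubio--de--Francia extrapolation scheme, whose quantitative heart is Buckley's sharp maximal inequality $\nor{M}_{\cB(L^r(v))}\leqsim_r[v]_{A_r}^{1/(r-1)}$ for the Hardy--Littlewood maximal operator $M$. I split the argument according to the cases $p\geq 2$ and $1<p<2$, which account exactly for the two possible values of $p^*$.

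\emph{Case $p\geq 2$} (so $p^*/p=1$). Fix $w\in A_p$ and $f\in L^p(w)$. The starting point is the duality identity
$$
\nor{Tf}_{L^p(w)}^2=\nor{|Tf|^2}_{L^{p/2}(w)}=\sup_{g}\int|Tf|^2\,gw\wrt x,
$$
with the supremum taken over $g\geq 0$ satisfying $\nor{g}_{L^{(p/2)'}(w)}=1$. Replace $g$ by its Rubio--de--Francia dominant
$$
\cR g:=\sum_{k=0}^\infty\frac{M^k g}{\bigl(2\nor{M}_{\cB(L^{(p/2)'}(w))}\bigr)^k},
$$
which is a pointwise majorant of $g$ with $\nor{\cR g}_{L^{(p/2)'}(w)}\leq 2$ and, by construction, $M(\cR g)\leq 2\nor{M}_{\cB(L^{(p/2)'}(w))}\cdot\cR g$, so that $\cR g\in A_1$ with $[\cR g]_{A_1}\leq 2\nor{M}_{\cB(L^{(p/2)'}(w))}$. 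Next, combine $\cR g$ with $w$ --- possibly through the Jones factorization $w=u_1u_2^{1-p}$ with $u_j\in A_1$ --- to produce an auxiliary weight $\omega\in A_2$ with $[\omega]_{A_2}\leqsim_p[w]_{A_p}$. Buckley's inequality is invoked here to translate the $A_1$-constant of $\cR g$ into linear control by $[w]_{A_p}$. Applying the hypothesis $\nor{Tf}_{L^2(\omega)}\leq C[\omega]_{A_2}^\tau\nor{f}_{L^2(\omega)}$ to the displayed identity and recombining via H\"older's inequality returns $\nor{f}_{L^p(w)}$ on the right-hand side, giving $\nor{Tf}_{L^p(w)}\leq c_pC[w]_{A_p}^\tau\nor{f}_{L^p(w)}$.

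\emph{Case $1<p<2$} (so $p^*=p'$ and $p^*/p=1/(p-1)$). Here I dualize. Since $[w^{-1}]_{A_2}=[w]_{A_2}$, the hypothesis also applies to $T^*$. The dual weight $\sigma:=w^{1-p'}$ belongs to $A_{p'}$ with $[\sigma]_{A_{p'}}=[w]_{A_p}^{p'-1}$, and applying the already treated case $p\geq 2$ to $T^*$ on $L^{p'}(\sigma)$ gives
$$
\nor{T^*}_{\cB(L^{p'}(\sigma))}\leq c_{p'}C[\sigma]_{A_{p'}}^\tau=c_{p'}C[w]_{A_p}^{(p'-1)\tau}=c_{p'}C[w]_{A_p}^{(p^*/p)\tau}.
$$
Taking adjoints turns this into the claim for $T$ on $L^p(w)$.

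The principal difficulty lies in Case~1: each iteration of $M$ injects a power of $\nor{M}_{\cB(L^{(p/2)'}(w))}$, and only the sharp Buckley bound, invoked at exactly the matching exponent $(p/2)'$ and balanced against the powers produced by H\"older, keeps the final dependence on $[w]_{A_p}$ at the advertised exponent $\tau$ rather than something strictly larger. Verifying that the chosen $\omega$ indeed lies in $A_2$ with the stated quantitative bound is the companion delicate point; it rests on the Jones factorization $A_p=A_1\cdot A_1^{1-p}$ applied together with the pointwise $A_1$-control for $\cR g$ produced by the iteration.
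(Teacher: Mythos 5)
The paper does not actually prove Theorem \ref{extrapol}; it only cites \cite{DGPP}, so your argument has to stand on its own. Your overall architecture --- duality for $p>2$, a Rubio--de--Francia iteration producing an $A_1$ majorant, Buckley's bound, and the self-dual reduction for $1<p<2$ (whose bookkeeping $[\sigma]_{A_{p'}}=[w]_{A_p}^{p'-1}$ is correct, granted that $T$ is linear) --- is the right family of ideas and is what \cite{DGPP} does. But the step you yourself flag as ``the principal difficulty'' is exactly where the proof is missing, and moreover the iteration you wrote down need not converge. The plain Hardy--Littlewood maximal operator $M$ is bounded on $L^{(p/2)'}(w)$ if and only if $w\in A_{(p/2)'}$, and since $(p/2)'=p/(p-2)<p$ as soon as $p>3$, a generic $w\in A_p$ fails this; the denominator $\nor{M}_{\cB(L^{(p/2)'}(w))}$ in your series is then infinite and $\cR g$ is undefined. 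The correct algorithm iterates a \emph{modified} operator adapted to the pair $(w,\sigma)$ --- for instance one built from $h\mapsto M(hw)/w$, or from $M$ acting on $L^{p}(w)$ and on $L^{p'}(w^{1-p'})$, both of which are bounded by Buckley precisely because $w\in A_p$ and $w^{1-p'}\in A_{p'}$ --- and it is these two operator norms, of orders $[w]_{A_p}^{1/(p-1)}$ and $[w]_{A_p}$ respectively, that must be balanced against each other.

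Second, the construction of the auxiliary weight $\omega\in A_2$ with $[\omega]_{A_2}\leqsim_p[w]_{A_p}$ is asserted rather than performed, and the appeal to the Jones factorization cannot supply it: the factorization $w=u_1u_2^{1-p}$ is a qualitative existence statement carrying no control of $[u_1]_{A_1}$, $[u_2]_{A_1}$ in terms of $[w]_{A_p}$ (obtaining such control is itself a Rubio--de--Francia argument). What is actually used is the elementary converse inequality $[v_1v_2^{-1}]_{A_2}\leq[v_1]_{A_1}[v_2]_{A_1}$; the real work consists in exhibiting $\omega=Hw$ as such a product, with the $A_1$ factors produced by the (corrected) iteration algorithms raised to exponents chosen so that simultaneously $H\geq g$, $\nor{H}_{L^{(p/2)'}(w)}\leqsim1$, and the product of the $A_1$ constants is $\leqsim[w]_{A_p}$. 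Until that weight is actually constructed and these three properties verified, the case $p\geq2$ --- and with it the case $p<2$, which you derive from it --- is not proven.
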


We conclude this introductory section with a terminological remark. Most of this text is dedicated to 
``weighted estimates" (of the operator $A$) or the ``unweighted estimates of $A$". Let us explain what we mean by this.
In the former case 
we study $A$ as an operator on $L^p(w)$ and express its norm estimates in terms of $[w]_{A_p}$, while in the latter we
study $A$ on $L^p$ (the case $w\equiv 1$), but typically ask for more precise information on the dependence of the norms on $p$.

\section{Motivation}
\label{motivati}

An important question in mathematical analysis is how mappings transform shape and size of objects. To make this question precise let us introduce two notions of distortion. Take an open $\Omega\subset\C$ and suppose $f:\Omega\rightarrow\C$ is an injective continuous function. For any $z\in\Omega$ 
define the {\it infinitesimal distortion} of $f$ at $z$ as \cite[Section 2.4]{AIM}
$$
H_f(z):=\limsup_{r\rightarrow 0}\frac{\max_{|h|=r}|f(z+h)-f(z)|}{\,\min_{|h|=r}|f(z+h)-f(z)|}\,.
$$
We are interested in functions of ``bounded distortion'', i.e. such $f$ for which $H_f<\infty$ uniformly on $\Omega$.
If $f$ is differentiable at $z$ with $J_f(z)>0$ then $H_f(z)$ can be simply expressed in terms of the Beltrami coefficient \cite[(2.27)]{AIM}. It turns out that finding functions of ``bounded distortion'' corresponds to finding functions with prescribed Beltrami coefficients. We will show how this problem naturally leads to the appearance of the Ahlfors-Beurling operator $T$.

Another concept that underlines much of what is discussed here is {\it area distortion} \cite[Section 13.1]{AIM}, by which we somewhat loosely mean how to control $|f(E)|$ in terms of $|E|$ for all Borel measurable $E$ with finite Lebesgue measure. 

\medskip
As a historical prelude let us consider a special case in which $H_f\equiv1$.

\subsection{Some estimates concerning conformal mappings}
\label{leeves}

The main source for this subsection is  \cite{CG}.
For the sake of the reader's convenience, we nevertheless chose to present complete proofs of most of the results in this subsection.

\begin{definition}
Suppose $\Omega\subset\C$ is an open set. We say that a function $f:\Omega\rightarrow\C$ is {\it conformal} on $\Omega$ if it is holomorphic and injective. Such a function is also called {\it univalent}. Univalent functions $f$ on $\Delta$ which are normalized by $f(0)=0$ and $f'(0)=1$ are called {\it schlicht}.
\end{definition}

\begin{remark}
Often, e.g. Rudin \cite[Definition 14.1]{R1}, conformality of $f$ is defined by requiring that $f$ preserve angles (both in terms of size and orientation). This is equivalent to $f$ being holomorphic and that $f'\ne 0$ everywhere on $\Omega$ \cite[Theorem 14.2]{R1}, which is in turn equivalent to $f$ being holomorphic and {\it locally} injective \cite[Theorems 10.30 and 10.33]{R1}. This however still does not suffice for {\it global} injectivity that is required in our definition, the 
example being the exponential function. 
\end{remark}

Our first result on univalent functions is due to Gronwall. The proofs can be found in \cite[Theorem I.1.1]{CG} or \cite[Theorem 14.13]{R1}.

\begin{theorem}[area theorem]
Suppose $g$ is univalent on $\Omega:=\Delta\backslash\{0\}$ and has there the Laurent series expansion
$$
g(z)=\frac1z+\sum_{n=0}^\infty b_nz^n\,.
$$
Then
$$
\sum_{n=1}^\infty n|b_n|^2\leq 1\,.
$$
\end{theorem}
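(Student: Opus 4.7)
The plan is to give the classical Gronwall argument based on Green's theorem. Fix $r\in(0,1)$ and define the curve $\gamma_r := g(\{|z|=r\})$. Since $g$ is holomorphic and injective on a neighbourhood of the circle $\{|z|=r\}$, its derivative does not vanish there (univalent holomorphic maps are local diffeomorphisms), so $\gamma_r$ is a real-analytic simple closed Jordan curve. Denote by $B_r$ the bounded, and by $U_r$ the unbounded, component of $\C\setminus\gamma_r$. Because $g(z)\to\infty$ as $z\to0$, the connected open set $g(\{0<|z|<r\})$ contains a neighbourhood of $\infty$ and is disjoint from $\gamma_r$ (by univalence), whence it lies in $U_r$. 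Similarly $g(\{r<|z|<1\})$ is connected, bounded, and disjoint from $\gamma_r$, hence lies in $B_r$. In particular $B_r$ is nonempty and $|B_r|\geq 0$.

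Next I apply Green's theorem: since $\gamma_r$ is a smooth Jordan curve,
$$
|B_r|=\frac{1}{2i}\oint_{\gamma_r^+}\bar w\,dw,
$$
where $\gamma_r^+$ is the positive (counter-clockwise) orientation of $\gamma_r$. Parametrize $\{|z|=r\}$ counter-clockwise by $z=re^{i\theta}$ for $\theta\in[0,2\pi]$. Because the dominant Laurent term of $g$ near $0$ is $1/z$, along this parametrization the image $w(\theta)=g(re^{i\theta})$ traverses $\gamma_r$ \emph{clockwise}; this can be made rigorous by the argument principle (the winding number of $\gamma_r$ about a point in $B_r$, which is a $g$-image of a point in $\{0<|z|<r\}$, is $-1$, because the map $z\mapsto g(z)-w_0$ has a single simple pole at $z=0$ inside $\{|z|\leq r\}$ and no zeros). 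Consequently
$$
|B_r|=-\frac{1}{2i}\int_0^{2\pi}\overline{g(re^{i\theta})}\,\frac{d}{d\theta}g(re^{i\theta})\,d\theta.
$$

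The key computation is to expand this integral using the Laurent series $g(re^{i\theta})=r^{-1}e^{-i\theta}+\sum_{n\geq0}b_n r^n e^{in\theta}$ and its $\theta$-derivative, and invoke the orthogonality relations $\int_0^{2\pi}e^{ik\theta}\,d\theta=2\pi\delta_{k,0}$. Only the ``diagonal'' contributions survive, yielding
$$
|B_r|=\frac{\pi}{r^2}-\pi\sum_{n=1}^\infty n|b_n|^2 r^{2n}.
$$
Since $|B_r|\geq 0$, we obtain $\sum_{n=1}^\infty n|b_n|^2 r^{2n}\leq r^{-2}$; letting $r\to 1^-$ and applying monotone convergence gives the desired inequality.

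The main obstacle is the geometric/topological input: identifying $B_r$ correctly and fixing the orientation so that the sign of the Green's-theorem integral is correct. Both rely essentially on the univalence of $g$ and on the fact that $0$ is a simple pole, and the cleanest justification is via the winding-number computation above. Once the sign is settled, the remaining work is a bookkeeping calculation with Fourier orthogonality.
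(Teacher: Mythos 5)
Your proof is correct and follows essentially the same route as the paper's: Green's theorem for the area enclosed by $g(rS^1)$, the argument-principle computation of the winding number to fix the clockwise orientation, Fourier orthogonality for the diagonal terms, and then $r\to1^-$. Two clauses deserve repair, though neither affects validity: a point $w_0\in B_r$ is precisely \emph{not} a $g$-image of any point of $\{0<|z|\le r\}$ (this is what your own pole/zero count uses, and it follows from your correct observation that $g(\{0<|z|<r\})\subset U_r$), and the boundedness of $g(\{r<|z|<1\})$ is neither automatic nor needed, since $|B_r|\ge0$ holds trivially.
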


\begin{proof}
Fix $0<r<1$ and define $\Delta_r=\mn{z\in\Delta}{0<|z|<r}$ and $D_r:=\C\backslash g(\Delta_r)$. 
By the open mapping theorem \cite[Chapter 10]{R1}, the set $D_r$ is closed. 
Take $w\in\Omega\backslash\Delta_r$ and write  $\alpha=g(w)$. Since $g$ was assumed to be injective, the function $z\mapsto g(z)-\alpha$ has no zeros on $\Delta_r$, therefore we may define $G:\Delta_r\rightarrow \C$ by
$$
G(z):=\frac1{g(z)-\alpha}=\frac z{1+zh(z)}\,,
$$
where
$$
h(z)=b_0-\alpha+\sum_{n=1}^\infty b_nz^n\,.
$$
Function $G$ thus has a removable singularity at zero. We still denote its holomorphic extension to $\Delta_r\cup\{0\}$ by $G$. By the open mapping theorem and since $G(0)=0$, there exists $\delta>0$ such that $B(0,\delta)\subset G(\Delta_r\cup\{0\})$. 
Suppose $u\in\C$ satisfies $|u|>1/\delta$. Then $0<|1/u|<\delta$, therefore $1/u\in G(\Delta_r)$, which means that $u\in g(\Delta_r)-\alpha$. Consequently, $D_r\subset\overline B(a,1/\delta)$, so $D_r$ is also bounded.

It can now be shown (e.g. by using the Green's formula) 
that
\begin{equation}
\label{ludi svit}
\text{area}(D_r)=\frac1{2i}\int_{\pd D_r}\bar z\,dz\,.
\end{equation}
Let us prove that, as sets, $\pd D_r$ and $g(r S^1)$ coincide. 
We have $\pd D_r=\pd\big(\C\backslash g(\Delta_r)\big)=
\pd g(\Delta_r)$. 
We know that $g(\Delta_r)$ is an open set, therefore 
$\pd g(\Delta_r)=\overline{g(\Delta_r)}\backslash g(\Delta_r)$.
Since $g$ is continuous, $g(\overline{\Delta_r })\subset \overline{g(\Delta_r)}$. 
Function $g$ is injective, therefore it has an inverse, $g^{-1}$. By a basic 
theorem \cite[Theorem 10.30]{R1}, $g^{-1}$ is again holomorphic, thus continuous. Consequently, $g^{-1}\big(\overline{g(\Delta_r)}\big)\subset \overline{ g^{-1}(g({\Delta_r }))}=\overline{ \Delta_r }$, which means 
$\overline{g(\Delta_r)}\subset g\big(\overline{ \Delta_r }\big)$. Therefore, in a set-theoretical sense,
$\pd g(\Delta_r)=g\big(\overline{ \Delta_r }\big)\backslash g(\Delta_r)=g(r S^1)$. 

Since $g$ is conformal, it preserves the orientation of {\it  angles}. 
As for
{\it curves},
it reverts the orientation of $rS^1$, in the following sense: if $\omega_r$ is the curve $rS^1$ oriented counterclockwise and $a\in D_r$, then $\text{Ind}_{g\circ\omega_r}(a)=-1$ ({\bf exercise}).  

To summarize, one admissible parametrization for the integral in \eqref{ludi svit} is $z=g(re^{-it})$, $t\in[0,2\pi)$. 
We emphasize that the negative value of the winding number above amounts to taking a minus in the exponent. 
With this choice (and with replacing $t$ by $2\pi-t$ after the first parametrization) one eventually gets
$$
\text{area}(D_r)=-\frac r2\int_0^{2\pi}\overline{g(re^{it})}\,g'(re^{it})\,e^{it}\,dt\,.
$$

Clearly
$$
g'(z)=-\frac1{z^2}+\sum_{n=1}^\infty b_nz^{n-1}\,.
$$
Expand the integrand above into a double infinite sum of trigonometric monomials, integrate term-by-term (we may do that since the double sum is absolutely integrable) and use that
$$
\int_0^{2\pi}e^{int}\,dt=\left\{
\begin{array}{lcl}
2\pi & ; & n=0\\
0 & ; & n\ne0\,.
\end{array}
\right.
$$
The outcome is
$$
\text{area}(D_r)=\pi\bigg(\frac1{r^2}-\sum_{k=1}^\infty k|b_k|^2r^{2k}\bigg)\,.
$$
Since the area is nonnegative, we proved
$$
\sum_{k=1}^\infty k|b_k|^2r^{2k}\leq\frac1{r^2}
$$
for all $0<r<1$. Finally send $r\nearrow 1$. 
\end{proof}

\begin{theorem}[Bieberbach]
\label{Bieberbach}
Suppose 
$$
f(z)=z+\sum_{n=2}^\infty a_nz^n
$$
is a Taylor series representation of a {\it schlicht} function. Then $|a_2|\leq2$.
\end{theorem}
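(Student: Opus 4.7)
The plan is to reduce the statement to the area theorem, applied not to $f$ itself but to an auxiliary univalent function with a pole at the origin. The construction has two steps, both of which are standard ``tricks of the trade'' for univalent functions on $\Delta$.

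First, I would construct an odd schlicht function $F$ on $\Delta$ whose square equals $f(z^2)$. Write $f(z)=z(1+a_2z+a_3z^2+\cdots)$; then
$$
\frac{f(z^2)}{z^2}=1+a_2z^2+a_3z^4+\cdots
$$
is holomorphic and nonvanishing on $\Delta$ (it vanishes nowhere because $f$ is schlicht and hence vanishes only at $0$). Since $\Delta$ is simply connected, this function admits a holomorphic square root equal to $1$ at the origin, and so we may define
$$
F(z):=z\sqrt{\frac{f(z^2)}{z^2}}=z+\frac{a_2}{2}z^3+\cdots
$$
on $\Delta$. I would then verify that $F$ is odd (the power series defining the square root is even in $z$) and univalent: if $F(z_1)=F(z_2)$ then squaring gives $f(z_1^2)=f(z_2^2)$ and univalence of $f$ forces $z_1^2=z_2^2$, i.e.\ $z_1=\pm z_2$; oddness of $F$ rules out the minus sign unless both are zero.

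Second, I would invert and examine the point at infinity. Because $F(z)=z+\tfrac{a_2}{2}z^3+O(z^5)$ has a simple zero only at $z=0$, the function $g(z):=1/F(z)$ is holomorphic and univalent on $\Delta\setminus\{0\}$, with Laurent expansion
$$
g(z)=\frac{1}{z}\cdot\frac{1}{1+\tfrac{a_2}{2}z^2+\cdots}=\frac{1}{z}-\frac{a_2}{2}z+\sum_{n\geq 3}b_nz^n,
$$
which matches exactly the normalization hypothesis of the area theorem (the constant term vanishes since $g$ is odd, inheriting parity from $F$). Applying the area theorem then yields
$$
\sum_{n\geq 1}n|b_n|^2\leq 1,
$$
and retaining just the first term gives $|a_2/2|^2\leq 1$, i.e.\ $|a_2|\leq 2$.

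The only delicate point, in my view, is the construction of $F$: one must justify both the existence of the holomorphic square root (using simple connectedness and non-vanishing of $f(z^2)/z^2$) and the transfer of injectivity from $f$ to $F$ via the oddness argument. Once $F$ is in hand, the remainder is a routine power-series manipulation followed by a direct application of the area theorem to $1/F$.
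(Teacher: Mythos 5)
Your proposal is correct and follows essentially the same route as the paper: construct the odd holomorphic square root $F(z)=z\sqrt{f(z^2)/z^2}$ (the paper writes it as $\psi(z)=z\f(z^2)$ with $\f^2=f(z)/z$, which is the same function), transfer injectivity from $f$ to $F$ via the oddness argument, and apply Gronwall's area theorem to $1/F$, whose coefficient of $z$ is $-a_2/2$. The details you flag as delicate (existence of the square root and the injectivity transfer) are handled exactly as you outline, so no gap remains.
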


\begin{proof}
Define
$$
h(z)=\frac{f(z)}z=1+\sum_{n=1}^\infty a_{n+1}z^n\,.
$$
Since $f$ is by assumption injective, $0$ is its only zero, therefore $h$ has no zeros on $\Delta$. Consequently, there exists $\f\in\cH(\Delta)$ such that $h=\f^2$ and $\f(0)=1$, e.g. \cite[Theorem 13.11.(j)]{R1}. 
Define also
 $$
 \psi(z):=z\f(z^2)\,.
 $$
Then 
 \begin{equation}
 \label{fen}
 \psi^2(z)=z^2\f^2\big(z^2\big)=z^2 h\big(z^2\big)=f\big(z^2\big)\,,
 \end{equation}
 i.e. 
$\psi:\Delta\rightarrow\C$ is a holomorphic square root of the function $z\mapsto f(z^2)$. 

We have, for $w\in\Delta$, 
$$
\frac1{\f(w)}=1+\sum_{k=1}^\infty b_{k}w^k\,,
$$
where
$$
b_1=\bigg(\frac1\f\bigg)'(0)=-\frac{\f'(0)}{\f(0)^2}=-\f'(0)\,.
$$
From $h=\f^2$ it follows that $h'=2\f\f'$, so $\f'(0)=h'(0)/2=a_2/2$, therefore $b_1=-a_2/2$. 
This means that we can define and expand function $g$ as
\begin{equation}
\label{Toshiba hard disc}
g(z):= \frac1{\psi(z)}=
\frac1{z\f(z^2)}=\frac1z-\frac{a_2}2\,z+\sum_{k=2}^\infty b_{k}z^{2k-1}\,.
\end{equation}

Let us show that $g$ is injective. If $g(z_1)=g(z_2)$ for some $z_1,z_2\in\Delta$, then $\psi(z_1)=\psi(z_2)$, thus $\psi^2(z_1)=\psi^2(z_2)$, i.e. $f(z_1^2)=f(z_2^2)$, by \eqref{fen}. Since $f$ was injective, $z_1^2=z_2^2$, which gives options $z_2=\pm z_1$. But $g$ is an odd function, hence $g(-z_1)=-g(z_1)$, which equals $g(z_1)$ only if $g(z_1)=0$. However $g$ has no zeros on $\Delta$, which forces $z_2=z_1$. So $g$ is indeeed injective and thus univalent. 

Now we may apply Gronwall's area theorem and conclude from \eqref{Toshiba hard disc} that 
$$
\Big|\frac{a_2}2\Big|^2+\sum_{k=2}^\infty (2k-1)|b_{k}|^{2k}\leq 1\,.
$$
In particular, $|a_2|\leq2$.
\end{proof}

Bieberbach conjectured in 1916 that, in fact, under the assumptions of Theorem \ref{Bieberbach}, $|a_n|\leq n$ for all $n\in\N$. This was confirmed many years later in a renowned work by de Branges \cite{dB}. It is easy to see that all of these estimates are optimal, the extremals being the so-called {\it Koebe functions}, cf. Exercise \ref{Koebe} below, and its rotations. 
Proposition \ref{segovia - hotel de condes de castilla} below shows that the converse of de Branges' theorem is false, in the sense that if $\f(z)=z+\sum_2^\infty b_nz^n$ is holomorphic on $\Delta$ and $|b_n|\leq n$, then $\f$ is not necessarily injective, even if all $b_n$ are strictly positive. 

For much more information about this problem see Gong \cite{Gong}.

\begin{theorem}[Koebe 1/4 theorem]
Suppose $f$ is {\it schlicht}. Then $B(0,1/4)\subset f(\Delta)$. 
\end{theorem}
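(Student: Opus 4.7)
The plan is to derive the Koebe $1/4$ theorem as a quick consequence of Bieberbach's estimate $|a_2|\leq2$ (Theorem \ref{Bieberbach}), via a one-parameter family of auxiliary schlicht functions built from $f$ and any ``omitted value''.

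Concretely, I would argue by contrapositive: suppose $w\in\C\setminus f(\Delta)$ (necessarily $w\ne0$ since $f(0)=0$). Define
$$
g(z):=\frac{wf(z)}{w-f(z)}=\frac{f(z)}{1-f(z)/w}\,, \qquad z\in\Delta\,.
$$
Since $f$ omits $w$, the denominator never vanishes, so $g\in\cH(\Delta)$. The first key step is to verify that $g$ is schlicht: holomorphy and $g(0)=0$ are immediate; a short manipulation of the defining equation shows $g(z_1)=g(z_2)$ forces $f(z_1)=f(z_2)$, whence injectivity of $f$ gives $z_1=z_2$; and expanding $1/(1-f(z)/w)=1+f(z)/w+(f(z)/w)^2+\cdots$ against $f(z)=z+a_2z^2+\cdots$ yields $g'(0)=1$.

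The second key step is to read off the second Taylor coefficient of $g$. The same expansion gives
$$
g(z)=z+\Big(a_2+\frac1w\Big)z^2+O(z^3)\,.
$$
Applying Theorem \ref{Bieberbach} simultaneously to $f$ and to $g$ yields $|a_2|\leq2$ and $|a_2+1/w|\leq2$, hence by the triangle inequality
$$
\Big|\frac1w\Big|\leq\Big|a_2+\frac1w\Big|+|a_2|\leq 4\,,
$$
i.e.\ $|w|\geq1/4$. Thus every omitted value lies outside $B(0,1/4)$, proving $B(0,1/4)\subset f(\Delta)$.

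The only nontrivial point I anticipate is the univalence of $g$; the rest is a routine power-series computation. No analytic obstacle arises because the Bieberbach bound has already been established, so the 1/4 theorem follows just by choosing the right Möbius post-composition of $f$ that ``moves'' the omitted value $w$ to infinity while preserving the normalization.
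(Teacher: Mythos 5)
Your proposal is correct and follows essentially the same route as the paper: you post-compose $f$ with the M\"obius map $u\mapsto wu/(w-u)$ sending the omitted value $w$ to infinity, check the result is schlicht, and apply Bieberbach's bound $|a_2|\leq 2$ to both $f$ and the new function to get $|1/w|\leq 4$. The only difference is cosmetic (you expand the geometric series where the paper differentiates), so nothing further is needed.
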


\begin{proof}
Take $c\in\C$ such that $f(z)\ne c$ for any $z\in\Delta$. Define
$$
g(z):=\frac{cf(z)}{c-f(z)}=\frac{c^2}{c-f(z)}-c\,.
$$
We calculate
$$
\aligned
g'(z)&=\frac{c^2f'(z)}{\big(c-f(z)\big)^2}\\
g''(z)&=c^2\,\frac{f''(z)\big(c-f(z)\big)+2f'(z)^2}{\big(c-f(z)\big)^3}\,.
\endaligned
$$
Therefore $g(0)=0$, $g'(0)=1$ and $g''(0)=a_2+1/c$, where
$$
f(z)=z+\sum_{k=2}^\infty a_kz^k
\,.
$$

Function $g$ is clearly injective. From the Bieberbach's estimate (Theorem \ref{Bieberbach}) applied to both $f$ and $g$ we get $|a_2|\leq 2$ and $|a_2+1/c|\leq 2$, respectively. This implies $|1/c|\leq 4$, i.e. $|c|\geq 1/4$. 

We proved that $c\not\in f(\Delta)$ implies $|c|\geq 1/4$, which is of course equivalent to $|c|<1/4$ implying $c\in f(\Delta)$.
\end{proof}

The estimate from the Koebe theorem is sharp, meaning that $1/4$ appearing in the formulation cannot be replaced by a smaller number:

\begin{exercise}
\label{Koebe}
On $\Delta$ define the {\it Koebe function}
$$
K(z):=\frac{z}{(1-z)^2}=\sum_{n=1}^\infty nz^n\,.
$$
Then $K(\Delta)=\C\backslash(-\infty,-1/4]$.
\end{exercise}

\begin{corollary}
For every {\it schlicht} function $f$ the following estimate is valid:
$$
\frac14\leq d\big(0,\pd f(\Delta)\big)\leq1\,.
$$
\end{corollary}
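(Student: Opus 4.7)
The lower bound is essentially the content of the Koebe $1/4$ theorem. Since $f(0)=0\in f(\Delta)$ and $f(\Delta)$ is open (by the open mapping theorem, as $f$ is nonconstant holomorphic), the quantity $R:=d(0,\pd f(\Delta))$ is the radius of the largest open ball centred at $0$ contained in $f(\Delta)$. But the Koebe theorem already proved above asserts $B(0,1/4)\subset f(\Delta)$, so $R\geq 1/4$.

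For the upper bound I would use the Schwarz lemma. With $R$ defined as above, we have $B(0,R)\subset f(\Delta)$. Since $f$ is conformal (univalent), its restriction to the open set $\Omega:=f^{-1}(B(0,R))\subset\Delta$ is a bijection onto $B(0,R)$, with holomorphic inverse $g:B(0,R)\to\Omega\subset\Delta$ satisfying $g(0)=0$ and $g'(0)=1/f'(0)=1$. Now rescale by setting
$$
h(z):=g(Rz),\qquad z\in\Delta.
$$
Then $h:\Delta\to\Delta$ is holomorphic with $h(0)=0$, so the Schwarz lemma yields $|h'(0)|\leq 1$. Computing $h'(0)=Rg'(0)=R$, we conclude $R\leq 1$, as desired.

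The two bounds are combined to give $1/4\leq R\leq 1$. The main step is really the Koebe theorem itself (already established); the upper estimate is a short Schwarz-lemma argument and presents no genuine obstacle beyond carefully noting that the open unit disc is the natural target domain for the rescaled inverse $h$.
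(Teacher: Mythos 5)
Your argument is correct and follows essentially the same route as the paper: the lower bound is exactly the Koebe $1/4$ theorem, and the upper bound comes from the Schwarz lemma applied to the inverse map $f^{-1}$. The only difference is cosmetic -- you rescale $f^{-1}$ on $B(0,R)$ and use the derivative inequality $|h'(0)|\leq 1$ directly, whereas the paper assumes $\overline\Delta\subset f(\Delta)$ and reaches a contradiction via the rigidity (equality) case of the Schwarz lemma; both versions are sound, and both tacitly use that $f(\Delta)\ne\C$ so that $R<\infty$ (which follows, e.g., since otherwise $f^{-1}$ would be a nonconstant bounded entire function, or simply by applying your rescaling argument to an arbitrary finite radius $R'\leq R$).
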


\begin{proof}
The lower estimate follows immediately from the Koebe theorem. 

The upper estimate will follow once we prove that $\overline\Delta\not\subset f(\Delta)$. Suppose the contrary, i.e. that $\overline\Delta\subset f(\Delta)$. Then
\begin{equation}
\label{countdown}
f^{-1}(\Delta)\subset f^{-1}(\overline\Delta)\subset f^{-1}(f(\Delta))=\Delta\,.
\end{equation}
The Schwartz lemma implies $f(z)=\lambda z$ for some $|\lambda|=1$. Since $f$ is univalent, we have $f'(0)=1$, therefore $\lambda=1$, i.e. $f(z)=z$. But this contradicts $\overline\Delta\subset f(\Delta)$. 
\end{proof}

Observe that the assumption $\overline\Delta\subset f(\Delta)$ was only used at the very end, i.e. \eqref{countdown} clearly holds already if $\Delta\subset f(\Delta)$, which can actually happen (e.g. with $f=\text{id}$).

\begin{corollary}
If $f$ is univalent on $B(z,\delta)$, then
$$
d\big(f(z),\pd f(B(z,\delta))\big)\geq\frac\delta4|f'(z)|\,.
$$
\end{corollary}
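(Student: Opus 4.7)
\medskip\noindent\textbf{Proof plan.} The plan is to reduce the statement to the previous corollary by a standard affine renormalization that turns $f$ into a \emph{schlicht} function on the unit disc. Since $f$ is univalent on $B(z,\delta)$, in particular it is holomorphic and locally injective there, so (by the remark following the definition of conformality) $f'(z)\ne 0$. Define
$$
g(w):=\frac{f(z+\delta w)-f(z)}{\delta\,f'(z)}\,,\hskip 30pt w\in\Delta\,.
$$
One checks immediately that $g$ is holomorphic on $\Delta$, that $g(0)=0$ and $g'(0)=1$, and that $g$ is injective on $\Delta$ because $f$ is injective on $B(z,\delta)$; hence $g$ is \emph{schlicht}.

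Next I would apply the preceding corollary (the lower bound half of the Koebe estimate) to $g$, obtaining $d\big(0,\pd g(\Delta)\big)\geq 1/4$. The only remaining step is to translate this distance estimate back to the original $f$. Writing $\Phi(w):=f(z)+\delta f'(z)\,w$ for the affine map that undoes the normalization, one has $f(B(z,\delta))=\Phi(g(\Delta))$, hence $\pd f(B(z,\delta))=\Phi(\pd g(\Delta))$, and $\Phi$ scales distances by the factor $\delta|f'(z)|$. Therefore
$$
d\big(f(z),\pd f(B(z,\delta))\big)=\delta|f'(z)|\cdot d\big(0,\pd g(\Delta)\big)\geq \frac{\delta}{4}|f'(z)|\,,
$$
which is the desired inequality.

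There is essentially no obstacle here beyond making sure the normalization is legitimate; the only small point worth verifying carefully is that $f'(z)\ne 0$, which is automatic from univalence as noted above, and that the equality $\pd f(B(z,\delta))=\Phi(\pd g(\Delta))$ holds (it does, because $\Phi$ is a homeomorphism of $\C$ onto itself, so it commutes with taking boundaries of images of open sets).
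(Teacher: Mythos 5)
Your proposal is correct and is essentially identical to the paper's proof: the same affine renormalization $g(w)=\big(f(z+\delta w)-f(z)\big)/\big(\delta f'(z)\big)$, the observation that $g$ is \emph{schlicht}, and an application of the preceding corollary followed by rescaling the distance. Your version merely spells out the boundary-commutation step more explicitly.
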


\begin{proof}
Define $g:\Delta\rightarrow\C$ by 
$$
g(w)=\frac{f(\delta w+z)-f(z)}{\delta f'(z)}\,.
$$
Then $g$ is {\it schlicht}, therefore $d\big(0,\pd f(\Delta)\big)\geq 1/4$, i.e.
\[
\frac{d\big(f(z),\pd f(\delta\Delta +z)\big)}{\delta |f'(z)|}\geq\frac14\,.
\qedhere
\]
\end{proof}

\begin{theorem}
Suppose $f$ is univalent on the region $D$ and $z\in D$. Then
$$
\frac14\leq\frac1{|f'(z)|}\cdot\frac{d\big(f(z),\pd f(D)\big)}{d(z,\pd D)}\leq4\,.
$$
\end{theorem}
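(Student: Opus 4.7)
The plan is to deduce both inequalities from the preceding corollary (Koebe's distortion estimate on a disc), using the lower bound directly for the left inequality and applying it to $f^{-1}$ for the right inequality.

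For the lower bound, set $\delta:=d(z,\pd D)$, so $B(z,\delta)\subset D$ and $f$ is univalent on $B(z,\delta)$. The preceding corollary yields
$$
d\bigl(f(z),\pd f(B(z,\delta))\bigr)\geq\frac{\delta}{4}|f'(z)|=\frac{|f'(z)|}{4}d(z,\pd D)\,.
$$
Next I would invoke the following elementary monotonicity fact: if $A\subset B$ are open in $\C$ and $p\in A$, then $d(p,\pd B)\geq d(p,\pd A)$. Indeed, for any $q\in\pd B$ the segment $[p,q]$ starts in $A$ and ends outside $B\supset A$, so it must cross $\pd A$ at some point $r$, giving $|p-q|\geq|p-r|\geq d(p,\pd A)$. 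Applying this with $A=f(B(z,\delta))$, $B=f(D)$, $p=f(z)$ (both are open, as conformal maps are open mappings), I obtain $d(f(z),\pd f(D))\geq d(f(z),\pd f(B(z,\delta)))$, which combined with the previous inequality gives the left bound.

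For the upper bound, I would apply exactly the same reasoning to the inverse $g:=f^{-1}:f(D)\to D$. Since $f$ is conformal on $D$, the set $f(D)$ is open, $g$ is univalent on $f(D)$, and $g'(f(z))=1/f'(z)$. Putting $w:=f(z)$ into the lower estimate already proved for $g$ on $f(D)$ gives
$$
d(z,\pd D)=d\bigl(g(w),\pd g(f(D))\bigr)\geq\frac{|g'(w)|}{4}d(w,\pd f(D))=\frac{1}{4|f'(z)|}d\bigl(f(z),\pd f(D)\bigr)\,,
$$
which rearranges to the right inequality.

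The only genuinely non-routine point is the monotonicity lemma for the distance to the boundary under inclusion of open sets; everything else is a direct application of the preceding corollary and the chain rule, so I do not anticipate a serious obstacle.
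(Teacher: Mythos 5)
Your proof is correct and follows essentially the same route as the paper: take $\delta=d(z,\pd D)$, apply the Koebe corollary on $B(z,\delta)$, use monotonicity of the distance to the boundary under inclusion for the lower bound, and then apply the lower bound to $f^{-1}$ on $f(D)$ for the upper one. The only cosmetic difference is that the paper obtains the monotonicity step directly from $f(D)^c\subset f(B(z,\delta))^c$ rather than via your segment-crossing argument, but both are valid.
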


\begin{proof}
Take $\delta:=d(z,\pd D)$. Then $B(z,\delta)\subset D$, therefore $f\big(B(z,\delta)\big)\subset f(D)$ and consequently
$$
\aligned
d\big(f(z),\pd f(D)\big) &= d\big(f(z),f(D)^c\big) \geq d\big(f(z),f(B(z,\delta))^c\big)=d\Big(f(z),\pd\big( f(B(z,\delta))\big)\Big)\\
&\geq\frac{|f'(z)|}{4}\,d(z,\pd D)\,.
\endaligned
$$

This confirms the lower estimate. In order to get the upper one, apply the lower estimate for
$$
\aligned
z & & \leftrightsquigarrow &\ \ f(z)\ \\
f & & \leftrightsquigarrow &\ \ f^{-1}\\
D & & \leftrightsquigarrow &\ \ f(D)\,. 
\endaligned
$$
Since 
$$
1=(f^{-1}\circ f)'(z)=\big(f^{-1}\big)'(f(z))\cdot f'(z)\,,
$$
it follows that
$$
d(z,\pd D)=d\big(f^{-1}(f(z)),\pd D\big)\geq\frac14\cdot\frac1{|f'(z)|}\cdot d\big(f(z),\pd f(D)\big)\,.
$$
This finishes the proof.
\end{proof}

\begin{theorem}
If $f$ is {\it schlicht} then
$$
d\big(f(z),\pd f(\Delta)\big)>\frac1{16}\,\big(1-|z|^2\big)
$$
for all $z\in\Delta$.
\end{theorem}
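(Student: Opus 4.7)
The plan is to reduce the estimate at a general $z\in\Delta$ to the Koebe $1/4$ theorem at the origin via conjugation by a disc automorphism, and then convert the resulting factor $|f'(z)|$ into an explicit numerical constant using the Koebe distortion estimate.

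First I would fix $z\in\Delta$ and introduce the Möbius automorphism
\[
\phi(w)=\frac{w+z}{1+\bar z\,w}
\]
of $\Delta$, which satisfies $\phi(0)=z$ and $\phi'(0)=1-|z|^2$. Setting
\[
g(w):=\frac{f(\phi(w))-f(z)}{f'(z)(1-|z|^2)},
\]
I observe that $g$ is univalent on $\Delta$ (composition of univalent maps followed by an affine rescaling), $g(0)=0$, and by the chain rule $g'(0)=1$; hence $g$ is \emph{schlicht}. Applying the Koebe $1/4$ theorem to $g$ gives $B(0,1/4)\subset g(\Delta)$. Since $\phi(\Delta)=\Delta$, one has $g(\Delta)=[f(\Delta)-f(z)]/[f'(z)(1-|z|^2)]$; undoing the normalization yields
\[
B\bigl(f(z),\,\tfrac14|f'(z)|(1-|z|^2)\bigr)\subset f(\Delta),
\]
and therefore the intermediate estimate
\[
d\bigl(f(z),\partial f(\Delta)\bigr)\;\geq\;\tfrac14\,|f'(z)|\,(1-|z|^2).
\]
This is a "Koebe $1/4$ theorem at the point $z$".

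To finish, I would invoke the Koebe distortion estimate
\[
|f'(z)|\;\geq\;\frac{1-|z|}{(1+|z|)^3},
\]
valid for every \emph{schlicht} $f$; it is the classical companion of Bieberbach's bound (Theorem~\ref{Bieberbach}), derived by applying $|a_2|\leq 2$ to the Möbius conjugate $g$ above, whose second Taylor coefficient carries the expression $(1-|z|^2)f''(z)/(2f'(z))-\bar z$, and then converting the resulting pointwise inequality for $\log f'$. Inserting this lower bound into the intermediate estimate and simplifying with the elementary inequality $1+|z|\leq 2$ produces the factor $1/16$.

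The principal obstacle is the Koebe distortion estimate in the last step; unlike the $1/4$ theorem it is not proved in the excerpt, and its derivation from $|a_2|\leq 2$ requires a careful Möbius reparametrization at the point $z$. Once it is in hand, the rest is a direct algebraic consequence of Bieberbach's theorem packaged as Koebe $1/4$, plus the elementary disc-automorphism normalization.
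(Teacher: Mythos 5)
Your reduction to a ``Koebe $1/4$ theorem at the point $z$'' is correct: $g$ is indeed schlicht and yields $d\big(f(z),\partial f(\Delta)\big)\geq\tfrac14\,|f'(z)|\,(1-|z|^2)$. The last step, however, does not produce what you claim. Inserting the distortion bound $|f'(z)|\geq(1-|z|)/(1+|z|)^3$ gives $d\big(f(z),\partial f(\Delta)\big)\geq\tfrac14\,(1-|z|)^2/(1+|z|)^2\geq\tfrac1{16}(1-|z|)^2$, which is weaker than $\tfrac1{16}(1-|z|^2)$ by a factor comparable to $(1-|z|)/(1+|z|)$; no use of $1+|z|\leq2$ recovers that factor. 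Moreover this gap cannot be closed by any argument, because the inequality with $1-|z|^2$ is simply false: for the Koebe function $K(z)=z/(1-z)^2$ and $z=-1/2$ one has $K(-1/2)=-2/9$ and $\partial K(\Delta)=(-\infty,-1/4]$, so the distance equals $1/36$, whereas $\tfrac1{16}\big(1-|z|^2\big)=3/64>1/36$. In general $|f'(z)|$ may decay like $1-|z|$, so no bound of the form $c\,(1-|z|^2)$ with an absolute $c>0$ can hold for all schlicht $f$.

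The statement as printed should therefore be read with $(1-|z|)^2$ in place of $1-|z|^2$ (this is the bound the cited Carleson--Gamelin theorem gives); note the notes themselves offer no proof, only the reference. With that correction your argument is exactly the standard one and goes through: pointwise Koebe $1/4$ via the disc automorphism, plus the lower distortion estimate, which you still must prove -- e.g.\ apply Bieberbach's estimate (Theorem \ref{Bieberbach}) to the conjugated map $g$ to get $\big|(1-|z|^2)\,f''(z)/f'(z)-2\bar z\big|\leq4$, and integrate the resulting bound on $\partial_r\log|f'|$ along radii. Finally, keep track of strictness: the step $(1+|z|)^2<4$ for $|z|<1$ is what turns the chain of $\geq$'s into the strict inequality asserted in the theorem.
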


\begin{proof}
See Carleson, Gamelin \cite[Theorem I.1.7]{CG}.
\end{proof}

\subsection{Beyond conformality: quasiconformal maps and the Beltrami equation}

Let us now start discussing the issues of area distortion for {\it nonholomorphic} functions. 
The simplest nonholomorphic function is probably the linear one, i.e. of the following form. Take $a,b\in\C\backslash\{0\}$ and define 
$A:\C\rightarrow\C$ by $A(u)=au+b\bar u$.

\begin{exercise}
\label{DSCH-Oktyabr'}
For $A$ as above, the following properties hold:

\begin{itemize}

\item
If $A$ is understood in a canonical way as a real $2\times2$ matrix, then 
$$\det A=|a|^2-|b|^2\,.$$

\item
When $|a|\ne|b|$, we have
$$
A^{-1}u=\frac{\bar au-b\bar u}{|a|^2-|b|^2}\,.
$$

\item
$A(\pd\Delta)$ is an ellipse whose semi-axes have lengths $|a|+|b|$ and $\big||a|-|b|\big|$. Inclination of the longer axis with respect to the positive half of the real axis is $(\arg a+\arg b)/2$.

\item
If we define $\nor{A}:=\max\mn{|Au|}{|u|=1}$ then $\nor{A}=|a|+|b|$.
\end{itemize}
\end{exercise}

We proceed with the general case.

\begin{definition}
\label{asdfasdfsadfasdf}
Let $U,V\subset\C$ be open sets. Take $K\geq1$ and denote
\begin{equation}
\label{B-A-B-G}
k:=\frac{K-1}{K+1}\in[0,1)\,.
\end{equation}
We say that a map $f:U\rightarrow V$ is {\it $K$-quasiconformal}, if
\begin{itemize}
\item
it is a homeomorphism;
\item
belongs to $W^{1,2}_{loc}(U)$;
\item
its distributional derivatives satisfy $|f_{\bar z}|\leq k|f_z|$ almost everywhere on $U$.
\end{itemize}
\end{definition}
\begin{remark}
Let us list a few quick observations:
\begin{enumerate}[*]
\item
The last condition is equivalent to $f$ solving the {\it Beltrami equation}
\begin{equation}
\label{cyrenaica}
\overline\pd f=\mu\cdot\pd f\,,
\end{equation}
for some $\mu\in L^\infty(\C)$ such that $\nor{\mu}_\infty\leq k$. 

\item
When $\mu=0$ (i.e. $K=1$) the Beltrami equation becomes the well-known Cauchy-Riemann system. 
Weyl's lemma (Theorem \ref{Weyl}) implies that any $1$-quasiconformal mapping is in fact conformal \cite[Corollary 4.1.7]{Hu}, \cite[p. 27]{AIM}.

\end{enumerate}
\end{remark}

\begin{definition}
A map $f$ is {\it quasiconformal} if it is $K$-quasiconformal for some $K\geq1$. 
The smallest $K$ with this property is called the {\it quasiconformal constant} for $f$ and is denoted by $K(f)$.
\end{definition}

It was shown by Gehring and Lehto \cite{GL} that quasiconformality of a homeomorphism is equivalent to its being of bounded distortion, in the sense of $H_f<\infty$ holding uniformly. 

\medskip
Suppose now $f$ is $K$-quasiconformal and also $f\in C^1(U)$. Then $Df(z_0)$ exists in the ordinary sense for all $z_0\in U$ and we have
$$
\big[Df(z_0)\big]u=\frac{\pd f}{\pd z}(z_0)\,u+\frac{\pd f}{\pd \bar z}(z_0)\,\bar u\,.
$$
Write $a=f_{z}(z_0)$ and $b=f_{\bar z}(z_0)$. Then the $K$-quasiconformality gives 
$|b|\leq k|a|<|a|$. 
Thus, by \eqref{jacobi} and Exercise \ref{DSCH-Oktyabr'}, $Df(z_0)$ is nondegenerate and $Jf=|a|^2-|b|^2>0$, meaning that $f$ preserves the orientation. Moreover, the eccentricity of $Df(z_0)$ is equal to
$$
\frac{|a|+|b|}{|a|-|b|}=\frac{1+|b|/|a|}{1-|b|/|a|}\leq\frac{1+k}{1-k}=K\,.
$$
Again by Exercise \ref{DSCH-Oktyabr'}, $\nor{Df}=|f_z|+|f_{\bar z}|$, therefore the above inequality could alternatively be expressed as
$$ 
\frac{\nor{Df}^2}{Jf}\leq K\,.
$$

\subsubsection{The first objective} 
We want to solve the Beltrami equation. A very prominent r\^ole in this expedition will be played by the Ahlfors-Beurling operator $T$.

\subsection{Cauchy transform}

We will often use the simple fact that 
\begin{equation}
\label{bereza stayala}
\frac1\zeta\in L^r_{loc}(\C)
\hskip 30pt
\text{for all }r<2\,.
\end{equation}

For $h\in C_c(\C)$ define its (planar) {\it Cauchy transform} by 
\begin{equation}
\label{sibir'}
(\cC h)(z):=-\frac1\pi\int_\C \frac{h(\zeta)}{\zeta-z}\,dA(\zeta)\,.
\end{equation}
By \eqref{bereza stayala} the above integral converges absolutely for $h\in C_c(\C)$. Indeed, 
$$
\int_\C \bigg|\frac{h(\zeta)}{\zeta-z}\bigg|\,dA(\zeta)\leq\nor{h}_\infty\int_{K}\frac{dA(\eta)}{|\eta|}<\infty\,.
$$
Here $K:=\supp h-z$ is a compact set.
Observe that the integral also converges absolutely under a weaker assumption of $h\in L^p$ having compact support.

\begin{exercise}
\label{mishku vypustili}
If $h\in C_c^1(\C)$ then $\cC h\in C^1(\C)$ and
$$
\aligned
\overline\pd(\cC h)&=\cC (\overline\pd h)\\
\pd(\cC h)&=\cC (\pd h)\,.
\endaligned
$$
In brief, on $C_c^1(\C)$ the operator $\cC$ commutes with both $\overline\pd$ and $\pd$.
\end{exercise}

This statement has an immediate corollary:

\begin{corollary}
\label{sviridov - svyatyj bozhe}
We have $\cC:C_c^\infty(\C)\rightarrow C^\infty(\C)$ and $\cC D=D\cC$ on $C_c^\infty(\C)$ for any linear partial differential operator $D$ on $C^\infty(\C)$ with constant coefficients.
\end{corollary}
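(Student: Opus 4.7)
The plan is to bootstrap from Exercise \ref{mishku vypustili}, which already gives the commutation with the two basic operators $\partial$ and $\bar\partial$ on $C_c^1$. First I would upgrade that statement from $C_c^1$ to $C_c^\infty$ and from first order to arbitrary order by induction. If $h\in C_c^\infty(\C)$, then $\partial h,\bar\partial h\in C_c^\infty(\C)$, so the exercise (applied to $\bar\partial h$ and to $\partial h$) yields $\cC h\in C^2(\C)$ together with
$$
\pd^2(\cC h)=\pd\cC(\pd h)=\cC(\pd^2 h),\qquad \bar\pd\pd(\cC h)=\cC(\bar\pd\pd h),\qquad \bar\pd^2(\cC h)=\cC(\bar\pd^2 h).
$$
Iterating: having shown that $\cC h\in C^k(\C)$ with $\partial^i\bar\partial^j(\cC h)=\cC(\partial^i\bar\partial^j h)$ for all $i+j\leq k$, the function $\partial^i\bar\partial^j h$ still lies in $C_c^\infty(\C)\subset C_c^1(\C)$, so Exercise \ref{mishku vypustili} applies once more and pushes the regularity and the commutation relation to order $k+1$. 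Hence $\cC h\in C^\infty(\C)$ and $\cC$ commutes with every monomial $\partial^i\bar\partial^j$ on $C_c^\infty(\C)$.

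Next I would rewrite an arbitrary constant-coefficient linear partial differential operator $D$ on $C^\infty(\C)$ in terms of the Wirtinger derivatives. The identities $\partial_x=\partial+\bar\partial$ and $\partial_y=i(\partial-\bar\partial)$ show that any polynomial in $\partial_x,\partial_y$ with constant coefficients can be rewritten as a polynomial $P(\partial,\bar\partial)=\sum_{i+j\leq N}c_{ij}\partial^i\bar\partial^j$ with constant complex coefficients. By the monomial-wise commutation just established, together with the linearity of $\cC$,
$$
\cC(Dh)=\sum_{i,j}c_{ij}\cC(\partial^i\bar\partial^j h)=\sum_{i,j}c_{ij}\partial^i\bar\partial^j(\cC h)=D(\cC h)
$$
for every $h\in C_c^\infty(\C)$, which is the claim.

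Since all the heavy lifting (differentiation under the integral sign for the mildly singular kernel $1/(\zeta-z)$) has already been done in Exercise \ref{mishku vypustili}, no further analytic obstacle arises; the only content of the corollary is the inductive upgrade and the observation that constant-coefficient PDOs are polynomials in $\partial,\bar\partial$. The main point to be careful about is ensuring that at every step of the induction the function to which the base case is applied still satisfies the hypotheses of Exercise \ref{mishku vypustili}, i.e.\ lies in $C_c^1(\C)$; this is automatic because differentiation preserves both smoothness and compact support.
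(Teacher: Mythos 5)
Your argument is correct and is exactly the bootstrapping the paper has in mind: the corollary is stated there as an ``immediate'' consequence of Exercise \ref{mishku vypustili}, and the content is precisely your inductive upgrade to higher order plus the observation that every constant-coefficient operator is a polynomial in $\pd$ and $\bar\pd$. Nothing further is needed.
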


Recall that $T$ was the Ahlfors-Beurling operator, defined in Section \ref{to ne veter vetku klonit}.

\begin{lemma}
\label{lyubo, bratcy, lyubo}
Suppose $h\in C_c^1(\C)$. Then $\cC h\in C^1$ and
$$
\aligned
(\cC h)_{\bar z}&=h\\
(\cC h)_{z}&=Th\,.
\endaligned
$$
\end{lemma}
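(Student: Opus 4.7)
The plan is to invoke Exercise \ref{mishku vypustili}, which for $h\in C_c^1(\C)$ already gives $\cC h\in C^1(\C)$ together with $\overline\pd(\cC h)=\cC(\overline\pd h)$ and $\pd(\cC h)=\cC(\pd h)$. The lemma thus reduces to verifying the two pointwise identities
$$
\cC(\overline\pd h)(z)=h(z)\qquad\text{and}\qquad \cC(\pd h)(z)=Th(z)
$$
for every $z\in\C$. Both will be handled by the same device: remove a small disk $B(z,\e)$ around the singularity, apply Stokes' theorem on a large annulus where the kernel $1/(\zeta-z)$ is smooth, and then let $\e\rightarrow 0^+$.

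For the first identity, fix $z\in\C$ and pick $R$ so large that $\supp h\subset B(0,R)\setminus B(z,\e)$. On this annulus $\zeta\mapsto 1/(\zeta-z)$ is holomorphic, so $\overline\pd h(\zeta)/(\zeta-z)=\overline\pd_\zeta[h(\zeta)/(\zeta-z)]$. Stokes' theorem converts the area integral into a contour integral: the outer boundary contributes nothing because $h$ vanishes on it, while parametrizing $\zeta=z+\e e^{i\theta}$ on $\pd B(z,\e)$ and using continuity of $h$ at $z$ produces a term tending to $2\pi ih(z)$. After tallying the prefactor $-1/\pi$ and the $1/(2i)$ coming from Stokes one obtains $h(z)$; this is essentially the Cauchy-Pompeiu formula.

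For the second identity the key algebraic input is $\pd_\zeta[(\zeta-z)^{-1}]=-(\zeta-z)^{-2}$, which on $\zeta\ne z$ yields the product-rule decomposition
$$
\frac{\pd h(\zeta)}{\zeta-z}=\pd_\zeta\!\left[\frac{h(\zeta)}{\zeta-z}\right]+\frac{h(\zeta)}{(\zeta-z)^2}.
$$
Integrate this over $\{\e<|\zeta-z|\}\cap B(0,R)$. The left-hand side converges, as $\e\rightarrow 0^+$, to $\int_\C \pd h(\zeta)/(\zeta-z)\,dA(\zeta)$ by dominated convergence, since $\pd h$ is compactly supported and $1/(\zeta-z)\in L^1_{loc}$ by \eqref{bereza stayala}. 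The second summand on the right, in the same limit, is precisely $-\pi Th(z)$ by the principal-value definition of $T$. For the first summand, Stokes again yields a boundary integral whose outer piece vanishes and whose inner piece equals a constant multiple of $\int_0^{2\pi}h(z+\e e^{i\theta})e^{-2i\theta}\,d\theta$; a first-order Taylor expansion of $h$ at $z$, combined with the crucial orthogonality $\int_0^{2\pi}e^{-2i\theta}\,d\theta=0$, shows this is $O(\e)$ and hence vanishes. Assembling the constants gives $\cC(\pd h)(z)=Th(z)$.

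The main obstacle is the second identity: neither of the two summands produced by the product rule is absolutely integrable near $\zeta=z$, so the $\e$-truncation must be carried through the entire calculation, and the argument hinges on the angular cancellation $\int_0^{2\pi}e^{-2i\theta}\,d\theta=0$ that kills the inner boundary term. The first identity is considerably easier because its kernel is already locally integrable.
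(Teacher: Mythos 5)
Your proposal is correct and follows essentially the same route as the paper's proof: reduce to the two identities via Exercise \ref{mishku vypustili}, truncate at radius $\e$, integrate by parts (Stokes) so that the outer boundary vanishes by compact support, recover $h(z)$ from the inner circle in the first case, and in the second case kill the inner boundary term using the angular cancellation $\int_0^{2\pi}e^{-2i\theta}\,d\theta=0$ together with the $C^1$ regularity of $h$, leaving the principal-value integral that defines $Th(z)$. The only cosmetic difference is that the paper carries out the integration by parts with the real partials $\pd_\alpha,\pd_\beta$ before recombining into $\pd_\zeta,\pd_{\bar\zeta}$, whereas you work with the complex derivatives directly.
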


\begin{proof}

Write $\zeta=\alpha+i\beta$ and $z=x+iy$. Take $h\in C_c^1(\C)$. In view of Exercise \ref{mishku vypustili} it suffices to evaluate integrals 
$$
\iint_\C\frac{\pd h}{\pd\bar\zeta}(\zeta)\,\frac1{\zeta-z}\,d\alpha\,d\beta
\hskip 20pt\text{ and }\hskip 20pt
\iint_\C\frac{\pd h}{\pd\zeta}(\zeta)\,\frac1{\zeta-z}\,d\alpha\,d\beta\,.
$$
We will first do it with $\pd_\alpha$ and $\pd_\beta$ in place of $\pd_{\bar\zeta}=(1/2)(\pd_\alpha+i\pd_\beta)$ and $\pd_\zeta=(1/2)(\pd_\alpha-i\pd_\beta)$. We start with
$$
\iint_\C\frac{\pd h}{\pd\alpha}(\zeta)\,\frac1{\zeta-z}\,d\alpha\,d\beta
=\lim_{\e\rightarrow 0}
\underbrace{\iint_{A_\e(z)}\frac{\pd h}{\pd\alpha}(\zeta)\,\frac1{\zeta-z}\,d\alpha\,d\beta}_{=:I_\e(z)}\,,
$$
where
$A_\e(z):=z+A(\e,1/\e)=\mn{\zeta\in\C}{\e<|\zeta-z|<1/\e}$.
\begin{figure}
\label{asdasd}
\begin{center}
\setlength{\unitlength}{1mm}
\begin{picture}(80,80)
  \put(40,40){{\color{SpringGreen}\circle*{80}}}
  \put(40,40){{\color{white}\circle*{30}}}
  \put(40,40){\circle{80}}
  \put(40,40){\circle{30}}
  \put(40,40){\line(1,2){17.9}}
  \put(40,40){\line(3,1){14.2}}
  \put(40,40){\circle*{1}}
  \put(38,37){$z$}
  \put(18,70){$A_\e(z)$}
  \put(54,64){$1/\e$}
  \put(49,45){$\e$}
\thicklines  \put(26.6,46.7){{\color{OrangeRed}\vector(2,-1){22.4}}}
  \put(26.6,46.7){\circle*{1}}
  \put(24,49){$\zeta$}
  \put(47,32.5){{\color{OrangeRed}$\nu$}}
 \end{picture}
\end{center}
\caption{}
{\protect{\label{fig0}}}
\end{figure}
We integrate by parts \cite[p.712, Theorem 2]{E}, using that for small $\e>0$ we have $h\big|_{S(z,1/\e)}\equiv 0$, since $h$ is of compact support. For $\zeta\in\pd A_\e(z)=S(z,\e)\cup S(z,1/\e)$ let $\nu=\nu(\zeta)=\nu^\alpha+i\nu^\beta$ denote the outer unit normal vector. 
We get
$$
I_\e(z)
=-\iint_{A_\e(z)}h(\zeta)\,\frac{\pd }{\pd\alpha}\Big(\frac1{\zeta-z}\Big)\,d\alpha\,d\beta
+\int_{S(z,\e)}h(\zeta)\cdot\frac1{\zeta-z}\cdot\nu^\alpha\,dS\,.
$$
An analogous formula holds for $\pd_\beta$. Thus for $\pd_{\bar\zeta}=(1/2)(\pd_\alpha+i\pd_\beta)$ we get, by taking into account that $\pd_{\bar\zeta}\big[1/(\zeta-z)\big]=0$,
$$
\iint_{A_\e(z)}\frac{\pd h}{\pd\bar\zeta}(\zeta)\,\frac1{\zeta-z}\,dA(\zeta)
=
\int_{S(z,\e)}h(\zeta)\cdot\frac1{\zeta-z}\cdot\frac{\nu^\alpha+i\nu^\beta}2\,dS\,.
$$
We see (e.g. by 
Figure \ref{fig0}) that on $S(z,\e)$ the outer unit normal $\nu=\nu^\alpha+i\nu^\beta$ can be evaluated as
$$
\nu=-\frac{\zeta-z}\e\,,
$$
therefore 
$$
-\frac1\pi\iint_{A_\e(z)}\frac{\pd h}{\pd\bar\zeta}(\zeta)\,\frac1{\zeta-z}\,dA(\zeta)
=\frac1{2\pi\e}
\int_{S(z,\e)}h\,dS=\av{h}_{S(z,\e)}\,.
$$
Thus the continuity of $h$ gives
$$
\lim_{\e\searrow 0}
-\frac1\pi\iint_{A_\e(z)}\frac{\pd h}{\pd\bar\zeta}(\zeta)\,\frac1{\zeta-z}\,dA(\zeta)
=h(z)\,,
$$
 i.e., for $h\in C_c^1(\C)$ we indeed have
 $$
\cC(\pd_{\bar z}h)=h\,.
 $$
 
\bigskip
Now let us address $\cC(\pd_{z}h)$. Similarly as before we get
$$
\aligned
\iint_{A_\e(z)}& \frac{\pd h}{\pd\zeta}(\zeta)\,\frac1{\zeta-z}\,dA(\zeta)\\
& =-\iint_{A_\e(z)}h(\zeta)\,\frac{\pd }{\pd\zeta}\Big(\frac1{\zeta-z}\Big)\,dA(\zeta)
+\int_{S(z,\e)}h(\zeta)\cdot\frac1{\zeta-z}\cdot\frac{\nu^\alpha-i\nu^\beta}2\,dS\,.
\endaligned
$$
Obviously 
$$
\frac{\pd }{\pd\zeta}\Big(\frac1{\zeta-z}\Big)=-\frac1{(\zeta-z)^2}
\hskip 20pt\text{and}\hskip 20pt
\nu^\alpha-i\nu^\beta=-\overline{\frac{\zeta-z}\e}\,,
$$
therefore
$$
\aligned
-\frac1\pi\iint_{A_\e(z)}&\frac{\pd h}{\pd\zeta}(\zeta)\,\frac1{\zeta-z}\,dA(\zeta)\\
&=-\frac1\pi\iint_{A_\e(z)}\frac{h(\zeta)}{(\zeta-z)^2}\,dA(\zeta)+
\underbrace{\frac1{2\pi\e}\int_{S(z,\e)}h(\zeta)\frac{\overline{\zeta-z}}{\zeta-z}\,dS(\zeta)}_{=:H_\e(z)}\,.
\endaligned
$$
We have
$$
H_\e(z)=\frac1{2\pi\e}\int_{S(0,\e)}h(\eta+z)\frac{\overline{\eta}}{\eta}\,dS(\eta)\,.
$$
Since
$$
\int_{S(0,\e)}\frac{\overline{\eta}}{\eta}\,dS(\eta)=\int_0^{2\pi}e^{-2it}\,dt=0\,,
$$
by introducing $g(\eta)=h(\eta+z)$ we obtain
$$
H_\e(z)=\frac1{2\pi\e}\int_{S(0,\e)}g(\eta)\frac{\overline{\eta}}{\eta}\,dS(\eta)
=\frac1{2\pi\e}\int_{S(0,\e)}\frac{g(\eta)-g(0)}\eta\cdot\bar\eta\,dS(\eta)\,.
$$
The fraction in the integrand above is uniformly bounded in a neighbourhood of 0, since $h\in C^1$. Therefore
$$
|H_\e(z)|\,\leqsim\,\av{|\bar\eta|}_{S(0,\e)}=\e\,.
$$
It follows that
\[
-\frac1\pi\iint_{\C}\frac{\pd h}{\pd\zeta}(\zeta)\,\frac1{\zeta-z}\,dA(\zeta)=-\frac1\pi\lim_{\e\rightarrow0}\iint_{A_\e(z)}\frac{h(\zeta)}{(\zeta-z)^2}\,dA(\zeta)=Th(z)\,.
\qedhere
\]
\end{proof}

Corollary \ref{sviridov - svyatyj bozhe} now implies its own analogue for $T$:

\begin{corollary}
\label{et in hora mortis nostrem}
We have $T:C_c^\infty(\C)\rightarrow C^\infty(\C)$ and $TD=DT$  on $C_c^\infty(\C)$ for any linear partial differential operator $D$ on $C^\infty(\C)$ with constant coefficients.
\end{corollary}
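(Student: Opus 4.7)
The plan is to bootstrap directly from Lemma \ref{lyubo, bratcy, lyubo} together with Corollary \ref{sviridov - svyatyj bozhe}. The former identifies $T$ as $\pd\circ\cC$ on $C_c^1(\C)$, so in particular $Th=\pd(\cC h)$ for every $h\in C_c^\infty(\C)$. Since $\cC$ sends $C_c^\infty(\C)$ into $C^\infty(\C)$ (Corollary \ref{sviridov - svyatyj bozhe}), and $\pd$ preserves $C^\infty(\C)$, this already gives the mapping property $T:C_c^\infty(\C)\to C^\infty(\C)$.

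For the commutation, I would take an arbitrary constant-coefficient linear partial differential operator $D$ and an arbitrary $h\in C_c^\infty(\C)$. Note first that $Dh\in C_c^\infty(\C)$, so $T(Dh)$ is defined and given by $\pd\,\cC(Dh)$. Now I apply Corollary \ref{sviridov - svyatyj bozhe} to move $D$ past $\cC$, and then use the obvious fact that any two constant-coefficient partial differential operators on $C^\infty(\C)$ commute (this is just equality of mixed partials, which holds on $\cC h\in C^\infty(\C)$). Schematically:
$$
T(Dh)\;=\;\pd\,\cC(Dh)\;=\;\pd\,D(\cC h)\;=\;D\,\pd(\cC h)\;=\;D(Th).
$$

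There is no substantial obstacle here; the statement is a direct corollary of Lemma \ref{lyubo, bratcy, lyubo} and Corollary \ref{sviridov - svyatyj bozhe}. The only point requiring a moment's care is that one needs to know $\cC h$ is smooth enough for $D$ to act on it classically and for the order of $D$ and $\pd$ to be interchangeable, but this is guaranteed because $\cC h\in C^\infty(\C)$.
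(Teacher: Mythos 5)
Your proposal is correct and follows essentially the same route the paper intends: the text derives the corollary directly from Lemma \ref{lyubo, bratcy, lyubo} (which gives $Th=\pd(\cC h)$) combined with Corollary \ref{sviridov - svyatyj bozhe} ($\cC$ maps $C_c^\infty$ into $C^\infty$ and commutes with constant-coefficient operators), exactly as in your chain $T(Dh)=\pd\,\cC(Dh)=\pd\,D(\cC h)=D\,\pd(\cC h)=D(Th)$. Your remark that $D$ and $\pd$ commute on $C^\infty$ functions by equality of mixed partials is the only additional ingredient, and it is handled correctly.
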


What follows is arguably the most important property of $T$ from the point of view of complex analysis.

\begin{exercise}
\label{hor sretenskogo monostirya}
Take $p>1$, For $h\in W^{1,p}(\C)$ we have
$$
T:\ \frac{\pd h}{\pd\bar z}\ \longmapsto\ \frac{\pd h}{\pd z}\,.
$$
Hint: first consider smooth $h$ and then approximate.
\end{exercise}

\begin{exercise}
\label{tetris}
The set $\Mn{\f_{\bar z}}{\f\in C_c^\infty(\C)}$ is dense in $L^2(\C)$.
\end{exercise}
\noindent
In view of these two exercises the isometry of $T$ on $L^2$ can be reformulated as 
Exercise \ref{left foot}, or vice versa.

\subsubsection{Cauchy transform on $L^p$}
We want to have the Cauchy operator available on $L^p$, $p>2$, 
without the extra assumption of compact support. But we may have integrability problems at infinity if in \eqref{sibir'} we only assume that $h\in L^p$. Therefore, following Ahlfors \cite[Chapter V.A]{A}, as a replacement for $\cC$ we introduce, for $p>2$ and $h\in L^p(\C)$, 
\begin{equation}
\label{posvjecenje proljeca}
(Ph)(z):=-\frac1\pi\int_\C h(\zeta)\left(\frac1{\zeta-z}-\frac1\zeta\right)\,dA(\zeta)\,.
\end{equation}
\begin{exercise}
\label{ruptura partialis}
The integral above converges absolutely when $h\in L^p$, i.e. $P$ is well defined.
\end{exercise}

Observe that for compactly supported $h\in L^p$ we have 
\begin{equation}
\label{grigorij aleksandrovich}
Ph(z)=\cC h(z)-\cC h(0)\,.
\end{equation}

\begin{lemma}
\label{rumyancev}
If $h\in L^p$ then $Ph$ is a continuous function. In addition, it satisfies the uniform H\"older condition with exponent $1-2/p$, i.e. 
$$
|Ph(z)-Ph(w)|\,\leqsim_p\,\nor{h}_p|z-w|^{1-2/p}
\hskip 30pt\forall\,z,w\in\C\,.
$$
\end{lemma}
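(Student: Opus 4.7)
The plan is to establish the H\"older estimate directly, from which continuity of $Ph$ on $\C$ follows automatically. The starting point is the identity
\[
Ph(z)-Ph(w)=-\frac{1}{\pi}\int_\C h(\zeta)\left(\frac{1}{\zeta-z}-\frac{1}{\zeta-w}\right)\wrt A(\zeta)=-\frac{z-w}{\pi}\int_\C\frac{h(\zeta)}{(\zeta-z)(\zeta-w)}\wrt A(\zeta),
\]
obtained by subtracting the two representations from \eqref{posvjecenje proljeca} (the two $-1/\zeta$ terms cancel). I would first justify this rearrangement by noting that, once I prove that the combined integrand is absolutely integrable, Fubini/linearity-of-the-integral applies; and indeed Exercise \ref{ruptura partialis} guarantees that each of $Ph(z)$ and $Ph(w)$ exists separately.

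Let $q=p/(p-1)$, so $q\in(1,2)$ since $p>2$. Applying H\"older's inequality with exponents $p,q$ gives
\[
|Ph(z)-Ph(w)|\leq\frac{|z-w|}{\pi}\,\nor{h}_p\left(\int_\C\frac{\wrt A(\zeta)}{|\zeta-z|^q|\zeta-w|^q}\right)^{1/q}.
\]
The core computation is then to evaluate the last integral. Setting $\delta:=|z-w|$ and changing variables $\zeta=z+\delta u$, with $e_0:=(z-w)/\delta\in\pd\Delta$, the integral becomes $\delta^{2-2q}\,I_q$, where
\[
I_q:=\int_\C\frac{\wrt A(u)}{|u|^q\,|u+e_0|^q}.
\]
I would then verify $I_q<\infty$: near $u=0$ and $u=-e_0$ we need $q<2$ for local integrability, while near infinity we need $2q>2$, i.e. $q>1$. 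Both hold because $p\in(2,\infty)$, so the constant $I_q$ depends only on $p$ (by rotation invariance $I_q$ doesn't even depend on $e_0$).

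Plugging back and simplifying the exponent, $1+(2-2q)/q=2/q-1=(p-2)/p=1-2/p$, which gives precisely
\[
|Ph(z)-Ph(w)|\leq\frac{I_q^{1/q}}{\pi}\nor{h}_p|z-w|^{1-2/p},
\]
the desired H\"older bound. Continuity of $Ph$ on $\C$ is an immediate consequence. The one place where I expect to pause is the verification that $I_q$ is finite and the careful bookkeeping of the exponent; everything else is just H\"older and scaling. No restrictions on $z,w$ are used beyond $z\neq w$, and the constant depends only on $p$, as required.
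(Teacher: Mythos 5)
Your proof is correct and follows essentially the same route as the paper: H\"older's inequality with exponents $p,q$ followed by a scaling change of variables reducing the kernel integral to the fixed constant $\int_\C|\eta|^{-q}|\eta-1|^{-q}\,dA(\eta)$, finite precisely because $q=p/(p-1)\in(1,2)$. The only (cosmetic) difference is that the paper first proves $|Ph(z)|\leqsim_p\nor{h}_p|z|^{1-2/p}$ using $Ph(0)=0$ and then deduces the general difference bound via the translation identity $P\widetilde h(z-w)=Ph(z)-Ph(w)$, whereas you estimate the difference directly.
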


\begin{proof}
By the H\"older's inequality we have, for $z\ne 0$,
$$
|Ph(z)|\leq\frac{|z|}\pi\,\nor{h}_p\Nor{\frac1{\zeta(\zeta-z)}}_q\,.
$$
By the change of variable $\zeta=z\eta$ we get, for $z\ne 0$,
$$
\aligned
\int_\C\frac{dA(\zeta)}{|\zeta(\zeta-z)|^q}
&=\int_\C\frac{J_{\zeta(\eta)}}{|z\eta(z\eta-z)|^q}\,dA(\eta)
=\int_\C\frac{|z|^2}{|z\eta(z\eta-z)|^q}\,dA(\eta)\\
&=|z|^{2-2q}\underbrace{\int_\C\frac{dA(\eta)}{|\eta(\eta-1)|^q}}_{c_p}\,,
\endaligned
$$
therefore 
\begin{equation}
\label{mirko rondovic}
|Ph(z)|\leq\frac{|z|}\pi\,\nor{h}_p\Big(c_p|z|^{2-2q}\Big)^{1/q}
=\underbrace{\frac1\pi\,\Nor{\frac1{\eta(\eta-1)}}_{L^q(d\eta)}}_{C_p}\nor{h}_p|z|^{1-2/p}.
\end{equation}

Fix arbitrary $w\in \C$. The function $\widetilde h(z):=h(z+w)$ of course again belongs to $L^p(\C)$. We have
$$
\aligned
P\widetilde h(z-w) 
& =-\frac1\pi\int_\C h(\zeta+w)\left(\frac1{\zeta+w-z}-\frac1\zeta\right)\,dA(\zeta)\\
& =-\frac1\pi\int_\C h(\zeta)\left(\frac1{\zeta-z}-\frac1{\zeta-w}\right)\,dA(\zeta)\\
&= Ph(z)-Ph(w)\,.
\endaligned
$$
Together with \eqref{mirko rondovic} this gives 
$$
|Ph(z)-Ph(w)|=\left|P\widetilde h(z-w)\right|\leq C_p\nor{h}_p|z-w|^{1-2/p}\,,
$$
as desired.
\end{proof}

This lemma has an important corollary, which can be perceived as a generalization of Lemma \ref{lyubo, bratcy, lyubo}:

\begin{exercise}
\label{preobrazhenskij}
For every $h\in L^p(\C)$, $p>2$, we have
$$
\aligned
(P h)_{\bar z}&=h\\
(P h)_{z}&=Th\,
\endaligned
$$
in the distributional sense. 

(Recall that this means that 
$$
\aligned
\int_\C Ph\cdot\phi_{\bar z} &=-\int_\C\phi\cdot h\\
\int_\C Ph\cdot\phi_{z} &=-\int_\C\phi\cdot Th
\endaligned
$$
for all $\phi\in C_c^\infty(\C)$.)
\end{exercise}

\subsection{Solving the Beltrami equation}

Exercise \ref{hor sretenskogo monostirya} suggests that $T$ might have an important r\^ole in solving the Beltrami equation \eqref{cyrenaica}. Indeed this is the case, as we explain in this section, following Ahlfors \cite[Chapter V.B]{A}. We will treat in detail the case of compactly supported $\mu$.

\begin{theorem}
\label{simon bolivar dudamel}
Suppose $\mu\in L^\infty(\C)$ is of compact support and $k:=\nor{\mu}_\infty<1$. Let $p>2$ be such that $k\nor{T}_p<1$. Then there exists a unique $f\in C(\C)\cap W^1(\C)$ that satisfies the conditions
\begin{equation}
\label{orvieto}
\aligned
\overline\pd f & =\mu\cdot\pd f\,;\\
f(0) & =0\,;\\
f_z-1&\in L^p(\C)\,.
\endaligned
\end{equation}

\end{theorem}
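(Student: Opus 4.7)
The plan is to search for solutions of the form $f(z)=z+Ph(z)$, where $P$ is the modified Cauchy transform from \eqref{posvjecenje proljeca} and $h\in L^p(\C)$ is to be determined. By Exercise \ref{preobrazhenskij}, such an $f$ automatically satisfies $f_{\bar z}=h$ and $f_z=1+Th$ distributionally; the normalization $f(0)=0$ holds since $Ph(0)=0$ by inspection of \eqref{posvjecenje proljeca}; and $f_z-1=Th\in L^p(\C)$ by \eqref{simeon stilit}. Plugging into the Beltrami equation reduces \eqref{orvieto} to the fixed-point identity
\[
(I-\mu T)h=\mu
\]
in $L^p(\C)$. Since $\mu$ is bounded and compactly supported, $\mu\in L^p(\C)$. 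The operator $\mu T$ has norm at most $k\nor{T}_p<1$ on $L^p$, so $I-\mu T$ is invertible via a Neumann series, yielding a unique $h\in L^p(\C)$ and therefore a solution $f$. Continuity of $f$ is immediate from Lemma \ref{rumyancev}, and $f\in W^1(\C)$ from Exercise \ref{preobrazhenskij}.

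For uniqueness, I would reverse the construction. Suppose $f$ satisfies \eqref{orvieto}, and set $h:=f_{\bar z}$. First I would show $h\in L^p(\C)$: since $\supp\mu$ is compact and $h=\mu f_z$, the function $h$ is supported in $\supp\mu$, where $|h|\leq k|f_z|\leq k(1+|f_z-1|)$ is in $L^p_{\rm loc}$, hence globally in $L^p$. Now $Ph$ is well-defined, and I would consider
\[
F(z):=f(z)-z-Ph(z).
\]
By Exercise \ref{preobrazhenskij}, $F_{\bar z}=f_{\bar z}-h=0$ in the distributional sense, so Weyl's lemma (Theorem \ref{Weyl}) makes $F$ entire. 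Its derivative $F'=F_z=(f_z-1)-Th$ lies in $L^p(\C)$, so Exercise \ref{useful} forces $F'\equiv 0$; then $F(0)=0$ gives $F\equiv 0$, i.e.\ $f(z)=z+Ph(z)$. Substituting back into the Beltrami equation returns the same fixed-point equation $(I-\mu T)h=\mu$, whose solution in $L^p$ is unique, so $f$ is uniquely determined.

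The only real obstacle is making sure the existence of such a $p$ is consistent with the hypotheses and that the uniqueness step is not circular. The first is resolved by recalling $\nor{T}_2=1$ from \eqref{infisa} combined with the continuity statement of Exercise \ref{filter}: since $k\nor{T}_2=k<1$, there is indeed $p>2$ with $k\nor{T}_p<1$, so the assumption of the theorem can be met. The second requires the global $L^p$-integrability of $h=f_{\bar z}$, which is exactly where the compactness of $\supp\mu$ is used; without this hypothesis the Neumann-series argument would have to be replaced by something more delicate, and the trick of writing $f(z)=z+Ph(z)$ would not close up cleanly.
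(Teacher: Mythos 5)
Your proposal is correct and follows essentially the same route as the paper: the ansatz $f=z+Ph$ reduces \eqref{orvieto} to the equation $(I-\mu T)h=\mu$ in $L^p$, solved by a Neumann series since $\nor{\mu T}_p\leq k\nor{T}_p<1$, with Exercise \ref{preobrazhenskij}, Lemma \ref{rumyancev}, Weyl's lemma and Exercise \ref{useful} supplying the structural and uniqueness steps, exactly as in the text. The only organizational difference is in uniqueness: you show $F=f-z-P(f_{\bar z})$ vanishes identically (using $F(0)=0$) and transfer uniqueness to the fixed-point equation, whereas the paper runs the contraction estimate $\nor{f_z-g_z}_p\leq k\nor{T}_p\nor{f_z-g_z}_p$ and applies Weyl's lemma a second time to $f-g$ and $\overline{f-g}$; both variants rest on the same ingredients.
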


\begin{remark}
Since $T$ is an isometry on $L^2$, its norm equals 1, of course. Therefore, by the continuity of $L^p$ norms (Exercise \ref{filter}), the condition $k\nor{T}_p<1$ is indeed fulfilled for $p$ sufficiently close to 2.

Since $\mu$ is compactly supported, the Beltrami equation and the Weyl lemma imply that $f$ is holomorphic outside a large disc.
\end{remark}

\begin{proof}
First let us prove uniquenness.

If $f$ satisfies \eqref{orvieto}, then 
$f_{\bar z} =\mu\cdot f_z=\mu(f_z-1)+\mu\in L^p(\C)$, so that $P(f_{\bar z})$ is well defined and hence so is
$F:=f-P(f_{\bar z})$. From Exercise \ref{preobrazhenskij} we conclude that 
$F_{\bar z}=f_{\bar z}-\big(P(f_{\bar z})\big)_{\bar z}=0$ distributionally. By Lemma \ref{rumyancev}, $F$ is continuous, therefore also $F\in L_{loc}^1(\C)$. Owing to the Weyl's lemma (Theorem \ref{Weyl}), under such conditions $F$ must be holomorphic. We have, in a weak sense,
\begin{equation}
\label{sergej sergejevich}
F_z-1=(f_z-1)-\big(P(f_{\bar z})\big)_z=(f_z-1)-T(f_{\bar z})\in L^p(\C)\,.
\end{equation}
Certainly, for smooth functions their weak derivative coincides with the strong one. Now Exercise \ref{useful} implies that $F_z-1\equiv 0$. 
Consequently \eqref{sergej sergejevich} yields
$$
f_z=T(f_{\bar z})+1=T(\mu f_z)+1\,.
$$

Suppose now $g$ is another solution of \eqref{orvieto}. Then $g_z=T(\mu g_{z})+1$ as before and thus
$$
f_z-g_z=T\big(\mu(f_{z}-g_{z})\big)\,,
$$
therefore $\nor{f_z-g_z}_p\leq k\nor{T}_p\nor{f_z-g_z}_p$. But $k\nor{T}_p<1$, hence $f_z-g_z=0$ p.p. $\C$. Beltrami's equation implies $f_{\bar z}-g_{\bar z}=0$ p.p. $\C$. By using the Weyl's lemma again, together with the fact that $\overline\f_{\bar z}=\overline{\f_z}$, we conclude that $f-g$ and $\overline{f-g}$ are both holomorphic functions. This is only possible if $f-g\equiv c$ for some $c\in\C$, whereupon the normalization gives $f\equiv g$.

\medskip
Let us now address the existence. First we will find a natural {\sl candidate} for the solution and then prove that this candidate is indeed the right one. 

{\sl Suppose} $f$ is a solution to \eqref{orvieto}. Note that $f\not\in W^{1,p}(\C)$, because $f_z\in 1+L^p$. Hence for $g:=f-z$ we have $g_z=f_z-1\in L^p$ and $g_{\bar z}=f_{\bar z}\in L^p$, by the Beltrami equation. In other words, $g\in W^{1,p}(\C)$. It follows from Exercise \ref{hor sretenskogo monostirya} that 
$g_{\bar z}=\mu(g_z+1)=\mu(Tg_{\bar z})+\mu$ and so $\mu=(I-\mu T)g_{\bar z}$. The assumed relation between $\mu$ and $p$ means that $\mu T$ is invertible on $L^p$, by the Neumann series, therefore $g_{\bar z}=(I-\mu T)^{-1}\mu$. Exercise \ref{preobrazhenskij} offered a solution to the (distributional) $\bar\pd$-equation with data from $L^p$, namely by means of the operator $P$. That is, $g=P\big((I-\mu T)^{-1}\mu\big)$. Therefore we have our candidate for a solution of \eqref{orvieto}:
\begin{equation}
\label{alcazar}
f=z+P\big((I-\mu T)^{-1}\mu\big)\,.
\end{equation}
It remains to verify that this function indeed has all the required properties. 

\begin{itemize}
\item
We saw before that $(I-\mu T)^{-1}$ is invertible on $L^p$, therefore $(I-\mu T)^{-1}\mu\in L^p$, for $\mu\in L^\infty_c\subset L^p$. By Lemma \ref{rumyancev}, $P\big((I-\mu T)^{-1}\mu\big)$ is a well-defined continuous function on $\C$, therefore so is our $f$. 

\item
Since $Ph(0)=0$ for any $h\in L^p$, we get $f(0)=0$.

\item
Exercise \ref{preobrazhenskij} gives, in a distributional sense, 
\begin{eqnarray}
\label{ferrara}
\aligned
f_{\bar z}&=(I-\mu T)^{-1}\mu\\
f_{z}&=1+T\big((I-\mu T)^{-1}\mu\big)\,.
\endaligned
\end{eqnarray}
Since $(I-\mu T)^{-1}\mu\in L^p$ and $T\in B(L^p)$, we conclude that $f_{\bar z},f_{z}-1\in L^p$. 

\item
Finally,
$$
\aligned
\mu f_z & =\mu+\mu T\big((I-\mu T)^{-1}\mu\big)=(I-\mu T)(I-\mu T)^{-1}\mu+\mu T\big((I-\mu T)^{-1}\mu\big)\\
 & =\big[(I-\mu T)+\mu T\big]\big((I-\mu T)^{-1}\mu\big) =(I-\mu T)^{-1}\mu\\
 & =f_{\bar z}\,. 
\endaligned
 $$
\end{itemize}
Thus all the properties of \eqref{orvieto} hold, as claimed.
\end{proof}

Function $f$ defined in \eqref{alcazar} is called the {\it normal solution} to the Beltrami equation \eqref{cyrenaica}.
It is known \cite[Theorem V.B.2]{A} that normal solutions are not only continuous, but in fact homeomorphisms.

\begin{remark}
\label{lecho}
The formula \eqref{alcazar} is essentially due to Bojarski \cite{Bo1}.

From 
\eqref{alcazar} and \eqref{ferrara} we see that $f=z+P(f_{\bar z})$. 

As our final -- yet very imporant -- remark in this section let us mention that in Theorem \ref{lyubo, bratcy, lyubo} one can remove that assumption about $\mu$ having compact support \cite[Theorem V.B.3]{A}.
\end{remark}

\subsection{The Iwaniec conjecture}
\label{Ravel piano G}

We also see from \eqref{ferrara} that $f-z\in W^{1,p}$, provided that $\nor{\mu T}_{p}<1$. Hence the Sobolev integrability of quasiconformal maps {\it self-improves}, meaning that the a priori assumption $f\in W^{1,2}_{loc}(\C)$ when coupled with the Beltrami equation improves to $f\in W^{1,p}_{loc}(\C)$ for some $p>2$. 
This fact is known as 
Bojarski's theorem, see \cite{Bo} or \cite[Theorem 5.4.2]{AIM}. 
For a long time a major question in the area was to determine the best (i.e. largest) $p$ that can be attained if one knows $\nor{\mu}_\infty=:k$. The question was solved by K. Astala \cite{Ast} who showed that the answer is $1+1/k-$. One immediately sees that the condition $\nor{\mu T}_{p}<1$ would be satisfied for all $p\in[2,1+1/k)$ if we knew that $\nor{T}_p=p-1$ for $p\geq 2$.
This is however a notoriously difficult problem, open at least since 1982 when it was formulated by T. Iwaniec \cite{I}:
\begin{conjecture}[Iwaniec]
\label{peter hammers}
For every $p\in (1,\infty)$,
\begin{equation}
\label{roll over, roll over}
\nor{T}_p=p^*-1\,.
\end{equation}
\end{conjecture}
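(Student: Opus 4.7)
The conjecture splits into a lower and an upper bound. For the lower bound $\nor{T}_p \geq p^*-1$, by the duality from Exercise \ref{lullaby} (together with the elementary observation $\nor{Sf}_p = \nor{Tf}_p$) it suffices to treat $p \geq 2$. I would test $T$ on a truncated power-type family such as $f_R(z) = |z|^{-2/p}(\bar z/z)\chi_{\{1 \leq |z| \leq R\}}$, or a smoothed variant lying in $L^p$. Using the Fourier multiplier symbol \eqref{infisa} together with the rotational equivariance of Exercise \ref{tommy lee jones robert de niro}, $Tf_R$ has an explicit radial-times-phase form whose $L^p$ norm can be compared to that of $f_R$. Sending $R \to \infty$ gives $\nor{Tf_R}_p/\nor{f_R}_p \to p-1 = p^*-1$, which is the desired lower bound.

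For the much harder upper bound, I would attempt the Bellman function method. The goal is to find a function $B \colon \C \times \C \to \R$ of two complex variables $(\zeta,\eta)$, intended to be evaluated at $(f(z), Tf(z))$, satisfying
\begin{itemize}
\item a size bound $|B(\zeta,\eta)| \leqsim |\zeta|^p + |\eta|^p$;
\item a domination $B(\zeta,\eta) \geq |\eta|^p - (p^*-1)^p |\zeta|^p$;
\item a concavity or Hessian property compatible with the symbol $\bar\xi/\xi$, making $t \mapsto \int_\C B(e^{t\Delta}f, T e^{t\Delta}f)$ monotone along the heat semigroup.
\end{itemize}
Given such $B$, letting $t \to \infty$ (when both $e^{t\Delta}f$ and $T e^{t\Delta}f$ decay in $L^p$) and comparing with $t = 0$ yields $\int |Tf|^p \leq (p^*-1)^p \int |f|^p$, i.e. $\nor{T}_p \leq p^*-1$.

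The \emph{main obstacle} is the construction of $B$ with the sharp constant. Burkholder's function
\[
B_0(\zeta,\eta) = \bigl(|\eta| - (p^*-1)|\zeta|\bigr)\bigl(|\zeta|+|\eta|\bigr)^{p-1}
\]
is the exact extremal for martingale transforms under the norm restriction $|d_n| \leq |e_n|$, and it naturally suggests itself as a model. However, every known Bellman-type or stochastic argument (Ba\~nuelos--M\'endez-Hern\'andez, Nazarov--Volberg, and the heat-flow refinements mentioned in the Prologue) proves only $\nor{T}_p \leq 2(p^*-1)$. The factor $2$ arises because such arguments decompose $T = (R_2^2 - R_1^2) + 2i R_1 R_2$ and estimate the real and imaginary parts separately, each contributing $p^*-1$. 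To reach the sharp constant one would need a Bellman function genuinely adapted to the \emph{complex} multiplier structure $T = R^2$, invariant under the joint rotation $(\zeta,\eta) \mapsto (e^{i\theta}\zeta, e^{2i\theta}\eta)$ inherited from Exercise \ref{tommy lee jones robert de niro}, rather than a superposition of two real estimates. Finding such a function, or equivalently representing $T$ as a single (not a linear combination of two) martingale transform, is exactly the point at which every published attack has stalled, and is the reason the conjecture has remained open since 1982.
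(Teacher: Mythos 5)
The statement you are asked about is not a theorem of the paper but an open conjecture: the text states explicitly that \eqref{roll over, roll over} has been open since 1982, that only the lower bound $\nor{T}_p\geq p^*-1$ is known (Lehto; see also Proposition \ref{hm} with $n=1$, where $f_{1,\alpha}(z)=z|z|^{-2\alpha}g_1(|z|)$ and $\alpha\to1/p$ gives the ratio $p-1$), and that the best known upper bounds are $\sqrt{2p(p-1)}$ and, asymptotically, $1{\cdot}3922\,(p^*-1)$. Your proposal does not close this gap, and to your credit you say so: the entire upper-bound section is a description of a strategy and of the reason it fails, not an argument. Concretely, the three bullet points you list for the Bellman function $B$ are requirements, and no function satisfying all of them simultaneously with the sharp constant is exhibited; the paragraph that follows is an explanation of why the known candidates (Burkholder's $B_0$, the Nazarov--Volberg heat-flow construction) only yield $2(p^*-1)$ or the improvements cited above. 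A proof must produce the object whose absence you correctly identify as the obstruction.

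The lower-bound half of your proposal is essentially correct and matches the standard route. Your test functions $f_R(z)=|z|^{-2/p}(\bar z/z)\chi_{\{1\leq|z|\leq R\}}$ are a truncation-at-infinity variant of the paper's $f_{1,\alpha}$, which instead regularizes by taking the exponent $\alpha$ slightly below the critical value $1/p$; both exploit the identity $T(\pd_{\bar z}h)=\pd_z h$ of Exercise \ref{hor sretenskogo monostirya} (or the explicit multiplier \eqref{infisa}) and a limiting argument, and both give $\nor{T}_p\geq p-1=p^*-1$ for $p\geq2$, with duality (Exercise \ref{lullaby}) handling $1<p<2$. One small caution if you pursue your version: you should verify that your $f_R$ is actually of the form $\pd_{\bar z}h$ for an admissible $h$, or else compute $Tf_R$ directly from the multiplier, since the sharp truncation $\chi_{\{1\leq|z|\leq R\}}$ is not smooth and the pointwise formula for $Tf_R$ acquires boundary contributions that must be shown to be negligible as $R\to\infty$. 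In summary: the lower bound is fine, the upper bound is the conjecture itself, and no proof of it exists in the paper or in your proposal.
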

It is not very difficult to prove that $\nor{T}_p\geq p^*-1$, see \cite{L} or \cite{BM-S}.
As to the upper estimates, following a series of results by different authors, the best to-date result, due to Ba\~nuelos and Janakiraman, is $\sqrt{2p(p-1)}$ for $p\geq 2$, see \cite{BJ} and the references there. In the asymptotical sense, the strongest estimate known today is due to 
Borichev, Janakiraman, Volberg \cite{BJV} who obtained $\nor{T}_p\leq 1^\cdot 3922(p^*-1)$ for $p^*\rightarrow\infty$. 

\medskip
Apart from immediately implying the optimal Sobolev integrability of quasiconformal maps, the confirmation of the Iwaniec conjecture would have many other deep consequences. See e.g. \cite{BM-S} for a related discussion. In particular, the validity of \eqref{roll over, roll over} is closely related to the calculus of variations and the Morrey's problem \cite{BM-S, AIPS}.

Another implication that \eqref{roll over, roll over} would bring regards the issue we are well familiar with by now -- distortion of area under the action of quasiconformal mappings.

\subsubsection{Area distortion by quasiconformal mappings}

The proof of Theorem \ref{simon bolivar dudamel} gives the following important consequence
\cite[Theorem I.7.2]{CG}.

\begin{theorem}
\label{drine vode hladne}
Take $R>0$ and $\mu\in L^\infty$ so that $\supp\mu\subset R\Delta$. Write $k:=\nor{\mu}_\infty<1$. 
Suppose $f$ is the normal solution to the Beltrami equation \eqref{orvieto}. If $p>2$ is such that $k\nor{T}_p<1$, then for every 
$|E|\subset R\Delta$ we have
$$
|f(E)|\,\leqsim_{R,k,p}\,|E|^{1-2/p}\,.
$$
\end{theorem}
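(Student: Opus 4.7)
The plan is to bound $|f(E)|$ via a change-of-variables formula, then use the Beltrami equation to replace $J_f$ by $|f_z|^2$, and finally use the fact that $f_z - 1 \in L^p$ together with Hölder's inequality.

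First, I would invoke the area formula. Since $f$ is a homeomorphism (as noted just before the theorem, this is part of the content of \cite[Theorem V.B.2]{A}) and $f \in W^{1,p}_{loc}(\C)$ for $p>2$ (by \eqref{ferrara}, which gives $f_z - 1, f_{\bar z}\in L^p$), the standard change of variables for Sobolev homeomorphisms applies to give
\[
|f(E)| = \int_E J_f\,dA.
\]
By the Beltrami equation $f_{\bar z} = \mu f_z$ combined with \eqref{jacobi},
\[
J_f = |f_z|^2 - |f_{\bar z}|^2 = (1-|\mu|^2)|f_z|^2 \leq |f_z|^2,
\]
so $|f(E)| \leq \int_E |f_z|^2\,dA$.

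Next I would apply Hölder's inequality with exponents $p/2$ and $p/(p-2)$ on $E$:
\[
\int_E |f_z|^2\,dA \leq \Bigl(\int_E |f_z|^p\,dA\Bigr)^{2/p}|E|^{1-2/p}.
\]
So it remains to show $\|f_z\|_{L^p(R\Delta)} \leqsim_{R,k,p} 1$. Using \eqref{ferrara} one has $f_z = 1 + Th$ with $h := (I-\mu T)^{-1}\mu$. Since $k\|T\|_p<1$, the Neumann series gives $\|(I-\mu T)^{-1}\|_{B(L^p)} \leq (1-k\|T\|_p)^{-1}$, and since $\mu$ is supported in $R\Delta$ with $\|\mu\|_\infty\leq k$, we have $\|\mu\|_p \leq k(\pi R^2)^{1/p}$. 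Therefore
\[
\|Th\|_p \leq \|T\|_p\cdot \frac{1}{1-k\|T\|_p}\cdot k(\pi R^2)^{1/p} =: C_{R,k,p}.
\]
Since $E \subset R\Delta$ has finite measure, the constant function $1$ contributes $\|1\|_{L^p(R\Delta)} = (\pi R^2)^{1/p}$, giving
\[
\Bigl(\int_E |f_z|^p\Bigr)^{1/p} \leq (\pi R^2)^{1/p} + C_{R,k,p} \leqsim_{R,k,p} 1.
\]
Combining everything yields $|f(E)| \leqsim_{R,k,p} |E|^{1-2/p}$.

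The main obstacle is justifying the identity $|f(E)| = \int_E J_f$ (or at least the inequality $\leq$), which requires the Lusin (N) property for the Sobolev homeomorphism $f$. This holds in the present setting because $f\in W^{1,p}_{loc}$ with $p>2$ is a quasiconformal homeomorphism, but if one wishes to avoid quoting this explicitly, one can bypass it by working with the approximate Jacobian and an exhaustion argument. The rest of the proof is just Hölder plus the Neumann series bound already built into the construction of the normal solution.
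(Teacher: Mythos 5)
Your proposal is correct and follows essentially the same route as the paper: $|f(E)|=\int_E J_f\leq\int_E|f_z|^2$ via the Beltrami equation, then H\"older with exponents $p/2$ and $p/(p-2)$, then the Neumann-series bound on $(I-\mu T)^{-1}\mu$ from \eqref{ferrara}; the only cosmetic difference is that you apply H\"older to $|f_z|^2$ as a whole and split $f_z=1+Th$ afterwards, while the paper splits first and handles the resulting $|E|$ term via $|E|\leq R^{2/p}|E|^{1-2/p}$. Your remark about the Lusin (N) property needed to justify $|f(E)|=\int_E J_f$ is a point the paper passes over silently, so it is a welcome addition rather than a deviation.
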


\begin{proof}
We start by recalling \eqref{jacobi} and calculating
\begin{equation}
\label{bronenosec}
|f(E)|=\int_{f(E)}1=\int_E J_f\leq\int_E|f_z|^2\,.
\end{equation}
By using \eqref{ferrara} and writing $\f:=T\big((I-\mu T)^{-1}\mu\big)$, so that $f_z=1+\f$, we obtain
\begin{equation}
\label{knjaz}
\aligned
\int_E|f_z|^2 & =\nor{f_z}_{L^2(E)}^2\\
& \,\leqsim\, \nor{1}^2_{L^2(E)} + \nor{\f}^2_{L^2(E)}\\
& = |E|+\Nor{\f^2}_{L^1(E)}\,.
\endaligned
\end{equation}
The last term can be estimated by applying the H\"older's inequality to the pair $p/2$, $(p/2)'=p/(p-2)$ to get
$$
\Nor{\f^2}_{L^1(E)}\leq
\Nor{\f^2}_{L^{p/2}(E)}\nor{1}_{L^{(p/2)'}(E)}
=\Nor{\f}_{L^{p}(E)}^2|E|^{1-2/p}\,.
$$
But
$$
\Nor{\f}_{L^{p}(E)}\leq\Nor{\f}_p\leq\nor{T}_p\cdot\frac{1}{1-k\nor{T}_p}\cdot\nor{\mu}_p
\leq\frac{k\nor{T}_p}{1-k\nor{T}_p}\cdot(\pi R^2)^{1/p}<\infty\,,
$$
therefore
\begin{equation}
\label{potjomkin}
\Nor{\f^2}_{L^1(E)}\,\leqsim_{R,k,p}\,|E|^{1-2/p}\,.
\end{equation}

On the other hand, $|E|/R\leq 1$ implies 
$|E|/R\leq (|E|/R)^{1-2/p}$, 
i.e. 
\begin{equation}
\label{tavricheskij}
|E|\leq R^{2/p}|E|^{1-2/p}\,.
\end{equation}
Finally merging \eqref{bronenosec}, \eqref{knjaz}, \eqref{potjomkin} and \eqref{tavricheskij} finishes the proof.
\end{proof}

It was conjectured by Gehring and Reich \cite{GR}, see also \cite{BM-S}, that the best integrability exponent in the theorem above should be $1/K$, where $K$ is inferred from $k$ through \eqref{B-A-B-G}. This exponent is attained in the case of the radial stretch function $z\mapsto z|z|^{1/K-1}$. 
The Gehring-Reich conjecture 
has attracted a lot of attention as the central problem in the area of planar quasiconformal mappings. It was eventually solved by Astala \cite{Ast}. Er\"emenko and Hamilton \cite{EH} gave a simplified proof of Astala's theorem.

One sees that the extremal case $1-2/p=1/K$ holds precisely when 
$$
\frac1k=\frac{K+1}{K-1}=\frac{1+1/K}{1-1/K}=\frac{2-2/p}{2/p}=p-1\,.
$$
In view of Theorem \ref{drine vode hladne} 
this small observation shows that one would immediately recover Astala's theorem if -- again -- one knew 
\eqref{roll over, roll over}.
At this point one may guess that the Gehring-Reich conjecture
is essentially equivalent to the optimal Sobolev integrability of solutions to the Beltrami equation, see \cite[Chapter 13]{AIM}.

\subsection{Weak quasiregularity vs. quasiregularity: weighted estimates of $T$}
\label{liberation sarajevo}

In Section \ref{Ravel piano G} we mentioned the question of the optimal Sobolev integrability that an a priori $W^{1,2}_{loc}$ function is pushed to by satisfying the Beltrami equation. One can pose a ``dual" version of this question:\\

\noindent
What is the smallest $p_-\in(1,2)$ such that $f\in W^{1,p_-}_{loc}$ which satisfies the Beltrami equation with $\nor{\mu}=k$, automatically belongs to $W^{1,2}_{loc}$ (and therefore, by Astala's theorem, to $W_{loc}^{1,1+1/k-}$)?\\

The aim of this section is to 
answer this question. We will show that it reduces to proving sharp estimates of the Ahlfors-Beurling operator on weighted $L^p$ spaces with weights from the Muckenhoupt class, as specified in the following fundamental theorem. It was first proven by Petermichl and Volberg \cite{PV}. In this note we will give a proof as in \cite{DV1}. We will devote Section \ref{Evtushenko} to this purpose.

\begin{theorem}[\cite{PV,DV1}]
\label{novljan}
For any $p>1$ and any $w\in A_p$ we have
$$
\nor{T}_{\cB(L^p(w))}\,\leqsim_p\, [w]_{A_p}^{p^*/p}.
$$
\end{theorem}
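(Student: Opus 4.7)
The proof splits naturally into a soft reduction and a hard core estimate. The soft reduction is an appeal to the sharp Rubio-de-Francia extrapolation theorem (Theorem \ref{extrapol}) with $\tau=1$: once we prove the linear $A_2$ bound
$$
\nor{T}_{\cB(L^2(w))}\,\leqsim\,[w]_{A_2}
$$
for all $w\in A_2$, the theorem immediately upgrades this to the desired $[w]_{A_p}^{p^*/p}$ estimate for every $p\in(1,\infty)$ and every $w\in A_p$. Hence the whole burden lies in the case $p=2$.

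For the $L^2(w)$ estimate I follow the averaging strategy of \cite{DV1}, which proceeds in two stages. First, one realizes $T$, up to a universal constant, as an average of simpler dyadic-type operators. Exploiting the translation, dilation and rotation invariances of the Ahlfors-Beurling kernel (cf.\ the families in \eqref{prokofjev} together with Exercise \ref{tommy lee jones robert de niro}), one writes $T$ as an integral of the form
$$
T\,=\,c\int \tau_t\,\delta_a\,U_\psi\,\kS\,U_{-\psi}\,\delta_{1/a}\,\tau_{-t}\,d\mu(t,a,\psi)
$$
against a suitable measure on the symmetry group, where $\kS$ is a dyadic martingale-transform (or Haar-shift) model adapted to a fixed reference grid. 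Provided the weighted bound for $\kS$ is uniform across all translates, dilates and rotates of the grid, Minkowski's integral inequality transfers the uniform dyadic bound to $T$ with no loss in the $[w]_{A_2}$ dependence.

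The second stage is the linear bound $\nor{\kS}_{\cB(L^2(w))}\leqsim [w]_{A_2}$ for the dyadic model, which I would obtain via a Bellman function argument. The plan is to construct a function $B(X_1,X_2,X_3,X_4,X_5)$ on an appropriate convex domain, whose arguments specialize on a dyadic cube $Q$ to
$$
\avg{f}_Q,\ \avg{g}_Q,\ \avg{|f|^2 w}_Q,\ \avg{|g|^2 w^{-1}}_Q,\ \avg{w}_Q\avg{w^{-1}}_Q,
$$
with two key features: a pointwise size control $|B|\leqsim [w]_{A_2}\sqrt{X_3 X_4}$ on the domain, and a discrete concavity inequality -- comparing $B(Q)$ to the half-sum of its values on the children of $Q$ -- that majorises the bilinear Haar-coefficient contribution of $\kS$ on $Q$. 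Telescoping this inequality along the dyadic tree and summing over cubes then yields the bilinear estimate $|\sk{\kS f}{g}|\leqsim [w]_{A_2}\nor{f}_{L^2(w)}\nor{g}_{L^2(w^{-1})}$, which is equivalent to the claimed weighted $L^2$ bound for $\kS$.

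The main obstacle is the explicit construction of $B$ that simultaneously enjoys linear $[w]_{A_2}$ size control and the sharp concavity estimate on the full five-variable $A_2$-domain: anything superlinear in $[w]_{A_2}$ would ruin the $\tau=1$ input needed by extrapolation, and the concavity must be strong enough to absorb precisely the Haar coefficient pairing produced by $\kS$. Once $B$ is produced and the averaging identity for $T$ is set up, the appeal to Theorem \ref{extrapol} closes the proof mechanically.
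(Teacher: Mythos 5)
Your top-level architecture coincides with the paper's: the sharp extrapolation theorem (Theorem \ref{extrapol}) with $\tau=1$ reduces everything to the linear $A_2$ bound $\nor{T}_{\cB(L^2(w))}\leqsim [w]_{A_2}$, and that bound is obtained by representing $T$, up to a nonzero constant, as an average over translations, dilations and rotations of martingale transforms built from Haar functions of type $0$ on a reference grid, with Minkowski's integral inequality and the translation/dilation/rotation invariance of $[w]_{A_2}$ transferring the uniform dyadic bound to $T$. Three points that your sketch compresses but that carry real work in the paper: the rotation average must be taken against the complex modulation $e^{-2i\psi}$ (an average against a positive measure would produce a rotation-invariant kernel, not $(z-w)^{-2}$); the multiscale sum over grid sizes has to be shown to converge in $L^2(w)$, which the paper does via the Treil--Volberg Riesz basis property of the Haar system in $L^2(w)$ together with dominated convergence; and the averaging constant has to be checked to be nonzero by an explicit computation (Proposition \ref{prostranstvima gdje svi}).

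Where you genuinely diverge, and where the gap lies, is the dyadic core. You propose to prove the linear weighted bound for the martingale transform by exhibiting a five-variable Bellman function with size control $\leqsim [w]_{A_2}\sqrt{X_3X_4}$ and a concavity dominating the Haar pairing, and you explicitly leave its construction as ``the main obstacle''. That construction is the entire content of the estimate (it is Wittwer's theorem in the Nazarov--Treil--Volberg scheme), so as written the core inequality is not established: you should either cite Wittwer \cite{W} as a black box or actually produce $B$. The paper proves the same dyadic estimate by a more economical route (following Volberg's notes): expand against the disbalanced Haar functions $h_I^w$, $h_I^{w^{-1}}$, split the bilinear form into four sums, control the diagonal sum by the orthonormality of the weighted Haar system, and control the off-diagonal sums by the weighted dyadic maximal function together with the Carleson embedding theorem; the only Bellman function needed there is the explicit two-variable $b(x,y)=(xy)^\alpha$, used solely to verify that the sequence $\mu_I$ is Carleson with constant $\leqsim ([w]_{A_2})^{\alpha}$. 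Supplying that step, or replacing it by the citation, is what your argument still needs.
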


Recently there have been remarkable efforts by many mathematicians to extend this result to all Calder\'on-Zygmund operators. The final step towards the proof of that extension was done by T. Hyt\"onen \cite{Hy1}.

\medskip
Assuming Theorem \ref{novljan} until the end of this section, let us proceed to its consequences. It will be directly used in the proof of Theorem \ref{Formicidae} which will in turn imply the answer to the question posed above.

\medskip
Denote, for $k\in(0,1)$,  
$$
I_k:=(1+k,1+1/k)\,.
$$

\begin{lemma}
\label{pat i mat}
Let $\mu\in L^\infty(\C)$ be such that $k:=\nor{\mu}_\infty<1$. 
Suppose that for all $p\in I_k$ and $g\in L^p$ we have $\nor{(I-\mu T)g}_p\,\geqsim_{p,k}\,\nor{g}_p$. Then $I-\mu T$ is invertible on $L^p$ for all $p\in I_k$. 
\end{lemma}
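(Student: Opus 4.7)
The plan is to deduce invertibility from the assumed lower bound through a standard duality argument. Fix $p\in I_k$; the hypothesis $\nor{(I-\mu T)g}_p\geqsim_{p,k}\nor{g}_p$ instantly yields injectivity and closed range of $I-\mu T$ on $L^p$, so by Exercise~\ref{alban} it suffices to prove that $\cR(I-\mu T)$ is dense in $L^p$. By Exercise~\ref{berg} this is in turn equivalent to $\cN\bigl((I-\mu T)^*\bigr)=\{0\}$ on $L^q$, where $q=p/(p-1)$.

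The first step I would take is to identify the adjoint. Using Exercise~\ref{lullaby} and the pairing~\eqref{si mislyat},
$$
\sk{\mu Tg}{h}=\sk{Tg}{\bar\mu h}=\sk{g}{S(\bar\mu h)}
$$
for $g\in L^p$ and $h\in L^q$, so $(I-\mu T)^{*}=I-SM_{\bar\mu}$, where $M_{\bar\mu}$ denotes multiplication by $\bar\mu$. Next I would observe that $I_k$ is symmetric under the involution $p\mapsto p/(p-1)$: a direct check shows $p>1+k\Leftrightarrow q<1+1/k$ and $p<1+1/k\Leftrightarrow q>1+k$, so $q\in I_k$ and the hypothesized lower bound is automatically available at the conjugate exponent as well.

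The final step is to transform $I-SM_{\bar\mu}$ back into a form to which the hypothesis at exponent $q$ can be applied. Suppose $h\in L^q$ satisfies $h=S(\bar\mu h)$. Applying $T$ and using $TS=I$ (Exercise~\ref{lullaby}) gives $Th=\bar\mu h$. Taking complex conjugates and using $\overline{Tu}=S\bar u$, which is just the definition of $S$, I obtain $S\bar h=\mu\bar h$; applying $T$ once more yields $\bar h=T(\mu\bar h)$. Setting $g:=\mu\bar h\in L^q$, this becomes $g=\mu Tg$, i.e.\ $(I-\mu T)g=0$ in $L^q$. The hypothesis at exponent $q$ then forces $g=0$, whence $\bar h=Tg=0$ and so $h=0$.

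The only genuinely delicate ingredient is the symmetry of $I_k$ under the conjugate-exponent involution, which makes the hypothesis simultaneously available at $p$ and $q$; everything else is routine bookkeeping with Exercises~\ref{alban}, \ref{berg} and~\ref{lullaby}, together with the elementary identity $\overline{Tu}=S\bar u$.
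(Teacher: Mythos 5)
Your argument is correct and follows essentially the same route as the paper: reduce via Exercises \ref{alban} and \ref{berg} to the triviality of $\cN\bigl((I-\mu T)^*\bigr)$ on $L^q$, identify the adjoint as $I-S M_{\bar\mu}$, and use the conjugation identity linking $I-\bar\mu S$ to $I-\mu T$ together with the (correctly noted) symmetry of $I_k$ under $p\mapsto p/(p-1)$. The only cosmetic difference is that the paper packages the same algebra as the operator factorization $I-S\bar\mu=S(I-\bar\mu S)T$ and thereby obtains a full lower bound for the adjoint, whereas you chase an element of the kernel; both are sound.
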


\begin{proof}
By Exercises \ref{alban}, \ref{berg} it suffices to show that $(I-\mu T)^*$ is injective. Recall also Exercise \ref{lullaby}. Then we have on $L^q$, compare with \cite[p. 371]{AIM},
$$
(I-\mu T)^*=I-S\overline{\mu}=S(I-\overline\mu S)T\,.
$$
Clearly 
$
(I-\overline\mu S)h=\overline{(I-\mu T)\overline h}.
$
Consequently, we get from \eqref{simeon stilit} and the assumption on $I-\mu T$
\[
\nor{(I-\mu T)^*g}_q
\,\geqsim\,\nor{(I-\overline\mu S)Tg}_q
=\nor{(I-\mu T)\overline{Tg}}_q
\,\geqsim\,\nor{Tg}_q
\,\geqsim\,\nor{g}_q\,.
\qedhere
\]
\end{proof}

The next result is stated in \cite[p. 4]{V2}.

\begin{exercise}
\label{larynx}
Let $\mu\in L^\infty(\C)$ be such that $k:=\nor{\mu}_\infty\in(0,1)$. Take $p\in I_k$. 
If the Iwaniec conjecture \eqref{roll over, roll over} holds, then $I-\mu T$ is invertible on $L^p$ and 
$$
\Nor{(I-\mu T)^{-1}}_p\leq\frac{1/k}{d(p,I_k^c)}\,.
$$
\end{exercise}

Actually, the 
invertibility of $I-\mu T$ is in fact true, regardless of the validity of \eqref{roll over, roll over}. More precisely, we have this result \cite[Theorem 3]{AIS}, \cite[Theorem 1.4]{V2}:

\begin{theorem}
\label{avtopralnica}
Suppose $\nor{\mu}_\infty=:k<1$. Take $p\in I_k$. Then $I-T\mu$ and $I-\mu T$ are invertible in $L^p$.
\end{theorem}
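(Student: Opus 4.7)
Invertibility of $I-\mu T$ and $I-T\mu$ on $L^p$ are equivalent, via the intertwining identity
$$T(I-\mu T)=(I-T\mu)T$$
together with the fact that $T$ is an isomorphism of $L^p$ onto itself (Exercise \ref{lullaby}): conjugation by $T$ converts $I-\mu T$ into $I-T\mu$. Consequently it suffices to prove that $I-\mu T$ is invertible on $L^p$ for every $p\in I_k$. By Lemma \ref{pat i mat}, this in turn reduces to the coercive estimate
$$\nor{(I-\mu T)g}_p\,\geqsim_{p,k}\,\nor{g}_p \hskip 25pt \forall g\in L^p.$$

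The case $p=2$ is immediate: $T$ is an isometry on $L^2$, so $\nor{\mu Tg}_2\leq k\nor{g}_2$ and hence $\nor{(I-\mu T)g}_2\geq(1-k)\nor{g}_2$. The plan for $p\in I_k\setminus\{2\}$ is to transfer this $L^2$ coercive inequality to $L^p$ through weighted estimates. Given $g\in L^p$ and $f:=(I-\mu T)g$, one constructs, via a Rubio-de-Francia-type iteration of the maximal function acting on suitable powers of $g$ and $f$, a weight $w\in A_2$ for which the $L^p$ norms are controlled by the corresponding weighted $L^2$ norms (Rubio-de-Francia's duality-style comparison), and for which the $A_2$ characteristic $[w]_{A_2}$ admits a quantitative upper bound depending on $p$ and $k$. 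Applying Theorem \ref{novljan} in the form $\nor{T}_{\cB(L^2(w))}\leqsim[w]_{A_2}$ one obtains, for the specifically constructed $w$, that $k\nor{T}_{\cB(L^2(w))}<1$, and hence
$$\nor{g}_{L^2(w)}\leq\frac{1}{1-k\nor{T}_{\cB(L^2(w))}}\,\nor{f}_{L^2(w)}.$$
Reverting this to the original $L^p$ scale through the H\"older-type comparison afforded by the Rubio-de-Francia construction yields $\nor{g}_p\leqsim_{p,k}\nor{f}_p$, as required.

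The main obstacle is to calibrate the construction so that the admissible range is precisely $I_k=(1+k,1+1/k)$, rather than a strict sub-interval. One half of the range, $p\in[2,1+1/k)$, must come from the fact that at $p=1+1/k$ one has $k(p-1)=1$, the exact threshold at which the linear dependence $[w]_{A_2}$ in Theorem \ref{novljan} can no longer absorb the factor $k$ after passing from $L^p$ to $L^2(w)$; the other half $p\in(1+k,2]$ comes from the previous one by duality (Lemma \ref{pat i mat}) upon replacing $p$ by its conjugate $q$. It is the \emph{sharpness} of the exponent in Theorem \ref{novljan} --- rather than a mere qualitative weighted bound --- that forces this matching; any loss in the weighted estimate would narrow the range of invertibility and miss the optimal interval $I_k$.
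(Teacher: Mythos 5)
Your opening reductions are the same as the paper's: the identity $I-T\mu=T(I-\mu T)S$ (Exercise \ref{lullaby}) passes the problem to $I-\mu T$, and Lemma \ref{pat i mat} reduces invertibility to the coercive estimate $\nor{(I-\mu T)g}_p\geqsim_{p,k}\nor{g}_p$ for $p\in I_k$. The gap is in how you propose to prove that estimate. The actual proof (following \cite{AIS}) does not construct a weight from $g$ and $f=(I-\mu T)g$; it uses the $K$-quasiconformal homeomorphism $f$ solving the Beltrami equation $f_{\bar z}=\mu f_z$, writes $\f=\cC g=u\circ f$, and after the change of variables controls everything by the fixed weight $w=J_{f^{-1}}^{1-p/2}$. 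The whole point is Theorem \ref{vozvraschenie} (Astala--Iwaniec--Saksman, resting on Astala's area-distortion theorem): this Jacobian weight lies in $A_2$ precisely for $p\in[2,1+1/k)$, with $[w]_{A_2}\leqsim_K (1+1/k-p)^{-1}$, and the range $1<p<2$ is handled by the $A_p$ duality $w\in A_p\Leftrightarrow w^{1-q}\in A_q$ applied to the same family of Jacobian weights --- not by Lemma \ref{pat i mat}, which converts the coercive estimate (needed for \emph{all} $p\in I_k$) into invertibility. That is where the interval $I_k$ comes from; moreover, for Theorem \ref{avtopralnica} itself only the \emph{qualitative} boundedness of $T$ on $L^p(w)$ for $w\in A_p$ is used --- the sharp linear $A_2$ bound of Theorem \ref{novljan} enters only later, at the endpoint result (Theorem \ref{Formicidae}, via Remark \ref{plocnik}).

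Your substitute mechanism does not work as described. A Rubio-de-Francia iteration built from powers of $g$ and $f$ produces a weight adapted to those two functions, but it carries no information about $\mu$, so there is no reason its $A_2$ characteristic should be tied to $k$ or to the threshold $p<1+1/k$; you give no construction achieving the two-sided comparison of $L^p$ with $L^2(w)$ norms that you invoke. Worse, the step ``$[w]_{A_2}$ small enough that $k\nor{T}_{\cB(L^2(w))}<1$'' is infeasible: $[w]_{A_2}\geq 1$ always, Theorem \ref{novljan} only gives $\nor{T}_{\cB(L^2(w))}\leq C[w]_{A_2}$ with an unspecified absolute constant $C$, and for $k$ close to $1$ the requirement $\nor{T}_{\cB(L^2(w))}<1/k$ cannot be arranged this way for any nontrivial weight. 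Finally, your claim that the sharpness of the exponent in Theorem \ref{novljan} is what ``forces'' the interval $I_k$ misplaces the source of the threshold: the exactness of $I_k$ is a consequence of Astala's theorem through Theorem \ref{vozvraschenie}, and a soft weight-construction argument that recovered it from the $A_2$ theorem alone would in effect reprove Astala's area-distortion exponent, which your sketch does not do. So the proposal stops exactly where the real work begins.
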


Before proceeding to the proof of this theorem let us quote another beautiful result of Astala, Iwaniec and Saksman \cite[Theorem 12]{AIS} which is crucial for establishing connection between quasiconformal maps and the theory of $A_p$ weights. The formulation presented here is 
not the most general one, but instead one which just suffices for our purposes. For a more general statement 
see \cite[Theorem 13.4.2 and estimate (13.58)]{AIM}. 

\begin{theorem}
\label{vozvraschenie}
Choose $K\geq 1$ and let $k$ be as in \eqref{B-A-B-G}. Let $p\in[2,1+1/k)$. Suppose $f:\C\rightarrow\C$ is a $K$-quasiconformal map. Define $w:=J_f^{1-p/2}$. Then $w\in A_2$ and 
$$
[w]_{A_2}\,\leqsim_K\,\frac1{1+1/k-p}\,.
$$
\end{theorem}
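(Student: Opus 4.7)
We must bound, uniformly over squares $Q\subset\C$, the quantity
$$[w]_{A_2}=\sup_Q \frac{1}{|Q|^2}\bigg(\int_Q J_f^{-\alpha}\bigg)\bigg(\int_Q J_f^{\alpha}\bigg),\qquad\alpha:=p/2-1\in[0,1/(K-1)).$$
My strategy is to extract both factors from a single sharp reverse H\"older inequality for Jacobians of $K$-quasiconformal maps (a consequence of Astala's sharp area distortion theorem), applied to $f$ for one factor and, after a change of variables, to $f^{-1}$ (which is again $K$-qc) for the other. Note that the hypothesis $p<1+1/k$ is equivalent to $p/2<q_K:=K/(K-1)=(1+1/k)/2$, which exactly matches the sharp Astala integrability threshold $q_K$.

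\emph{Step 1 (sharp reverse H\"older from area distortion).} For a $K$-qc map $g$, a disk $B$ and a measurable $E\subset B$, Astala's area distortion gives $|g(E)|\leq C_K|g(B)|(|E|/|B|)^{1/K}$. Choosing $E:=\{J_g>t\}\cap B$ and using the trivial bound $t|E|\leq|g(E)|$, one extracts the weak-type estimate $|\{J_g>t\}\cap B|\,\leqsim_K\,|g(B)|^{q_K}|B|^{1-q_K}t^{-q_K}$. Integrating the distribution function, for every $q<q_K$,
$$\frac{1}{|B|}\int_B J_g^{q}\leq\frac{C_K}{q_K-q}\bigg(\frac{|g(B)|}{|B|}\bigg)^{q},$$
and the blow-up $1/(q_K-q)$ of the constant is sharp.

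\emph{Step 2 (applying to both factors).} Enclose $Q$ in a disk $B\supset Q$ with $|B|\sim|Q|$. For $g=f$ and $q=\alpha$ the exponent is strictly less than $q_K-1$, so the constant $C_K/(q_K-\alpha)$ depends only on $K$ and
$$\int_Q J_f^{\alpha}\,\leqsim_K\,|Q|^{1-\alpha}|f(Q)|^{\alpha}.$$
For the second factor, the change of variables $y=f(x)$ with $dx=J_{f^{-1}}(y)\,dy$ and $J_f(x)^{-1}=J_{f^{-1}}(y)$ yields
$$\int_Q J_f^{-\alpha}=\int_{f(Q)} J_{f^{-1}}^{1+\alpha}=\int_{f(Q)} J_{f^{-1}}^{p/2}.$$
Now enclose $f(Q)$ in a disk $B'\supset f(Q)$ such that simultaneously $|B'|\,\leqsim_K\,|f(Q)|$ and $|f^{-1}(B')|\,\leqsim_K\,|Q|$; such a $B'$ is available from the uniform quasisymmetry of $K$-quasiconformal homeomorphisms of $\C$, which controls the eccentricity of images of disks. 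Apply Step 1 to $g=f^{-1}$ on $B'$ with the \emph{near-endpoint} exponent $q=p/2<q_K$ to produce the blow-up factor
$$\int_Q J_f^{-\alpha}\leq\frac{C_K}{q_K-p/2}\,|f(Q)|^{1-p/2}\,|Q|^{p/2}.$$

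\emph{Step 3 (bookkeeping and obstacle).} Multiplying the two bounds, the exponents on $|Q|$ sum to $(1-\alpha)+p/2=2$ and those on $|f(Q)|$ to $\alpha+(1-p/2)=0$, thanks to $\alpha=p/2-1$. Hence
$$\int_Q J_f^{\alpha}\cdot\int_Q J_f^{-\alpha}\leq\frac{C_K}{q_K-p/2}\,|Q|^2=\frac{2C_K}{1+1/k-p}\,|Q|^2,$$
so $[w]_{A_2}\,\leqsim_K\,1/(1+1/k-p)$, as claimed. I expect the main technical obstacle to be the geometric step in Step 2 supplying the disk $B'$ with simultaneous two-sided area control $|B'|\sim_K|f(Q)|$ and $|f^{-1}(B')|\sim_K|Q|$: one has to invoke the quasisymmetry of $K$-qc maps of $\C$ carefully so that all implicit constants depend only on $K$. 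The remainder of the proof is a tight bookkeeping of exponents, arranged so that the near-endpoint blow-up is incurred only once, precisely with the rate $1/(q_K-p/2)$ predicted by Astala's sharp theorem.
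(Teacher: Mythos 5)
Your argument is correct, but there is nothing in the paper to compare it with: Theorem \ref{vozvraschenie} is quoted from Astala--Iwaniec--Saksman \cite[Theorem 12]{AIS} (see also \cite[Theorem 13.4.2]{AIM}) and no proof is given in the text. What you have written is essentially a reconstruction of the standard proof from those sources: Astala's area distortion yields the weak reverse H\"older inequality for Jacobians of $K$-quasiconformal maps with threshold $q_K=K/(K-1)=(1+1/k)/2$, and the two averages in the $A_2$ condition are handled by applying it once to $f$ at the harmless exponent $\alpha=p/2-1<q_K-1$ and once to $f^{-1}$ at the near-endpoint exponent $p/2<q_K$, the two being glued by the change of variables $J_{f^{-1}}\circ f=1/J_f$; this correctly isolates the blow-up $1/(q_K-p/2)=2/(1+1/k-p)$ in a single place, and your exponent bookkeeping checks out. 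Two points to tighten. First, the replacement of $|f(B)|$ by $|f(Q)|$ is needed in the \emph{first} factor as well (Step 1 produces $|f(B)|^{\alpha}$, not $|f(Q)|^{\alpha}$), so the quasisymmetry input you flag in Step 2 is in fact used twice; for entire $K$-quasiconformal maps the two-sided control of images of concentric disks with constants depending only on $K$ is standard, and it also supplies the disk $B'$ with $|f^{-1}(B')|\leqsim_K|Q|$ (note that for the other exponent the trivial inclusion $B'\supset f(Q)$ already gives $|B'|^{1-p/2}\leq|f(Q)|^{1-p/2}$ since $1-p/2\leq 0$). Second, the distribution-function integration in Step 1 actually produces a constant of the form $1+Cq/(q_K-q)$; since $q_K-p/2\leq q_K-1$ is bounded in terms of $K$ alone, this is still $\leqsim_K 1/(q_K-p/2)$, so the stated conclusion follows.
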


\begin{proof}[{\bf Proof of Theorem \ref{avtopralnica}}]
We have $I-T\mu=T(I-\mu T)S$ and since $T,S$ are both invertible in any $L^p$, cf. Exercise \ref{lullaby}, it is enough to verify the statement for $I-\mu T$. Lemma \ref{pat i mat} reduces the theorem to proving
\begin{equation}
\label{boogie woogie}
\nor{(I-\mu T)g}_p\,\geqsim_{p,k}\,\nor{g}_p
\hskip 30pt \text{for all }p\in I_k \text{ and }g\in L^p\,.
\end{equation}
We follow \cite[proof of Lemma 14]{AIS}. By Exercise \ref{blgaria} we may take $g\in C_c^\infty(\C)$ with zero average, i.e. $\int_\C g=0$. 

Define $\f:=\cC g$, with $\cC$ being the planar Cauchy transform. Lemma \ref{lyubo, bratcy, lyubo} gives $\f_{\bar z}=g$. Let $h=(I-\mu T)g$. Therefore our goal is to prove
\begin{equation}
\nor{\f_{\bar z}}_p\,\leqsim_{k,p}\,\nor{h}_p\,.
\end{equation}
We know that there is a $K-$qasiconformal homeomorphism $f$ such that 
$f_{\bar z}-\mu f_{z}=0$, cf. Remark \ref{lecho}. Set $u=\f\circ f^{-1}$, that is, $\f=u\circ f$. The chain rule reads
\begin{equation}
\label{14:53}
\aligned
\f_{\bar z} & =(u_z\circ f)\cdot\pd_{\bar z}f+(u_{\bar z}\circ f)\cdot\pd_{\bar z}\bar f\\
\f_{ z} & =(u_z\circ f)\cdot\pd_{ z}f+(u_{\bar z}\circ f)\cdot\pd_{ z}\bar f\,.
\endaligned
\end{equation}
We will start with 
\begin{equation}
\label{arch stanton}
|\f_{\bar z}|^p \,\leqsim_p\,|u_z\circ f|^p|\pd_{\bar z}f|^p+|u_{\bar z}\circ f|^p|\pd_{\bar z}\bar f|^p\,.
\end{equation}
From \eqref{14:53} it follows, through Exercise \ref{hor sretenskogo monostirya}, that
$$
h=\f_{\bar z}-\mu\f_{z}=0+(u_{\bar z}\circ f)\cdot\left(\pd_{\bar z}\bar f-\mu\cdot\pd_{z}\bar f\right)\,.
$$
Since $\pd_{\bar z}\bar f=\overline{\pd_{z} f}$ and 
$\pd_{z}\bar f=\overline{\pd_{\bar z}f}
=\overline{\mu\cdot\pd_{z}f}=\overline\mu\cdot\overline{\pd_z f}$,
we obtain
$$
h=\left(1-|\mu|^2\right)\cdot (u_{\bar z}\circ f)\cdot\overline{\pd_z f}\,,
$$
which together with $J_f=|f_z|^2-|f_{\bar z}|^2\leq|f_z|^2$ implies
$$
|u_{\bar z}\circ f|^p\cdot J_f^{p/2-1}\cdot J_f
\leq 
|u_{\bar z}\circ f|^p\cdot|f_z|^p
= 
\frac{|h|^p}{\big(1-|\mu|^2\big)^p}\,
\leqsim_k\,
|h|^p\,.
$$
Integrate, change the variable into $z'=f(z)$ and use $J_f\circ f^{-1}=(J_{f^{-1}})^{-1}$ to get
\begin{equation}
\label{ame agaru}
\int|u_{\bar z}|^p\cdot J_{f^{-1}}^{1-p/2}\,\leqsim_k\,\int|h|^p\,.
\end{equation}

\medskip
On the other hand, the definition of $f$ gives $|f_{\bar z}|\leq k|f_z|$, therefore 
$J_f\geq(1/k^2-1)|f_{\bar z}|^2$ and so
$$
|u_{z}\circ f|^p|\pd_{\bar z}f|^p
\leq\frac{k^2}{1-k^2}
|u_{z}\circ f|^p\cdot J_f^{p/2-1}\cdot J_f\,,
$$
therefore
$$
\int|u_{z}\circ f|^p|\pd_{\bar z}f|^p
\,\leqsim_k\,\int|u_{z}|^p\cdot J_{f^{-1}}^{1-{p/2}}
=\int|Tu_{\bar z}|^p\cdot J_{f^{-1}}^{1-{p/2}}.
$$
Write
$$
w=J_{f^{-1}}^{1-{p/2}}\,.
$$
Let us summarize our findings so far:
$$
\aligned
&\bullet
\int|u_{z}\circ f|^p|\pd_{\bar z}f|^p
\leq\frac{k^2}{1-k^2}
\int|Tu_{\bar z}|^pw\\
&\bullet
\int|u_{\bar z}|^pw
\leq
\int|u_{\bar z}\circ f|^p\cdot|f_z|^p
\leq\frac1{(1-k^2)^p}\int|h|^p
\,.
\endaligned
$$
In fact, the top chain of inequalities can be (up to a multiplicative constant) continued by the bottom one:

It is a well-known fact that every Calder\'on-Zygmund operator (such as $T$) is bounded on $L^r(w)$ for any $A_r$ weight $w$, e.g. \cite[Theorem 7.11]{Du}.
Since $f$ is $K$-quasiconformal, so is $f^{-1}$ \cite[Theorem 3.7.7]{AIM}. Theorem \ref{vozvraschenie} implies that $w\in A_2$.

If $p\geq2$ then also $w\in A_{p}$ 
\cite[Proposition 7.2]{Du}.
Now suppose $1<p<2$. Then $w\in A_p$ if and only if $w^{1-q}\in A_q$, where $q=p/(p-1)\in[2,1+1/k)$. Now 
$$
w^{1-q}=J_{f^{-1}}^{(1-{p/2})(1-q)}=J_{f^{-1}}^{1-{q/2}}\,, 
$$
which by Theorem \ref{vozvraschenie} belongs to $A_2$ and thus also $A_q$.

This shows that, after integration, both of the summands in \eqref{arch stanton} are controlled by $\int|h|^p$, which finishes the proof.
\end{proof}

\begin{remark}
\label{plocnik}
The proof above shows that if
$$
\int|Tu_{\bar z}|^pw\,\leqsim\,F\big([w]_{A_2}\big)\int|u_{\bar z}|^pw
$$
for some increasing function $F$, then 
$$
\Nor{(I-\mu T)^{-1}}_{p\rightarrow p}\,\leqsim_K\,1+F\big([w]_{A_2}\big)\,,
$$
where, as before, $w=J_{f^{-1}}^{1-{p/2}}$ and $f$ is $K$-quasiconformal.
\end{remark}

We are nearing the main objective of this section. The next result can be found in \cite[Theorem 14.4.1]{AIM}. 

\begin{theorem}
\label{Formicidae}
Suppose that $\mu\in L^\infty(\C)$ with $\nor{\mu}_\infty=:k<1$. Let $q:=1+k$. Then $I-\mu T$ and $I-T \mu$ are injective on $L^{1+k}(\C)$.
\end{theorem}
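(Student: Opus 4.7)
Since $T$ is invertible on $L^{1+k}(\C)$ (Exercise \ref{lullaby}), the factorization $I-T\mu = T\,(I-\mu T)\,T^{-1}$ reduces the theorem to the injectivity of $I-\mu T$ on $L^{1+k}$. So suppose $g\in L^{1+k}(\C)$ satisfies $g=\mu Tg$; the goal is $g\equiv 0$. The strategy is to transplant $g$ to the kernel of $\bar\pd$ via the same quasiconformal change of variables used in the proof of Theorem \ref{avtopralnica} -- now specialized to $h:=(I-\mu T)g\equiv 0$ -- and close with a Liouville-type rigidity argument (Exercise \ref{useful}).

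First construct a potential $\phi$ with $\phi_{\bar z}=g$ and $\phi_z=Tg$ in the distributional sense. Because $1+k<2$, neither $\mathcal C$ (which needs compact support) nor $P$ (which needs $p>2$) applies directly; I would therefore approximate by $g_n:=g\chi_{B(0,n)}$, set $\phi_n:=\mathcal C g_n$ (Lemma \ref{lyubo, bratcy, lyubo} then gives $(\phi_n)_{\bar z}=g_n$ and $(\phi_n)_z=Tg_n$), and pass to the limit on bounded sets modulo additive constants to obtain $\phi\in W^{1,1+k}_{\mathrm{loc}}(\C)$. The hypothesis $g=\mu Tg$ then reads $\phi_{\bar z}=\mu\phi_z$, so $\phi$ weakly solves the Beltrami equation with coefficient $\mu$. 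Let $F$ be a normal $K$-quasiconformal solution of the same Beltrami equation (Theorem \ref{simon bolivar dudamel} together with Remark \ref{lecho}), with $K=(1+k)/(1-k)$, and define $u:=\phi\circ F^{-1}$. The chain-rule computation \eqref{14:53} from the proof of Theorem \ref{avtopralnica}, specialized to $h=0$, simplifies to
\[
0 \;=\; h \;=\; (1-|\mu|^2)\,(u_{\bar z}\circ F)\,\overline{F_z}.
\]
Since $1-|\mu|^2\geq 1-k^2>0$ and $F_z\neq 0$ a.e., one obtains $u_{\bar z}=0$ a.e., and Weyl's lemma (Theorem \ref{Weyl}) upgrades $u$ to an entire holomorphic function.

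To conclude $u\equiv 0$, I must place $u$ into $L^p(\C)$ for some $p\geq 1$ and invoke Exercise \ref{useful}; once this is done, $\phi$ is itself weakly holomorphic, whence $g=\phi_{\bar z}=0$. The required global integrability of $u$ would be obtained by combining (i) a Hardy--Littlewood--Sobolev-type bound $\phi\in L^{2(1+k)/(1-k)}_{\mathrm{loc}}(\C)$ (available because $1+k<2$), together with appropriate decay at infinity coming from the cutoff construction above, and (ii) Astala's sharp area-distortion estimate for $F^{-1}$ (the qualitative content of Theorem \ref{drine vode hladne}), which transfers $L^p$-integrability from $\phi$ to $u=\phi\circ F^{-1}$ via change of variables. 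The principal obstacle is precisely this concluding endpoint integrability transfer: the exponent $1+k$ is exactly where both the Neumann-series bound of Exercise \ref{larynx} and the invertibility proof of Theorem \ref{avtopralnica} collapse, because under the Iwaniec bound one has $k\,\nor{T}_{1+k}=k(p^*-1)=1$ and the relevant Astala weight $J_{F^{-1}}^{1-p/2}$ sits at the boundary of $A_p$. What substitutes for Neumann convergence is the rigidity of the quasiconformal change of variables combined with Astala's sharp area distortion, which forces any entire function produced by the construction to lie in some $L^p(\C)$ and hence to vanish identically.
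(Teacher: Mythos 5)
Your reduction of $I-T\mu$ to $I-\mu T$ is fine, but the core of your argument -- the direct Stoilow-type factorization $\phi=u\circ F$ followed by a Liouville step -- has a genuine gap exactly where you flag it, and it is not one that can be patched along the lines you sketch. First, at the regularity level $\phi\in W^{1,1+k}_{loc}$ the chain-rule identity \eqref{14:53} for $u=\phi\circ F^{-1}$ is itself not available: justifying that $u$ is weakly differentiable and that the computation giving $u_{\bar z}=0$ is legitimate requires precisely the self-improvement of Sobolev regularity under the Beltrami equation at the critical exponent, which is the theorem that the paper \emph{deduces from} Theorem \ref{Formicidae} (the result immediately following it), so your route is circular in spirit. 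Second, even granting that $u$ is entire, nothing in the construction places $u$ in a global $L^p(\C)$: the potential $\phi$ built from $g\in L^{1+k}$ by truncation is controlled only locally (modulo diverging additive constants), and Astala's area distortion transfers \emph{local} integrability through $F^{-1}$, never decay at infinity. An entire function that is merely $L^p_{loc}$ is arbitrary, so Exercise \ref{useful} does not apply and the argument does not close. The exponent $q=1+k$ is genuinely critical -- the statement fails below it -- and your sketch never isolates a quantitative mechanism that distinguishes $1+k$ from smaller exponents.

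The paper closes the endpoint by a completely different, dual argument, and the quantitative input is the sharp weighted estimate. By Exercise \ref{berg}, injectivity of $I-\mu T$ on $L^{q}$, $q=1+k$, is equivalent to density of the range of the corresponding operator on $L^{p}$, $p=1+1/k$; by convexity it suffices to prove weak density there. Given $h\in C_c^\infty$ one sets $\phi_\e=[I-(1-\e)\mu T]^{-1}h\in L^p$ (legitimate since $p\in I_{(1-\e)k}$, by Theorem \ref{avtopralnica}) and writes $(I-\mu T)\phi_\e=h-\mu T(\e\phi_\e)$, so it remains to show $\e\phi_\e\to0$ weakly in $L^p$. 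The two ingredients are: (i) the linear $A_2$ bound for $T$ (Theorem \ref{jednamladost}, via Remark \ref{plocnik} and Theorem \ref{vozvraschenie}) gives $\Nor{[I-(1-\e)\mu T]^{-1}}_{\cB(L^p)}\leqsim_K 1/\e$, hence $\nor{\e\phi_\e}_p\leqsim_K\nor{h}_p$ uniformly; (ii) the Neumann series in $L^2$ gives $\e\phi_\e\to0$ in $L^2$, and Exercise \ref{ging heut morgen uebers feld} upgrades this to weak convergence in $L^p$. It is step (i) -- the exact cancellation of the factor $\e$ against the $1/\e$ blow-up of the resolvent, which rests on the \emph{linear} dependence of $\nor{T}_{\cB(L^2(w))}$ on $[w]_{A_2}$ -- that substitutes for the missing Liouville argument in your proposal. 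If you want to salvage a direct approach, you would have to import this quantitative information somewhere; as written, your proof does not.
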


\begin{proof}
By Exercise \ref{berg} it suffices to prove that $I-\mu T$ and $I-T \mu$ have dense ranges on $L^{1+1/k}(\C)$. Now Exercise \ref{lullaby} and the identity $I-T\mu=T(I-\mu T)S$ imply that it is enough to prove this for $I-\mu T$. Since $\cR(I-\mu T)$ is a convex set, it suffices to prove that it is {\sl weakly} dense in $L^{1+1/k}(\C)$, see Rudin \cite[Theorem 3.12]{R}.

Choose $h\in C_c^\infty(\C)$ and $0<\e<1$. Denote by $p$ the conjugate exponent of $q$, i.e. $p=1+1/k$. 
Since $p\in I_{(1-\e)k}$, Theorem \ref{avtopralnica} says that 
$$
\phi_\e:=\big[I-(1-\e)\mu T\big]^{-1}h\in L^p(\C)\,.
$$
Let us write this formula as 
$$
(I-\mu T)\phi_\e=h-\mu T(\e\phi_\e)\,.
$$
Now invoking again Exercise \ref{lullaby} {\it ii)} reduces our problem to proving that $\e\phi_\e\rightarrow 0$ weakly in $L^q$ as $\e\rightarrow 0$. 

At this point we use our weighted estimate for $T$ -- Theorem \ref{jednamladost}. For together with Remark \ref{plocnik} and Theorem \ref{vozvraschenie} it gives the estimate
$$
\Nor{\big[I-(1-\e)\mu T\big]^{-1}}_{\cB(L^p(\C))}\,\leqsim_K\,[w]_{A_2}\leq \frac1\e\,.
$$
This immediately implies $\nor{\e\phi_\e}_p\,\leqsim_K\nor{h}_p$. On the other hand, by applying in $L^2$ the Neumann series to the definition of $\phi_\e$ we see that
$$
\nor{\phi_\e}_2\leq\frac1{1-(1-\e)k}\nor{h}_2\,.
$$
Hence $\e\phi_\e\rightarrow 0$ in $L^2$ as $\e\rightarrow 0$. 
Now Exercise \ref{ging heut morgen uebers feld} implies that $\e\phi_\e\rightarrow 0$ weakly in $L^p$ as $\e\rightarrow 0$, which had to be proven.
\end{proof}

The next result summarizes our efforts in this section. It appeared in \cite[Lemma 19 and Proposition 19]{AIS}, \cite[Corollary 14.4.3]{AIM}, \cite[Theorem 1.5]{V2}.

\begin{theorem}
Suppose $\mu\in L^\infty(\C)$ and $\nor{\mu}_\infty=:k<1$. 
If a solution of $F_{\bar z}-\mu F_z=0$ lies in $W^{1,1+k}_{loc}(\C)$, then it automatically also lies in $W^{1,2}_{loc}(\C)$. Therefore 
it even lies in $W^{1,1+1/k-}_{loc}(\C)$.
\end{theorem}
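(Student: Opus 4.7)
The plan is to localize $F$ via a smooth cutoff $\phi$, derive an equation $(I - \tilde\mu T)G_{\bar z} = \psi$ for $G := \phi F$ with a truncated Beltrami coefficient $\tilde\mu$ of compact support, solve the same equation in $L^2$ by a Neumann series (which converges because $T$ is an $L^2$-isometry and $k<1$), and finally identify the $L^2$ solution with $G_{\bar z}$ via the $L^{1+k}$-injectivity provided by Theorem \ref{Formicidae}.

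Fix a compact $K \subset \C$ and choose $\phi \in C_c^\infty(\C)$ with $\phi \equiv 1$ on a neighbourhood of $K$ and $\supp\phi \subset K_1$ for some compact $K_1$. Set $G := \phi F \in W^{1,1+k}(\C)$; since $G$ has compact support, Exercise \ref{hor sretenskogo monostirya} gives $G_z = TG_{\bar z}$. A direct calculation using $F_{\bar z} = \mu F_z$ yields $G_{\bar z} - \mu G_z = (\phi_{\bar z} - \mu\phi_z)F =: \psi$. Because $G_z$ vanishes outside $\supp\phi \subset K_1$, replacing $\mu$ here by $\tilde\mu := \mu\chi_{K_1}$ changes nothing, and one obtains $(I - \tilde\mu T)G_{\bar z} = \psi$ with $\tilde\mu$ compactly supported and $\|\tilde\mu\|_\infty \leq k$. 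Furthermore, the Sobolev embedding (Theorem \ref{Johhny Shines}) gives $F \in L^q_{loc}(\C)$ for $q := 2(1+k)/(1-k) > 2$, so $\psi \in L^q(\C) \cap L^2(\C)$ with compact support.

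On $L^2$, $T$ is an isometry, hence $\|\tilde\mu T\|_{L^2 \to L^2} \leq k < 1$ and the Neumann series $v := \sum_{j \geq 0}(\tilde\mu T)^j \psi$ converges in $L^2(\C)$ to a solution of $(I - \tilde\mu T)v = \psi$. From the fixed-point identity $v = \psi + \tilde\mu T v$ one reads off $\supp v \subset \supp\psi \cup \supp\tilde\mu \subset K_1$, so $v$ is compactly supported in $L^2$ and therefore also lies in $L^{1+k}(\C)$. Both $G_{\bar z}$ and $v$ now belong to $L^{1+k}(\C)$ and satisfy $(I - \tilde\mu T)h = \psi$. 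Theorem \ref{Formicidae} applied to $\tilde\mu$ (or, when $\|\tilde\mu\|_\infty =: k' < k$, Theorem \ref{avtopralnica} applied at the exponent $1+k$, which lies in $I_{k'}$ since $kk' < 1$) guarantees that $I - \tilde\mu T$ is injective on $L^{1+k}(\C)$, and hence $G_{\bar z} = v \in L^2(\C)$. The $L^2$-boundedness of $T$ then gives $G_z = TG_{\bar z} \in L^2(\C)$ as well.

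On the neighbourhood of $K$ where $\phi \equiv 1$ we have $F = G$, so $F \in W^{1,2}$ there; since $K$ was arbitrary, $F \in W^{1,2}_{loc}(\C)$, and Astala's theorem (Section \ref{Ravel piano G}) upgrades this to $F \in W^{1,1+1/k-}_{loc}(\C)$. The main obstacle is the identification $G_{\bar z} = v$: without first truncating $\mu$ to a compactly supported $\tilde\mu$, the $L^2$ Neumann solution $v$ need not be compactly supported, hence need not lie in $L^{1+k}(\C)$, so the injectivity statement of Theorem \ref{Formicidae} would not apply directly.
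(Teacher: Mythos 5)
Your proof is correct and follows essentially the same route as the paper's: localize with a cutoff, use the Sobolev embedding to place the right-hand side in $L^2$, solve $(I-\tilde\mu T)v=\psi$ by a Neumann series, and identify $v$ with $G_{\bar z}$ via the $L^{1+k}$-injectivity of Theorem \ref{Formicidae}. The only differences are cosmetic and in your favour: you work directly at the level of the $\bar\pd$-derivative instead of constructing the primitive $H=P\big((I-\mu T)^{-1}\widetilde F\big)$ as the paper does (which spares you the discussion of the constant $G-H$ and the continuity of $H$), and you make explicit why the truncation $\tilde\mu=\mu\chi_{K_1}$ leaves the equation for $G$ unchanged (because $G_z$ vanishes off $\supp\phi$), a point the paper leaves implicit.
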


\begin{proof}
The case $k=0$ follows from Weyl's lemma. Now assume $k\in(0,1)$. 
Write $q=1+k$. Suppose $F\in W^{1,q}_{loc}(\C)$ satisfies $F_{\bar z}-\mu F_z=0$. Take $\phi\in C_c^\infty(\C)$ and define $G=\phi F$. Then $G\in W^{1,q}(\C)$. Let $\omega\in C_c^\infty(\C)$ be such that $\omega\big|_{\supp \phi}\equiv 1$. 
Then clearly
$$
G_{\bar z}-\mu G_z=(\phi_{\bar z}-\mu \phi_z)F=(\phi_{\bar z}-\mu \phi_z)(\omega F)=:\widetilde F\,.
$$
Since $F\in W^{1,q}_{loc}(\C)$ it follows that $\omega F\in W^{1,q}(\C)$, therefore, by the Sobolev embedding (Theorem \ref{Johhny Shines}), 
$\omega F\in L^{2K}$, where $K> 1$ is as in \eqref{B-A-B-G}. Since $\widetilde F$ contains $\phi_{\bar z}-\mu \phi_z$ as a factor, it is compactly supported, therefore $\widetilde F\in L^2\cap L^{2K}$.

Let us now try to solve the equation 
\begin{equation}
\label{my girl}
H_{\bar z}-\mu H_z=\widetilde F
\end{equation}
but for $H$ whose derivatives belong to $L^2(\C)$ instead of $L^q(\C)$. By Exercise \ref{hor sretenskogo monostirya} this is the same as 
$$
(I-\mu T)H_{\bar z}=\widetilde F\,.
$$
Since $\widetilde F\in L^{r}$ with $r\leq 2K$, $2K>2$ and $I-\mu T$ is invertible on $L^{2+\e}$ for small $\e>0$ (Neumann series), this is the same as
$$
H_{\bar z}=(I-\mu T)^{-1}\widetilde F\in L^{2+\e}\,.
$$
By Exercise \ref{preobrazhenskij} the solution of this $\bar\pd$-problem is given by the Cauchy transform of $(I-\mu T)^{-1}\widetilde F$. But obviously also $(I-\mu T)^{-1}\widetilde F\in L^{2}$, therefore $H_{\bar z}, H_{ z}=TH_{\bar z}\in L^2$. 
We may assume that $\mu$ has a compact support, otherwise we may consider $\lambda:=\mu\chi_{\supp\phi}$ or $\lambda=\omega\mu$ instead. Therefore 
$H_{\bar z}=\widetilde F +\mu H_z$
has compact support and hence belongs to $L^q$, since $\widetilde F$ does. Then also 
$H_{ z}=TH_{\bar z}\in L^q$.

\medskip
So we got {\it two} solutions to \eqref{my girl} on $L^q$, namely $G$ and $H$. Thus $G_{\bar z}-\mu G_z=H_{\bar z}-\mu H_z$. 
Since $G_{ z}=TG_{\bar z}$ and $H_{ z}=TH_{\bar z}$ we obtained $(I-\mu T)(G_{\bar z}-H_{\bar z})=0$. We saw that $G_{\bar z},H_{\bar z}\in L^q$. But Theorem \ref{Formicidae} says that $I-\mu T$ is injective on $L^q$, therefore $G_{\bar z}=H_{\bar z}\in L^2$ and consequently also $G_{z}=H_{z}\in L^2$. Thus $G-H$ is a constant. Since $H$ is of the form $Ph$ for some $h\in L^{2+\e}$, it is continuous, by Lemma \ref{rumyancev}. Therefore $H\in L^2_{loc}$ and hence also $G\in L^2_{loc}$. Since $G$ has compact support, it follows that $G\in L^2$. Thus for $G=\phi F$ we got $G,G_{\bar z},G_z\in L^2$. Since $\phi\in C_c^\infty(\C)$ was arbitrary this means that $F\in W^{1,2}_{loc}$.
\end{proof}

\subsection{Estimates of $T^n$ on $L^p$}
\label{saso mange}

In the article \cite{IM} Iwaniec and Martin study singular integrals that appear in regularity theory of nonlinear PDE in arbitrary dimensions. For example, they compute $L^p$ norms of scalar Riesz transforms on $\R^n$, thus extending a well-known result of Pichorides \cite{P}. One of the key features of their work is that they reduce the estimates of vector-valued operators on $\R^n$ (such as combinations of Riesz transforms, complex Riesz transforms, certain differential operators, etc.) to those of scalar-valued operators on $\C$. 
They succeed in doing that by developing a so-called {\it complex method of rotations} \cite[Section 6]{IM}. We summarize this technique in Section \ref{rotmet} below. As it emerges from their approach, the crucial r\^{o}le is played by the Ahlfors-Beurling operator $T$, its ``square root'' ${\bf H}_\C$ (recall it was defined in Section \ref{riesz}) and their powers ${\bf H}_\C^k$. 
Throughout their article they extensively work with them; most of their estimates 
are expressed in terms of the norm of ${\bf H}_\C^k$ on $L^p(\C)$, which they denote by $H_p(k)$. However, no estimate on $H_p(k)$ itself is given. 
In a subsequent paper by Iwaniec and Sbordone \cite{IS} it was noticed that for  
odd $k$ one can resort to the method of rotations, developed in the 1956 paper \cite{CZ} of Calder\'on and Zygmund, see also \cite[Section 4.3]{Du} or \cite[Chapter II]{S}, which yields 
\begin{equation}
\label{minakarina}
H_p(2n-1)\leqslant \frac\pi 2(2n-1)\cot\frac{\pi}{2p^*}\leqsim\, 
np^*,\hskip 15pt \forall n\in\N\,.
\end{equation}
On the other hand, the same approach applied in the case of even $k$ does not give such a linear estimate. Recall that 
$H_p(2n)=\nor{T^n}_p$. 
Obviously
\begin{equation}
\label{tavisuplebis}
H_p(2n)\leqslant H_p(2n-1)H_p(1)\leqsim\,n{p^*}^2\,.
\end{equation}
The slight difference lies in the fact that the kernel of ${\bf H}_\C^{2n}=T^n$ is even. Calder\'on and Zygmund \cite{CZ} derive a method for operators with even kernels as well, but that method yields the same quadratic estimate in $p$ as \eqref{tavisuplebis}, namely
$$
H_p(2n)\leqsim\,n{p^*}^2\,.
$$

In \cite{DPV} yet another method of rotation was presented, which works very well exactly for even kernels. Applying it to $T^n={\bf H}_\C^{2n}$ returned an estimate $H_p(2n)\leqsim\,n{p^*}$, which is the best possible estimate {\it of the type $\phi(n)\psi(p)$}, but still not the overall optimal estimate of $H_p(2n)$ simultaneously in $n$ and $p$. The optimal result is stated in Theorem \ref{izvorska}.

Finally, the estimates of $H_p(k)$ for any $k\in\Z$ are also considered in \cite[Section 4.5]{AIM}. The estimate in Corollary 4.5.1 there is
\begin{equation}
\label{harcho}
H_p(k)\leq C(p)(1+k^2)
\hskip 30pt\forall\ k\in\Z\,.
\end{equation}  
This is improved in our Theorem \ref{goin' to louisiana} below.

\medskip
Section \ref{nas na babu promenyal} is devoted to the estimates of $H_p(k)$. In Theorems \ref{goin' to louisiana} and \ref{izvorska} we improve \eqref{minakarina}, \eqref{tavisuplebis} and \eqref{harcho}, and in the case of $H_p(2n)$ our estimate is asymptotically sharp in terms of both $n$ and $p$. We also conjecture the exact value of $H_p(2n)$. 
See Section \ref{Mitropoulos} for precise formulations of our results.

\subsubsection{Iwaniec and Martin's complex method of rotations} 
\label{rotmet}
Here we summarize \cite[Section 6]{IM}.

Let $E$ and $F$ be finite-dimensional complex vector spaces, each of them endowed with its own Hermitian inner product. Denote by $\cL(E,F)$ the vector space of all linear mappings from $E$ to $F$. The norm on $\cL(E,F)$ is introduced by
$$
\nor{\Lambda}:=\sup_{h\in E\backslash\{0\}}\frac{\nor{\Lambda h}_F}{\nor{h}_E}\,.
$$
We additionally assume that $m\in\Z\backslash\{0\}$ and $p\in(1,\infty)$ are given.

\begin{theorem}
Suppose $\Omega:\C^n\rightarrow\cL(E,F)$ is a measurable function satisfying
\begin{equation}
\label{vrat}
\Omega(\lambda\zeta)=\left(\frac{\lambda}{|\lambda|}\right)^{-m}\frac{\Omega(\zeta)}{|\lambda|^{2n}}\,
\end{equation}
for all $(\lambda,\zeta)\in\C\times S^{2n-1}$ and suppose that the restriction $\Omega\big|_{S^{2n-1}}$ is integrable.
Then the singular integral operator $\kT:L^p(\C^n,E)\rightarrow L^p(\C^n,F)$, defined by 
$$
(\kT f)(z)=\int_{\C^n}\Omega(z-w)f(w)\,dw\,,
$$
is bounded and admits the following norm estimate:
$$
\nor{\kT}_p\leq\frac{H_p(m)}{|m|}\int_{S^{2n-1}}\nor{\Omega(\zeta)}\,d\zeta\,.
$$
\end{theorem}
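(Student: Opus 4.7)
The plan is to follow the complex analog of the Calder\'on-Zygmund method of rotations: slice $\C^n$ by complex lines through the origin and, on each slice indexed by $\zeta\in S^{2n-1}$, identify the resulting one-complex-variable convolution as a scalar multiple of the Riesz power $R^m={\bf H}_\C^m$. Averaging these slice operators against $\nor{\Omega(\zeta)}$ and applying Minkowski's integral inequality then produces the stated bound; the normalization constants in \eqref{chains and things} conspire to convert the $H_p(m)$ on the line into $H_p(m)/|m|$ in the final estimate.

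First I would establish the polar decomposition
\[
\int_{\C^n} G(w)\,dw \;=\; \frac{1}{2\pi}\int_{S^{2n-1}}\int_\C G(\lambda\zeta)\,|\lambda|^{2n-2}\,dA(\lambda)\,d\zeta,
\]
which follows by writing $\lambda=\rho e^{i\psi}$, absorbing $\rho^{2n-1}d\rho\,d\zeta$ into the standard polar measure on $\C^n$, and using the $U(n)$-invariance of $d\zeta$ under $\zeta\mapsto e^{i\psi}\zeta$ together with Fubini in $\psi$. After translating $w\mapsto z-w$ in the definition of $\kT f$ and applying this identity to $G(w):=\Omega(w)f(z-w)$, the homogeneity \eqref{vrat} factors $\Omega(\lambda\zeta)=(\lambda/|\lambda|)^{-m}|\lambda|^{-2n}\Omega(\zeta)$, so the $|\lambda|^{2n-2}\cdot|\lambda|^{-2n}$ collapses to $|\lambda|^{-2}$, leaving
\[
(\kT f)(z)=\frac{1}{2\pi}\int_{S^{2n-1}}\Omega(\zeta)\,I_\zeta f(z)\,d\zeta,\qquad I_\zeta f(z):=\int_\C \frac{(\lambda/|\lambda|)^{-m}}{|\lambda|^2}\,f(z-\lambda\zeta)\,dA(\lambda).
\]

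For fixed $\zeta$, a unitary rotation of $\C^n$ sends $\zeta\mapsto e_1$, whereupon $I_{e_1}f$ acts only on the first complex coordinate $z_1$ as a one-dimensional convolution with kernel $(\lambda/|\lambda|)^{-m}/|\lambda|^2$. By \eqref{coppy}--\eqref{chains and things} this kernel equals $\frac{2\pi}{i^{|m|}|m|}$ times the convolution kernel of $R^m$. Combining Fubini in $z'=(z_2,\ldots,z_n)$ with the scalar bound $H_p(m)$ for $R^m$ (extended componentwise to the finite-dimensional Hilbert space $E$ by standard Hilbert-valued Calder\'on-Zygmund theory), I obtain
\[
\nor{I_\zeta f}_{L^p(\C^n,E)} \;\leq\; \frac{2\pi\,H_p(m)}{|m|}\,\nor{f}_{L^p(\C^n,E)}.
\]

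Finally, the operator-norm inequality $\nor{\Omega(\zeta)I_\zeta f(z)}_F\leq\nor{\Omega(\zeta)}\,\nor{I_\zeta f(z)}_E$ and Minkowski's integral inequality in $\zeta$ give
\[
\nor{\kT f}_{L^p(F)} \leq \frac{1}{2\pi}\int_{S^{2n-1}}\nor{\Omega(\zeta)}\,\nor{I_\zeta f}_{L^p(E)}\,d\zeta \leq \frac{H_p(m)}{|m|}\nor{f}_{L^p(E)}\int_{S^{2n-1}}\nor{\Omega(\zeta)}\,d\zeta,
\]
with the two $2\pi$ factors cancelling. I anticipate two delicate points. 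The first is justifying the interchange of $\C^n$-integration with the principal-value limit defining the inner singular integral: I would prove the bound first for $f\in C_c^\infty(\C^n,E)$, where the $\e<|\lambda|<1/\e$ truncated inner kernels give legitimate absolutely convergent integrals and a uniform approximation, and then extend by density. The second, and arguably the main conceptual obstacle, is verifying that the scalar Fourier multiplier $R^m$ really does extend to $L^p(\C,E)$ with the same norm $H_p(m)$; this is automatic at $p=2$ by Plancherel and follows in general from the Hilbert-space-valued singular integral theory (e.g. Stein's extension of the Calder\'on-Zygmund theorem), since $E$ is a finite-dimensional Hilbert space and the kernel of $R^m$ satisfies the standard size and smoothness conditions.
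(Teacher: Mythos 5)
Your proposal is correct and follows essentially the same route as the paper: the polar identity $2\pi\int_{\C^n}G=\int_{S^{2n-1}}\int_\C G(\lambda\zeta)|\lambda|^{2n-2}\,dA(\lambda)\,d\zeta$ to slice $\kT$ into directional operators, identification of each slice (after a unitary rotation and Fubini) with a constant multiple of ${\bf H}_\C^m$ via \eqref{coppy}--\eqref{chains and things}, and Minkowski's integral inequality to reassemble. The only cosmetic difference is that the paper normalizes the directional operator $\kA_\zeta$ by $|m|/(2\pi)$ so that $\nor{\kA_\zeta}_p=H_p(m)$ and runs the computation from $\kA_\zeta$ toward $\kT$, whereas you carry the factor $2\pi/|m|$ explicitly and decompose $\kT$ directly; your extra care about the vector-valued extension of $R^m$ and the truncation/density argument fills in details the paper leaves to the cited source.
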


\begin{proof}[{\bf Sketch of the proof.}]
For each complex ``direction'' $\zeta\in S^{2n-1}$ we introduce the operator
$\kA_\zeta:L^p(\C^n,E)\rightarrow L^p(\C^n,E)$ by
$$
(\kA_\zeta f)(z)=\frac{|m|}{2\pi}\int_\C 
\left(\frac{\lambda}{|\lambda|}\right)^{-m}\frac{f(z-\lambda\zeta)}{|\lambda|^{2}}\,d\lambda\,.
$$
Iwaniec and Martin call it ``directional complex Hilbert transform''; strictly speaking though, it looks more like  
``directional {\it $m$-th power} of the complex Hilbert transform''. What matters more is that they prove the following:
\begin{equation}
\label{tavagna}
\nor{\kA_\zeta}_p=H_p(m),
\end{equation}
which is not so surprising, in view of \eqref{coppy} and \eqref{chains and things}.

The second key calculation in their proof, and the one we present here, shows how to pass from $\kA_\zeta$ to $\kT$. Compare with the classical method of rotation by Calder\'on and Zygmund. 

Fix $z\in\C^n$ and consider a test function $\Phi=\Phi_z:S^{2n-1}\rightarrow F$, defined by
$$
\Phi(\zeta)=\Omega(\zeta)\kA_\zeta f(z)\,.
$$
By the definition of $\kA_\zeta$ we get
$$
\Phi(\zeta)=\frac{|m|}{2\pi}\,\Omega(\zeta)\int_\C 
\left(\frac{\lambda}{|\lambda|}\right)^{-m}\frac{f(z-\lambda\zeta)}{|\lambda|^{2}}\,d\lambda\,.
$$
It is easy to verify that for $\Lambda\in\cL(E,F)$ and $g:\C\rightarrow E$ we have $\Lambda\int_\C g=\int_\C\Lambda\circ g$.
Apply this simple fact to $\Lambda=\Omega(\zeta)$ and then use property \eqref{vrat}. The outcome is:
\begin{equation}
\label{punjabi techno}
\Phi(\zeta)=\frac{|m|}{2\pi}\int_\C 
|\lambda|^{2n-2}\Omega(\lambda\zeta)f(z-\lambda\zeta)\,d\lambda\,.
\end{equation}

At this point we need the following identity: for even $m\in\N$ we have
\begin{equation}
\label{pletnev}
2\pi\int_{\R^m}F(x)\,dx=\int_{S^{m-1}}\int_\C F(\lambda\sigma) |\lambda|^{m-2}\,d\lambda\,d\sigma.
\end{equation}
To prove this start with the well-known formula \cite[Appendix C.3, Theorem 4]{E}
$$
\int_{\R^m}F(x)\,dx=\int_0^\infty\int_{rS^{m-1}}F\,d\sigma\,dr.
$$
By using spherical coordinates we get
$$
\int_{\R^m}F(x)\,dx=\int_{S^{m-1}}\int_0^\infty F(r\sigma)\,r^{m-1}\,dr\,d\sigma.
$$
We integrate as $\int_0^{2\pi}\,d\f$ and obtain
$$
2\pi\int_{\R^m}F(x)\,dx=\int_0^\infty\int_0^{2\pi}\int_{S^{m-1}} F(r\sigma)\,r^{m-1}\,d\sigma\,d\f\,dr.
$$
Since $m$ is even, we may think of $\sigma\in\R^m$ as $\sigma\in\C^n$, where of course $m=2n$. Then it makes sense, for any fixed $\f$, to introduce the new variable $\sigma'=e^{-i\f}\sigma$, of course understood as a tensor product of rotations applied to a vector in $\R^m$. We get
$$
\aligned
2\pi\int_{\R^m}F(x)\,dx
&=\int_0^\infty\int_0^{2\pi}\int_{S^{m-1}} F(re^{i\f}\sigma')\,d\sigma'\,d\f\,r^{m-1}\,dr\\
&=\int_{S^{m-1}} \int_0^{2\pi}\int_0^\infty F(re^{i\f}\sigma)r^{m-2}r\,dr\,d\f\,d\sigma\,,
\endaligned
$$
which we recognize as \eqref{pletnev}.

By recalling that $|\zeta|=1$, we may apply \eqref{pletnev} to \eqref{punjabi techno} and arrive at
$$
\int_{S^{2n-1}}\Phi_z(\zeta)\,d\zeta
=|m|\int_{\C^n}\Omega(\zeta)f(z-\zeta)\,d\zeta
=|m|(\kT f)(z)\,.
$$
Finally we apply Minkowski's integral inequality:
$$
\aligned
\nor{\kT f}_p & \leq\frac1{|m|}\int_{S^{2n-1}}\nor{\Phi_\cdot(\zeta)}_p\,d\zeta\\
&\leq\frac1{|m|}\int_{S^{2n-1}}\nor{\Omega(\zeta)}\nor{\kA_\zeta f}_p\,d\zeta\\
&\leq\frac{H_p(m)}{|m|}\left(\int_{S^{2n-1}}\nor{\Omega(\zeta)}\,d\zeta\right)\nor{f}_p\,.
\endaligned
$$ 
For the last inequality we applied \eqref{tavagna}. 
\end{proof}

\subsubsection{Our results}
\label{Mitropoulos}

The following estimates give partial answers to the questions discussed above. They will be proven in Section \ref{nas na babu promenyal}.

\begin{theorem}[\cite{DPV}]
\label{goin' to louisiana}
We have $H_p(k)\,\leqsim\, |k|^{1-2/p^*}(p^*-1)$, where $k\in\Z\backslash\{0\}$ and $p>1$.
\end{theorem}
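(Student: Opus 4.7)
The approach I would take is Riesz--Thorin interpolation of $R^k$ between the $L^2$ endpoint, where Plancherel and the modulus-one Fourier multiplier symbol $(\bar\xi/|\xi|)^k$ (see \eqref{dihat za ovratnik}) give $H_2(k)=1$, and a high-$L^{p_1}$ endpoint where the estimate is linear in $|k|$. Before interpolating I would reduce to $p\ge 2$ by duality: arguing as in Exercise \ref{lullaby} and using that the Hilbert-space adjoint of $R^k$ corresponds (modulo complex conjugation) to $R^{-k}$, one obtains $H_p(k)=H_{p^*}(k)$, so it suffices to handle $p\ge 2$, where $p^*=p$.

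For the high-$L^{p_1}$ input I would use
\[
H_p(k)\;\le\;C\,|k|\,(p-1),\qquad p\ge 2,
\]
valid with an absolute constant $C$: for odd $k$ this is the Iwaniec--Sbordone bound \eqref{minakarina} once one notes $\cot(\pi/(2p))\lesssim p-1$ for $p\ge 2$, while for even $k$ this is the main output of the DPV even-kernel rotation method described just above Section \ref{Mitropoulos}. Riesz--Thorin between $L^2$ and $L^{p_1}$ for any $p_1>p\ge 2$ then produces
\[
H_p(k)\;\le\;\bigl(C|k|(p_1-1)\bigr)^{\theta},\qquad \theta\;=\;\frac{1/2-1/p}{1/2-1/p_1}\;=\;\Bigl(1-\tfrac{2}{p}\Bigr)\cdot\frac{p_1}{p_1-2},
\]
and one tunes $p_1$ so as to drive $\theta$ close to $1-2/p$ while keeping $(p_1-1)^{\theta}$ controlled by $p-1$. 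After this calibration the right-hand side becomes $C'\,|k|^{1-2/p}(p-1)$, and duality restores the case $1<p<2$ with $p^*$ in place of $p$.

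The main obstacle is the precise optimization in $p_1$. To match the target exponent $|k|^{1-2/p}$ one needs the surplus $\theta-(1-2/p)=(1-2/p)\cdot 2/(p_1-2)$ to be at most of order $1/\log|k|$, which forces $p_1\gtrsim\log|k|$; but then the secondary factor $(p_1-1)^{\theta}$ can exceed $p-1$ when $p$ is close to $2$ and $|k|$ is very large, producing a spurious $(\log|k|)^{1-2/p}$ factor. Overcoming this requires either a simultaneous calibration of $p_1$ in terms of $|k|$ and $p$ in which the logarithmic bleed is absorbed into the implied constant in $\lesssim$, or an upgrade from plain Riesz--Thorin to Stein's complex interpolation applied to the analytic family $\{R^{ks}\}_{\Re s\in[0,1]}$, whose admissible exponential growth on vertical lines together with the cleaner endpoint normalization should yield the exact exponent $1-2/p$ on $|k|$ without logarithmic loss.
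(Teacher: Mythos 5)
Your duality reduction to $p\ge 2$ and the $L^2$ endpoint are fine, but the interpolation scheme built on them has a gap that you identify and then do not close. Interpolating between $L^2$ and a \emph{moving} endpoint $L^{p_1}$ with input $H_{p_1}(k)\le C|k|(p_1-1)$ forces $p_1\gtrsim\log|k|$ in order to bring $\theta$ within $O(1/\log|k|)$ of $1-2/p$, and then the residual factor $(p_1-1)^{\theta}\sim(\log|k|)^{1-2/p}$ is unbounded in $k$ for each fixed $p>2$; it cannot be ``absorbed into the implied constant'', since the $\leqsim$ in the statement is uniform in $k$ and $p$. Optimizing $\theta\bigl(\log(C|k|)+\log(p_1-1)\bigr)$ over all $p_1$ shows the best this two-point scheme can produce is $|k|^{1-2/p}(\log|k|)^{1-2/p}(p-1)$: the logarithmic loss is intrinsic to using a finite strong-type endpoint whose $k$-dependence is $|k|^1$ rather than $|k|^{1-2/p_1}$. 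Your fallback via Stein interpolation on the family $R^{ks}$ is not worked out: on $\Re s=0$ the multiplier $e^{-iks\arg\xi}$ has modulus $e^{kt\arg\xi}$, of size up to $e^{\pi|k||t|}$, so the admissible-growth hypothesis is precisely where the $k$-dependence reappears, and no endpoint normalization is exhibited that removes it.

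The missing idea is that the correct second endpoint is not a large $p_1$ but the \emph{weak $(1,1)$} endpoint, which is fixed and carries the full power $|k|^1$; interpolating it against the $L^2$ isometry gives exactly the exponent $1-2/p^*$ with no tuning. The kernel of ${\bf H}_\C^k$ is $\Omega_k(z/|z|)/|z|^2$ with $\Omega_k(\zeta)=\frac{i^{|k|}|k|}{2\pi}\zeta^{-k}$ as in \eqref{chains and things}; it satisfies no smoothness condition uniformly in $k$ (a gradient or Lipschitz hypothesis would cost another factor of $|k|$), but the rough-kernel theorem of Christ--Rubio de Francia and Hofmann (Theorem \ref{I'm going upstairs}) yields the weak $(1,1)$ bound with constant $\leqsim\nor{\Omega_k}_{L^2(S^1)}\sim|k|$, uniformly in the truncation; a Fatou argument passes to the untruncated operator, giving $N_{1,k}\leqsim|k|$ as in \eqref{shake it}. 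Marcinkiewicz interpolation from weak $(1,1)$ to strong $(2,2)$ at an intermediate exponent $r=(p+1)/2$, followed by Riesz--Thorin from $r$ to $2$, then gives $H_p(k)\leqsim N_{1,k}^{2/p-1}/(p-1)=|k|^{1-2/p^*}(p^*-1)$ for $1<p<2$, and the case $p>2$ follows by the duality $\sk{{\bf H}_\C^kf}{g}=\sk{f}{{\bf H}_\C^{-k}g}$ that you already set up. This is the route the paper takes; your proposal, as written, stops one genuine idea short of it.
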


\medskip
When $k$ is {\it even} the one-sided inequality from the above theorem 
is mirrored by the same estimate from below, thus establishing the correct asymptotic behaviour of $\nor{T^n}_p$ simultaneously in $n$ and $p$:

\begin{theorem}[\cite{DPV, D1}]
\label{izvorska}
We have $\nor{T^n}_p\sim n^{1-2/p^*}(p^*-1)$, for 
$n\in\N$ and $p>1$.
\end{theorem}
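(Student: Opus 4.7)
The upper bound follows immediately from Theorem \ref{goin' to louisiana} with $k=2n$, since $T^n = \mathbf{H}_{\C}^{2n}$:
$$\|T^n\|_p \;=\; H_p(2n) \;\lesssim\; (2n)^{1-2/p^*}(p^*-1) \;\sim\; n^{1-2/p^*}(p^*-1).$$

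The matching lower bound is the serious content of the theorem. By duality $(T^n)^*=S^n$ (Exercise \ref{lullaby}) one has $\|T^n\|_p=\|T^n\|_{p'}$, so it suffices to treat $p\geq 2$, where $p^*=p$ and the target reads $\|T^n\|_p\gtrsim n^{1-2/p}(p-1)$. The plan is to test $T^n$ against the family
$$
f_{n,\e}(z) \;=\; \left(\frac{z}{|z|}\right)^{\!2n}\,|z|^{-2/p+\e}\,\mathbf{1}_{\Delta}(z),\qquad \e>0,
$$
with $\e$ a parameter to be optimized in its admissible range. The angular factor $(z/|z|)^{2n}$ is dictated by the covariance $U_\psi T^n = e^{2in\psi} T^n U_\psi$, which follows by iterating Exercise \ref{tommy lee jones robert de niro}: one checks that $U_\psi f_{n,\e}=e^{-2in\psi}f_{n,\e}$, so $T^n f_{n,\e}$ is rotation-invariant and hence \emph{radial}. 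Equivalently, the oscillation of the convolution kernel of $T^n$ aligns constructively with $f_{n,\e}$ instead of averaging to zero. The radial profile $|z|^{-2/p+\e}$ is a variant of the classical near-extremal radial stretch profile that realizes the lower bound $\|T\|_p\geq p-1$, and it places $f_{n,\e}$ on the edge of $L^p(\Delta)$ with $\|f_{n,\e}\|_p\sim(\e p)^{-1/p}$.

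On the Fourier side, writing $\xi=\rho e^{i\phi}$, one has $\widehat{f_{n,\e}}(\xi)=e^{2in\phi}\,g_\e(\rho)$, where $g_\e$ is an explicit Hankel transform of order $2n$ of the radial profile $r\mapsto r^{1-2/p+\e}\mathbf{1}_{r<1}$; the symbol $(\bar\xi/\xi)^n=e^{-2in\phi}$ of $T^n$ exactly cancels the angular character, so $\widehat{T^n f_{n,\e}}(\xi)=g_\e(\rho)$. Large-order Bessel asymptotics ($J_{2n}$ being concentrated around argument $\sim n$) then furnish sharp control on $g_\e$, and the radial function $T^n f_{n,\e}$ inherits a correspondingly well-localized profile. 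Converting this into an $L^p$ estimate and optimizing $\e$ should yield
$$\|T^n f_{n,\e}\|_p \;\gtrsim\; n^{1-2/p}(p-1)\,\|f_{n,\e}\|_p,$$
which is the desired lower bound.

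\medskip
The main obstacle is extracting the sharp joint $(n,p)$-dependence from the Hankel/Bessel calculation: one must quantify precisely how the concentration of $J_{2n}$ at frequency $\sim n$, combined with the power-law profile, translates into an $L^p$ gain over the $L^2$-norm of the same function. A built-in consistency check comes from $p=2$, where the $L^2$-isometry of $T$ forces $\|T^n f_{n,\e}\|_2=\|f_{n,\e}\|_2$ exactly, so all $n$-growth must vanish at $p=2$ and be governed by the exponent $1-2/p=2(1/2-1/p)$. The prefactor $(p-1)$ is inherited from the classical $n=1$ case $\|T\|_p\geq p-1$, proved by testing $T$ against precisely the radial stretch profile that $f_{n,\e}$ specializes to when $n=1$.
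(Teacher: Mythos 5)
Your upper bound is exactly the paper's: apply Theorem \ref{goin' to louisiana} with $k=2n$. The gap is in the lower bound, which is the real content of the theorem. The inequality $\nor{T^nf_{n,\e}}_p\geqsim n^{1-2/p}(p-1)\nor{f_{n,\e}}_p$ is precisely what has to be proven, and you leave it at ``Large-order Bessel asymptotics \dots should yield'' without executing the Hankel-transform computation; you even flag this yourself as ``the main obstacle.'' As written this is a plan, not a proof. Worse, the sharp truncation $\mathbf 1_\Delta$ in your test function forecloses the one device that makes the computation elementary: a compactly supported power function is not globally $\bar\pd^n$ of anything in $W^{n,p}(\C)$, so you are forced into the Fourier/Bessel route, where extracting the joint $(n,p)$-asymptotics is genuinely delicate.

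The paper avoids Fourier analysis entirely. In Proposition \ref{hm} one takes $f_{n,\alpha}(z)=z^n|z|^{-2\alpha}g_n(|z|)$ with a smooth, rapidly decaying $g_n\equiv 1$ on $\Delta$, so that $f_{n,\alpha}\in W^{n,p}(\C)$ (Exercise \ref{Stravinsky: L'Histoire du soldat}) and the differential identity $T^n(\bar\pd^nf)=\pd^nf$ applies. The test function for $T^n$ is thus $\bar\pd^nf_{n,\alpha}$, which on $\Delta$ equals a constant times $(z/\bar z)^n|z|^{-2\alpha}$ --- essentially your $f_{n,\e}$ --- while $T^n$ of it equals $\pd^nf_{n,\alpha}$, which on $\Delta$ is the explicit radial function $(n-\alpha)\cdots(1-\alpha)|z|^{-2\alpha}$. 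Both $L^p(\Delta)$ norms are elementary integrals; the exterior contributions are bounded uniformly and are annihilated by the factor $1-\alpha p\rightarrow 0$ as $\alpha\rightarrow 1/p$, so the ratio of norms converges to $\kappa_n(p)=(1/q)_n/(1/p)_n$. The remaining step, Proposition \ref{junuz}, is a purely arithmetic estimate showing $\kappa_n(p)\sim n^{1-2/p}(p-1)$ (via convexity of $\delta\mapsto\log\frac{2k+1+\delta}{2k+1-\delta}$ and the Euler--Mascheroni constant). To repair your argument, either carry out the Bessel asymptotics in full, or replace the sharp cutoff by a smooth one and exploit the identity $T^n\bar\pd^n=\pd^n$ as above.
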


Our proving Theorem \ref{izvorska} in Section \ref{nas na babu promenyal} will eventually lead us to conjecture that 
$$
\nor{T^n}_p=\frac{(1/q)_n}{(1/p)_n}
$$
for all $p\geq 2$, $q=p/(p-1)$ and $n\in\N$. See Conjecture \ref{gryanyem bratci} below. Here
\begin{equation}
\label{psy}
(a)_n=\frac{\Gamma(a+n)}{\Gamma(a)}=a\,(a+1)\cdot\hdots\cdot(a+n-1)
\end{equation}
is the {\it Pochhammer symbol}. 

\medskip
Let us explain why the case of odd $k$ is more difficult. Recall that, by definition, $H_p(k)=\nor{R^k}_{\cB(L^p(\C))}$. 

If $k$ is {\it even} we have $R^{2l}=T^l$. The operator $T$ is characterized by $\pd_{\bar z}f\mapsto\pd_zf$ (Exercise \ref{hor sretenskogo monostirya}). This property is used in proving lower estimate of $H_p(k)$ for even $k$, see Proposition \ref{hm}.

If $k$ is {\it odd} we have $R^{2l+1}=RT^l$. Now, $R$ has a characterization in terms of $(-\Delta)^{1/2}f\mapsto 2\pd_zf$, however $(-\Delta)^{1/2}$ is not a differential operator, which makes the calculations less explicit. See \cite[Example 3.7.5]{D} for details regarding $(-\Delta)^{1/2}$.

\medskip
Still, we believe that $H_p(k)\,\sim\, |k|^{1-2/p^*}(p^*-1)$ might hold for {\it all} $k\in\Z\backslash\{0\}$, not only the even ones which Theorem \ref{izvorska} addresses. At the moment however this question is still open. 
We have trivial inequalities $H_p(2l+2)\leq H_p(2l+1)H_p(1)$ and $H_p(4l+2)\leq H_p(2l+1)^2$. It follows from \cite[Theorem 1.1]{IM} that $H_p(1)\leqsim p^*$. Therefore we obtain, as a simple consequence of Theorems \ref{goin' to louisiana} and \ref{izvorska}, the following estimate of $H_p(k)$ from below in the case when $k$ is {\it odd}:

\begin{corollary}
$H_p(2l+1)\geqsim \max\left\{
l^{1-2/p^*},l^{1/2-1/p^*}\sqrt{p^*-1}\right\}$.
\end{corollary}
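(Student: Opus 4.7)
The plan is a direct calculation using the two submultiplicative bounds indicated in the hint together with the sharp asymptotics from Theorem \ref{izvorska} and the linear bound $H_p(1) \leqsim p^*$ from \cite[Theorem 1.1]{IM}. There is essentially no obstacle here; the corollary is a two-line consequence of results already on the table. Still, let me lay out the two pieces separately.

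First, to obtain $H_p(2l+1)\geqsim l^{1-2/p^*}$, I would start from the trivial bound $H_p(2l+2)\le H_p(2l+1)\,H_p(1)$, rearrange to
\[
H_p(2l+1)\,\ge\,\frac{H_p(2l+2)}{H_p(1)},
\]
and apply Theorem \ref{izvorska} (lower estimate) to the numerator, giving $H_p(2l+2)=\|T^{l+1}\|_p\geqsim (l+1)^{1-2/p^*}(p^*-1)$, together with $H_p(1)\leqsim p^*$ in the denominator. The ratio is $\geqsim (l+1)^{1-2/p^*}\cdot (p^*-1)/p^*$. Since $p^*\ge 2$ we have $(p^*-1)/p^*\ge 1/2$, hence the $p$-dependent factor is absorbed into the implied constant, yielding the first half of the claim.

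Second, for $H_p(2l+1)\geqsim l^{1/2-1/p^*}\sqrt{p^*-1}$, I would use the other trivial bound $H_p(4l+2)\le H_p(2l+1)^2$, rewrite as
\[
H_p(2l+1)\,\ge\,\sqrt{H_p(4l+2)},
\]
and apply Theorem \ref{izvorska} again, this time to $H_p(4l+2)=\|T^{2l+1}\|_p\geqsim (2l+1)^{1-2/p^*}(p^*-1)\geqsim l^{1-2/p^*}(p^*-1)$. Taking square roots produces exactly $l^{1/2-1/p^*}\sqrt{p^*-1}$.

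Finally, combining the two estimates and taking the pointwise maximum yields the corollary. The only mildly nontrivial remark worth inserting explicitly is that both estimates from Theorem \ref{izvorska} hold with constants uniform in $p$ (so the implied constants in the final $\geqsim$ are absolute), and that the bound $(p^*-1)/p^*\ge 1/2$ justifies dropping that factor in the first step. Neither of the two bounds dominates the other across the full range of parameters: the first is larger when $p^*-1\leqsim 1$ (i.e.\ $p$ close to $2$), and the second is larger when $p^*$ is large and $l$ is fixed, which is precisely why the corollary is stated with a maximum.
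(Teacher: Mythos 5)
Your argument is correct and is precisely the one the paper intends: the trivial bounds $H_p(2l+2)\leq H_p(2l+1)H_p(1)$ and $H_p(4l+2)\leq H_p(2l+1)^2$, combined with the lower estimate of Theorem \ref{izvorska} and $H_p(1)\leqsim p^*$ from \cite[Theorem 1.1]{IM}, with the factor $(p^*-1)/p^*\geq 1/2$ absorbed into the implied constant. Nothing further is needed.
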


\medskip
We conclude by a weak-type (1,1) result analogous to Theorem \ref{izvorska}.

\begin{theorem}
\label{v-o-d-a}
We have $\nor{T^n}_{1\rightarrow1,\infty}\,\sim\, n$, for 
$n\in\N$.
\end{theorem}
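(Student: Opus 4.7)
The plan is to prove the two inequalities separately.

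\textbf{Lower bound.} Test on $f:=\chi_\Delta$ (so $\|f\|_1=\pi$). By \eqref{coppy}--\eqref{chains and things}, $T^n=R^{2n}$ acts by convolution with the kernel $K_n(y)=c_n\bar y^{2n}/|y|^{2n+2}$, where $|c_n|=n/\pi$. For $|z|$ large compared to $n$, a uniform estimate on the variation of $K_n$ over the unit disk gives $T^n f(z)=\pi K_n(z)(1+o(1))$, hence $|T^n f(z)|\sim n/|z|^2$. Therefore $|\{|T^n f|>t\}|\geqsim n/t$ for all sufficiently small $t>0$, which yields $\|T^n f\|_{1,\infty}\geqsim n$ and the lower bound.

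\textbf{Upper bound.} The plan is a Calder\'on--Zygmund decomposition. Given $\lambda>0$ and $f\in L^1(\C)$, write $f=g+\sum_Q b_Q$ at level $\lambda$, over disjoint dyadic cubes $\{Q\}$ satisfying $\sum|Q|\leqsim\|f\|_1/\lambda$, with $\|g\|_\infty\leqsim\lambda$, $\|g\|_1\leq\|f\|_1$, and each $b_Q$ supported on $Q$ with $\int b_Q=0$ and $\|b_Q\|_1\leqsim\lambda|Q|$. The $L^2$-isometry of $T^n$ (Exercise~\ref{lullaby} and \eqref{simeon stilit}) handles the good part: $\|T^n g\|_2^2=\|g\|_2^2\leq\lambda\|f\|_1$ and Chebyshev give $|\{|T^n g|>\lambda/2\}|\leqsim\|f\|_1/\lambda$. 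For the bad part, with $E:=\bigcup_Q 2Q$ (of measure $\leqsim\|f\|_1/\lambda$), the mean-zero of each $b_Q$ together with the scale-invariance of $K_n$ reduces the estimate to the H\"ormander-type constant
\begin{equation*}
H_n:=\sup_{|h|\leq 1/2}\int_{|y|>1}|K_n(y-h)-K_n(y)|\,dy,
\end{equation*}
from which $\int_{E^c}|T^n b|\leqsim H_n\|f\|_1$ and $|\{|T^n b|>\lambda/2\}\setminus E|\leqsim H_n\|f\|_1/\lambda$ follow at once. Combining, $\|T^n\|_{L^1\to L^{1,\infty}}\leqsim 1+H_n$.

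\textbf{Main obstacle.} The crux is the sharp bound $H_n\leqsim n$. A naive pointwise smoothness estimate $|\nabla K_n(y)|\leqsim n^2/|y|^3$ gives only $H_n\leqsim n^2$; exploiting the angular oscillation $K_n(re^{i\theta})=(-1)^n(n/\pi)e^{-2in\theta}/r^2$ via a dyadic split over $|y|\leq n$ and $|y|>n$ readily improves this to $H_n\leqsim n\log n$, where the logarithm arises from the intermediate scales $1<|y|<n$ on which the angular period $\sim 1/n$ of $K_n$ is much smaller than $|y|$ but the crude size bound cannot be beaten at a single scale. Removing this logarithm to reach the sharp $H_n\leqsim n$ is the principal technical challenge: for instance, after normalizing $h=1$ by rotation, the substitution $w=1-1/y$ recasts the H\"ormander integral as $(n/\pi)\int_{B(1,1/2)}|g(w)|/(|w|^2|1-w|^2)\,dw$, with $g(w)=|w|^2-(\bar w/w)^n$, which vanishes to first order at $w=1$ and possesses $\sim n$ zeros along the arc $\{|w|=1\}\cap B(1,1/2)$; a careful analysis of this expression exploiting both the local vanishing at $w=1$ and the distribution of its zeros should yield the sharp bound.
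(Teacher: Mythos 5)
Your lower bound is fine: testing on $\chi_\Delta$ and using the gradient bound $|\nabla K_n(y)|\leqsim n^2/|y|^3$ to get $|T^n\chi_\Delta(z)|\geq \tfrac12\, n|z|^{-2}$ for $|z|\geqsim n$ does give $\nor{T^n}_{1\to1,\infty}\geqsim n$, and in fact this is more direct than the paper's route, which obtains the same lower bound indirectly by combining the $L^p$ lower bound of Corollary \ref{stark} with the interpolation estimate \eqref{schwach} (if the weak constant were $o(n)$, Marcinkiewicz interpolation would contradict Corollary \ref{stark} at a fixed $p\in(1,2)$). The upper bound, however, is where the content of the theorem lies, and there your argument has a genuine gap which you yourself flag: everything is reduced to the claim $H_n\leqsim n$ for the H\"ormander constant, and no proof of that claim is given.

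Worse, that claim is false, so the reduction cannot be repaired by ``a careful analysis'' of the integral you write down. In polar coordinates $K_n(re^{i\theta})=(-1)^n(n/\pi)e^{-2in\theta}r^{-2}$, and shifting $y=re^{i\theta}$ by $h$ with $|h|=1$ changes $\arg y$ by roughly $\sin\theta/r$, hence the phase $-2n\arg y$ by roughly $2n\sin\theta/r$. Throughout the intermediate range $1\ll r\ll n$ this phase increment is $\geqsim 1$ for a fixed proportion of angles, so there $|K_n(y-h)-K_n(y)|\sim n/r^2$ on a set of angles of fixed measure, and integrating $\int_1^{n} (n/r^2)\,r\,dr$ shows $H_n\geqsim n\log n$; your own dyadic computation giving $n\log n$ from above is therefore sharp. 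The logarithm is intrinsic to any estimate that puts absolute values on $K_n(y-h)-K_n(y)$, i.e.\ to the classical Calder\'on--Zygmund treatment of the bad part. To reach the sharp constant $\leqsim n$ one must exploit cancellation between the kernel and the mean-zero bad functions in an $L^2$/Fourier-transform fashion, and this is precisely what the rough-kernel weak $(1,1)$ theorem of Christ--Rubio de Francia and Hofmann (Theorem \ref{I'm going upstairs}) does: it requires no smoothness of $\Omega$ and yields a weak $(1,1)$ constant $\leqsim\nor{\Omega}_{L^q(S^1)}$, which for $\Omega_{2n}$ is $\sim n$ by \eqref{chains and things}. The paper's proof of \eqref{shake it} consists of applying that theorem to the truncations $T_\e$ and passing to the limit by almost everywhere convergence and Fatou's lemma; replacing your bad-part estimate by an appeal to (or a proof of) that theorem is what is needed to close your argument.
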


\subsection{Weighted estimates of $T^n$}
\label{american pie}

Based on the results announced in this section (estimates of $T^n$ on $L^p$ and of $T$ on $L^p(w)$) 
we find it natural to consider estimating powers of $T$ on weighted spaces. It turns out an adequate modification of the techniques from \cite{DV1} and \cite{DPV} yields the following generalization of Theorem \ref{novljan}:

\begin{theorem}[\cite{D2}]
\label{jednamladost}
For every $p>1$ there is $C(p)>0$ such that for every $n\in\Z\backslash\{0\}$ and $w\in A_p$ we have
\begin{equation}
\label{prituri_se}
\nor{T^n}_{B(L^p(w))}\leqslant C(p)\, |n|^3\,
[w]_p^{p^*\!/p}\,.
\end{equation}
\end{theorem}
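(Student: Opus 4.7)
The natural strategy is to decouple the dependence on $w$ from the dependence on $n$ by reducing the general $L^p(w)$ bound to a weighted $L^2$ estimate via the sharp extrapolation theorem. Concretely, I would apply Theorem \ref{extrapol} to the operator $T^n$: once one has
$$\nor{T^n}_{\cB(L^2(w))} \leq C \, |n|^{\alpha} \, [w]_{A_2}$$
for some absolute exponent $\alpha$ and every $w\in A_2$, extrapolation promotes this automatically to
$$\nor{T^n}_{\cB(L^p(w))} \leq c_p \, C \, |n|^\alpha \, [w]_{A_p}^{p^*/p},$$
since the prefactor $|n|^\alpha$ is inert under Rubio-de-Francia extrapolation (which only modifies the $[w]$-exponent). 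Thus the entire task reduces to proving a weighted $L^2$ estimate linear in $[w]_{A_2}$ and polynomial in $n$; the target exponent is $\alpha=3$.

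\textbf{Construction of the $L^2(w)$ bound.} The plan here is to adapt the Bellman function / averaging proof of Theorem \ref{novljan} given in \cite{DV1} to the powers $T^n$. By \eqref{coppy}--\eqref{chains and things}, the convolution kernel of $T^n=R^{2n}$ is the principal value of $\Omega_{2n}(z/|z|)/|z|^2$ with
$$\Omega_{2n}(\zeta) = \frac{(-1)^n |2n|}{2\pi}\,\zeta^{-2n}.$$
One power of $|n|$ is therefore visible immediately from the kernel prefactor. Next, I would apply the Iwaniec--Martin complex method of rotations (summarized in Section \ref{rotmet}) to represent $T^n$ as an average over $\zeta\in S^1$ of directional complex Hilbert transforms $\kA_\zeta$ of order $2n$, each a scalar convolution on $\C$ whose dyadic model is a Haar shift of complexity essentially $|n|$. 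Applying the Bellman-function machinery of \cite{DV1} (a concave functional encoding the $A_2$ condition, combined with averaging over translations and dilations) to each $\kA_\zeta$ should produce a weighted $L^2$ bound of the form $C\,|n|^{\beta}\,[w]_{A_2}$; integrating over $\zeta\in S^1$ by Minkowski's integral inequality preserves the linearity in $[w]_{A_2}$, and together with the prefactor $|2n|$ from $\Omega_{2n}$ yields the desired total exponent $\alpha=2+\beta$. Choosing $\beta=1$ (or verifying that the combined loss is no worse than quadratic in $|n|$) gives exactly $|n|^3$.

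\textbf{Main obstacle.} The hard part will be the quantitative Bellman-function bookkeeping. The argument in \cite{DV1} is finely tuned to deliver the \emph{sharp} linear bound in $[w]_{A_2}$ for the specific operator $T$: the concave function there interacts intricately with Haar coefficients via the martingale-transform model of $T$, and preserving linearity in $[w]_{A_2}$ while tracking the polynomial loss when the kernel is replaced by $\zeta^{-2n}$ (equivalently, when the dyadic shift complexity is increased to $|n|$) is nontrivial. One must show that neither the higher-order oscillation in the rotation-method decomposition nor the larger shift complexity destroys the sharp weight exponent, and that the cumulative $n$-loss is bounded by a polynomial of degree three. Once this delicate control is in place, the remainder of the proof is assembly: combine the directional estimates via the rotation-method representation, integrate over $S^1$, and invoke Theorem \ref{extrapol} to conclude \eqref{prituri_se}.
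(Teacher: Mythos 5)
You should first note that the paper does not actually prove Theorem \ref{jednamladost}: it states explicitly that the proof (from \cite{D2}) is not presented, only that the result follows by an ``adequate modification'' of the averaging technique of Section \ref{Evtushenko} and of \cite{DV1,DPV}. So your proposal can only be measured against that indicated route. Your first step is fine and is certainly part of the intended argument: once one has $\nor{T^n}_{\cB(L^2(w))}\leq C|n|^3[w]_{A_2}$ for all $w\in A_2$, Theorem \ref{extrapol} (with $\tau=1$ and the constant $C|n|^3$ carried along inertly) gives exactly \eqref{prituri_se}.

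The construction of the weighted $L^2$ bound, however, contains a genuine gap. The Iwaniec--Martin complex method of rotations buys you nothing here: in the planar case the directional operators $\kA_\zeta$ are just rotated copies of ${\bf H}_\C^{2n}=T^n$ itself (this is the content of \eqref{tavagna}), so ``representing $T^n$ as an average of the $\kA_\zeta$'' reduces the weighted estimate for $T^n$ to the weighted estimate for $T^n$ --- it is circular; the rotation method is a device for passing from vector-valued operators on higher-dimensional $\C^d$ to the scalar planar powers, and in any case directional decompositions cooperate badly with general $A_2$ weights, since restrictions of a planar weight to lines need not be uniformly in $A_2$. Likewise, the assertion that each piece has a ``dyadic model of Haar-shift complexity $|n|$'' obeying a weighted bound with loss $|n|^{\beta}$, $\beta=1$, is precisely the statement that needs proof, and no mechanism for it is offered; you yourself list this bookkeeping as the ``main obstacle,'' but it is the entire content of the theorem. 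The adaptation that actually works (and is what \cite{D2} does, following the scheme of Section \ref{chantee}) keeps the very same martingale transforms --- whose Wittwer bound $\leqsim[w]_{A_2}$ is uniform in the symbol, with no complexity growth in $n$ --- and changes only the modulation in the rotation average, replacing $e^{-2i\psi}$ in \eqref{deveta godina} by $e^{-2ni\psi}$; the averaged operator then equals $c_nT^n$ as in Proposition \ref{prostranstvima gdje svi}, and the real work is to show $c_n\neq0$ for every $n$ together with a quantitative lower bound $|c_n|\,\geqsim\,|n|^{-3}$ (the factor $|n|$ visible in $\Omega_{2n}$ accounts for part of this, the rest coming from the polynomial decay of the angular Fourier coefficients of the fixed profile $F$). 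Without that nonvanishing and lower bound, the exponent $3$ in \eqref{prituri_se} is not justified by your argument.
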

The proof will not be presented in this note. The theorem above has an improvement due to Hyt\"onen \cite[Corollary 7.5]{Hy} who proved, for every $n\in\N$, $p>1$ and $\e>0$, the estimate
\begin{equation}
\label{38}
\nor{T^n}_{B(L^p(w))}\leqslant C(p,\e)\, |n|^{1+\e}\,
[w]_p^{p^*\!/p}\,.
\end{equation}
More recently \cite{Hy2} he observed that one may improve this to 
\begin{equation*}
\label{39}
\nor{T^n}_{B(L^p(w))}\leqslant C(p)\, |n|(1+\log |n|)^2\,
[w]_p^{p^*\!/p}\,.
\end{equation*}
Even more recently, Hyt\"onen, Roncal and Tapiola \cite[Corollary 4.2]{HRT} improved this estimate to
\begin{equation}
\label{40}
\nor{T^n}_{B(L^p(w))}\leqslant C(p)\, |n|(1+\log |n|)\,
[w]_p^{p^*\!/p}\,.
\end{equation}

On the other hand, Hyt\"onen also noticed (personal communication, 2011) that, for any $\e>0$, the estimate 
$$
\nor{T^n}_{B(L^2(w))}\leqslant C(\e)\, n^{1-\e}\,
[w]_2\,
$$
is {\it not} valid for all $n\in\N$ and $w\in A_2$. Indeed, suppose the contrary. Then, by Theorem \ref{extrapol}, we would get
$$
\nor{T^n}_{B(L^p(w))}\leqslant C(p,\e)\, n^{1-\e}\,
$$
for any $n\in\N$, $p>2$ and all $w\in A_p$. In particular, in the unweighted case ($w\equiv 1$) we would have
$$
\nor{T^n}_{B(L^p(\C))}\leqslant C(p,\e)\, n^{1-\e},
$$
for all $n\in\N$ and $p>2$. However, Theorem \ref{izvorska} gives
$$
\nor{T^n}_{B(L^p(\C))}\geqslant C(p)\, n^{1-2/p},
$$
therefore the combination of the last two inequalities would lead to
$$
n^{1-2/p}\,\leqslant C(p,\e)\, n^{1-\e}
$$
for all $n\in\N$ and $p>2$, which is clearly impossible. 

\medskip
It remains an open question whether one can get rid of $\e$ in \eqref{38} or, equivalently, of the logarithmic term in \eqref{40}. See \cite[Section 6.2]{Hy2} and \cite[Conjecture 4.6]{HRT}.

\medskip
Hyt\"onen's observation in \cite[Corollary 7.5]{Hy} arrives from a much stronger result -- the $A_2$ theorem for general Calder\'on-Zygmund operators -- whose original proof, also due to Hyt\"onen \cite{Hy1}, is way more involved than our proof of Theorem \ref{jednamladost}, while on the other hand it also uses a (far subtler) version of the averaging technique.

\section{Weighted estimate for $T$: proof of Theorem \ref{novljan}.}
\label{Evtushenko}

In view of the above-cited extrapolation result (Theorem \ref{extrapol}), in order to prove Theorem \ref{novljan} it suffices to consider the case $p=2$ 
(recall that
 $$
 [w]_2=\sup_{Q\subset\R^2}\,
 \avg{w}_Q\bavg{w^{-1}}_Q\,,
 $$
where the supremum is taken over all squares in $\C$ regardless of their orientation):

\begin{theorem}[\cite{PV,DV1}]
\label{zujovic-crni}
For any $w\in A_2$ we have
$$
\nor{T}_{\cB(L^2(w))}\,\leqsim\, [w]_{A_2}.
$$
This estimate is sharp.
\end{theorem}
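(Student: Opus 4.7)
The plan is to deduce the weighted $L^2$ bound from two ingredients: a representation of $T$ as an average of simpler dyadic "model" operators, and a Bellman-function argument that gives the sharp linear $A_2$ bound uniformly for each model. The extrapolation result (Theorem \ref{extrapol}) then upgrades the $p=2$ case to all $p>1$, so it indeed suffices to treat $p=2$.

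\smallskip
\noindent\textbf{Step 1: Averaging $T$ over dyadic grids.} Parametrize dyadic lattices in $\C$ by a triple $(\tau,\alpha,\psi)$ of translation, dilation, and rotation parameters, and let $\mathcal D=\mathcal D_{\tau,\alpha,\psi}$ denote the resulting grid. Following the Petermichl-type procedure (combined with Exercise \ref{tommy lee jones robert de niro} to handle rotations), I would construct for each $\mathcal D$ a dyadic shift operator $\kS_{\mathcal D}$ -- a sum of terms of the form $\sum_{Q\in\mathcal D}(h_Q^{(1)}\otimes h_Q^{(2)})$ with $h_Q^{(i)}$ Haar-type functions adapted to $Q$ -- and show that, on $\cS(\C)$,
\begin{equation*}
Tf = c\int \kS_{\mathcal D}f\,d\nu(\mathcal D),
\end{equation*}
where $\nu$ is a probability measure on the parameter space and $c$ an absolute constant. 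The kernel of the right-hand side must be matched with \eqref{coppy}--\eqref{chains and things} for $k=2$; here homogeneity and the rotation symmetry of $T$ do the bulk of the work.

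\smallskip
\noindent\textbf{Step 2: Sharp linear $A_2$ bound for the model.} The heart of the matter is to prove
\begin{equation*}
\nor{\kS_{\mathcal D}f}_{L^2(w)}\,\leqsim\,[w]_{A_2}\nor{f}_{L^2(w)}
\end{equation*}
uniformly in $\mathcal D$. I would do this by a Bellman function: fix $Q_0:=[w]_{A_2}$ and construct $B=B(x_1,x_2,x_3,x_4,x_5)$ on the domain
\[
\Omega_{Q_0}:=\Mn{(x_1,\dots,x_5)\in\R^5}{|x_1|^2\leq x_3x_4,\ |x_2|^2\leq x_4x_5,\ 1\leq x_3x_4\leq Q_0}
\]
(the variables encoding $\avg{fw^{-1/2}}_Q$, $\avg{gw^{1/2}}_Q$, $\avg{w}_Q$, $\avg{w^{-1}}_Q$, and an auxiliary $L^2$ norm), with size $0\leq B\leq CQ_0(x_3x_5+x_4x_6)$ and a concavity inequality saying that, for admissible dyadic splittings into two children of equal mass, the Hessian-type expression controls the contribution of one "brick" of the dyadic shift. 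Telescoping $B$ along the tree and summing over $Q\in\mathcal D$ then bounds the dyadic bilinear form $\sk{\kS_{\mathcal D}f}{g}_w$ by $CQ_0\nor{f}_{L^2(w)}\nor{g}_{L^2(w^{-1})}$.

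\smallskip
\noindent\textbf{Step 3: Averaging and sharpness.} Integrating Minkowski's inequality across $\nu$ transfers the uniform bound from Step 2 to $T$ itself, completing the upper estimate. For sharpness, I would test against the power weight $w_\alpha(z):=|z|^{2\alpha}$ with $\alpha\nearrow 1$. A direct computation on squares gives $[w_\alpha]_{A_2}\sim (1-\alpha)^{-1}$, and evaluating $T$ on suitable radial bumps or annular indicators (exploiting the explicit kernel $-1/(\pi(\zeta-z)^2)$ and the fact that $T$ sends $\bar z/z^{k+1}$-type profiles into themselves up to explicit factors) yields $\nor{Tf}_{L^2(w_\alpha)}/\nor{f}_{L^2(w_\alpha)}\,\geqsim\,(1-\alpha)^{-1}\sim[w_\alpha]_{A_2}$.

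\smallskip
\noindent\textbf{Main obstacle.} The hard part is Step 2: exhibiting a concrete $B$ with the correct concavity and the linear growth in $Q_0$. The integral representation in Step 1 is morally clear once the right kernel identity is verified, and the sharpness in Step 3 is a calculation; but the Bellman function requires a careful choice of variables so that (i) the $A_2$ constraint $x_3x_4\leq Q_0$ enters only linearly, and (ii) the loss per dyadic splitting, summed over the tree, reproduces exactly one bilinear pairing $\sk{f}{g}_w$. Getting these two features simultaneously is what distinguishes the sharp linear bound from the easier estimate $[w]_{A_2}^{3/2}$ obtained from cruder Bellman-function candidates.
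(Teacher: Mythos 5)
Your overall architecture -- average $T$ over translated, dilated and rotated dyadic model operators, prove a uniform linear $A_2$ bound for the models, transfer it by Minkowski, and test power weights for sharpness -- is exactly the architecture of the paper's proof, and Step 3 is essentially the paper's sharpness computation (the paper uses $w_\alpha(z)=|z|^\alpha$, $\alpha\to 2^-$, with $f=|z|^{-\alpha}\chi_E$ on a quarter-disc and bounds $Tf$ from below on $-E$). But there are two concrete points in Step 1 where your sketch, as written, would fail or is incomplete.

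First, the choice of model operator matters and cannot be ``Petermichl-type.'' The Petermichl dyadic shift (and its Riesz-transform analogues from \cite{PTV}) has an odd kernel symmetry, which is why it reproduces the Hilbert and first-order Riesz transforms; the kernel of $T$ is \emph{even}, and averaging shift-type operators against it produces zero. The paper makes this point explicitly and replaces shifts by martingale transforms -- concretely, the $0$-type Haar projections $f\mapsto\sum_{Q}\sk{f}{h_Q^0}h_Q^0$, which fit your generic form $\sum_Q h_Q^{(1)}\otimes h_Q^{(2)}$ only with $h_Q^{(1)}=h_Q^{(2)}=h_Q^0$. Second, ``homogeneity and rotation symmetry do the bulk of the work'' only gets you to $S=cT$ for \emph{some} constant $c$; it does not rule out $c=0$, in which case the representation is vacuous. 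The paper must, and does, compute $c$ explicitly (an elementary but unavoidable integral of the averaged kernel against $\cos 2t$, yielding $c\approx -0.042$). Any correct write-up of Step 1 needs both the even model operator and this nonvanishing check.

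On Step 2 you take a genuinely different route from the paper. You propose a direct five-variable Bellman function for the weighted bilinear form of the model operator (in the style of Nazarov--Treil--Volberg and Petermichl--Volberg); the paper instead invokes Wittwer's theorem for martingale transforms and proves it by decomposing Haar functions into weighted Haar functions $h_I^w$ plus averaging parts, reducing the off-diagonal terms to a Carleson embedding whose Carleson condition is verified with the much simpler two-variable Bellman function $b(x,y)=(xy)^\alpha$. Both routes are known to work; the paper's is more modular and the Bellman function it actually needs is far easier to exhibit than the one you defer to. As you acknowledge, constructing your $B$ with linear growth in $Q_0$ is the genuinely hard part, so as it stands Step 2 is an appeal to a known construction rather than a proof; also note the stray variable $x_6$ in your size condition, which your domain does not contain.
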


This theorem was first proven by Petermichl and Volberg \cite{PV} by means of the Bellman function technique. Here we present another approach, essentially from \cite{DV1}. The key result, and one of the inspirations for the proof in \cite{DV1}, is an inequality by Wittwer \cite{W} (see Theorem \ref{sve prolazi} 
below), saying, roughly,  that martingale transforms admit on $L^2(w)$ linear estimates in terms of $[w]_{A_2}$, i.e. just the kind of estimates we want for $T$. Its proof is an example of the Bellman function technique and is modelled on the paper by Nazarov, Treil and Volberg \cite{NTV}. We present it in Section \ref{tamo dolje niz mahalu}.

Another source of inspiration was a paper by Petermichl, Treil and Volberg \cite{PTV}, where (first-order) Riesz transforms were represented as averages of so-called dyadic shifts. That result, in its turn, was an upgrade of the theorem by Petermichl \cite{P}, where the Hilbert transform was represented as an average of dyadic shifts. While first-order Riesz transforms are singular operators with odd kernels, the Ahlfors-Beurling operator $T$ has an even kernel and thus dyadic shifts cannot be turned into $T$ by the averaging method. It turns out that an appropriate replacement are precisely the martingale transforms.

\subsection{Haar functions and martingale transforms}
\label{flash gordon}

For each interval $I\subset\R$ let
$I_-, I_+$ be its left and its right half, respectively. Denote by
$\chi_I$  its characteristic function and by $h_I$ its {\it Haar function}, which is to say 
$h_I=|I|^{-1/2}(\chi_{I_+}-\chi_{I_-})$.
An interval in $\R$ is called {\it dyadic} if it is of the form $[k2^m,(k+1)2^m)$ for some integers $k,m$.
The collection $\mn{h_I}{I\subset[0,1]\text{ dyadic}}$ is called the (standard) {\it Haar system} on $[0,1]$. Together with the constant function $\mathbf 1$ it forms a complete orthogonal system in $L^2([0,1])$; see Muscalu-Schlag \cite[Section 8.4]{MS} or Grafakos \cite[Section 5.4]{G1}.

Typically Haar functions are defined on intervals in $\R$. By working in $\R^2$ one is led to define them on squares \cite{DV1}. 
Let ${\mathcal L}$ be the standard dyadic lattice in $\R^2$, i.e., the collection of squares $Q=I\times J$, where both $I$ and $J$ are dyadic intervals of the same length.
The corresponding {\it Haar functions} are then
$$
h_Q^0=\frac1{\sqrt{|I|}}\, \chi_I\otimes h_J\,,\hskip 15pt
h_Q^+=\sqrt{\frac2{|J|}}\, h_I\otimes \chi_{J_+}\,,\hskip 15pt
h_Q^-=\sqrt{\frac2{|J|}}\, h_I\otimes \chi_{J_-}\,.
$$
The constants in the front are chosen so that the functions are normalized in (the usual) $L^2$.

We define {\it martingale transforms} $M_\sigma$ to be the operators
\begin{equation}
\label{sopci}
M_\sigma f(\zeta)=\sum_{Q\in{\mathcal L}\atop *\in\{0,+,-\}}\sigma_Q^*\sk{f}{h_Q^*}h_Q^*(\zeta)\,,
\end{equation}
where $\sigma_Q^*\in \overline\Delta$
and $h_Q^*$ are Haar functions.

Wittwer \cite{W} originally addressed weighted estimates of martingale transforms on the line and associated with the standard dyadic lattice in $\R$. Her result (here Theorem \ref{sve prolazi}) is formulated and proven in Section \ref{tamo dolje niz mahalu}. 
A careful reading of 
the proof 
shows that the same 
holds for the ``planar" operators $M_\sigma$ defined above: 

\begin{theorem}[Wittwer \cite{W}]
\label{Wittwer}
There exists $C>0$ such that for any family of coefficients $\sigma$ and any $w\in A_2$,
\begin{equation*}
\label{satanas}
\nor{M_\sigma}_{B(L^2(w))}\leqslant C[w]_{A_2}\,.
\end{equation*}
\end{theorem}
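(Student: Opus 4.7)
My plan is to adapt, rather than invoke as a black box, the Bellman function argument that proves the one-dimensional version (Theorem \ref{sve prolazi}); passing from intervals to squares requires re-examining only the concavity step. By duality of $L^2(w)$ against $L^2(w^{-1})$ in the unweighted pairing, it suffices to show
$$
|\sk{M_\sigma f}{g}|\,\leqsim\,[w]_{A_2}\,\nor{f}_{L^2(w)}\,\nor{g}_{L^2(w^{-1})}\,,
$$
and since $|\sigma_Q^*|\leq1$, after truncation to a finite subfamily ${\mathcal L}_N\subset{\mathcal L}$ of squares inside one large ancestor $R_0$, the task reduces to bounding
$\sum_{Q\in{\mathcal L}_N}\sum_{*\in\{0,+,-\}}|\sk{f}{h_Q^*}\sk{g}{h_Q^*}|$.

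The plan is to use Wittwer's Bellman function $B=B_Q$, defined on
$$
\Omega_Q=\Mn{(x,y,u,v)\in\C^2\times\R_+^2}{|x|^2\leq u,\ |y|^2\leq v,\ 1\leq uv\leq Q}
$$
with $Q:=[w]_{A_2}$, and satisfying a size bound of the form $0\leq B\leq C\,Q\sqrt{uv}$ together with the two-point concavity
$$
B(P)-\tfrac12\bigl(B(P_0)+B(P_1)\bigr)\,\geq\,c\,|\Delta x|\cdot|\Delta y|
$$
for all $P_0,P_1\in\Omega_Q$ whose midpoint $P$ also lies in $\Omega_Q$. For every $R\in{\mathcal L}_N$ the sample point
$P_R=(\avg{f}_R,\avg{g}_R,\avg{w}_R,\avg{w^{-1}}_R)$ lies in $\Omega_Q$ thanks to Cauchy--Schwarz and the definition of $[w]_{A_2}$.

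The crux of the planar argument is to upgrade the two-point concavity into a four-children inequality matching the three-dimensional Haar decomposition at $R$. Bisect $R$ first horizontally into rectangles $R^+,R^-$: applying the two-point concavity to the pair $(P_{R^+},P_{R^-})$ produces, upon identifying the difference of averages with the Haar coefficient, a term proportional to $|R|^{-1}|\sk{f}{h_R^0}\sk{g}{h_R^0}|$. Then bisect each $R^\pm$ vertically into two daughter squares of $R$; a second and third application of the concavity produce terms proportional to $|R|^{-1}|\sk{f}{h_R^\pm}\sk{g}{h_R^\pm}|$. Adding these up yields
$$
B(P_R)-\frac14\sum_{R'\text{ child of }R}B(P_{R'})\,\geq\,\frac{c}{|R|}\sum_{*\in\{0,+,-\}}|\sk{f}{h_R^*}\sk{g}{h_R^*}|\,.
$$
Multiplying by $|R|$ and summing over ${\mathcal L}_N$ telescopes the left-hand side to at most $|R_0|\,B(P_{R_0})$, which by the size bound on $B$ and an application of Cauchy--Schwarz is $\leqsim [w]_{A_2}\nor{f}_{L^2(w)}\nor{g}_{L^2(w^{-1})}$.

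The main obstacle is the admissibility check at the three successive bisections: one must verify that the intermediate four-tuples, in which $x,y,u,v$ are now averages over rectangles rather than squares, still belong to $\Omega_Q$, so that the two-point concavity is legitimately applicable. The constraints $|x|^2\leq u$ and $|y|^2\leq v$ follow directly from Cauchy--Schwarz, while the constraint $uv\leq Q$ requires that one replaces in the definition of $[w]_{A_2}$ the family of squares by that of rectangles of aspect ratio two; these two quantities are comparable by a standard argument (decomposing such a rectangle into two squares) at the cost of an absolute multiplicative constant. Once this admissibility is secured, the telescoping above gives Theorem \ref{Wittwer} with linear dependence on $[w]_{A_2}$, exactly as in the one-dimensional case.
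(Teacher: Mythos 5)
Your plan differs from the paper's: the paper deduces the planar statement simply by observing that its one-dimensional proof of Theorem \ref{sve prolazi} (duality, expansion in the weighted Haar functions $h_I^w$, the four sums $I$--$IV$, and the weighted Carleson embedding, with the Bellman function $(xy)^\alpha$ used only to verify the Carleson condition for $\mu_I$) carries over verbatim to squares and the three Haar functions $h_Q^0,h_Q^\pm$. Your NTV/Wittwer-style ``pure Bellman'' route, with the three successive bisections matching $h_Q^0,h_Q^\pm$ and the comparison of the rectangle and square $A_2$ characteristics, is a legitimate alternative in spirit, but as written it has a genuine gap in the specification of the Bellman function, and the endgame does not produce the required right-hand side.

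Concretely: (i) the admissibility claim for the sample points fails. Cauchy--Schwarz gives $|\avg{f}_R|^2\leq \avg{|f|^2w}_R\,\avg{w^{-1}}_R$ (and $|\avg{g}_R|^2\leq\avg{|g|^2w^{-1}}_R\,\avg{w}_R$), not $|\avg{f}_R|^2\leq\avg{w}_R$, so the points $P_R=(\avg{f}_R,\avg{g}_R,\avg{w}_R,\avg{w^{-1}}_R)$ need not lie in your domain $\Omega_Q$ at all. (ii) Even granting admissibility, your size condition $0\leq B\leq CQ\sqrt{uv}$ carries no information about $f$ or $g$: after telescoping you only obtain $\sum_{R}\sum_*|\sk{f}{h_R^*}\sk{g}{h_R^*}|\leqsim Q^{3/2}|R_0|$, a bound independent of $f,g$ which grows with the truncation, and there is nothing on which the announced final ``application of Cauchy--Schwarz'' could act to produce $\nor{f}_{L^2(w)}\nor{g}_{L^2(w^{-1})}$. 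The genuine Wittwer/NTV function must carry additional ``energy'' variables majorizing $\avg{|f|^2w}_R$ and $\avg{|g|^2w^{-1}}_R$, with the Cauchy--Schwarz relations above as domain constraints, size linear in these energies times $[w]_{A_2}$, and the two-point drop condition; then telescoping yields $\leqsim[w]_{A_2}\big(\nor{f}_{L^2(w)}^2+\nor{g}_{L^2(w^{-1})}^2\big)$ and the product form follows by rescaling $f\mapsto\lambda f$, $g\mapsto g/\lambda$. Constructing (or precisely citing) such a function and verifying its concavity is the actual content of Wittwer's theorem and cannot be assumed; either supply that construction, or argue as the paper does, by transplanting the proof of Theorem \ref{sve prolazi} to the lattice $\cL$.
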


\subsection{Main idea} 
\label{maide}
Briefly, we represent $T$ as an ``average'' of operators that on $\cB(L^2(w))$ admit linear estimates in terms of $[w]_{A_2}$.

Instead of a dyadic lattice let us for a moment consider a unit
{\it grid} ${\mathcal G}$ of squares. This is a family of
squares $I\times J$, where $I$ and $J$ are
dyadic intervals of unit length.

Our plan is to consecutively average $0-$type projections (i.e. operators of the form 
$$
f\mapsto\sum_{Q\in \cF}\sk{f}{h_Q^0}h_Q^0,
$$
where $\cF$ is some dyadic collection of squares) over (ever wider) families of $\cF$'s comprising
\begin{itemize}
\item
translated unit grids
\item
translated grids of arbitrary (fixed) size
\item
translated and rotated grids of arbitrary (fixed) size
\item
all lattices of a fixed calibre
\item
all lattices.
\end{itemize}
More precisely, introduce, for $f\in\cS$, 
$$
\cP_0 f := \sum_{Q\in {\mathcal G}} \sk{f}{h_Q^0}h_Q^0\,.
$$
Note that, at least in the unweighted case, $\cP_0$ is simply the orthogonal projection onto the subspaces generated by all Haar functions of type 0 having as supports dyadic squares of unit size.
Note also that we have one sole set of coefficients, namely $\sigma_Q^0=1$ and $\sigma_Q^+=\sigma_Q^-=0$ for all such $Q$.

This $\cP_0$ is our starting operator, the one we will conjugate by translations, dilations, rotations, and then average. In the process we will take care of:
\begin{itemize}
\item
approaching $T$, meaning that after the final average our resulting operator will {\it equal} the operator $T$, up to a nonzero multiplicative constant;
\item
justifying the name ``averaging'', i.e. preserving at any phase the same estimates that the original operator, $\cP_0$, enjoys on $L^2(w)$, $w\in A_2$, by Theorem \ref{Wittwer}.
\end{itemize}
The first part (the averaging) will be carried out in Section \ref{chantee}, while the second one (estimates) in Section \ref{capellanus}.

The idea presented here comes from the simple observation that $T$ is a (p.v.) integral operator whose kernel 
$k(z,w)=(z-w)^{-2}$ is characterized, up to a nonzero multiplicative constant, by the following properties:
\begin{itemize}
\item
translation invariance (convolution kernel): $k(z,w)=k(z+u,w+u)$, $\forall z,w,u\in\C$;
\item
``weighted dilation invariance'' (homogeneity of order -2): $k(\lambda z,\lambda w)=\lambda^{-2}k(z,w)$ for any $\lambda>0$;
\item
``weighted rotation invariance'': $k(e^{i\psi} z,e^{i\psi} w)=e^{-2i\psi}k(z,w)$ for any $\psi\in\R$.
\end{itemize}

\subsection{The averaging}
\label{chantee}

We start by carrying out the plan announced above.\\

\noindent\framebox{Averaging over translated unit grids.}
For $t\in\R^2$ define $\cP_t$ as the conjugation of $\cP_0$ by the translation $\tau_t$, explicitly,
\begin{equation}
\label{oto}
\cP_t=\tau_t\circ\cP_0\circ\tau_t^{-1}.
\end{equation}
It is straightforward that, for $f\in\cS$, 
$$
\cP_tf=\sum_{Q\in {\mathcal G}} \sk{f}{\tau_t h_Q^0}\tau_t h_Q^0\,.
$$
Observe that $\tau_t h_Q^0=h_{Q+t}^0$, therefore we may write
\begin{equation}
\label{fabijan}
\cP_tf=\sum_{Q\in {\mathcal G}_t} \sk{f}{h_Q^0}h_Q^0\,,
\end{equation}
where ${\mathcal G}_t:={\mathcal G}+t$ is the {\it grid} of
unit squares such that one of them contains the point $t$ as one of its vertices.

The family $\Omega:=\mn{{\mathcal G}_t}{t\in\R^2}$
of all unit grids naturally corresponds to the torus
$\R^2/\Z^2$, which is of course in one-to-one
correspondence with the square
$[0,1)^2$. Thus we are able to regard $\Omega$ as a
probability space $[0,1)^2$ where the probability measure equals 
the Lebesgue measure.

Consider the ``mathematical expectation" of the
``random variable" $\cP$\,: for $f\in\cS$ and $x\in\R^2$ define
$$
(\EP f)(x)=\int_\Omega(\cP_tf)(x)\, dt\, .
$$
It makes good sense to call the process of passing from all $\cP_t$ to $\EP$ the {\it
averaging}. The structure of the operator $\EP$ is revealed in the following proposition.

\begin{proposition}
\label{F}
The operator $\EP$ is a convolution operator with the kernel 
$-\beta\otimes\alpha$, where
$\alpha=h_0*h_0$ 
and
$\beta=\chi_0*\chi_0$.
Here $\chi_0$ and $h_0$ stand (respectively) for the characteristic and Haar function of the interval $(-1/2,1/2]$. 
That is, we have
$$
\EP f=f*F,
$$
where $f\in\cS(\R^2)$ and $F(x,y)=-\beta(x)\alpha(y)$.
\end{proposition}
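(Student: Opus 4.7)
The plan is to exploit the tensor-product structure of the type-zero Haar functions. Since every $Q\in\cG$ has the form $Q=I\times J$ with $|I|=|J|=1$, we have $h_Q^0=\chi_I\otimes h_J$; combining $\tau_t h_Q^0=h_{Q+t}^0$ with \eqref{fabijan} shows that $\cP_t f(x)=\int_{\R^2}K_t(x,y)f(y)\,dy$ with
$$
K_t(x,y)=\Big(\sum_I\chi_{I+t_1}(x_1)\chi_{I+t_1}(y_1)\Big)\Big(\sum_J h_{J+t_2}(x_2)h_{J+t_2}(y_2)\Big)=:A(t_1;x_1,y_1)\,B(t_2;x_2,y_2),
$$
the sums running over unit dyadic intervals of $\R$. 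Note that $A$ depends only on $t_1$ and $B$ only on $t_2$; moreover, a shift of $(x_i,y_i)$ can be absorbed into a shift of $t_i$, and both $A$ and $B$ are $1$-periodic in $t_i$, so the averages
$$
A^*(u):=\int_0^1 A(t;u,0)\,dt,\qquad B^*(u):=\int_0^1 B(t;u,0)\,dt
$$
yield, via Fubini, $\EP f(x)=\int K(x,y)f(y)\,dy$ with $K(x,y)=A^*(x_1-y_1)\,B^*(x_2-y_2)$. The proposition thus reduces to the two identities $A^*=\beta$ and $B^*=-\alpha$.

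For $A^*$: the quantity $A(t_1;x_1,y_1)$ equals $1$ precisely when $x_1,y_1$ lie in a common unit interval of the translated grid $\Z+t_1$, and equals $0$ otherwise. A short argument on $\R/\Z$ (the translate $(y_1-t_1,x_1-t_1]$ of a fixed interval of length $|x_1-y_1|$ fails to cross an integer on a set of $t_1$'s of measure $\max(0,1-|x_1-y_1|)$ when $|x_1-y_1|<1$, and always crosses one otherwise) shows that this yields $\max(0,1-|x_1-y_1|)=(\chi_0*\chi_0)(x_1-y_1)=\beta(x_1-y_1)$.

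For $B^*$: writing $h_{J+t_2}(\,\cdot\,)=h_0(\,\cdot\,-c_{J+t_2})$ where $c_{J+t_2}$ is the center of $J+t_2$, and performing the substitution $s=c_{J+t_2}$ that amalgamates the sum over $J$ with the integration over $t_2\in[0,1)$ into a single integral over $\R$, one obtains
$$
B^*(u)=\int_\R h_0(v)\,h_0(v-u)\,dv.
$$
Since $h_0$ is almost everywhere odd, $h_0(u-v)=-h_0(v-u)$, and hence the right-hand side coincides with $-(h_0*h_0)(u)=-\alpha(u)$.

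Putting these together gives $K(x,y)=-\beta(x_1-y_1)\alpha(x_2-y_2)=F(x-y)$ with $F(x_1,x_2)=-\beta(x_1)\alpha(x_2)$, i.e.\ $\EP f=f*F$. The single point demanding real care is the minus sign in $B^*$: the averaging produces the \emph{autocorrelation} of $h_0$ rather than the convolution $h_0*h_0$, and the two differ by a sign precisely because $h_0$ is essentially odd. Keeping the orientation consistent in the change of variable that reduces the sum-plus-integral to a single integral over $\R$ is the only real hazard; everything else is a routine Fubini computation.
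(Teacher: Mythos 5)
Your proof is correct and follows essentially the same route as the paper's: both compute the kernel of $\cP_t$ explicitly and average over $t\in[0,1)^2$, identifying the result as convolution with $-\beta\otimes\alpha$. The only difference is organizational — you factor the average into two one-dimensional correlations (with a neat geometric reading of $A^*=\beta$ and a clean handling of the sign via the oddness of $h_0$), whereas the paper localizes to the unique square containing $x$ and performs a single change of variables; both are sound.
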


\begin{proof}
Choose $t=(t_1,t_2)\in\R^2$ and $Q=I\times J\in {\mathcal
G}_t$. Then
$$
\sk{f}{h_Q^0}=\int_\R\!\int_\R f(s_1,s_2)h_Q^0(s_1,s_2)\,
ds_1\, ds_2 = \int_J\!\int_I f(s_1,s_2)\chi_I(s_1)h_J(s_2)\,
ds_1\, ds_2,.
$$
Thus for (fixed) $f\in
\cS(\R^2)$ and $x=(x_1,x_2)\in\R^2$ we have, by \eqref{fabijan},
$$
(\cP_t f)(x) =\sum_{Q\in {\mathcal G}_t}
\int_\R\!\int_\R f(s_1,s_2)\chi_I(s_1)h_J(s_2)\,
ds_1\, ds_2 \, \cdot h_Q^0(x).
$$
The expression under the summation sign in the last row is
nonzero for exactly one $Q\in {\mathcal G}_t$; namely, one such that
$h_Q^0(x)\not =0$. This means that
$x=(x_1,x_2)\in Q$ and hence $x_1\in I$ and $x_2\in J$.
Thus $h_Q^0(x)=h_J(x_2)$. We thus obtain
\begin{equation}
\label{a}
(\cP_t f)(x)=\int_\R\!\int_\R f(s_1,s_2)
\chi_{I}(s_1)h_{J}(s_2)h_J(x_2)   
\,ds_1\, ds_2\, .
\end{equation}
Because ${\mathcal G}_t$ does not change if we increase or
decrease any component of $t$ by 1, we may assume that
$I=[t_1-1,t_1)$ and $J=[t_2-1,t_2)$. Denoting
$I_0=[-1/2,1/2)$, this assumption implies
$$
I=t_1-\frac{1}{2}+I_0 
\hskip 30pt {\rm and} \hskip 30pt
J=t_2-\frac{1}{2}+I_0\, .
$$
Now let $\chi_0$ and $h_0$ be as in the formulation of the
proposition and let $k_0:=-h_0$. Then
$$
\chi_I(s)=\chi_0(t_1-1/2-s)
$$
$$
h_I(s)=k_0(t_1-1/2-s)
$$
for all $s\in\R^2$. The analogue pair of identities is valid also for $J$,
of course.

The point here is that we modified the
expressions on the left to look more like a part of a
convolution integral with $t_1$ and
$t_2$ as integration variables and $s$ as a center of
convolving.

Together with (\ref{a}), the last two equalities imply
\begin{equation}
\label{brisi}
(\cP_t f)(x)
=\int_\R\!\int_\R f(s_1,s_2)\,
\chi_0(t_1-1/2-s_1)k_0(t_2-1/2-s_2)k_0(t_2-1/2-x_2)
\,ds_1\,
ds_2.
\end{equation}
Recall that $x_1\in I=[t_1-1,t_1)$ and $x_2\in
J=[t_2-1,t_2)$. Hence $x_j<t_j\leq x_j+1$ for $j=1,2$.
Averaging in our case means integrating over all admissible
$t_j$. Therefore,
$$
(\EP f)(x)=\int_{x_2}^{x_2+1}\int_{x_1}^{x_1+1}
(\cP_{(t_1,t_2)}f)(x)\,dt_1\, dt_2\, .
$$
By using the most recent expression for $(\cP_tf)(x)$,
changing variables (from $t_j$ to $t_j-1/2$) 
and applying Fubini's theorem, we obtain
\begin{equation}
\label{b}
(\EP
f)(x_1,x_2)=
\int_\R\!\int_\R f(s_1,s_2)\
\int_{x_2-1/2}^{x_2+1/2}\int_{x_1-1/2}^{x_1+1/2}[A]\,
dt_1\, dt_2\ \
ds_1\,ds_2  \, ,
\end{equation}
where
$$
A=k_0(t_2-x_2)\,\chi_0(t_1-s_1)\,k_0(t_2-s_2).
$$
Let us take a closer look at the inner integral. We have
$$
\aligned
\int_{x_2-1/2}^{x_2+1/2}\int_{x_1-1/2}^{x_1+1/2}& [A]\,dt_1\, dt_2\\
&= \int_{x_1-1/2}^{x_1+1/2} \chi_0(t_1-s_1) \, dt_1
\cdot
\int_{x_2-1/2}^{x_2+1/2}k_0(t_2-x_2)\, k_0(t_2-s_2) \, dt_2\\
&=\int_{-1/2}^{1/2} \chi_0(y_1-u_1) \, du_1
\cdot
 \int_{-1/2}^{1/2} k_0(-u_2)\,k_0(y_2-u_2) \, du_2,
\endaligned
$$
where in the last integral we introduced a new pair of variables, namely
\begin{equation}
\label{zlatko}
\aligned
u_j & =x_j-t_j\\
y_j & =x_j-s_j
\endaligned
\end{equation}
for $j=1,2$. Clearly, this is the same as $-(\chi_0*\chi_0)(y_1)(k_0*k_0)(y_2)$. 
Observe that $k_0*k_0=h_0*h_0$ and recall that $y_j$'s were given in \eqref{zlatko}. Hence we proved 
$$
\int_{x_2-1/2}^{x_2+1/2}\int_{x_1-1/2}^{x_1+1/2} [A]\,dt_1\, dt_2=
F(x_1-s_1,x_2-s_2),
$$
where $F$ is as in the formulation of the proposition. By \eqref{b}, this finishes the proof.
\end{proof}

Graphs of functions $\alpha$ and $\beta$ are shown as
Figures \ref{fig1} and \ref{fig2}, respectively.


\setlength{\unitlength}{1mm}
\begin{figure}
\begin{center}
\begin{picture}(140,80)(-63.5,-35)
\put(0,0){\vector(1,0){60}}
\put(0,0){\vector(-1,0){60}}
\put(0,0){\vector(0,1){30}}
\put(0,0){\vector(0,-1){40}}
\multiput(-0.5,16)(3.72,0){5}{\line(1,0){1.5}}
\multiput(16,-0.5)(0,3.7){5}{\line(0,1){1.5}}
\multiput(16,-0.5)(0,3.7){1}{\line(0,1){1}}
%
%
%
%
\put(15,-5){$\frac12$}
\put(-3,15){$\frac12$}
\multiput(-16,-0.5)(0,3.7){1}{\line(0,1){1}}
\put(-20.5,-5){$-\frac12$}

\put(60,2){$x$}
\put(31,-4){$1$}
\put(-7,-33){$-1$}

\thicklines
\put(0,-32){\line(1,3){16}}
\put(16,16){\line(1,-1){16}}
\put(0,-32){\line(-1,3){16}}
\put(-16,16){
\line(-1,-1){16}}


\end{picture}
\end{center}
\caption{Graph of $\alpha$}
{\protect{\label{fig1}}}%
\end{figure}
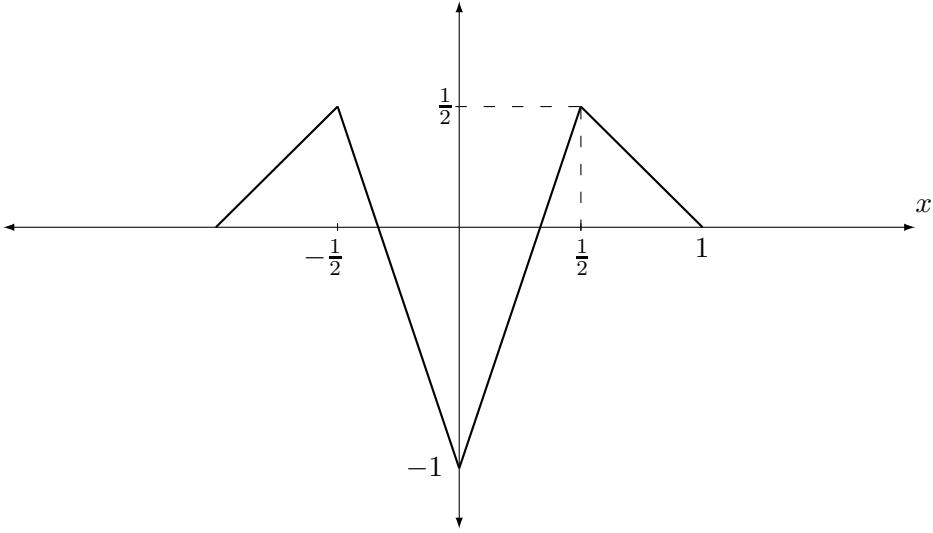



\setlength{\unitlength}{1mm}%
\begin{figure}
\begin{center}
\begin{picture}(140,45)(-63.5,-10)
\put(0,0){\vector(1,0){60}}
\put(0,0){\vector(-1,0){60}}
\put(0,0){\vector(0,1){40}}
\put(0,0){\vector(0,-1){15}}
\thicklines
\put(0,32){\line(1,-1){32}}
\put(0,32){\line(-1,-1){32}}


\put(60,2){$x$}
\put(31,-4){$1$}
\put(-33,-4){$1$}
\put(2,31){$1$}

\end{picture}
\end{center}
\caption{Graph of $\beta$}
{\protect{\label{fig2}}}%
\end{figure}
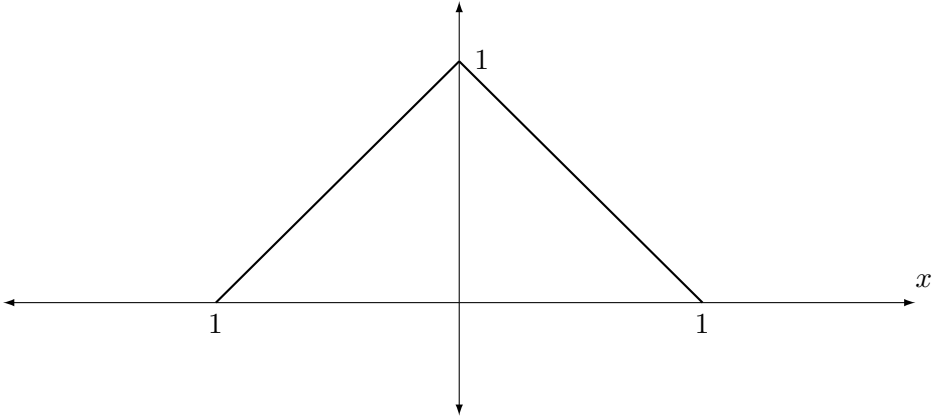


\bigskip\noindent\framebox{Averaging over translated grids of arbitrary (fixed) size.}
Instead of the unit grid we may consider a grid of squares
with sides of an arbitrary length $\rho >0$. Denote such a grid by ${\mathcal
G}_t^\rho$ if $t\in\R^2$ is a vertex of one of its members. Henceforth we
will call $\rho$ the {\it size} of the grid and $t$ its {\it reference point}.
We introduce another family of operators, defined by
$$
\cP_t^\rho f := \sum_{Q\in {\mathcal G}_t^\rho}
\sk{f}{h_Q^0}h_Q^0\,.
$$
Since ${\mathcal G}_t^\rho=\rho{\mathcal G}+t$, it follows that, for any $Q\in {\mathcal G}_t^\rho$, 
$$
h_Q^0=\frac1\rho(\tau_t\circ\delta_\rho)h_{\widetilde Q}^0,
$$
where $\widetilde Q$ is some (unique) square belonging to ${\mathcal G}$. From here we get a generalization of \eqref{oto}, namely
\begin{equation}
\label{bejturane jado}
\cP_t^\rho=\Lambda_{\rho,t}\circ\cP_0\circ\Lambda_{\rho,t}^{-1}
\end{equation}
where $\Lambda_{\rho,t}:=\tau_t\circ\delta_\rho$. So $\cP_t^\rho$ is obtained by conjugating our initial operator $\cP_0$ by a composition of a translation {\it and} dilation. 

In a similar way, by writing ${\mathcal G}_t^\rho=\rho{\mathcal G}_{t/\rho}$, we may derive the representation
\begin{equation}
\label{zvonko}
\cP_t^\rho=\delta_{\rho}\circ\cP_{t/\rho}\circ\delta_{\rho}^{-1}.
\end{equation}

\medskip
Let us {\bf fix} $\rho>0$ and average operators $\cP_t^\rho$ over all admissible $t$. This means studying the operator $\EP^\rho$, given on $\cS$ by
$$
\EP^\rho f:=\frac1{\rho^2}\int_{\Omega^\rho}\cP_t^\rho f\,dt,
$$
where $\Omega^\rho:=[0,\rho)\times[0,\rho)$.

We have the following generalization of Proposition \ref{F}.

\begin{exercise}
\label{ro}
Fix $\rho>0$. Then the operator $\EP^\rho$ can be on $\cS(\R^2)$ expressed as
$$
\EP^\rho f=f*F^\rho, 
$$
where 
$$
F^\rho(x,y):=\frac{1}{\rho^2}\,F\left ( \frac{x}{\rho},
\frac{y}{\rho}\right )\,
$$
and $F$ is as in Proposition \ref{F}.
\end{exercise}

\noindent\framebox{Averaging over translated and rotated grids of arbitrary (fixed) size.} 
At this step we bring rotations into play. Recall the notation \eqref{prokofjev}. 
For $\rho>0$ define 
\begin{equation}
\label{deveta godina}
G^\rho(\xi)=\frac1{2\pi}\int_0^{2\pi}(U_\psi F^\rho)(\xi)e^{-2i\psi}d\psi
\end{equation}
and $G=G^1$. Observe that 
\begin{equation}
\label{opa cupa}
\text{supp }G^\rho\subset K_2(0,\rho\sqrt 2).
\end{equation}
Indeed, if $|\xi|_2>\rho\sqrt 2$, then $|\cO_{-\psi}\xi|_2>\rho\sqrt 2$ for every $\psi\in\R$, hence $|\cO_{-\psi}\xi|_\infty>\rho$, therefore $F^\rho(\cO_{-\psi}\xi)=0$ for every $\psi\in\R$.

Exercise \ref{ro} helps to establish the following formula.
\begin{exercise}
\label{edinburgh}
For every $\rho>0$, $f\in\cS(\R^2)$ and $x\in\R^2$ we have
$$
(f*G^\rho)(x)=\frac1{2\pi}\int_0^{2\pi}
[(U_\psi\circ\EP^\rho\circ U_\psi^{-1})f](x)\, e^{-2i\psi}\,d\psi.
$$
\end{exercise}

\begin{remark}
From the definition of $\EP^\rho$ it follows that 
$$
[(U_\psi\circ\EP^\rho\circ U_\psi^{-1}) f](x)=
\frac1{\rho^2}\int_{\Omega^\rho}[(U_\psi\circ\cP_t^\rho\circ U_\psi^{-1}) f](x)\,dt.
$$
Now Exercise \ref{edinburgh} and \eqref{bejturane jado} imply
\begin{equation*}
f*G^\rho=\frac1{2\pi}\int_0^{2\pi}\frac1{\rho^2}\int_{\Omega^\rho}
\left(\Lambda_{\rho,t,\psi}\circ\cP_0\circ\Lambda_{\rho,t,\psi}^{-1}\right)f\,dt\,e^{-2i\psi}d\psi.
\end{equation*}
where $\Lambda_{\rho,t,\psi}:=U_\psi\circ\tau_t\circ\delta_\rho$. 
Therefore we could restate Exercise \ref{edinburgh} by saying that averaging the operators 
$$
\Lambda_{\rho,t,\psi}\circ\cP_0\circ\Lambda_{\rho,t,\psi}^{-1} : f\longmapsto \sum_{Q\in{\mathcal G}_t^\rho}\sk{f}{U_\psi h_Q^0}U_\psi h_Q^0
$$
where $\rho>0$ is fixed and $t,\psi$ run over all admissible values, returns a convolution operator whose (convolution) kernel is $G^\rho$. Note that the operators $\Lambda_{\rho,t,\psi}\circ\cP_0\circ\Lambda_{\rho,t,\psi}^{-1}$ are all martingale transforms, and that the supports of the ``rotated'' Haar functions are squares from the rotated grid $\cO_\psi{\mathcal G}_t^\rho$. 
\end{remark}

The following property of $G^\rho$ is simple yet important:
\begin{lemma}
\label{kamulatorsko kolo}
 $G^\rho$ has zero average on every circle centered at the origin. 
\end{lemma}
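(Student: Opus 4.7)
The plan is to compute the circle integral of $G^\rho$ directly from the definition \eqref{deveta godina}, swapping the order of integration via Fubini, and then to exploit the rotation invariance of arclength measure on circles centered at the origin in order to peel off the $\psi$-dependence of the inner integral.

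First, I would fix $r > 0$ and denote by $d\sigma$ arclength measure on $\{|\xi|=r\}$. Substituting \eqref{deveta godina} and interchanging the integrals gives
$$
\int_{|\xi|=r}G^\rho(\xi)\,d\sigma(\xi)
= \frac{1}{2\pi}\int_0^{2\pi}e^{-2i\psi}\left(\int_{|\xi|=r}F^\rho(\cO_{-\psi}\xi)\,d\sigma(\xi)\right)d\psi.
$$
The swap is legitimate because $F^\rho$ is continuous with compact support (recall $F=-\beta\otimes\alpha$ with $\alpha,\beta$ continuous of compact support), so the double integrand is bounded on a bounded region.

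Next, the key observation is that for each $\psi$ the map $\xi\mapsto\cO_{-\psi}\xi$ is a measure-preserving bijection of $\{|\xi|=r\}$, so the inner integral equals $\int_{|\xi|=r}F^\rho(\xi)\,d\sigma(\xi)$ and is in particular independent of $\psi$. Pulling this quantity out of the outer integral reduces the whole expression to a scalar multiple of $\int_0^{2\pi}e^{-2i\psi}\,d\psi=0$, which finishes the proof.

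I do not anticipate a serious obstacle here: the lemma is essentially the observation that $G^\rho$ was by construction defined as the $\psi\mapsto e^{2i\psi}$ Fourier coefficient of the rotational family $\{U_\psi F^\rho\}_\psi$, and the operation of integrating over a circle about $0$ commutes with the rotations $U_\psi$, so the outcome must be orthogonal to the constant mode in $\psi$.
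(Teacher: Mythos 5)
Your proposal is correct and follows essentially the same route as the paper: substitute the definition \eqref{deveta godina}, swap the two integrals, and use the rotation invariance of the circle average (the paper does this via the change of variable $\lambda=\f-\psi$ in polar notation) to factor out a $\psi$-independent quantity against $\int_0^{2\pi}e^{-2i\psi}\,d\psi=0$. Your explicit justification of Fubini via the boundedness and compact support of $F^\rho$ is a harmless addition that the paper leaves implicit.
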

\begin{proof}
Here it is convenient to resort to the ``complex'' notation. 
For every $R>0$,
\begin{equation}
\aligned
\int_0^{2\pi}G^\rho(Re^{i\f})\,d\f& =\frac1{2\pi}\int_0^{2\pi}e^{-2i\psi}\int_0^{2\pi}F^\rho(Re^{i(\f-\psi)})\,d\f\, d\psi\\
&=\frac1{2\pi}\int_0^{2\pi}e^{-2i\psi}\, d\psi\int_0^{2\pi}F^\rho(Re^{i\lambda})\,d\lambda\\
&=0.
\endaligned
\end{equation}\vskip -20pt
\end{proof}

\noindent\framebox{Averaging over all lattices of a fixed calibre.}
Only now, owing to Lemma \ref{kamulatorsko kolo}, will it make real sense to introduce the ``kernel for averages over whole lattices''. Define
\begin{equation}
\label{frank martin}
 k^r(x):=\sum_{n=-\infty}^\infty G^{2^nr}(x),
\end{equation}
in the sense that
\begin{equation}
\label{lumbajla}
 k^r(x)=\lim_{M\rightarrow\infty}k_M^r(x),
\end{equation}
where
\begin{equation}
\label{mandarine-citrus}
k_M^r(x):=\sum_{n=-\infty}^M G^{2^nr}(x).
\end{equation} 

The fact that $k^r*$ is a sum of operators, obtained by
averaging over grids of size $r\cdot 2^n$, hints at $k^r*$
itself being an average, this time over {\sl unions} of
these grids, i.e. lattices of calibre $r$. While it is not clear what could be a probability space corresponding to all lattices of a fixed calibre, we define the above-said average as a limit of averages of truncated lattices. Then the statement makes sense and holds, as will be shown.

\medskip
For every $x\ne 0$ and $M\in\Z$, the sum $k_M^r(x)$ is finite (i.e. has only finitely many nonzero terms). Indeed, from 
\eqref{opa cupa} it emerges that $G^{2^nr}(x)\ne 0$ implies 
$$
n\geq\log_2\frac{|x|_2}r-\frac12=:C(x,r)>-\infty.
$$
Therefore, with $N(x,r):=[C(x,r)]$, where $[y]$ is the integer part of $y\in\R$,
$$
 k_M^r(x)=\sum_{n=N(x,r)}^M G^{2^nr}(x).
$$
Consequently,
\begin{equation}
\label{feriza}
 k^r(x)=\lim_{M\rightarrow\infty}\sum_{n=N(x,r)}^M G^{2^nr}(x).
\end{equation}

\begin{remark}
\label{daje}
Since $N(x,r)$ rises as $|x|$ rises, this shows that we also have 
$$
 k_M^r(y)=\sum_{n=N(x,r)}^M G^{2^nr}(y)
$$
for any $y\in\R^2$ such that $|y|\geq|x|$. Consequently, $ k_M^r$ is continuous on $\R^2\backslash\{0\}$ for any $M\in\Z$ and $r>0$, since on any set of the form $\{|y|>\e\}$ it is a finite sum of continuous functions $G^{2^nr}$.
\end{remark}

\begin{lemma}
\label{saban bajramovic}
The sum in \eqref{lumbajla} converges absolutely and uniformly on 
every complement of a neighbourhood of $0$
and satisfies the estimate $| k^r(x)|\leqsim |x|^{-2}$, uniformly in $r>0$. 
By Remark \ref{daje}, this implies that $k^r$ is continuous on $\R^2\backslash\{0\}$.
\end{lemma}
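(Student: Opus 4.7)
The plan is to reduce everything to three facts about the ``base'' function $G = G^1$: a scaling identity relating $G^\rho$ to $G$, the compact support of $G$ inherited from \eqref{opa cupa}, and the uniform boundedness of $G$. Once these are in hand, the sum in \eqref{frank martin} becomes a geometric series whose terms decay like $(2^n r)^{-2}$, and the support of $G$ cuts off the contributions from small $2^n r$ at exactly the threshold $2^n r \sim |x|$.

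First I would verify the scaling identity $G^\rho(x) = \rho^{-2}\, G(x/\rho)$ for every $\rho>0$. This is immediate from $F^\rho(x) = \rho^{-2} F(x/\rho)$ (Exercise \ref{ro}) and the fact that rotations commute with dilations: $(U_\psi F^\rho)(x) = F^\rho(\cO_{-\psi}x) = \rho^{-2} F(\cO_{-\psi}(x/\rho)) = \rho^{-2}(U_\psi F)(x/\rho)$. Integrating in $\psi$ against $e^{-2i\psi}$ gives the claim. Next, \eqref{opa cupa} specialized to $\rho=1$ yields $\supp G \subset \overline{B}(0,\sqrt 2)$, and since $F=-\beta\otimes\alpha$ is bounded (both $\alpha$ and $\beta$ are bounded with compact support), the integral representation \eqref{deveta godina} shows $\|G\|_\infty \leq \|F\|_\infty < \infty$.

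Fix $x \neq 0$ and $r>0$. By the support bound and the scaling identity, $G^{2^n r}(x)$ vanishes unless $|x|/(2^n r) \leq \sqrt 2$, i.e.\ unless $2^n r \geq |x|/\sqrt 2$. Let $N=N(x,r)$ be the smallest integer with $2^N r \geq |x|/\sqrt 2$; in particular $2^N r \sim |x|$ with implied constant independent of $r$. Then the two-sided infinite sum in \eqref{frank martin} collapses to a one-sided tail, and
\[
\sum_{n=-\infty}^{\infty} |G^{2^n r}(x)|
\;=\; \sum_{n=N}^{\infty} (2^n r)^{-2}\,|G(x/(2^n r))|
\;\leq\; \|G\|_\infty \sum_{n=N}^\infty (2^n r)^{-2}
\;=\; \tfrac{4}{3}\,\|G\|_\infty\,(2^N r)^{-2}
\;\leqsim\; |x|^{-2},
\]
uniformly in $r>0$. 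This establishes both the absolute pointwise convergence and the bound $|k^r(x)|\leqsim |x|^{-2}$.

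For uniform convergence on a set of the form $\{|x|\geq\e\}$: the upper tail satisfies $\sum_{n\geq M}(2^n r)^{-2}\|G\|_\infty \leq \tfrac{4}{3}\|G\|_\infty\, r^{-2} 4^{-M}$, which tends to $0$ uniformly in $x$ as $M\to\infty$, while the lower tail is identically zero once $M$ is taken above the (uniformly bounded below) value $N(x,r)$. Combined with Remark \ref{daje}, which says each $k_M^r$ is continuous on $\R^2\setminus\{0\}$, this yields the continuity of $k^r$ on $\R^2\setminus\{0\}$. The main (and only) point requiring care is ensuring that the implied constant in $|k^r(x)|\leqsim |x|^{-2}$ is genuinely independent of $r$, which is guaranteed by the defining inequality $2^N r \geq |x|/\sqrt 2$; no cancellation from Lemma \ref{kamulatorsko kolo} is needed for this bound, as the $\ell^\infty$ bound on $G$ and the geometric decay already suffice.
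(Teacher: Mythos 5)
Your proposal is correct and follows essentially the same route as the paper: use the support property \eqref{opa cupa} to truncate the sum from below at the scale $2^n r \sim |x|$, use the scaling $G^{\rho}(x)=\rho^{-2}G(x/\rho)$ together with $\|G\|_\infty\leq\|F\|_\infty\leq 1$, and sum the resulting geometric series to get the $r$-independent bound $\tfrac{8}{3}|x|^{-2}$. Your treatment of the uniform convergence on $\{|x|\geq\e\}$ is slightly more explicit than the paper's, but it is the same argument.
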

\begin{proof}
Take $x=(x_1,x_2)\in\R^2\backslash\{0\}$. 
Therefore, by \eqref{feriza},
$$
 k^r(x)
=\frac{1}{r^2}\sum_{n=N(x,r)}^{\infty}\frac{1}{4^n}\,G\Big(\frac{x}{2^nr}\Big). 
$$
Since $|F|\leq 1$, and thus $|G|\leq 1$, this implies
\[
|k^r(x)|
\leq\frac{1}{r^2}(1/4)^{N(x,r)}\,\frac{1}{1-1/4}
\leq\frac{4}{3r^2}(1/4)^{C(x,r)}
=\frac83\,\cdot\frac1{|x|_2^2}.
\qedhere
\]
\end{proof}

\begin{remark}
\label{six months}
Actually, the above proof shows that, for any nonzero $x\in\R^2$ and any subset $\cZ$ of $\Z$, 
$$
\sum_{n\in\cZ} |G^{2^nr}(x)|\leqsim |x|^{-2},
$$ 
with the implied constant independent of $x$, $r$ and $\cZ$. 
\end{remark}

\begin{lemma}
\label{zero average}
The average of $k^r$ over any circle centered at the origin is zero. That is, for every $R,r>0$ we have
$$
\int_0^{2\pi}k^r(Re^{i\psi})\,d\psi=0.
$$
\end{lemma}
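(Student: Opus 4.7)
The plan is to reduce the statement to Lemma \ref{kamulatorsko kolo} by interchanging the circular integral with the series defining $k^r$, and then applying the lemma termwise. Fix $R,r>0$ and consider the circle $C_R:=\{Re^{i\psi}:\psi\in[0,2\pi)\}$, which is a compact subset of $\R^2\setminus\{0\}$.

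First I would record that, by the definition \eqref{lumbajla},
$$
\int_0^{2\pi}k^r(Re^{i\psi})\,d\psi=\lim_{M\to\infty}\int_0^{2\pi}k_M^r(Re^{i\psi})\,d\psi,
$$
provided the convergence $k_M^r\to k^r$ is uniform on $C_R$. This uniformity is exactly what Lemma \ref{saban bajramovic} (together with Remark \ref{six months}) provides: the series $\sum_n G^{2^n r}$ converges absolutely and uniformly on every set of the form $\{|x|_2\geq\e\}$, with a tail bound controlled uniformly by $|x|^{-2}$. Hence on $C_R$ we may interchange the finite sum with the integral:
$$
\int_0^{2\pi}k_M^r(Re^{i\psi})\,d\psi=\sum_{n=-\infty}^{M}\int_0^{2\pi}G^{2^n r}(Re^{i\psi})\,d\psi.
$$

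The final step is simply to invoke Lemma \ref{kamulatorsko kolo}, which guarantees that each summand $\int_0^{2\pi}G^{2^n r}(Re^{i\psi})\,d\psi$ vanishes. Therefore every partial sum is zero, and passing to the limit as $M\to\infty$ yields the claim. The only point requiring care is the justification of the term-by-term integration, but this is immediate from the uniform bound $|G^{2^n r}(x)|\leqsim$ (something summable in $n$) supplied by Remark \ref{six months}; there is no genuine analytic obstacle, as the dilation-invariance built into the construction of $k^r$ makes the rotational symmetry established for a single $G^\rho$ pass automatically to the whole lacunary sum.
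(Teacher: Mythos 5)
Your proposal is correct and follows essentially the same route as the paper: both reduce the claim to Lemma \ref{kamulatorsko kolo} applied termwise, after justifying the interchange of the integral over the circle with the limit/sum defining $k^r$ via the bounds of Lemma \ref{saban bajramovic} and Remark \ref{six months} (the paper phrases this as dominated convergence, you as uniform convergence on the circle, which is the same estimate at work). The paper additionally notes that on the circle the sum starts at the fixed index $N(R,r)$, independent of $\psi$, which is the same finiteness observation you make via the support condition \eqref{opa cupa}.
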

\begin{proof}
We have, by \eqref{feriza},
$$
\int_0^{2\pi}k^r(Re^{i\psi})\,d\psi = \int_0^{2\pi}\lim_{M\rightarrow\infty}\sum_{n=N(Re^{i\psi},r)}^M G^{2^nr}(Re^{i\psi})\,d\psi\,.
$$
Because $N(Re^{i\psi},r)=N(R,r)$ for every $\psi\in\R$, and by Remark \ref{six months}, we may apply the dominated convergence theorem and conclude
$$
\int_0^{2\pi}k^r(Re^{i\psi})\,d\psi =\lim_{M\rightarrow\infty}\sum_{n=N(R,r)}^M  \int_0^{2\pi}G^{2^nr}(Re^{i\psi})\,d\psi.
$$
By Lemma \ref{kamulatorsko kolo}, this is equal to zero.
\end{proof}

\begin{proposition}
\label{jovo, momce mlado}
For every $r>0$, the function $k^r$ defines a tempered distribution, understood in the sense
$$
W_{k^r}(\phi)=\lim_{\e\rightarrow 0}\int_{\{|x|>\e\}}k^r(x)\phi(x)\,dx.
$$
\end{proposition}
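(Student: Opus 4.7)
The plan is to exploit two features of $k^r$ established above: the pointwise bound $|k^r(x)|\leqsim |x|^{-2}$ (Lemma \ref{saban bajramovic}) and the vanishing mean on every circle about the origin (Lemma \ref{zero average}). Together these make $W_{k^r}$ behave like a standard principal value distribution of Calder\'on--Zygmund type.

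First I would verify that the limit exists for every $\phi\in\cS(\R^2)$. Split
$$
\int_{\{|x|>\e\}}k^r(x)\phi(x)\,dx=\int_{\{\e<|x|<1\}}k^r(x)\phi(x)\,dx+\int_{\{|x|\geq1\}}k^r(x)\phi(x)\,dx.
$$
The second integral is independent of $\e$ (for $\e<1$) and converges absolutely, because $|k^r(x)\phi(x)|\leqsim |x|^{-2}|\phi(x)|$ and Schwartz decay of $\phi$ makes this integrable on $\{|x|\geq 1\}$; in fact it is bounded by $C\,\sup_x(1+|x|)^3|\phi(x)|$. For the first integral, apply Lemma \ref{zero average} in polar coordinates: for each fixed $\rho\in(\e,1)$, $\int_0^{2\pi}k^r(\rho e^{i\psi})\,d\psi=0$, so the constant $\phi(0)$ can be subtracted freely, giving
$$
\int_{\{\e<|x|<1\}}k^r(x)\phi(x)\,dx=\int_{\{\e<|x|<1\}}k^r(x)\bigl[\phi(x)-\phi(0)\bigr]\,dx.
$$
Since $|\phi(x)-\phi(0)|\leq\nor{\nabla\phi}_\infty|x|$, the new integrand is dominated by $\nor{\nabla\phi}_\infty\,|x|^{-1}$, which is integrable near the origin in $\R^2$. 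Dominated convergence therefore yields
$$
\lim_{\e\to 0}\int_{\{\e<|x|<1\}}k^r(x)\phi(x)\,dx=\int_{\{|x|<1\}}k^r(x)\bigl[\phi(x)-\phi(0)\bigr]\,dx,
$$
with absolute value at most $C\,\nor{\nabla\phi}_\infty$.

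Combining the two pieces, the limit defining $W_{k^r}(\phi)$ exists and satisfies the estimate
$$
|W_{k^r}(\phi)|\,\leqsim\,\nor{\nabla\phi}_\infty+\sup_{x\in\R^2}(1+|x|)^3|\phi(x)|,
$$
which is a finite combination of Schwartz seminorms $\rho_{\alpha,\beta}$. Linearity of $W_{k^r}$ in $\phi$ is immediate from linearity of the integral and the limit, so $W_{k^r}$ is a continuous linear functional on $\cS(\R^2)$, i.e.\ a tempered distribution.

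I do not anticipate serious obstacles here: the only subtle point is justifying the subtraction of $\phi(0)$ on each annulus $\{\e<|x|<1\}$, which requires Fubini together with Lemma \ref{zero average}. One should check that $k^r$ is integrable on each annulus $\{\e<|x|<1\}$ so that these manipulations are legal; this follows from continuity of $k^r$ on $\R^2\backslash\{0\}$ (Lemma \ref{saban bajramovic}) and the bound $|k^r|\leqsim|x|^{-2}$, giving an $L^1$ bound $O(\log(1/\e))$ on each such annulus, which is harmless since we are treating it against the bounded function $\phi-\phi(0)$ for the cancellation step and against $\phi$ directly only after the $\phi(0)$-subtraction.
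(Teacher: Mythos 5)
Your proof is correct and follows essentially the same route as the paper's: the same splitting at $|x|=1$, the same use of Lemma \ref{saban bajramovic} for the quadratic decay at infinity and of Lemma \ref{zero average} to subtract $\phi(0)$ near the origin, and the same conclusion via a bound by finitely many Schwartz seminorms. The minor cosmetic differences (bounding $|\phi(x)-\phi(0)|$ by $\nor{\nabla\phi}_\infty|x|$ rather than by the sum of the sup-norms of the partials, and the choice of weight in the tail seminorm) do not change the argument.
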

\begin{proof}
We start with the splitting
$$
\int_{\{|x|>\e\}}k^r(x)\phi(x)\,dx=\int_{\{\e<|x|<1\}}+\int_{\{|x|\geq 1\}}.
$$
The second integral obviously converges, because $\phi\in{\mathcal S}$ and $k^r$ decays (quadratically), by Lemma \ref{saban bajramovic}. As for the first integral, by Lemma \ref{zero average} we may write
\begin{equation}
\label{crescent}
\int_{\{\e<|x|<1\}}k^r(x)\phi(x)\,dx=\int_{\{\e<|x|<1\}}k^r(x)|x|\,\frac{\phi(x)-\phi(0)}{|x|}\,dx.
\end{equation}
By Lemma \ref{saban bajramovic}, the function $k^r(x)|x|$ is integrable on $\{0<|x|<1\}$, while $\frac{\phi(x)-\phi(0)}{|x|}$
is bounded. 
Indeed, since $\phi(y)-\phi(0)=\sk{\nabla\phi(ty)}{y}$ for some $t\in (0,1)$, it follows that
\begin{equation}
\label{beru l' te djevojke}
\frac{|\phi(y)-\phi(0)|}{|y|}\leq\nor{\pd_{y_1}\phi}_\infty+\nor{\pd_{y_2}\phi}_\infty.
\end{equation}

Thus the limit $\e\rightarrow 0$ of \eqref{crescent} exists.
We proved, for any $r>0$ and $\phi\in{\mathcal S}$, that $W_{k^r}(\phi)$ is well defined and that
\begin{equation}
\label{gora mirisala}
W_{k^r}(\phi)=\int_{\{|x|<1\}}\widetilde\phi(x)k^r(x)|x|\,dx+\int_{\{|x|\geq 1\}}\phi(x)k^r(x)\,dx,
\end{equation}
where
$$
\widetilde\phi(x):=
\left\{
\begin{array}{ccc}
\displaystyle{\frac{\phi(x)-\phi(0)}{|x|}} & ; & x\ne 0;\\
0 & ; & x=0.
\end{array}
\right.
$$

Now it is easy to show that $W_{k^r}$ defines a tempered distribution:
by combining \eqref{gora mirisala}, Lemma \ref{saban bajramovic} and \eqref{beru l' te djevojke}  we get
$$
\aligned
|W_{k^r}(\phi)|
& \leqsim
\nor{\widetilde\phi}_\infty\int_{\{|x|<1\}}\,\frac{dx}{|x|}+
\sup_{y\in\R^2}|\phi(y)||y|
\int_{\{|x|\geq 1\}}\,\frac{dx}{|x|^3}\\
&\leqsim
\nor{\pd_{y_1}\phi}_\infty+\nor{\pd_{y_2}\phi}_\infty
+\sup_{y\in\R^2}|\phi(y)||y|.
\endaligned
$$
This means, e.g. \cite[Proposition 2.3.4.(b)]{G1}, that indeed $W_{k^r}\in\cS'(\R^2)$. 
\end{proof}

\begin{remark}
\label{evo na jad, jado, deveta godina}
As before, we see that 
$$
W_{k_M^r}(\phi)=\lim_{\e\rightarrow 0}\int_{\{|x|>\e\}}k_M^r(x)\phi(x)\,dx
$$
also defines a tempered distribution and that 
\begin{equation}
\label{daje me avava}
W_{k_M^r}(\phi)=\int_{\{|x|<1\}}\widetilde\phi(x)k_M^r(x)|x|\,dx+\int_{\{|x|\geq 1\}}\phi(x)k_M^r(x)\,dx.
\end{equation}
Moreover, $W_{k_M^r}\rightarrow W_{k^r}$ in the sense of tempered distributions as $M\rightarrow\infty$, i.e.
$W_{k_M^r}(\phi)\rightarrow W_{k^r}(\phi)$ for every $\phi\in{\mathcal S}$, e.g.  \cite[p. 110]{G1}. Indeed,
$$
\aligned
W_{k^r}(\phi)- W_{k_M^r}(\phi)
&=\int_{\{|x|<1\}}\widetilde\phi(x)[k^r(x)-k_M^r(x)]|x|\,dx+\int_{\{|x|\geq 1\}}\phi(x)[k^r(x)-k_M^r(x)]\,dx\\
&=\int_{\{|x|<1\}}\widetilde\phi(x)\sum_{n=M+1}^\infty G^{2^nr}(x)|x|\,dx+\int_{\{|x|\geq 1\}}\phi(x)\sum_{n=M+1}^\infty G^{2^nr}(x)\,dx.
\endaligned
$$
Because of Lemma \ref{saban bajramovic} and the fact that $\phi\in{\mathcal S}$, we may use the dominated convergence theorem and show that the above expression tends to zero as $M\rightarrow\infty$, for any $x\in\R^2$.
\end{remark}

The operators we are interested in are the convolutions with the distributions $W_{\kappa_M^r}$ and $W_{\kappa^r}$:
$$
\cK_M^r f:= f*W_{k_M^r}.
$$
$$
\cK^r f:= f*W_{k^r}.
$$
It is well known \cite[Theorem 2.3.20]{G1} that a convolution of a Schwartz function $\f$ and a tempred distribution $u$ is actually a ($C^\infty$) function and \cite[eq. (2.3.21)]{G1} that it is given by 
$$
x\mapsto u(\f(x-\cdot)).
$$ 
Therefore for $ f\in\cS$ we have, by Proposition \ref{jovo, momce mlado} and Remark \ref{evo na jad, jado, deveta godina}, 
$$
(\cK^r f)(x)=
\lim_{\e\rightarrow 0}\int_{\{|y|>\e\}}k^r(y) f(x-y)\,dy
$$
and 
\begin{equation}
\label{pfanner}
(\cK_M^r f)(x)=
\lim_{\e\rightarrow 0}\int_{\{|y|>\e\}}k_M^r(y) f(x-y)\,dy.
\end{equation}

\noindent\framebox{Averaging over all lattices.}
The final step is to average over dilations, in other words, over all calibres $r$. 
The family of all calibres (of dyadic lattices) is represented by any interval of the form $[a,2a)$.
For our purpose, the most appropriate 
measure turns out to be $dr/r$. This
makes all such intervals have the same measure ($\log 2$). 
Therefore when averaging over all (translated, rotated, dilated) lattices we may take, for instance, $[1,2)$ and define, for $f\in\cS$ and $x\in\R^2$, 
$$
(Sf)(x):=\int_1^2 (\cK^r f)(x)\,\frac{dr}r.
$$
This integral is well defined. 

\medskip
The operator $S$ is the final outcome of our averaging process.
We needed to sum over $n$ and integrate over $dt$, $d\psi$ 
and $dr$. The problem was to do this rigorously and in the correct order.

\medskip
As explained above, we need to show that with $S$ we ``hit the target'' - $T$.
 
\begin{proposition}
\label{prostranstvima gdje svi}
We have $S=cT$, where $c\in\R$ is nonzero.
\end{proposition}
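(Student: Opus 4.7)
The plan is to represent $S$ as a principal-value convolution operator whose kernel shares the defining symmetries of $-1/(\pi z^2)$: homogeneity of degree $-2$ and the rotation equivariance $\kappa(\cO_\psi x) = e^{-2i\psi}\kappa(x)$. Uniqueness of such a kernel up to a scalar will then force $S = c\,T$.

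Unwinding the definitions, $Sf = f * W_\kappa$ with $\kappa(x) := \int_1^2 k^r(x) \, dr/r$. The bound $|k^r(x)| \leqsim |x|^{-2}$ (Lemma \ref{saban bajramovic}) and vanishing spherical averages (Lemma \ref{zero average}) pass to $\kappa$, so the argument of Proposition \ref{jovo, momce mlado} interprets $\kappa$ as a tempered principal-value distribution. For the homogeneity: from $G^\rho(x) = \rho^{-2} G(x/\rho)$ (immediate from \eqref{deveta godina} and Exercise \ref{ro}) one reads off $G^\rho(\lambda x) = \lambda^{-2} G^{\rho/\lambda}(x)$, which after reindexing the defining sum yields $k^r(\lambda x) = \lambda^{-2} k^{r/\lambda}(x)$. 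Combined with the log-periodicity $k^{2r} = k^r$---which makes $\int_a^{2a} k^u(x) \, du/u$ independent of $a>0$---and the substitution $u = r/\lambda$ in the integral defining $\kappa(\lambda x)$, one obtains $\kappa(\lambda x) = \lambda^{-2} \kappa(x)$. For rotation equivariance, substituting $\phi = \psi - \theta$ in \eqref{deveta godina} gives $G^\rho(\cO_\theta x) = e^{-2i\theta} G^\rho(x)$ directly, hence $\kappa(\cO_\theta x) = e^{-2i\theta}\kappa(x)$.

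Writing $z = re^{i\theta}$ and combining the two symmetries,
\[
\kappa(z) = e^{-2i\theta}\,r^{-2} \kappa(1) = \frac{\kappa(1)}{z^2},
\]
so $\kappa$ coincides with $-\pi\kappa(1)$ times the Ahlfors--Beurling kernel $-1/(\pi z^2)$, giving $S = c\,T$ with $c = -\pi\kappa(1)$.

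The main obstacle is verifying $c \ne 0$ and $c \in \R$. I would go to the Fourier side: exchanging sum and integral gives $\widehat\kappa(\xi) = \int_0^\infty \widehat{G^s}(\xi) \, ds/s$, and the rotation equivariance derived above translates to $\widehat{G^s}(\xi) = (\bar\xi/\xi)\,\widetilde g(s|\xi|)$ with $\widetilde g(t) := (2\pi)^{-1}\int_0^{2\pi} \widehat F(te^{i\phi}) e^{2i\phi}\,d\phi$. Since both $\alpha$ and $\beta$ are even (by inspection of Figures \ref{fig1}, \ref{fig2}, or directly from $\alpha = h_0 \ast h_0$ and $\beta = \chi_0 \ast \chi_0$), $\widehat F = -\widehat\beta\,\widehat\alpha$ is real, and the $\phi \mapsto -\phi$ symmetry kills the $\sin 2\phi$ part of $\widetilde g$, so $\widetilde g \in \R$. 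Substituting $t = s|\xi|$ collapses the integral to $\widehat\kappa(\xi) = C_0\,(\bar\xi/\xi)$ with $C_0 := \int_0^\infty \widetilde g(t)\,dt/t \in \R$, confirming reality of $c$. Nonvanishing is then read off from $\widehat\beta \ge 0$, $\widehat\alpha \le 0$ (so $\widehat F \ge 0$), together with the fact that $\widehat F$ vanishes on the real axis (because $\widehat\alpha(0) = \int\alpha = 0$) but is strictly positive off both coordinate axes for a range of radii, forcing the angular mode-$2$ coefficient $\widetilde g$ to be sign-definite on an interval and hence $C_0 \ne 0$.
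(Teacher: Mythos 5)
The structural half of your argument is sound: the homogeneity $\kappa(\lambda x)=\lambda^{-2}\kappa(x)$ (via $G^\rho(\lambda x)=\lambda^{-2}G^{\rho/\lambda}(x)$, the log-periodicity $k^{2r}=k^r$ and the substitution in $\int_1^2\,dr/r$) and the rotation covariance $\kappa(\cO_\theta x)=e^{-2i\theta}\kappa(x)$ do force $\kappa(z)=\kappa(1)/z^2$, and this is really the same mechanism the paper exploits by computing $k(re^{i\f})$ directly in polar coordinates. Your observation that $\widehat F=-\widehat\beta\,\widehat\alpha$ is real, nonnegative and even in each variable, so that the $\sin 2\phi$ mode drops out and $c\in\R$, is also correct.

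The gap is the final claim that $C_0\neq 0$. Knowing that $\widehat F\geq 0$, that it vanishes on the real axis, and that it is positive off the axes does \emph{not} force the second angular Fourier coefficient $\widetilde g(t)=\frac1{2\pi}\int_0^{2\pi}\widehat F(te^{i\phi})\cos 2\phi\,d\phi$ to be sign-definite, let alone its integral $\int_0^\infty\widetilde g(t)\,dt/t$ to be nonzero: since $\cos2\phi=\cos^2\phi-\sin^2\phi$, the quantity $\widetilde g(t)$ is a \emph{difference} of two nonnegative contributions (concentrated near the $\xi_1$-axis and the $\xi_2$-axis respectively), and a nonnegative function vanishing at $\phi\in\{0,\pi\}$ can perfectly well have vanishing or either-signed mode-$2$ coefficient (e.g. $\sin^2 2\phi$). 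Moreover, even sign-definiteness of $\widetilde g$ on ``a range of radii'' would not preclude cancellation in the $t$-integral. That this is not a removable technicality is shown by the paper's own resolution: it reduces $c$ to the one-dimensional integral $4\int_0^1(1-y)y^2\,C(y)\,dy$ with $C(y)=\int_0^1\alpha(x)(x^2+y^2)^{-1}dx$ computed in closed form, and the exact value is $\frac1{12}(\arctan 2-4\arctan\frac12+\frac{15}8\log 5-4\log 2)\approx -0.042$ --- a small residue of genuine cancellation. So the non-vanishing of $c$, which is the entire point of the proposition (if $c=0$ the averaging method collapses), must be established by an actual evaluation of the integral rather than by the soft positivity considerations you propose.
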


\begin{proof}
We have
$$
(\cK^r f)(x)=\int_{\{|y|<1\}}\frac{f(x-y)-f(x)}{|y|}\,k^r(y)|y|\,dy+\int_{\{|y|\geq 1\}} f(x-y)k^r(y)\,dy.
$$
Now from Lemma \ref{saban bajramovic} it emerges that 
$$
Sf=\text{p.v. }f*k,
$$
where
$$
k(x)=\int_1^2 k^r(x)\,\frac{dr}r\,.
$$
We need to prove that $k$ is a (nonzero) constant multiple of the (p.v.) convolution kernel of $T$. We have, by
\eqref{frank martin}, Lemma \ref{saban bajramovic} and \eqref{deveta godina} that
$$
\aligned
k(x)&=\int_1^2 \sum_{n=-\infty}^\infty G^{2^nr}(x)\,\frac{dr}r\\
&= \sum_{n=-\infty}^\infty \int_1^2G^{2^nr}(x)\,\frac{dr}r\\
&= \sum_{n=-\infty}^\infty \int_{2^n}^{2^{n+1}}G^{s}(x)\,\frac{ds}s\\
& =\int_0^\infty G^s(x)\,\frac{ds}s\\
& =\frac1{2\pi}\int_0^\infty\int_0^{2\pi}F^s(\cO_{-\psi}x)e^{-2i\psi}d\psi\,\frac{ds}s.
\endaligned
$$
Therefore, in polar coordinates,
$$
\aligned
k(re^{i\f})&=\frac1{2\pi}\int_0^\infty\int_0^{2\pi}s^{-2}F(e^{-i\psi}re^{i\f}/s)e^{-2i\psi}d\psi\,\frac{ds}s\\
&=\frac{e^{-2i\f}}{2\pi}\int_0^\infty\int_0^{2\pi}F(re^{it}/s)e^{2it}dt\,\frac{ds}{s^3}\\
&=\frac{e^{-2i\f}}{\pi r^2}\cdot\frac1{2}\int_0^\infty\int_0^{2\pi}F(u e^{it})e^{2it}dt\,u\,du\,.
\endaligned
$$
Since the (p.v.) convolution kernel of $T$ is equal to 
$$
re^{i\f}\mapsto -\frac{e^{-2i\f}}{\pi r^2}\,,
$$
we indeed obtain $S=cT$, where, by the evenness of $F$, 
$$
c=-\frac1{2}\int_0^\infty\int_0^{2\pi}F(u e^{it})\cos2t\,dt\,u\,du.
$$

\medskip
It remains to be verified that $c\ne 0$. 
In cartesian coordinates, we can express $c$ as
$$
c=-\frac1{2}\int_\R\int_\R F(x,y)
\frac{x^2-y^2}{x^2+y^2}\,dx\,dy\,.
$$
In Proposition \ref{F} we saw that 
$
F(x,y)=-\beta(x)\alpha(y),
$
which leads to 
$$
c=-\frac12\int_\R\int_\R \alpha(x)\beta(y)
\frac{x^2-y^2}{x^2+y^2}\,dx\,dy\,.
$$
Since $\alpha$ and $\beta$ are even functions, supported on
the interval $[-1,1]$, we obtain
$$
c=-2\int_0^1\!\!\int_0^1 \alpha(x)\beta(y)
\frac{x^2-y^2}{x^2+y^2}\,dx\,dy\,.
$$
Figure \ref{fig2} shows that $\beta(y)=1-y$ on 
$[0,1]$. Combine this with the identity
$$
\frac{x^2-y^2}{x^2+y^2}=1-\frac{2y^2}{x^2+y^2}
$$
and the observation (following from Figure \ref{fig1}) that
$\int_0^1\alpha(x)\,dx=0$ to get
$$
c=4\int_0^1(1-y)y^2\int_0^1
\frac{\alpha(x)}{x^2+y^2}\,dx\,dy\,.
$$
For $y>0$, computation returns
$$
\aligned
C(y):&=\int_0^1\frac{\alpha(x)}{x^2+y^2}\,dx\\
&=\frac{1}{y}\left(\arctan\frac{1}{y}-2\arctan\frac{1}{2y}\right)+
2\log(4y^2+1)-\frac{1}{2}\log(y^2+1)
-3\log y-4\log 2\,.
\endaligned
$$
We now need only evaluate the integral
$$ 
\int_0^1(1-y)y^2C(y)\,dy\,.
$$
We can directly calculate this integral to find that it
equals
$$
\frac{1}{12}\left(\arctan 2-4\arctan\frac12
+\frac{15}{8}\log5-4\log2 \right)\,,
$$
which is approximately $-0^\cdot 042$\,.
\end{proof}

\subsection{Estimates for $S$ on $L^2(w)$}
\label{capellanus}
\begin{remark}
Recall that, by \eqref{pfanner} and \eqref{mandarine-citrus},
$$
\cK_M^r f=\text{ p.v. }f*\left(\sum_{n=-\infty}^M G^{2^nr}\right).
$$ 
Let us show that we also have
\begin{equation}
\label{costella}
\cK_M^r f=\sum_{n=-\infty}^M \left(f*G^{2^nr}\right).
\end{equation}
Indeed, this follows from 
$$
(\cK_M^r f)(x)=\int_{\{|y|<1\}}\frac{f(x-y)-f(x)}{|y|}\,k_M^r(y)|y|\,dy+\int_{\{|y|\geq 1\}} f(x-y)k_M^r(y)\,dy,
$$
Lemma \ref{saban bajramovic} and the dominated convergence theorem. 
\end{remark}

By \eqref{costella} and Exercise \ref{edinburgh}, 
$$
\aligned
(\cK_M^1f)(x)
&=\sum_{n=-\infty}^M\frac1{2\pi}\int_0^{2\pi}[(U_\psi\circ\EP^{2^n}\circ U_\psi^{-1})f](x)\, e^{-2i\psi}d\psi\\
&=\sum_{n=-\infty}^M\frac1{2\pi}\int_0^{2\pi}
\frac1{4^n}\int_{[0,2^n)^2}
\cP_t^{2^n}(U_\psi^{-1} f)(\cO_{-\psi}x)\,dt\,e^{-2i\psi}d\psi\,.
\endaligned
$$
As implied by \eqref{brisi} and \eqref{zvonko}, for every $f,x,n$, the function 
$$
(t,\psi)\mapsto
\cP_t^{2^n}(U_\psi^{-1} f)(\cO_{-\psi}x)\,e^{-2i\psi}
$$ 
is bounded on $[0,2^n)^2\times [0,2\pi)$, therefore it belongs to $L^1$ of that space and hence we can apply Fubini's theorem. With
\begin{equation}
\label{lubenica}
(\cR_t^{\rho}f)(x)=
\frac1{2\pi}\int_0^{2\pi}[(U_\psi\circ\cP_t^{\rho}\circ U_\psi^{-1})f](x)\, e^{-2i\psi}d\psi,
\end{equation}
we get
$$
(\cK_M^1f)(x)=\sum_{n=-\infty}^M\frac1{4^n}\int_{[0,2^n)^2}(\cR_t^{2^n}f)(x)\,dt.
$$
By using the fact that $\cP_\cdot^{\rho}$ is periodic and that $n\leq M$, we can continue as
\begin{equation}
\label{lato}
(\cK_M^1f)(x)=\sum_{n=-\infty}^M\frac1{4^M}\int_{[0,2^M)^2}(\cR_t^{2^n}f)(x)\,dt.
\end{equation}

Let us define, for 
$N\in\Z_M:=\mn{k\in\Z}{k\leq M}$,

\begin{equation*}
(\cK_{M,N}^1f)(x)=\sum_{n=N}^M\frac1{4^M}\int_{[0,2^M)^2}(\cR_t^{2^n}f)(x)\,dt.
\end{equation*}

The following result is straightforward yet useful.
\begin{lemma} 
\label{jazz for cows}
Let $A$ be an invertible affine transformation of $\R^2$, i.e. 
$Ax=\Lambda x+c$ for $x\in\R^2$, where $\Lambda$ is linear and invertible on $\R^2$ and $c\in\R^2$. 
Define the associated operator $\cC=\cC_A$ on $\cS(\R^2)$ by $\cC g=g\circ A$. Then
$$
\nor{\cC g}_{L^2(Sw)}=\frac1{\sqrt{\det\Lambda}}\,\nor{g}_{L^2(w)}
$$
for any $w\in A_2$.
\end{lemma}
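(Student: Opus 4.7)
The plan is a one-line change of variables, but first I would pin down the meaning of $Sw$, which is not spelled out in the statement. From the structure of the claim (relating weighted norms of $\cC g$ and $g$), the natural interpretation is $(Sw)(x):=w(Ax)$, i.e.\ the pullback of the weight $w$ by $A$; this is the operation on weights dual to the pullback $\cC$ on functions.

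Setting $y=Ax=\Lambda x+c$, which is a bijection of $\R^2$ with constant Jacobian $|\det\Lambda|$, I would simply compute
\[
\nor{\cC g}_{L^2(Sw)}^2 = \int_{\R^2}|g(Ax)|^2\,w(Ax)\,dx = \frac{1}{|\det\Lambda|}\int_{\R^2}|g(y)|^2\,w(y)\,dy = \frac{1}{|\det\Lambda|}\,\nor{g}_{L^2(w)}^2,
\]
and take square roots. The $\sqrt{\det\Lambda}$ appearing in the statement should then be read as $\sqrt{|\det\Lambda|}$; equivalently, the lemma is formulated for orientation-preserving $\Lambda$. In the intended applications, where $A$ is a composition of a translation, a rotation and a positive dilation (cf.\ \eqref{lubenica} and the operators $\cR_t^\rho$ of Section \ref{capellanus}), one has $\det\Lambda>0$ and this distinction is immaterial.

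The only technical point is that the weighted norms make sense: $w$ is nonnegative and locally integrable, and these properties are clearly preserved under pullback by a linear diffeomorphism with constant Jacobian, so $Sw$ is again a weight and the integrals above are genuine Lebesgue integrals in $[0,\infty]$. One could also observe that $[Sw]_{A_2}$ is controlled by $[w]_{A_2}$ (with a constant that depends only on $\Lambda$) by noting that preimages of squares under $A$ are parallelograms comparable to squares, but this is not needed for the isometry statement itself. I do not expect any real obstacle here; the content of the lemma is just the translation/rotation/dilation behaviour of Lebesgue measure, packaged so that it can be fed into the averaging machinery that intertwines the ``model'' operator $\cP_0$ with the operators $\cR_t^\rho$ produced by the affine transformations $\Lambda_{\rho,t,\psi}$.
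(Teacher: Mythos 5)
Your proof is correct and is exactly the intended argument: the paper states this lemma without proof (calling it straightforward), and the change of variables $y=Ax$, with $Sw$ read as the pullback $w\circ A$ — consistent with how the lemma is invoked in the proof of Proposition \ref{fab}, where weights like $\tau_{-t}U_\psi w$ appear — is the whole content. Your observation that $\det\Lambda$ should be $|\det\Lambda|$ is also right, and indeed immaterial in the applications, where $A$ is a composition of a translation, rotation and positive dilation.
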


\begin{proposition}
\label{fab}
For every $f\in\cS$, $M\in\Z$ and $w\in A_2$, the sequence $(\cK_{M,N}^1f)_{N\in\Z_M}$ is a Cauchy sequence in $L^2(w)$. 
\end{proposition}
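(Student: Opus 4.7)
My plan is to reduce the Cauchy property to absolute summability of the ``atoms'' $f*G^{2^n}$ in $L^2(w)$ as $n\to-\infty$, and then to extract the necessary decay of $\|f*G^{2^n}\|_{L^2(w)}$ from the vanishing circle averages of $G^\rho$ (Lemma~\ref{kamulatorsko kolo}) together with the polynomial growth forced by the $A_2$ condition. To begin, for $N_1<N_2\leq M$ the definition of $\cK_{M,N}^1$ yields
$$
\cK_{M,N_1}^1 f-\cK_{M,N_2}^1 f=\sum_{n=N_1}^{N_2-1}\frac{1}{4^M}\int_{[0,2^M)^2}\cR_t^{2^n}f\,dt.
$$
Since $t\mapsto\cP_t^{2^n}$, hence $t\mapsto\cR_t^{2^n}$, is periodic with period lattice $2^n\Z^2$, the cube $[0,2^M)^2$ tiles into $4^{M-n}$ translates of $[0,2^n)^2$, so the inner integral collapses to $(1/4^n)\int_{[0,2^n)^2}\cR_t^{2^n}f\,dt$, which by Exercise~\ref{edinburgh} is precisely $f*G^{2^n}$. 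It thus suffices to prove $\sum_{n=-\infty}^{N}\|f*G^{2^n}\|_{L^2(w)}\to 0$ as $N\to-\infty$.

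The core step is a pointwise estimate for $f*G^\rho$ displaying decay jointly in $\rho$ and $x$. Lemma~\ref{kamulatorsko kolo} and Fubini give $\int_{\R^2}G^\rho=0$, so
$$
(f*G^\rho)(x)=\int_{\R^2}G^\rho(y)\bigl[f(x-y)-f(x)\bigr]\,dy.
$$
Using $|G^\rho|\leq\|F^\rho\|_\infty\leqsim\rho^{-2}$ together with $\supp G^\rho\subset B(0,\rho\sqrt 2)$ (from \eqref{opa cupa}), a mean-value calculation yields $\int|y|\,|G^\rho(y)|\,dy\leqsim\rho$; combining this with the Schwartz decay of $\nabla f$ produces, for any $L>0$ and any $\rho\leq 1$,
$$
|(f*G^\rho)(x)|\leqsim\rho\sup_{|z-x|\leq\rho\sqrt 2}|\nabla f(z)|\leq C_{f,L}\,\rho\,(1+|x|)^{-L}.
$$
To turn this into an $L^2(w)$ bound I would invoke the standard polynomial-growth property of $A_2$ weights: from $\langle w\rangle_{B(0,R)}\langle w^{-1}\rangle_{B(0,R)}\leqsim[w]_2$ one obtains $w(B(0,R))\leqsim R^4$ for $R\geq 1$, so that $(1+|\cdot|)^{-L}\in L^2(w)$ for $L$ sufficiently large. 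Consequently $\|f*G^{2^n}\|_{L^2(w)}\leq C_{f,w}\,2^n$ for all $n\leq 0$, and the geometric series sums to complete the argument.

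The main obstacle is the pointwise step: one must exploit two structural features of $G^\rho$ simultaneously, namely its vanishing mean (to replace $f$ by an increment and gain a factor of $\rho$) and its support in a shrinking ball (so that the increment inherits the local Lipschitz size of $f$). Without this cancellation, the best one could extract from Exercise~\ref{edinburgh}, Wittwer's theorem and Minkowski would be the uniform bound $\|f*G^{2^n}\|_{L^2(w)}\leqsim[w]_2\|f\|_{L^2(w)}$, which is not summable as $n\to-\infty$. Once the joint decay in $n$ and $x$ is established, the weighted integration against a polynomial-growth $A_2$ weight is routine.
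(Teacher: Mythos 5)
Your proof is correct, but it follows a genuinely different route from the one in the text. You collapse the average over $[0,2^M)^2$ by the $2^n\Z^2$-periodicity of $t\mapsto\cP_t^{2^n}$ and identify each summand with $f*G^{2^n}$; this is legitimate and is in fact exactly the identity the paper itself establishes on the way to \eqref{lato} and \eqref{costella} via Exercise \ref{edinburgh}. You then estimate each atom directly from the kernel: the vanishing circle averages of Lemma \ref{kamulatorsko kolo} give $\int_{\R^2}G^\rho=0$, which together with $|G^\rho|\leqsim\rho^{-2}$, $\supp G^\rho\subset B(0,\rho\sqrt2)$ from \eqref{opa cupa} and the Schwartz decay of $\nabla f$ yields $|(f*G^{2^n})(x)|\leqsim_{f,L}2^n(1+|x|)^{-L}$ for $n\leq0$, and the polynomial growth of $A_2$ weights (the same fact from \cite[p.~94]{M} that is invoked in the proof of Corollary \ref{nini_diplomirala}) turns this into $\nor{f*G^{2^n}}_{L^2(w)}\leqsim_{f,w}2^n$, so the tail sums geometrically and the Cauchy property follows with an explicit rate. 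The paper's proof never touches the kernel: it pushes the Cauchy difference through two applications of Minkowski's integral inequality and Lemma \ref{jazz for cows} to reduce to the martingale transforms $\sum_{n=N_1}^{N_2-1}\cP_0^{2^n}$ applied to translated and rotated data, and then combines Wittwer's theorem (Theorem \ref{Wittwer}) with the Treil--Volberg Riesz basis property of the Haar system in $L^2(w)$ and dominated convergence. Your argument is more elementary (no Riesz basis, no dominated convergence in the parameters) and quantitative; what the paper's route buys is uniformity: its bounds are of the form $\leqsim[w]_{A_2}\nor{f}_{L^2(w)}$, independent of $t$, $\psi$ and of the Schwartz seminorms of $f$, and precisely these estimates are recycled immediately afterwards to obtain $\nor{\cK_{M,N}^1f}_{L^2(w)}\leqsim[w]_{A_2}\nor{f}_{L^2(w)}$ and ultimately the weighted bound for $S$. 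Your constants $C_{f,w}$ depend on derivatives of $f$ and on $\int_{\R^2}(1+|x|)^{-2L}w$, so they prove the stated proposition but would not substitute for that operator-norm estimate in the sequel.
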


\begin{proof}
We have, for $N_1<N_2\in\Z_M$,
$$
\nor{\cK_{M,N_1}^1f-\cK_{M,N_2}^1f}_{L^2(w)}=
\left(\int_{\R^2}\Big|\frac1{4^M}\int_{[0,2^M)^2}\sum_{n=N_1}^{N_2-1}(\cR_t^{2^n}f)(x)\,dt\Big|^2w(x)\,dx\right)^{1/2}.
$$
By the Minkowski's integral inequality, 
$$
\nor{\cK_{M,N_1}^1f-\cK_{M,N_2}^1f}_{L^2(w)}\leq
\frac1{4^M}\int_{[0,2^M)^2}
\left(\int_{\R^2}\Big|\sum_{n=N_1}^{N_2-1}(\cR_t^{2^n}f)(x)\Big|^2w(x)\,dx\right)^{1/2}
\,dt.
$$
From \eqref{lubenica} and by using the Minkowski's integral inequality again we get 
$$
\aligned
\left(\int_{\R^2}\Big|\right.&\left.\sum_{n=N_1}^{N_2-1}(\cR_t^{2^n}f)(x)\Big|^2w(x)\,dx\right)^{1/2}\\
& \leq
\frac1{2\pi}\int_0^{2\pi}
\left(\int_{\R^2}\bigg|
\Big[(U_\psi\circ\sum_{n=N_1}^{N_2-1}\cP_t^{2^n}\circ U_\psi^{-1})f\Big](x)\bigg|^2w(x)\,dx\right)^{1/2}
d\psi,
\endaligned
$$
where
$$
\aligned
\bigg(\hdots
\bigg)^{1/2}&=\left(\int_{\R^2}\bigg|
\Big[\sum_{n=N_1}^{N_2-1}\cP_t^{2^n}(U_\psi^{-1} f)\Big](y)\bigg|^2w(\cO_{\psi} y)\,dy\right)^{1/2}\\
&=\Nor{\sum_{n=N_1}^{N_2-1}\cP_t^{2^n}(U_{-\psi} f)}_{L^2(U_{-\psi} w)}\\
&=\Nor{\bigg(\tau_t\circ \sum_{n=N_1}^{N_2-1}\cP_0^{2^n}\circ\tau_{-t}\bigg)(U_{-\psi} f)}_{L^2(U_{-\psi} w)}\\
&=\Nor{\bigg(\sum_{n=N_1}^{N_2-1}\cP_0^{2^n}\bigg)(\tau_{-t}U_\psi f)}_{L^2(\tau_{-t}U_\psi w)}.
\endaligned
$$
The last inequality is due to Lemma \ref{jazz for cows}. To summarize, we so far proved that
\begin{equation}
\aligned
\label{avaj avaj mo cavo}
\nor{\cK_{M,N_1}^1f-\cK_{M,N_2}^1f}_{L^2(w)}\\
&\hskip -100pt \leq
\frac1{4^M}\int_{[0,2^M)^2}
\frac1{2\pi}\int_0^{2\pi}
\Nor{\Big(\sum_{n=N_1}^{N_2-1}\cP_0^{2^n}\Big)(\tau_{-t}U_\psi f)}_{L^2(\tau_{-t}U_\psi w)}
d\psi\,dt.
\endaligned
\end{equation}
Verifying the Cauchy condition means proving that
$$
\lim_{N_2\rightarrow-\infty}\sup_{N_1\in\Z_{N_2}}\nor{\cK_{M,N_1}^1f-\cK_{M,N_2}^1f}_{L^2(w)}
=0.
$$
Since for any integrable sequence $(f_k)_k$ on any space we clearly have $\sup\int|f_k|\leq\int\sup|f_k|$, 
we get from \eqref{avaj avaj mo cavo} that
\begin{equation}
\label{maljarkica}
\aligned
\sup_{N_1\in\Z_{N_2}}&\nor{\cK_{M,N_1}^1f-\cK_{M,N_2}^1f}_{L^2(w)}\\
&\leq
\frac1{4^M}\int_{[0,2^M)^2}
\frac1{2\pi}\int_0^{2\pi}
\sup_{N_1\in\Z_{N_2}}\Nor{\Big(\sum_{n=N_1}^{N_2-1}\cP_0^{2^n}\Big)(\tau_{-t}U_\psi f)}_{L^2(\tau_{-t}U_\psi w)}
d\psi\,dt.
\endaligned
\end{equation}
Certainly, we would now like to use the dominated convergence theorem and put $\lim_{N_2\rightarrow-\infty}$ inside the integral. For that purpose we need to know that functions
$$
\gamma_{N_2}(\psi,t):=\sup_{N_1\in\Z_{N_2}}\Nor{\Big(\sum_{n=N_1}^{N_2-1}\cP_0^{2^n}\Big)(\tau_{-t}U_\psi f)}_{L^2(\tau_{-t}U_\psi w)}
$$ 
have a $L^1(d\psi\, dt)$ majorant which is independent on $N_2$. 
In order to show that we invoke a result by Treil and Volberg \cite[Theorem 5.2]{TV} stating, roughly, that a (one-dimensional) Haar system is a Riesz basis in $L^2(w)$ when $w\in A_2$. Recall that a complete system of vectors $e_n$ in a Hilbert space is called a {\it Riesz basis} if 
$$
\Nor{\sum_n\lambda_n e_n}\sim\left(\sum_n|\lambda_n|^2\nor{e_n}^2\right)^{1/2}
$$
for all finitely supported sequences $\lambda_n$ of coefficients.

\begin{lemma}
Let $w\in A_2$, $g\in{\mathcal S}$ and 
$$
g_{N_1,N_2}=\Big(\sum_{n=N_1}^{N_2-1}\cP_0^{2^n}\Big)g.
$$
Then
$$
\lim_{N_2\rightarrow-\infty}\sup_{N_1\in\Z_{N_2}}\nor{g_{N_1,N_2}}_{L^2(w)}=0.
$$
\end{lemma}

\begin{proof}
Since $\sum_{n=N_1}^{N_2-1}\cP_0^{2^n}$ is a classical martingale transform (in $\R^2$), we can apply (a two-dimensional analogue of) the Wittwer's result \cite{W}, see Theorems \ref{sve prolazi} and \ref{Wittwer}, and conclude that
\begin{equation}
\label{bajro}
\nor{g_{N_1,N_2}}_{L^2(w)}\leqsim[w]_{A_2}\nor{g}_{L^2(w)},
\end{equation}
where the implied constants are independent of $N_1,N_2$.

On the other hand, we can write
$$
g_{N_1,N_2}=\sum_{n=N_1}^{N_2-1}\sum_{Q\in{\mathcal G}_0^{2^n}} \sk{g}{h_Q^0}h_Q^0.
$$
Assume for a moment that $g$ is of compact support. Then the sum above is finite and we may use the above-noted fact that Haar system $\{h_Q^j\}_{Q,j}$ is a Riesz basis. We get
\begin{equation}
\label{viki me roma}
\nor{g_{N_1,N_2}}_{L^2(w)}\geqsim
\left(\sum_{n=N_1}^{N_2-1}\sum_{Q\in{\mathcal G}_0^{2^n}}|\sk{g}{h_Q^0}|^2\nor{h_Q^0}_{L^2(w)}^2\right)^{1/2}.
\end{equation}
Together \eqref{bajro} and \eqref{viki me roma} yield convergence of the series 
$$
\sum_{n=-\infty}^{\infty}\sum_{Q\in{\mathcal G}_0^{2^n}}|\sk{g}{h_Q^0}|^2\nor{h_Q^0}_{L^2(w)}^2,
$$
which thus satisfies the Cauchy condition, in particular
\begin{equation}
\label{besdile}
\lim_{N_2\rightarrow-\infty}\sup_{N_1\in\Z_{N_2}}\sum_{n=N_1}^{N_2-1}\sum_{Q\in{\mathcal G}_0^{2^n}}|\sk{g}{h_Q^0}|^2\nor{h_Q^0}_{L^2(w)}^2=0.
\end{equation}
Finally, Haar system's being a Riesz basis also means that 
$$
\nor{g_{N_1,N_2}}_{L^2(w)}\leqsim
\left(\sum_{n=N_1}^{N_2-1}\sum_{Q\in{\mathcal G}_0^{2^n}}|\sk{g}{h_Q^0}|^2\nor{h_Q^0}_{L^2(w)}^2\right)^{1/2},
$$
which in view of \eqref{besdile} finishes the proof of the lemma in the case of compactly supported $g$.

When $g\in{\mathcal S}$, use that $C_c^\infty$ is a dense subspace of $L^2(w)$, cf. \cite[p. 93 bottom]{M}, and combine with \eqref{bajro}: start with
$g_{N_1,N_2}=(g-f)_{N_1,N_2}+f_{N_1,N_2}$. Then
$$
\lim_{N_2\rightarrow-\infty}\sup_{N_1\in\Z_{N_2}}\nor{g_{N_1,N_2}}_{L^2(w)}\leq \e+0
$$
 for every $\e>0$.
\end{proof}

We continue with the proof of Proposition \ref{fab}. From \eqref{bajro} we get
$$
\aligned
\gamma_{N_2}(\psi,t)
&\leqsim[\tau_{-t}U_\psi w]_{A_2}\nor{\tau_{-t}U_\psi f}_{L^2(\tau_{-t}U_\psi w)}\\
&\leqsim[w]_{A_2}\nor{f}_{L^2(w)}.
\endaligned
$$
Therefore we may use the dominated convergence theorem and conclude from 
\eqref{maljarkica} that
$$
\aligned
\lim_{N_2\rightarrow-\infty}\sup_{N_1\in\Z_{N_2}}&\nor{\cK_{M,N_1}^1f-\cK_{M,N_2}^1f}_{L^2(w)}\\
&\leq
\frac1{4^M}\int_{[0,2^M)^2}
\frac1{2\pi}\int_0^{2\pi}
\lim_{N_2\rightarrow-\infty}
\gamma_{N_2}(\psi,t)d\psi\,dt.
\endaligned
$$
By the previous lemma, this expression is equal to zero.
\end{proof}

It is straightforward to show that if $(\f_N)_N$ is Cauchy sequence in $L^2$ and converges to $\f$ pointwise, then it also converges to $\f$ in $L^2$. 
[True: $\f_N$ Cauchy in $L^2$, therefore $\f_N\rightarrow \psi$ in $L^2$, therefore $(\f_{N_k})_k\rightarrow \psi$ almost everywhere. At the same time we know $(\f_{N_k})_k\rightarrow \f$. Hence $\psi=\f$ almost everywhere.]
This gives the following.

\begin{corollary}
For every $M\in\Z$ and $f\in\cS$ we have
$\lim_{N\rightarrow-\infty}\cK_{M,N}^1f=\cK_{M}^1f$ in $L^2(w)$.
\end{corollary}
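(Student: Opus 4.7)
The plan is to combine Proposition \ref{fab} with a pointwise convergence argument, exactly along the lines indicated in the paragraph just preceding the corollary. Fix $M\in\Z$, $f\in\cS$ and $w\in A_2$.

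First, by Proposition \ref{fab}, $(\cK_{M,N}^1 f)_{N\in\Z_M}$ is a Cauchy sequence in $L^2(w)$. Since $L^2(w)$ is complete, there exists $\psi\in L^2(w)$ with $\cK_{M,N}^1 f\to \psi$ in $L^2(w)$ as $N\to-\infty$. By passing to a subsequence $N_k\to-\infty$ we also obtain $\cK_{M,N_k}^1 f\to\psi$ pointwise almost everywhere.

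Second, I would show that $\cK_{M,N}^1 f\to \cK_M^1 f$ pointwise on $\R^2$ as $N\to-\infty$. By \eqref{lato} and the definition of $\cK_{M,N}^1 f$,
$$
(\cK_M^1 f)(x)-(\cK_{M,N}^1 f)(x)
=\sum_{n=-\infty}^{N-1}\frac{1}{4^M}\int_{[0,2^M)^2}(\cR_t^{2^n}f)(x)\,dt,
$$
so it suffices to show that the right-hand side tends to $0$ for every $x$. Using \eqref{lubenica} and \eqref{costella}, the remainder is controlled by a tail of $\sum_{n=-\infty}^M (f\ast G^{2^n})(x)$, and this series converges absolutely for each $x$ by the uniform estimate $|G^{2^n}(y)|\leq C$ together with the summability bound $\sum_{n\in\Z}|G^{2^n r}(y)|\lesssim |y|^{-2}$ from Remark \ref{six months}; the Schwartz decay of $f$ then makes the tail go to zero. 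Hence $\cK_{M,N}^1 f\to \cK_M^1 f$ pointwise on $\R^2$.

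Finally, pointwise uniqueness of limits forces $\psi=\cK_M^1 f$ almost everywhere, so the $L^2(w)$-limit of $\cK_{M,N}^1 f$ is $\cK_M^1 f$, as claimed. The only delicate point in the whole argument is the pointwise convergence step, where one must make sure that the tail $\sum_{n<N}(f\ast G^{2^n})(x)$ really does tend to zero; this is where the quadratic decay of the kernels from Lemma \ref{saban bajramovic} and Remark \ref{six months} does all the work, and it is the main technical obstacle worth checking carefully.
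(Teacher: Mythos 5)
Your proof follows the same route as the paper: Proposition \ref{fab} gives the Cauchy property, and the standard fact that a sequence which is Cauchy in $L^2(w)$ and converges pointwise must converge in $L^2(w)$ to its pointwise limit does the rest. The paper states exactly this principle in the paragraph preceding the corollary and leaves the pointwise convergence implicit.

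One detail in your pointwise step is misstated, though the conclusion is fine. You justify the absolute convergence of $\sum_{n\le M}(f*G^{2^n})(x)$ by ``the uniform estimate $|G^{2^n}(y)|\le C$'' together with $\sum_n|G^{2^nr}(y)|\lesssim|y|^{-2}$ and the Schwartz decay of $f$. But $|G^\rho|\le\rho^{-2}$ blows up as $\rho\to0$, so there is no uniform bound on $G^{2^n}$ for $n\to-\infty$; and since $|y|^{-2}$ is not locally integrable in the plane, the majorant $\int|f(x-y)|\,|y|^{-2}\,dy$ diverges, so size estimates alone cannot give absolute convergence. The correct mechanism for the tail $n\to-\infty$ is cancellation: $G^{2^n}$ has zero average on circles centered at the origin (Lemma \ref{kamulatorsko kolo}) and is supported in $B(0,2^n\sqrt2)$, so $|(f*G^{2^n})(x)|=\bigl|\int[f(x-y)-f(x)]G^{2^n}(y)\,dy\bigr|\lesssim 2^n\nor{\nabla f}_\infty$, which is summable. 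Alternatively, and more economically, you need not reprove convergence at all: \eqref{lato} already exhibits $(\cK_M^1f)(x)$ as the sum of a convergent series whose partial sums are exactly $(\cK_{M,N}^1f)(x)$, so the tails vanish and the pointwise convergence is immediate.
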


The preceeding proof also shows that for any $N_1,N_2\in\Z$, $N_1\geq N_2$,
$$
\nor{\cK_{N_1,N_2}^1f}_{L^2(w)}\leqsim [w]_{A_2}\nor{f}_{L^2(w)}\,,
$$
with the implied constant independent of $M,N,f,w$. Indeed, with any $M>N_1$ we may write 
$$
\cK_{N_1,N_2}^1=\cK_{M,N_2}^1-\cK_{M,N_1+1}^1.
$$
Now \eqref{avaj avaj mo cavo} and Lemma \ref{jazz for cows} imply
$$
\nor{\cK_{N_1,N_2}^1f}_{L^2(w)}
\leq
\Nor{\Big(\sum_{n=N_1}^{N_2}\cP_0^{2^n}\Big)f}_{L^2(w)}\,,
$$
which is dominated by $C[w]_{A_2}\nor{f}_{L^2(w)}$, according to Theorem \ref{Wittwer}.

Hence the previous corollary immediately yields

\begin{corollary}
For every $M\in\Z$ and $f\in\cS$ we have
$$
\nor{\cK_{M}^1f}_{L^2(w)}\leqsim [w]_{A_2}\nor{f}_{L^2(w)}\,.
$$ 
\end{corollary}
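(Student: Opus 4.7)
The plan is very short: pass to the limit $N\to-\infty$ in the uniform bound for the truncated operators $\cK_{M,N}^1$ that has just been established, exploiting the fact that the previous corollary already identifies the limit as $\cK_M^1 f$ in $L^2(w)$.

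More concretely, I would first invoke the preceding corollary, which asserts that $\cK_{M,N}^1 f \to \cK_M^1 f$ in $L^2(w)$ as $N\to-\infty$ for $f\in\cS$. Continuity of the $L^2(w)$-norm then gives
\[
\nor{\cK_M^1 f}{L^2(w)} \;=\; \lim_{N\to-\infty}\nor{\cK_{M,N}^1 f}{L^2(w)}.
\]
Next, I would plug in the estimate $\nor{\cK_{N_1,N_2}^1 f}{L^2(w)} \leqsim [w]_{A_2}\nor{f}{L^2(w)}$, which was derived in the paragraph just before the statement of the corollary and holds with an implied constant independent of the truncation parameters. The key observation there is that by writing $\cK_{M,N}^1=\cK_{M,N_2}^1-\cK_{M,N_1+1}^1$ (for any auxiliary $M>N_1$), bounding the latter via Minkowski's integral inequality, then transferring the problem to the fixed grid through the unitarity-type identity provided by Lemma \ref{jazz for cows}, the estimate ultimately reduces to Wittwer's theorem (Theorem \ref{Wittwer}) applied to the finite martingale transform $\sum_{n=N_1}^{N_2}\cP_0^{2^n}$, which yields an $A_2$-linear bound uniform in $N_1,N_2$ (and $M$). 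Setting the relevant parameters to $(M,N)$ and letting $N\to-\infty$ delivers the claimed inequality.

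I do not see any real obstacle here: every ingredient has already been assembled. The only point worth making explicit is the uniformity in the truncation parameters of the Wittwer estimate, since a naive direct application of Theorem \ref{Wittwer} to the infinite-index operator $\cK_M^1$ is not allowed — one must approximate by finite martingale transforms and pass to the limit. The two preceding corollaries are designed precisely to bypass this issue, so the proof amounts to a three-line argument combining them.
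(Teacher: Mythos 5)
Your argument is correct and is essentially identical to the paper's: you combine the convergence $\cK_{M,N}^1f\to\cK_M^1f$ in $L^2(w)$ from the preceding corollary with the uniform-in-the-truncation bound $\nor{\cK_{M,N}^1f}_{L^2(w)}\leqsim[w]_{A_2}\nor{f}_{L^2(w)}$, which the paper obtains exactly as you describe via the decomposition into differences, \eqref{avaj avaj mo cavo}, Lemma \ref{jazz for cows} and Theorem \ref{Wittwer}. The only slip is cosmetic: the identity you quote should read $\cK_{N_1,N_2}^1=\cK_{M,N_2}^1-\cK_{M,N_1+1}^1$ with auxiliary $M>N_1$, rather than having $\cK_{M,N}^1$ on the left.
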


Because the ``Cauchy differences'' of the sequence $(\cK_{M,N}^1f)_{N\in\Z_M}$ look exactly as the ``Cauchy differences'' of the sequence $(\cK_{M}^1f)_{M\in\Z_M}$, effectively the same proof works for $\cK^1$:

\begin{corollary}
For every $f\in\cS$ we have
$\lim_{M\rightarrow-\infty}\cK_{M}^1f=\cK^1f$ in $L^2(w)$ and therefore 
$$
\nor{\cK^1f}_{L^2(w)}\leqsim [w]_{A_2}\nor{f}_{L^2(w)}\,.
$$ 
\end{corollary}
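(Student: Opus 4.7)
The plan is to mimic the Cauchy-sequence strategy of the preceding corollary, with the outer index $M$ now playing the role that $N$ had in Proposition \ref{fab}. For $M_1<M_2$, identity \eqref{costella} gives
\begin{equation*}
\cK_{M_2}^1 f - \cK_{M_1}^1 f \;=\; \sum_{n=M_1+1}^{M_2} f*G^{2^n},
\end{equation*}
and the aim is to bound the $L^2(w)$-norm of this difference by a quantity that vanishes when the index range $(M_1,M_2]$ moves into the regime where the partial sums $\cK_M^1f$ approach the full $\cK^1f$.

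First I would insert Exercise \ref{edinburgh} into each summand, then apply Minkowski's integral inequality and Lemma \ref{jazz for cows} exactly as in the chain leading to \eqref{avaj avaj mo cavo}. Using that each $\cP_{\cdot}^{2^n}$ is $2^n$-periodic to collapse the scale-dependent averaging cube $[0,2^M)^2$ in \eqref{lato} onto a single reference cube $[0,1)^2$, I expect to arrive at an estimate of the form
\begin{equation*}
\|\cK_{M_2}^1 f - \cK_{M_1}^1 f\|_{L^2(w)} \leq \int_{[0,1)^2}\!\int_0^{2\pi} \Big\| \sum_{n=M_1+1}^{M_2} \cP_0^{2^n}(\tau_{-t}U_\psi f)\Big\|_{L^2(\tau_{-t}U_\psi w)}\,\frac{d\psi}{2\pi}\,dt.
\end{equation*}
The inner operator is a martingale transform, so Theorem \ref{Wittwer} supplies a uniform majorant $C[w]_{A_2}\|f\|_{L^2(w)}$ that legitimizes dominated convergence in the outer $(\psi,t)$-integration.

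For the pointwise-in-$(\psi,t)$ vanishing of the integrand, I would invoke verbatim the Riesz-basis argument from the lemma preceding the previous corollary: combining the Treil--Volberg theorem that the Haar system is a Riesz basis of $L^2(w')$ with \eqref{bajro} yields convergence of $\sum_{n}\sum_{Q\in\cG_0^{2^n}}|\langle g,h_Q^0\rangle|^2\|h_Q^0\|_{L^2(w')}^2$ for compactly supported $g$, so its tail in $n$ tends to zero; a density argument extends this to $g\in\cS$. This furnishes the Cauchy condition for $(\cK_M^1 f)_M$ in $L^2(w)$, hence convergence to some $L^2(w)$-limit. Coupled with the pointwise convergence $\cK_M^1f(x)\to\cK^1f(x)$, which follows from Remark \ref{evo na jad, jado, deveta godina} together with the standard identity $\varphi*W:x\mapsto W(\varphi(x-\cdot))$, that $L^2(w)$-limit must coincide with $\cK^1f$. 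The claimed bound $\|\cK^1f\|_{L^2(w)}\leqsim [w]_{A_2}\|f\|_{L^2(w)}$ then drops out by passing to the limit in the uniform bound from the previous corollary.

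The main technical obstacle, in my view, is the bookkeeping around the averaging domain. The representation \eqref{lato} averages over $[0,2^M)^2$ with normalization $1/4^M$; but because $\cP_\cdot^{2^n}$ is $2^n$-periodic one has
\begin{equation*}
\frac{1}{4^M}\int_{[0,2^M)^2} \cR_t^{2^n} f\,dt \;=\; \frac{1}{4^n}\int_{[0,2^n)^2} \cR_t^{2^n} f\,dt
\end{equation*}
for each scale $n$, and a further periodicity-based change of variable reduces every scale to a common $M$-independent reference cube. Without this reduction, dominated convergence in the outer $(\psi,t)$-variables would fail, since the outer domain would itself depend on $M_1,M_2$. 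Once this reduction is in place, the remaining steps are, as the author suggests, ``effectively the same'' as those in the previous corollary.
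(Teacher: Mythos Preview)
Your diagnosis of the obstacle is exactly right: passing from $(\cK_{M,N}^1f)_N$ to $(\cK_M^1f)_M$, the averaging cube $[0,2^{M_2})^2$ now moves with the Cauchy index, so one cannot simply copy the dominated-convergence step from Proposition~\ref{fab}. But your proposed remedy does not work, and the displayed inequality
\[
\|\cK_{M_2}^1 f - \cK_{M_1}^1 f\|_{L^2(w)} \leq \int_{[0,1)^2}\!\int_0^{2\pi} \Big\| \sum_{n=M_1+1}^{M_2} \cP_0^{2^n}(\tau_{-t}U_\psi f)\Big\|_{L^2(\tau_{-t}U_\psi w)}\,\frac{d\psi}{2\pi}\,dt
\]
is false. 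The integrand equals $\big\|\sum_{n}\cP_t^{2^n}(U_\psi f)\big\|_{L^2(U_\psi w)}$, which as a function of $t$ has period $2^{M_2}$, not $1$; for $n>0$ the map $t\mapsto\cP_t^{2^n}$ has period $2^n>1$, so no periodicity argument collapses the average to $[0,1)^2$. If instead you rescale each term by $s=t/2^n$ before summing, you obtain $\sum_n\cP_{2^ns}^{2^n}$; but for generic $s\in[0,1)^2$ the grids $\cG_{2^ns}^{2^n}$ at different levels do \emph{not} nest (the vertex sets $2^ns+2^n\Z^2$ are not contained in one another), so this sum is not a martingale transform and Theorem~\ref{Wittwer} is unavailable.

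A repair that stays close to the paper's intention: keep the cube $[0,2^{M_2})^2$ and, \emph{before} integrating in $t$, use the Riesz-basis upper bound to decouple scales,
\[
\Big\|\sum_{n=M_1+1}^{M_2}\cP_t^{2^n}g\Big\|_{L^2(v)}^2 \ \leqsim_{[v]_{A_2}}\ \sum_{n=M_1+1}^{M_2}\sum_{Q\in\cG_t^{2^n}}|\langle g,h_Q^0\rangle|^2\,\|h_Q^0\|_{L^2(v)}^2.
\]
Now each inner sum over $Q$ is $2^n$-periodic in $t$, so after integrating and applying periodicity term-by-term one gets $\sum_{n>M_1}a_n$ with $a_n$ independent of $M_2$; the Riesz-basis \emph{lower} bound (or Wittwer, as in \eqref{bajro}) shows $\sum_n a_n<\infty$, whence the tail vanishes uniformly in $M_2$. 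Jensen's inequality then controls the first-power average, and the $\psi$-integral is over a fixed interval so causes no trouble. The remaining steps (pointwise identification of the limit, passage to the uniform bound) are as you wrote them.
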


And the same can be done for any $r>0$:

\begin{corollary}
For every $r>0$, $f\in\cS$ and $w\in A_2$ we have
$$
\nor{\cK^r f}_{L^2(w)}\leqsim [w]_{A_2}\nor{f}_{L^2(w)}\,.
$$ 
where the implied constants do not depend on $r,f,w$.
\end{corollary}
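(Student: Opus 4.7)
The plan is to reduce the corollary for arbitrary $r>0$ to the already established $r=1$ case via a scaling argument. The key observation is that the averaged kernel $k^r$ satisfies the scaling identity
$$
k^r(x)=\frac{1}{r^2}\,k^1\!\left(\frac{x}{r}\right)
$$
pointwise on $\R^2\setminus\{0\}$. Indeed, Exercise~\ref{ro} gives $F^\rho(x,y)=\rho^{-2}F(x/\rho,y/\rho)$, and since the rotation average in \eqref{deveta godina} commutes with pointwise dilation (using $\cO_{-\psi}(\xi/\rho)=(\cO_{-\psi}\xi)/\rho$), we obtain $G^\rho(\xi)=\rho^{-2}G(\xi/\rho)$. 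Summing over $\rho=2^n r$ and reindexing inside $G$ proves the claim, with convergence as in Lemma~\ref{saban bajramovic}.

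Next, the kernel scaling translates into the operator identity
$$
\cK^r=\delta_r\circ\cK^1\circ\delta_{1/r},
$$
with $\delta_a$ as in \eqref{prokofjev}. In the principal value convolution $(\cK^r f)(x)=\lim_{\e\to 0}\int_{|y|>\e}k^r(y)f(x-y)\,dy$, justified by Proposition~\ref{jovo, momce mlado}, the substitution $z=y/r$ maps $\{|y|>\e\}$ bijectively onto $\{|z|>\e/r\}$, so the truncated integrals and the limit transfer without change. This yields the pointwise identity $(\cK^r f)(x)=(\cK^1\delta_{1/r}f)(x/r)$.

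The argument then concludes by a weighted change of variable. Set $w_r(u):=w(ru)$. Substitution $u=x/r$ gives
$$
\nor{\cK^r f}_{L^2(w)}=r\,\nor{\cK^1\delta_{1/r}f}_{L^2(w_r)},\qquad \nor{\delta_{1/r}f}_{L^2(w_r)}=\frac{1}{r}\,\nor{f}_{L^2(w)}.
$$
The $A_2$ constant is dilation invariant: substitution yields $\avg{w_r}_Q=\avg{w}_{rQ}$ and analogously for $w_r^{-1}$, and since $Q\mapsto rQ$ is a bijection of the family of squares preserving the supremum in the $A_2$ functional, $[w_r]_{A_2}=[w]_{A_2}$. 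Applying the preceding corollary to $\delta_{1/r}f\in\cS$ and the weight $w_r\in A_2$, the factors of $r$ cancel and we obtain
$$
\nor{\cK^r f}_{L^2(w)}\,\leqsim\,r\cdot[w_r]_{A_2}\cdot\frac{1}{r}\,\nor{f}_{L^2(w)}=[w]_{A_2}\,\nor{f}_{L^2(w)},
$$
with a constant inherited from the $r=1$ case and therefore independent of $r$, $f$ and $w$. The only mildly delicate point is verifying that the principal value interpretation commutes with the dilation, but this is immediate from the bijection of truncation sets under $y\mapsto rz$.
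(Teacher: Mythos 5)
Your proof is correct, but it takes a different route from the paper. The paper treats general $r$ by simply rerunning the whole $r=1$ argument of Section \ref{capellanus} with grids of size $2^n r$ in place of $2^n$: since the Wittwer bound and Lemma \ref{jazz for cows} are insensitive to translation, rotation and dilation of the grids, every step goes through verbatim and the constant is automatically uniform in $r$. You instead reduce to the already-proved $r=1$ case by a clean scaling argument: the termwise identity $G^{2^nr}(x)=(2^nr)^{-2}G\bigl(x/(2^nr)\bigr)$ (which the paper itself exploits inside Lemma \ref{saban bajramovic}) gives $k^r(x)=r^{-2}k^1(x/r)$, hence the conjugation identity $\cK^r=\delta_r\circ\cK^1\circ\delta_{1/r}$ on the level of principal-value convolutions, and then the change of variables in the weighted norms together with the dilation invariance $[w(r\,\cdot)]_{A_2}=[w]_{A_2}$ makes the powers of $r$ cancel. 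Your computations check out: $\nor{\delta_r g}_{L^2(w)}=r\nor{g}_{L^2(w_r)}$, $\nor{\delta_{1/r}f}_{L^2(w_r)}=r^{-1}\nor{f}_{L^2(w)}$, and $\delta_{1/r}f\in\cS$, so the $r=1$ corollary applies and yields the stated bound with a constant independent of $r,f,w$. What your approach buys is that the uniformity in $r$ becomes a one-line consequence of dilation invariance rather than a re-inspection of the limiting/averaging machinery; what the paper's approach buys is that it needs no separate verification of the kernel scaling or of the commutation of the p.v.\ limit with dilation, since nothing in the original argument ever singled out the calibre $r=1$.
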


Finally, this yields the weighted estimate of $S$:
\begin{corollary}
For any $w\in A_2$ we have
\begin{equation*}
\nor{S}_{\cB(L^2(w))}\leqsim [w]_{A_2}.
\end{equation*}
\end{corollary}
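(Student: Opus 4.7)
The heavy lifting has already been done in the previous corollary, which shows that for every $r>0$, $f\in\cS$ and $w\in A_2$,
$$
\nor{\cK^r f}_{L^2(w)}\leqsim [w]_{A_2}\nor{f}_{L^2(w)}\,,
$$
with implied constant independent of $r$, $f$, and $w$. The plan is to combine this uniform bound with the definition
$$
(Sf)(x)=\int_1^2(\cK^r f)(x)\,\frac{dr}{r}
$$
via Minkowski's integral inequality, exploiting the fact that $dr/r$ is a finite measure on $[1,2)$.

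First I would apply Minkowski's integral inequality (with $p=2$ and the measure $w(x)\,dx$ on $\R^2$ versus $dr/r$ on $[1,2)$) to obtain
$$
\nor{Sf}_{L^2(w)}=\left(\int_{\R^2}\left|\int_1^2(\cK^r f)(x)\,\frac{dr}{r}\right|^2 w(x)\,dx\right)^{1/2}\leq\int_1^2\nor{\cK^r f}_{L^2(w)}\,\frac{dr}{r}\,.
$$
Second, I would plug in the uniform estimate on $\cK^r$ from the preceding corollary to get
$$
\nor{Sf}_{L^2(w)}\,\leqsim\,[w]_{A_2}\nor{f}_{L^2(w)}\int_1^2\frac{dr}{r}=(\log 2)\,[w]_{A_2}\nor{f}_{L^2(w)}\,.
$$
Since $\cS$ is dense in $L^2(w)$ for $w\in A_2$, this extends to all $f\in L^2(w)$.

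There is essentially no obstacle at this step, because the delicate work -- controlling the absolute convergence of the series defining $k^r$, justifying the interchanges of sum and integral, the Cauchy-sequence argument leveraging Treil--Volberg's Riesz basis property of the Haar system in $L^2(w)$, and ultimately reducing everything to Wittwer's weighted bound (Theorem \ref{Wittwer}) for martingale transforms -- has already been carried out in the preceding subsection. All that remains here is the outer Minkowski averaging over the scale parameter $r\in[1,2)$, whose finite total mass $\log 2$ preserves the linear-in-$[w]_{A_2}$ estimate. Combined with Proposition \ref{prostranstvima gdje svi}, which identifies $S=cT$ with $c\ne 0$, this yields the desired bound
$$
\nor{T}_{\cB(L^2(w))}=\frac{1}{|c|}\,\nor{S}_{\cB(L^2(w))}\,\leqsim\,[w]_{A_2}\,,
$$
completing the proof of Theorem \ref{zujovic-crni} (and, via Theorem \ref{extrapol}, of Theorem \ref{novljan}).
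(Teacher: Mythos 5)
Your proposal is correct and follows essentially the same route as the paper: Minkowski's integral inequality over $r\in[1,2)$ with the measure $dr/r$, the uniform bound $\nor{\cK^r f}_{L^2(w)}\leqsim [w]_{A_2}\nor{f}_{L^2(w)}$ from the preceding corollary, and density of $\cS$ in $L^2(w)$ to extend to all of $L^2(w)$. The concluding remark linking back to $S=cT$ and Theorem \ref{zujovic-crni} matches how the paper closes the section as well.
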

\begin{proof}
For every $f\in\cS$, another application of Minkowski's integral inequality gives
$$
\nor{Sf}_{L^2(w)}\leq
\int_1^2\nor{\cK^r f}_{L^2(w)}
\,\frac{dr}r\,.
$$
The previous corollary implies $\nor{Sf}_{L^2(w)}\leqsim [w]_{A_2}\nor{f}_{L^2(w)}$. 
Since $\cS$ is dense in $L^2(w)$ \cite{M}, this allows an extension of $S$ onto the whole of $L^2(w)$ with the same bound.
\end{proof}

Now Proposition \ref{prostranstvima gdje svi} implies 
Theorem \ref{zujovic-crni} and thus Theorem \ref{novljan}, as explained at the beginning of this section.

\subsection{Sharpness}

We still need to show that this estimate is sharp, in the same sense the
Wittwer's estimate (Theorem \ref{Wittwer}) was.
For that purpose we shall need
the following auxiliary result, which can be proved by direct simple
calculations.

\begin{exercise}
\label{nevemkaj}
Let $\alpha\in(-2,2)$ and define $w_\alpha: \R^2\rightarrow [0,\infty)$ by
$
w_\alpha(x)=|x|^\alpha\,.
$ 
Then
$$
[w_\alpha]_{A_2}\sim\frac1{2-|\alpha|}\,.
$$
\end{exercise}

Now we are ready to prove the sharpness.
Calculations of this kind have already been made for other singular integral
operators (e.g. \cite{Buc}) but not for $T$,
so we include this calculation for the sake of completeness.

\begin{proposition}
\label{protiprimer}
Let $\phi:\R\rightarrow (0,\infty)$ grow at infinity
more slowly than a linear function, i.e. 
$$
\lim_{x\rightarrow\infty}\frac{\phi(x)}x=0\,. 
$$ 
Then there is a weight $w\in
A_2$ 
such that
  \begin{equation}
   \label{vecja}
   \nor{T}_{\cB(L^2(w))} > \phi([w]_{A_2})\,.
  \end{equation}
\end{proposition}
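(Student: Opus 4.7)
The plan is to argue by contradiction, exhibiting for any sublinear $\phi$ a power weight violating the putative bound $\nor{T}_{\cB(L^2(w))}\leqslant\phi([w]_{A_2})$.  By Exercise~\ref{nevemkaj}, the power weights $w_\alpha(z)=|z|^\alpha$ for $\alpha\in(-2,2)$ satisfy $[w_\alpha]_{A_2}\sim 1/(2-|\alpha|)$, so sending $\alpha\to -2^+$ makes $[w_\alpha]_{A_2}$ arbitrarily large; the key point I need to establish is that $\nor{T}_{\cB(L^2(w_\alpha))}$ grows at least \emph{linearly} in $[w_\alpha]_{A_2}$.

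To see this I would test $T$ on $f(z)=(z/|z|)^2\chi_{\{|z|<1\}}(z)$, i.e.\ the angular mode $m=2$ restricted to the unit disc.  The heuristic is that $T$ shifts the angular index by $-2$ (its Fourier symbol being $\bar\xi/\xi$), so $Tf$ becomes radial on $\{|z|<1\}$, and it is the radiality against a power weight that generates the needed logarithmic enhancement.  Concretely, via Lemma~\ref{lyubo, bratcy, lyubo} one has $Tf=\pd(\cC f)$; splitting $\cC f(z)=-\frac{1}{\pi}\int\frac{e^{2i\theta_w}}{w-z}\,dA(w)$ into $|w|<|z|$ and $|z|<|w|<1$, and expanding $1/(w-z)$ in the appropriate geometric series, angular orthogonality $\int_0^{2\pi}e^{ik\theta}d\theta=2\pi\delta_{k,0}$ kills every term except $n=1$ in $\sum z^n/w^{n+1}$.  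The surviving computation gives $\cC f(z)=2z\log|z|$ on $|z|<1$ and $\cC f(z)=0$ on $|z|>1$, hence
\[
Tf(z)=(2\log|z|+1)\,\chi_{\{|z|<1\}}(z),
\]
which one sanity-checks by $\nor{Tf}_2=\nor{f}_2=\sqrt{\pi}$.

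Setting $\alpha=-2+\e$ with small $\e>0$, the weighted norms follow by direct integration in polar coordinates: $\nor{f}_{L^2(w_\alpha)}^2=2\pi/\e$, while the substitution $u=-\log r$ turns the second integral into $2\pi\int_0^\infty(1-2u)^2e^{-\e u}\,du$, evaluating to $2\pi(8\e^{-3}-4\e^{-2}+\e^{-1})$.  Therefore
\[
\nor{T}_{\cB(L^2(w_\alpha))}\,\geq\,\frac{\nor{Tf}_{L^2(w_\alpha)}}{\nor{f}_{L^2(w_\alpha)}}\,\geqsim\,\frac{1}{\e}\,\sim\,[w_\alpha]_{A_2}.
\]
Given any $\phi$ with $\phi(x)/x\to 0$, the putative inequality would then force $c/\e\leqslant\phi(C/\e)=o(1/\e)$, absurd for $\e$ sufficiently small; hence some $w_\alpha$ serves as the weight required by the statement.

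The only non-routine step is the closed-form evaluation of $\cC f$; the key conceptual point is the choice of angular mode $m=2$, which is precisely the mode whose $T$-image is radial and carries a $\log|z|$ factor.  This logarithm is what upgrades the naive $\sqrt{[w_\alpha]_{A_2}}$ lower bound one obtains from simpler test functions like $\chi_{\{|z|<1\}}$ (whose weighted-norm ratio is only $(\alpha+2)/(2-\alpha)$) to the linear lower bound needed to rule out \emph{every} sublinear $\phi$.
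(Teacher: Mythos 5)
Your proof is correct, but it takes a genuinely different route from the paper's. The paper also reduces to the power weights $w_\alpha(z)=|z|^\alpha$ of Exercise \ref{nevemkaj}, but it sends $\alpha\to 2^-$ and tests $T$ on $f=|z|^{-\alpha}\chi_E$ with $E$ a quarter of the unit disc; the lower bound for $\nor{Tf}_{L^2(w_\alpha)}$ is then extracted on the opposite quadrant $X=-E$ by discarding all of $Tf$ except the imaginary part of the kernel $(z-u)^{-2}$, which has a fixed sign there. That yields $\nor{Tf}^2_{L^2(w_\alpha)}\geqsim(2-\alpha)^{-3}$ against $\nor{f}^2_{L^2(w_\alpha)}\sim(2-\alpha)^{-1}$, hence the same linear lower bound $\nor{T}_{\cB(L^2(w_\alpha))}\geqsim[w_\alpha]_{A_2}$ that you obtain, and the conclusion from sublinearity of $\phi$ is identical. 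What your approach buys is an exact closed form: choosing the angular mode $e^{2i\theta}\chi_\Delta$ makes $Tf=(2\log|z|+1)\chi_\Delta$ computable explicitly (your expansion of the Cauchy kernel and the resulting weighted integrals check out), and the logarithm against the near-critical weight $|z|^{-2+\e}$ transparently produces the extra two powers of $1/\e$. What the paper's approach buys is robustness: it needs only a one-sided pointwise estimate of the kernel, not an exact evaluation, and is the template used for other singular integrals (cf.\ \cite{Buc}). One small technical point you should patch: Lemma \ref{lyubo, bratcy, lyubo} is stated for $h\in C_c^1$, and your $f$ is only a compactly supported bounded function; the identity $\pd(\cC f)=Tf$ for such $f$ should instead be justified via Exercise \ref{preobrazhenskij} together with \eqref{grigorij aleksandrovich}, after which one checks that the continuous, piecewise smooth function $\cC f$ has no singular part in its distributional gradient across $\pd\Delta$, so the a.e.\ pointwise formula for $Tf$ is legitimate.
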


\Do 
For $\alpha\in(-2,2)$
define $w(z)=w_\alpha(z)=|z|^\alpha$ as in Exercise \ref{nevemkaj}. 
Denote
$$
E=\mn{re^{i\varphi}}{0<r<1,\, 0<\varphi<\pi/2},
$$ 
$X=-E$,
and $f(z)=|z|^{-\alpha}\chi_{E}(z)$.

We begin by
\begin{equation}
\label{tehnika}
\aligned
\nor{f}^2_{L^2(w)}
&=\iint_{\R^2}|f(x,y)|^2\,w(x,y)\,dx\,dy\\
&=\iint_E(x^2+y^2)^{-\alpha}(x^2+y^2)^{\alpha/2}\,dx\,dy\\
&=\int_0^{\pi/2}\int_0^1r^{-2\alpha+\alpha}\,r\,dr\,d\varphi\\
&=\frac{\pi}{2}\cdot\frac{1}{2-\alpha}\,.
\endaligned
\end{equation}
In particular, $f\in L^2(w)$ and therefore also $Tf\in L^2(w)$. Let us estimate its norm from below:
\begin{equation}
\label{Helmwige}
\nor{Tf}^2_{L^2(w)} =\int_{\R^2}|(Tf)(\xi)|^2\,w(\xi)\,d\xi
 \geq \int_{X}|(Tf)(\xi)|^2\,w(\xi)\,d\xi\,.
\end{equation}
When $z\in X$ we have
$$
(Tf)(z)=-\frac1\pi\int_E\frac{f(u)}{(z-u)^2}\,du\,, 
$$
i.e. the integral converges in the ordinary sense (not only as a principal value integral), because $z\in X$ is on a positive distance from supp\,$f\subset E$. Therefore \eqref{Helmwige} implies
\begin{equation*}
\nor{Tf}^2_{L^2(w)}\geq\iint_X\left|\frac1\pi\iint_E\frac{(s^2+t^2)^{-\alpha/2}}
{[(x-s)+i(y-t)]^2}\,ds\,dt\right|^2\,
(x^2+y^2)^{\alpha/2}\,dx\,dy\,.
\end{equation*}
We use the identity
$$
\frac{1}{[a+ib]^2} =
\frac{a^2-b^2}{[a^2+b^2]^2}
-i\,\frac{2ab}{[a^2+b^2]^2}
$$
to estimate the square of modulus of the inner integral (i.e.
the one over $E$) from below by the square of its imaginary
part, that is, by
$$
\left(\iint_E\frac{2(x-s)(y-t)}{[(x-s)^2+(y-t)^2]^2}(s^2+t^2)^{-\alpha/2}
\,ds\,dt\right)^2\,.
$$
Since $(x,y)\in X$ and $(s,t)\in E$, we have
$$
(x-s)(y-t)\geq xy\,.
$$
The bound for the denominator comes from the triangle
inequality:
$$
[(x-s)^2+(y-t)^2]^2=|(x,y)-(s,t)|^4\leq
\left( |(x,y)| + |(s,t)|\right )^4\,.
$$
Therefore,
$$
\aligned
\left|\iint_E\frac{(s^2+t^2)^{-\alpha/2}}{[(x-s)+i(y-t)]^2}\,ds\,dt\right|^2
& \geq 4x^2y^2\left(\iint_E\frac{(s^2+t^2)^{-\alpha/2}} {(|(x,y)| + |(s,t)|)^4}
\,ds\,dt\right)^2\\
&=\pi^2x^2y^2\left(\int_0^1\frac{r^{-\alpha}}{(|(x,y)| + r)^4}\,r\,dr\right)^2\,.
\endaligned
$$
By taking $u=r/|(x,y)|$ we can continue with
$$
\aligned
\pi^2x^2y^2\bigg(&|(x,y)|^{-\alpha-2}\int_0^{1/|(x,y)|}\frac{u^{1-\alpha}}
{( 1 + u)^4}
\,du\bigg)^2\,\geq\\
&\geq \frac{\pi^2x^2y^2}{(x^2+y^2)^{\alpha+2}}
\left( \int_0^{1}\frac{u^{1-\alpha}}
{( 1 + 1)^4}\,du\right)^2\,
=\frac{\pi^2}{256(2-\alpha)^2}\cdot
\frac{x^2y^2}{(x^2+y^2)^{\alpha+2}}\,.
\endaligned
$$
We proved that
\begin{equation}
\label{leva}
\nor{Tf}^2_{L^2(w)}\geq \frac{1}{256\,(2-\alpha)^2}
\iint_X x^2y^2\,(x^2+y^2)^{\alpha/2-\alpha-2}\,dx\,dy\,=\frac{C_l}{(2-\alpha)^3}\,,
\end{equation}
where
$$
C_l=\frac{1}{256}
\int_0^{\pi/2}\cos^2\varphi\,
\sin^2\varphi\,d\varphi
=\frac{1}{256}\cdot\frac12B\left(\frac32,\frac32\right)=\frac\pi{4096}\,.
$$

Thus we summarize \eqref{leva} and \eqref{tehnika} as
$$
\nor{T}_{\cB(L^2(w))}\geq\frac{\nor{Tf}_{L^2(w)}}{\nor{f}_{L^2(w)}}\,\geqsim\, \frac1{2-\alpha}\,.
$$

In order to finish the proof we need to find $\alpha$ that satisfies the inequality
$$
\frac{1}{2-\alpha} > C\, \phi([w_\alpha]_{A_2})
$$
for a given constant $C$. It suffices to show that
$$
\lim_{\alpha\rightarrow 2^-}(2-\alpha)\, \phi([w_\alpha]_{A_2})=0\,,
$$
which however follows from Exercise \ref{nevemkaj} and the assumption on $\phi$.
\qed

\subsection{Bellman function and Wittwer's theorem}
\label{tamo dolje niz mahalu}
Let $\cD=\cD[0,1]$ be the collection of all dyadic intervals in $[0,1)$. 
As remarked in Section \ref{flash gordon}, the set $\{1\}\cup\mn{h_I}{I\in\cD}$ is a complete orthonormal system in $L^2([0,1])$ \cite[Theorem 5.4.6]{G1}. 
Let $\sigma=(\sigma_I)_{I\in\cD}$ be a sequence in 
$\overline\Delta$. 
Define on $\cS(\R)$ the operator
$$
T_\sigma f=\sum_{I\in\cD}\sigma_I\sk{f}{h_I}h_I\,.
$$
Define also a {\it dyadic $A_2$ weight}: this is a positive function $w\in L^1([0,1])$, such that
$$
[w]_{A_2}^{dyadic}:=\sup_{I\in\cD}\av{w}_I\av{w^{-1}}_I
$$
is finite.

The following theorem was proven by Wittwer \cite{W} who based her approach on the paper by Nazarov, Treil and Volberg \cite{NTV}. We will give another proof, following Volberg \cite[Theorem 2.10]{V2}. A key element will be a concrete Bellman function.

First we need the following simple lemma introducing ``Haar functions'' which are more suitable to the {\it weighted} space $L^2(w)$ (see \cite[Section 2.1]{NTV0}). 
Denote 
$$
d\lambda=w\,dm\,. 
$$

\begin{exercise}
\label{sesta}
For any interval $I$ and any positive $w\in L_{loc}^1(\R)$ there exists a unique function (called $h_I^w$) satisfying the properties:
\begin{enumerate}[(i)]
\item
$h_I^w$ is a linear combination of functions $\chi_{I_-}$ and $\chi_{I_+}$;

\item
\label{asd}
$$
\int_Ih_I^w\,d\lambda=0\,;
$$

\item
\label{wer}
$$
\int_I|h_I^w|^2\,d\lambda=1\,;
$$

\item
$$
h_I^w\Big|_{I_+}>0\,.
$$
\end{enumerate}
Functions $\mn{h_I^w}{I\in\cD}$ form an orthonormal system in $L^2(\lambda)$.
\end{exercise}

Since $h_I^w$ and $\chi_I$ are (linearly independent) linear combinations of functions $\chi_{I_-}$, $\chi_{I_+}$, and so is $h_I$, there exist unique real constants $\alpha_I, \beta_I$ such that
\begin{equation}
\label{Shostakovich - suita u fis-molu}
h_I=\alpha_Ih_I^w+\beta_I\,\frac{\chi_I}{\sqrt{|I|}}\,.
\end{equation}
Observe that $\alpha_I$ cannot be zero, since otherwise $h_I$ would be constant on $I$. 
Certainly, we can extract $\alpha_I, \beta_I$ by using Exercise \ref{sesta}:
\begin{itemize}
\item
compute the $L^2(w)$ norm of \eqref{Shostakovich - suita u fis-molu}:
$$
\av{w}_I=\nor{h_I}_{L^2(w)}^2=\alpha_I^2+\beta_I^2\av{w}_I\,;
$$
\item
divide \eqref{Shostakovich - suita u fis-molu} by $\sqrt{|I|}$ and integrate with respect to $w\,dm$ to get
$$
\frac12\,\Delta_I w=\beta_I\av{w}_I\,.
$$
\end{itemize}
Here $\Delta_I w:=\av{w}_{I_+}-\av{w}_{I_-}\,$. 

Let us summarize the above findings. 

\begin{lemma}
\label{DSCH}
The constants $\alpha_I, \beta_I$ from \eqref{Shostakovich - suita u fis-molu} obey estimates
$$
0<|\alpha_I|\leq\sqrt{\av{w}_I}
\hskip 40pt
\text{and}
\hskip 40pt
|\beta_I|\leq\frac{|\Delta_I w|}{\av{w}_I}\,.
$$
\end{lemma}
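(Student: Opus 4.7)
\medskip\noindent\textbf{Proof proposal.} Both bounds are immediate bookkeeping once one reads \eqref{Shostakovich - suita u fis-molu} as the orthogonal decomposition of $h_I$ in $L^2(I,w\,dm)$ along the pair $\{h_I^w,\,\chi_I/\sqrt{|I|}\}$. By property (\ref{asd}) of Exercise \ref{sesta} these two functions are orthogonal in $L^2(w)$ on $I$, with $\nor{h_I^w}{L^2(w)}^2=1$ (property (\ref{wer})) and $\nor{\chi_I/\sqrt{|I|}}{L^2(w)}^2=\av{w}_I$. The whole argument amounts to extracting two scalar identities from \eqref{Shostakovich - suita u fis-molu}.

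For $\alpha_I$, I would compute the $L^2(w)$-norm squared of both sides on $I$. Since $h_I^2\equiv 1/|I|$ on $I$, the left-hand side equals $\av{w}_I$; by orthogonality the right-hand side equals $\alpha_I^2+\beta_I^2\av{w}_I$. Thus
$$
\alpha_I^2=\av{w}_I\big(1-\beta_I^2\big)\leq\av{w}_I,
$$
giving $|\alpha_I|\leq\sqrt{\av{w}_I}$. For $\alpha_I\ne 0$: if $\alpha_I=0$ then $h_I$ would be a multiple of $\chi_I$, hence constant on $I$, contradicting the sign change of $h_I$ across the midpoint of $I$.

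For $\beta_I$, I would pair \eqref{Shostakovich - suita u fis-molu} with $w\,dm$ on $I$. The $h_I^w$-term vanishes by property (\ref{asd}) of Exercise \ref{sesta}, while the left-hand side evaluates, via the definition $h_I=|I|^{-1/2}(\chi_{I_+}-\chi_{I_-})$, as
$$
\int_I h_I\, w\, dm
=|I|^{-1/2}\Big(\int_{I_+}w-\int_{I_-}w\Big)
=\tfrac{|I|^{1/2}}{2}\,\Delta_I w,
$$
and the remaining right-hand side is $\beta_I|I|^{1/2}\av{w}_I$. This yields $\beta_I=\Delta_I w/(2\av{w}_I)$, which is in fact twice as sharp as the claimed bound.

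There is no real obstacle here; the lemma is essentially already proved in the two bullet points preceding its statement, and the task is simply to organize those observations and to justify $\alpha_I\ne 0$ separately. The substantive use of $\alpha_I,\beta_I$ — and hence the motivation for highlighting these particular inequalities — should appear later in the Bellman-function proof of Wittwer's theorem \ref{sve prolazi}, where they measure the ``defect'' between the weighted and unweighted Haar expansions.
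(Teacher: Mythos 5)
Your proof is correct and follows essentially the same route as the paper: computing the $L^2(w)$-norm of \eqref{Shostakovich - suita u fis-molu} to get $\av{w}_I=\alpha_I^2+\beta_I^2\av{w}_I$, and integrating against $w\,dm$ to get the exact value $\beta_I=\Delta_I w/(2\av{w}_I)$, which indeed beats the stated bound by a factor of $2$. The non-vanishing of $\alpha_I$ is handled identically (otherwise $h_I$ would be constant on $I$), so nothing is missing.
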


\begin{theorem}
\label{sve prolazi}
$$
\sup_{\sigma}\nor{T_\sigma}_{\cB(L^2(w))}\,\leqsim\, [w]_{A_2}^{dyadic}.
$$
\end{theorem}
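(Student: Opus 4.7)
The plan is to follow the Bellman function method of Nazarov, Treil and Volberg \cite{NTV}, in its sharp form due to Wittwer \cite{W} which yields the \emph{linear} dependence on $Q:=[w]_{A_2}^{dyadic}$.

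First I would reduce the operator bound to a bilinear estimate. Bounded functions are dense in both $L^2(w)$ and $L^2(w^{-1})$, so by the duality $(L^2(w))^*\cong L^2(w^{-1})$ implemented through the unweighted pairing $\sk{f}{g}=\int_0^1 f\bar g\,dm$, it suffices to prove
$$
|\sk{T_\sigma f}{g}|\,\leqsim\,Q\,\nor{f}_{L^2(w)}\,\nor{g}_{L^2(w^{-1})}
$$
for bounded $f,g$. Since $\{h_I\}_{I\in\cD}\cup\{\mathbf 1\}$ is an orthonormal basis of $L^2([0,1],dm)$ and $T_\sigma\mathbf 1=0$, Parseval gives $\sk{T_\sigma f}{g}=\sum_{I\in\cD}\sigma_I\sk{f}{h_I}\overline{\sk{g}{h_I}}$. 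Using the identity $\sk{f}{h_I}=(\sqrt{|I|}/2)\,\Delta_I f$, with $\Delta_I f:=\avg{f}_{I_+}-\avg{f}_{I_-}$, together with $|\sigma_I|\leq 1$, the task reduces to proving the bilinear estimate
\begin{equation}
\label{bilin}
\sum_{I\in\cD}|I|\,|\Delta_I f|\,|\Delta_I g|\,\leqsim\,Q\,\nor{f}_{L^2(w)}\,\nor{g}_{L^2(w^{-1})}.
\end{equation}

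Next I would build a Bellman function on the convex domain
$$
\Omega_Q:=\Mn{(x,X,y,Y,u,v)\in\R\times\R_+\times\R\times\R_+\times\R_+\times\R_+}{x^2\leq Xv,\ y^2\leq Yu,\ 1\leq uv\leq Q},
$$
whose variables are intended to track the six averages
$$
x\leftrightarrow\avg{f}_I,\ X\leftrightarrow\avg{|f|^2w}_I,\ y\leftrightarrow\avg{g}_I,\ Y\leftrightarrow\avg{|g|^2w^{-1}}_I,\ u\leftrightarrow\avg{w}_I,\ v\leftrightarrow\avg{w^{-1}}_I.
$$
Here $x^2\leq Xv$ and $y^2\leq Yu$ are Cauchy--Schwarz, while $1\leq uv\leq Q$ combines Jensen with the $A_2$ assumption. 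The crux is the construction of a function $B\in C^2(\Omega_Q)$ with
\begin{itemize}
\item[(a)] $0\leq B(x,X,y,Y,u,v)\leq C\sqrt{XY}$;
\item[(b)] for any three points $P,P_-,P_+\in\Omega_Q$ with $P=(P_++P_-)/2$,
$$
B(P)-\tfrac12\bigl(B(P_+)+B(P_-)\bigr)\,\geq\,\frac{c}{Q}\,|x_+-x_-|\,|y_+-y_-|.
$$
\end{itemize}
A function with these properties, essentially the Nazarov--Treil--Volberg Bellman function, has the schematic form $B(x,X,y,Y,u,v)=\sqrt{(X-x^2/v)(Y-y^2/u)}\,+\,(\text{correction})$, chosen so that the Hessian, restricted to the admissible directions, is negative-definite with the correct quantitative size in $Q$.

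Finally I would iterate (b) on the dyadic tree. For each $I\in\cD$ set
$$
P_I:=\bigl(\avg{f}_I,\avg{|f|^2w}_I,\avg{g}_I,\avg{|g|^2w^{-1}}_I,\avg{w}_I,\avg{w^{-1}}_I\bigr)\in\Omega_Q.
$$
Since all six coordinates are averages, $P_I=(P_{I_+}+P_{I_-})/2$, so (b) applied to $(P_I;P_{I_\pm})$ and multiplied by $|I|$ yields
$$
|I|B(P_I)-|I_+|B(P_{I_+})-|I_-|B(P_{I_-})\,\geq\,\frac{c}{Q}\,|I|\,|\Delta_I f|\,|\Delta_I g|.
$$
Summing over $I\in\cD$ (first over a finite subtree, then passing to the limit by monotone convergence) telescopes the left side to at most $B(P_{[0,1)})$, which by (a) is dominated by $C\sqrt{\avg{|f|^2w}_{[0,1)}\avg{|g|^2w^{-1}}_{[0,1)}}=C\nor{f}_{L^2(w)}\nor{g}_{L^2(w^{-1})}$. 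This gives \eqref{bilin}. The main obstacle is step two: the actual construction of $B$ and the delicate verification of the concavity (b), which is the technical heart of the argument and the point at which the ``linear in $Q$'' behaviour of \cite{W} (as opposed to the weaker bound in \cite{NTV}) enters.
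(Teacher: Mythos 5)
Your reduction to the bilinear estimate \eqref{bilin} is sound (indeed, choosing unimodular $\sigma_I$ to match phases shows \eqref{bilin} is \emph{equivalent} to the theorem), but the proposal has a genuine gap precisely at the point you defer: the existence of the six-variable function $B$ with properties (a) and (b). By the standard extremal-problem characterization (take $\mathbf B$ to be the supremum of the normalized tails $\frac1{|J|}\sum_{I\subseteq J}|I||\Delta_If||\Delta_Ig|$ over all data with prescribed averages), a function satisfying (a) and (b) with gain $c/Q$ exists \emph{if and only if} \eqref{bilin} holds with constant $\leqsim Q$ -- so asserting its ``schematic form'' is circular rather than a proof. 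Worse, the candidate you sketch does not deliver linearity: functions of the type $\sqrt{(X-x^2/v)(Y-y^2/u)}$ produce, after differentiation, separate diagonal gains in $(dx)^2$ and $(dy)^2$, and recombining these by Cauchy--Schwarz (equivalently, running the two weighted square-function embeddings and then H\"older) loses a factor $\sqrt{Q}$ and lands at $Q^{3/2}$, which is exactly the classical barrier that Wittwer's argument was designed to break. No explicit single Bellman function with the genuinely off-diagonal gain $\frac cQ|dx||dy|$ on this domain appears to be available. A secondary, fixable issue: $\Omega_Q$ is \emph{not} convex (the constraint $uv\leq Q$ fails along chords, e.g. between $(Q,1)$ and $(1,Q)$), so the midpoint inequality (b) cannot be taken as stated; one must verify a Hessian condition on the enlarged domain $\Omega_{4Q}$ through which the segment $[P_{I_-},P_{I_+}]$ actually travels, and track how the constants depend on that enlargement.

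For contrast, the proof in the text circumvents the global Bellman function entirely. After changing densities ($f\mapsto fw$, $g\mapsto gw^{-1}$) it expands each $h_I$ in the disbalanced Haar functions $h_I^w$, $h_I^{w^{-1}}$ via \eqref{Shostakovich - suita u fis-molu}; the diagonal term $I$ is handled by Cauchy--Schwarz and Bessel's inequality in $L^2(w)$ and costs only $\sqrt{Q}$, while the paraproduct-type terms $II$--$IV$ are controlled by the weighted dyadic maximal function together with the Carleson embedding theorem. The only Bellman function required is the elementary two-variable $b(x,y)=(xy)^\alpha$, used solely to show that $(\mu_I)_I$ is a Carleson sequence with constant $\leqsim Q^\alpha$. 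To complete your route you would have to either construct and verify your $B$ explicitly -- which is the entire difficulty -- or adopt a decomposition of this kind.
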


\begin{proof}
We want to prove $\nor{T_\sigma f}_{L^2(w)}\leqsim\, [w]_{A_2}^{dyadic}\nor{f}_{L^2(w)}$ for $f\in L^2(w)$ and $w\in A_2$. Replacing $f$ by $fw^{-1}$ enables us to rephrase this inequality as 
$$
\nor{T_\sigma(fw^{-1})}_{L^2(w)}\leqsim\, [w]_{A_2}^{dyadic}\nor{f}_{L^2(w^{-1})}
$$ 
for $f\in L^2(w^{-1})$ and $w\in A_2$, which is (take $w^{-1}$ in place of $w$) the same as 
$$
\nor{T_\sigma(fw)}_{L^2(w^{-1})}\leqsim\, [w]_{A_2}^{dyadic}\nor{f}_{L^2(w)} 
\hskip 40pt
\text{ for }
f\in L^2(w), w\in A_2\,.
$$
Since the duality in $L^2(w^{-1})$ means that 
$$
\nor{\f}_{L^2(w^{-1})}=\sup_{\nor{g}_{L^2(w^{-1})}}|\sk{\f}{g}_{L^2(w^{-1})}|\,, 
$$
we see that the theorem is equivalent to proving
$$
|\sk{T_\sigma(fw)}{gw^{-1}}| \leqsim\, [w]_{A_2}^{dyadic}\nor{f}_{L^2(w)} \nor{g}_{L^2(w^{-1})}\,.
$$
We emphasize that in the last inequality we take the usual (unweighted) scalar product, i.e. with respect to the Lebesgue measure.

Since functions $\mn{h_I}{I\in\cD}$ are orthonormal in (the unweighted) $L^2$, we get
$$
\sk{T_\sigma(fw)}{gw^{-1}}=\sum_I\sigma_I\sk{fw}{h_I}\,\overline{\sk{gw^{-1}}{h_I}}\,.
$$
By using the assumption on $\sigma_I$, we see that our task will be completed once we prove
$$
\sum_{I\in\cD}|\sk{fw}{h_I}|\,|\sk{gw^{-1}}{h_I}|\,\leqsim\,[w]_{A_2}^{dyadic}\,\nor{f}_{L^2(w)} \nor{g}_{L^2(w^{-1})}\,.
$$
We use \eqref{Shostakovich - suita u fis-molu} and Lemma \ref{DSCH} to estimate
$$
\sum_{I\in\cD}|\sk{fw}{h_I}|\,|\sk{gw^{-1}}{h_I}|
\leq I+II+III+IV,
$$
where

$$
\aligned
I &= \sum_{I\in\cD}|\sk{fw^{}}{h_I^w}|\sqrt{\av{w}_I}\,|\sk{gw^{-1}}{h_I^{w^{-1}}}|\sqrt{\av{w^{-1}}_I}\\
II &= \sum_{I\in\cD}|\av{fw^{}}_I| \frac{|\Delta_I w|}{\av{w}_I} \sqrt{|I|} \cdot |\sk{gw^{-1}}{h_I^{w^{-1}}}|\sqrt{\av{w^{-1}}_I}\\
III &= \sum_{I\in\cD}|\sk{fw^{}}{h_I^{w}}|\sqrt{\av{w}_I}\cdot |\av{gw^{-1}}_I|\frac{|\Delta_I w^{-1}|}{\av{w^{-1}}_I}\sqrt{|I|} \\
IV &= \sum_{I\in\cD}
|\av{fw^{}}_I| \frac{|\Delta_I w|}{\av{w}_I} \sqrt{|I|} \cdot
|\av{gw^{-1}}_I| \frac{|\Delta_I w^{-1}|}{\av{w^{-1}}_I} \sqrt{|I|}\,.
\endaligned
$$
\end{proof}

\noindent\framebox{Estimate of $I$.} 
We obviously have 
$$
\aligned
I 
& \leq \sqrt{[w]_{A_2}^{dyadic}} \sqrt{\sum_{I\in\cD}|\sk{fw}{h_I^w}|^2}\,\sqrt{\sum_{I\in\cD}|\sk{gw^{-1}}{h_I^{w^{-1}}}|^2}\\
& = \sqrt{[w]_{A_2}^{dyadic}} \sqrt{\sum_{I\in\cD}|\sk{f}{h_I^w}_{L^2(w)}|^2}\,\sqrt{\sum_{I\in\cD}|\sk{g}{h_I^{w^{-1}}}_{L^2(w^{-1})}|^2}\\
& \leq \sqrt{[w]_{A_2}^{dyadic}} \nor{f}_{L^2(w)} \nor{g}_{L^2(w^{-1})}.
\endaligned
$$
For the second inequality we applied Exercise \ref{sesta}. \\

\noindent\framebox{Estimate of $II, III, IV$.} 
Since $III$ is symmetric to $II$, we only consider $II$ and $IV$.

Fix $\alpha\in(0,1/2)$ and introduce
$$
\mu_I:=\av{w}_I^\alpha\av{w^{-1}}_I^\alpha\left(\frac{|\Delta_I w|^2}{\av{w}_I^2}+\frac{|\Delta_I w^{-1}|^2}{\av{w^{-1}}_I^2}\right)|I|\,.
$$
Observe that 
\begin{equation}
\label{maja}
\max\left\{
\frac{|\Delta_I w|}{\av{w}_I}\sqrt{|I|}\ ,\,
 \frac{|\Delta_I w^{-1}|}{\av{w^{-1}}_I}\sqrt{|I|}
 \right\}
\leq
\av{w}_I^{-\alpha/2}\av{w^{-1}}_I^{-\alpha/2}\sqrt{\mu_I}
 \,.
\end{equation}
Now suppose that $p\in(1,2)$. Then H\"older's inequality gives
\begin{equation}
\label{blagdan}
|\av{fw^{}}_I|\leq\av{|f|^pw^{}}_I^{1/p}\av{w^{}}_I^{1-1/p}.
\end{equation}
Introduce measures $d\lambda=w\,dm$ and $d\lambda^*=w^{-1}dm$. Then \eqref{blagdan} can be rewritten as
\begin{equation}
\label{sunce moru}
|\av{fw^{}}_I| 
 \leq \av{w^{}}_I
\left(
\frac1{\lambda(I)}\int_I|f|^p\,d\lambda
\right)^{1/p}
 \leq \av{w^{}}_I 
\inf_{x\in I}\left(M_{\lambda}^d|f|^p(x)
\right)^{1/p}\,,
\end{equation}
where for any positive measure $\mu$ on $[0,1]$ we define its {\it dyadic (weighted) maximal function} by
$$
M_\mu^d\f(x):=\sup_{I\in\cD\atop I\ni x}\frac1{\mu(I)}\int_I|\f|\,d\mu\,.
$$
We summarize \eqref{maja} and \eqref{sunce moru} into
$$
|\av{fw}_I|\frac{|\Delta_I w|}{\av{w}_I}\sqrt{|I|}\leq
\av{w}_I^{1-\alpha/2}\av{w^{-1}}_I^{-\alpha/2}\inf_{I}\left(M_{\lambda}^d|f|^p
\right)^{1/p}\sqrt{\mu_I}\,.
$$
Consequently
$$
\aligned
II &\leq 
\sum_{I\in\cD}
\av{w}_I^{1-\alpha/2}\av{w^{-1}}_I^{1-\alpha/2}
\frac{\inf_{I}\left(M_{\lambda}^d|f|^p
\right)^{1/p}}{\av{w^{-1}}_I^{1/2}}\sqrt{\mu_I}\cdot |\sk{gw^{-1}}{h_I^{w^{-1}}}|
\\
&\leq 
\left([w]_{A_2}^{dyadic}\right)^{1-\alpha/2}
\sum_{I\in\cD}
\frac{\inf_{I}\left(M_{\lambda}^d|f|^p
\right)^{1/p}}{\av{w^{-1}}_I^{1/2}}\sqrt{\mu_I}\cdot |\sk{gw^{-1}}{h_I^{w^{-1}}}|
\\
&\leq 
\left([w]_{A_2}^{dyadic}\right)^{1-\alpha/2}
\sqrt{\sum_{I\in\cD}
\frac{\inf_{I}\left(M_{\lambda}^d|f|^p
\right)^{2/p}}{\av{w^{-1}}_I}\mu_I}
\sqrt{\sum_{I\in\cD} |\sk{gw^{-1}}{h_I^{w^{-1}}}|^2}\\
&\leq 
\left([w]_{A_2}^{dyadic}\right)^{1-\alpha/2}
\nor{g}_{L^2(w^{-1})}
\sqrt{\sum_{I\in\cD}
\frac{\inf_{I}\left(M_{\lambda}^d|f|^p
\right)^{2/p}}{\av{w^{-1}}_I}\mu_I}
\endaligned
$$

and similarly
$$
\aligned
IV &\leq 
\left([w]_{A_2}^{dyadic}\right)^{1-\alpha}
\sum_{I\in\cD}
\inf_{I}\left(M_{\lambda}^d|f|^p
\right)^{1/p}
\inf_{I}\left(M_{\lambda^*}^d|g|^p
\right)^{1/p}\mu_I\\
&\leq 
\left([w]_{A_2}^{dyadic}\right)^{1-\alpha}
\sum_{I\in\cD}
\inf_{I}\left(M_{\lambda}^d|f|^p
\right)^{1/p}
\left(M_{\lambda^*}^d|g|^p
\right)^{1/p}\mu_I
\,.
\endaligned
$$

\begin{lemma}
\label{spavaj}
The sequence $\{\mu_I\}_{I\in\cD}$ is a Carleson sequence with the Carleson constant bounded by $\leqsim\,
\left([w]_{A_2}^{dyadic}\right)^\alpha=:C_\alpha(w)$. This means that, for every $I\in\cD$, 
$$
\sum_{J\in\cD\atop J\subset I}\mu_J\,\leqsim_\alpha\,C_\alpha(w)|I|.
$$
\end{lemma}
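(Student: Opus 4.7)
The plan is to apply the Bellman function method, following the strategy pioneered in \cite{NTV0,W} and streamlined in \cite{V2}. Set $Q := [w]_{A_2}^{\mathrm{dyadic}}$ and define the candidate Bellman function
$$
B(x,y) := (xy)^\alpha\,,\hskip 30pt \text{on}\hskip 10pt \Omega_Q := \mn{(x,y)\in\R^2}{x,y>0,\ xy\leq Q}\,.
$$
By the Cauchy-Schwarz inequality, the pair $z_J := \big(\av{w}_J, \av{w^{-1}}_J\big)$ lies in $\Omega_Q$ for every $J \in \cD$, and $0 \leq B(z_J) \leq Q^\alpha$.

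The heart of the argument will be the \emph{discrete concavity estimate}: there exists $c_\alpha>0$ such that whenever $z = \tfrac{1}{2}(z^+ + z^-)$ with $z = (x,y)$ and $z^\pm = (x^\pm, y^\pm)$ all in $\Omega_Q$,
\begin{equation*}
B(z) - \tfrac{1}{2}\big[B(z^+) + B(z^-)\big]\, \geq\, c_\alpha\, (xy)^\alpha \left(\frac{(x^+-x^-)^2}{x^2} + \frac{(y^+-y^-)^2}{y^2}\right)\,. \qquad (\ast)
\end{equation*}
To verify $(\ast)$, first compute the Hessian of $B$ and observe that the matrix $-\mathrm{Hess}\,B(x,y) - c_\alpha (xy)^\alpha\,\mathrm{diag}(x^{-2},y^{-2})$ is positive semidefinite on $\R_+^2$ as soon as $c_\alpha \leq \alpha(1-2\alpha)$; here the hypothesis $\alpha < 1/2$ enters precisely because $\det\big(-\mathrm{Hess}\,B\big) = \alpha^2(1-2\alpha)(xy)^{2\alpha - 2}$, which degenerates at $\alpha = 1/2$. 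The chord inequality $(\ast)$ will then follow from this pointwise Hessian bound by applying Taylor's formula with integral remainder to $\phi(t) := B(z + t\delta)$, $\delta := \tfrac{1}{2}(z^+ - z^-)$, $t \in [-1, 1]$; the segment $[z^-, z^+]$ lies in $\R_+^2$ where $B$ is smooth.

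Once $(\ast)$ is in hand, apply it to each $J \in \cD$ with $z = z_J$, $z^\pm = z_{J_\pm}$: the dyadic averaging identities give $z_J = \tfrac{1}{2}(z_{J_+} + z_{J_-})$, $x^+ - x^- = \Delta_J w$, and $y^+ - y^- = \Delta_J w^{-1}$. Multiplying by $|J|$ and rearranging,
$$
|J|\, B(z_J) - |J_+|\, B(z_{J_+}) - |J_-|\, B(z_{J_-})\, \geq\, c_\alpha\, \mu_J\,.
$$
Summing over all $J \subset I$, $J\in\cD$, with $|J| \geq 2^{-N}|I|$, the left-hand side telescopes; discarding the nonnegative generation-$N$ boundary terms and using $B(z_I) \leq Q^\alpha$,
$$
\sum_{J \subset I,\, J \in \cD,\, |J| \geq 2^{-N}|I|} \mu_J\, \leq\, c_\alpha^{-1}\, Q^\alpha\, |I|\,.
$$
Sending $N \to \infty$ yields the asserted Carleson bound $\sum_{J \subset I,\, J \in \cD} \mu_J \,\leqsim_\alpha\, \big([w]_{A_2}^{\mathrm{dyadic}}\big)^\alpha |I|$, as required.

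The main obstacle is the verification of $(\ast)$: the infinitesimal Hessian calculation is elementary, but passing to the chord estimate requires some care because the weight $(xy)^\alpha/x^2$ may vary considerably along $[z^-, z^+]$. A helpful technical device is the two-parameter symmetry of $(\ast)$: both sides transform identically under $(x,y,x^\pm,y^\pm)\mapsto(tx, ty, tx^\pm, ty^\pm)$, and both sides are invariant under $(x,y,x^\pm,y^\pm)\mapsto(sx, s^{-1}y, sx^\pm, s^{-1}y^\pm)$ for $s,t>0$. Normalising so that $x=y=1$ reduces $(\ast)$ to a bounded elementary inequality in the two variables $(x^+-x^-)/2$ and $(y^+-y^-)/2$, for which the desired bound on the deficit can be checked by direct calculus.
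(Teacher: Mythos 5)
Your proof is correct and follows essentially the same route as the paper: the Bellman function $b(x,y)=(xy)^\alpha$ with the Hessian bound $-d^2b\geq\alpha(1-2\alpha)(xy)^\alpha\big(x^{-2}(dx)^2+y^{-2}(dy)^2\big)$, the chord estimate via Taylor's formula with integral remainder, and the telescoping summation over generations using $b\leq Q^\alpha$. The only cosmetic difference is in closing the chord step: the paper restricts the Taylor remainder to the half of the segment near the midpoint, where the martingale identity $\av{w}_I=\tfrac12(\av{w}_{I_+}+\av{w}_{I_-})$ makes both coordinates comparable to their values at the center, whereas you invoke scaling symmetries and a compactness/normalization argument — both work.
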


This lemma will be proven at the end of the section. We couple it with another result, which follows directly from the dyadic version of the Carleson embedding theorem, see \cite{NT}, \cite{NTV} (for the weighted version and a nicely written proof) and \cite{NTV1} for a proof establishing the optimal constant.

\begin{lemma}
Suppose $(\alpha_J)_{J\in\cD}$ defines a Carleson measure with Carleson constant bounded by $B>0$. If $F,G$ are positive measurable functions on $(0,1)$, then
$$
\aligned
\sum_{J\in\cD}(\inf_J F)\,\alpha_J\,&\leqsim\,B\int_{(0,1)}F\\
\sum_{J\in\cD}\frac{\inf_J G}{\av{w}_J}\,\alpha_J\,&\leqsim\,B\int_{(0,1)}\frac Gw.
\endaligned
$$
\end{lemma}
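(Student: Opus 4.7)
My plan for both parts is the same: apply layer cake to $\inf_J$, decompose the superlevel sets into their maximal dyadic subintervals, and invoke the Carleson hypothesis on each piece.

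\textbf{First inequality.} Writing $\inf_J F=\int_0^\infty\chi_{\{\inf_J F>t\}}\,dt$ and exchanging the nonnegative sum with the integral, note that $\inf_J F>t$ forces $J\subset E_t:=\{F>t\}$. Group the dyadic $J$'s contained in $E_t$ by their (pairwise disjoint) maximal ancestors $\{K_i^t\}\subset E_t$: by the Carleson hypothesis, $\sum_{J\subset K_i^t}\alpha_J\leq B|K_i^t|$, hence
\[
\sum_{J\in\cD}(\inf_J F)\,\alpha_J=\int_0^\infty\!\sum_{J\subset E_t}\!\alpha_J\,dt\leq\int_0^\infty\!B\sum_i|K_i^t|\,dt\leq B\!\int_0^\infty|E_t|\,dt=B\!\int F.
\]

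\textbf{Reduction of the second inequality.} By the same layer cake and maximal-dyadic decomposition of $\{G>t\}$, matters reduce to proving, uniformly over dyadic $K\subset(0,1)$, the localized transfer
\begin{equation}\label{star}
\sum_{J\subset K}\frac{\alpha_J}{\av{w}_J}\,\leqsim\,B\int_K\frac{dm}{w}.
\end{equation}
Indeed, granting \eqref{star} and Fubini in $t$,
\[
\sum_J\frac{\inf_J G}{\av w_J}\alpha_J\leq\int_0^\infty\sum_i\sum_{J\subset K_i^t}\frac{\alpha_J}{\av w_J}\,dt\leqsim B\!\int_0^\infty\!\int_{\{G>t\}}\frac{dm}{w}\,dt=B\!\int\frac Gw.
\]

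\textbf{Main obstacle: proof of \eqref{star}.} This will be the hard part, as it is a genuinely weighted Carleson transfer. The idea is first to produce the pointwise bound
\[
\frac{1}{\av{w}_J}\,\leq\,\bigl(\av{1/\sqrt w}_J\bigr)^{2}
\]
via Cauchy--Schwarz applied to $w^{1/4}\cdot w^{-1/4}$, which gives $1\leq\av{\sqrt w}_J^{1/2}\av{1/\sqrt w}_J^{1/2}$, together with $\av{\sqrt w}_J\leq\sqrt{\av w_J}$ (Jensen). With this, \eqref{star} reduces to the \emph{local $L^2$ Carleson embedding}
\[
\sum_{J\subset K}\alpha_J\av{\phi}_J^{\,2}\,\leqsim\,B\int_K\phi^{\,2}\,dm
\]
with $\phi=1/\sqrt w$, which in turn follows from the first inequality: for $x\in J\subset K$, $\av{\phi}_J\leq M^d(\phi\chi_K)(x)$, whence $\av{\phi}_J^2\leq\inf_J\bigl(M^d(\phi\chi_K)\bigr)^2$, and applying the first inequality to the (still Carleson, with constant $B$) sequence $(\alpha_J\chi_{\{J\subset K\}})_J$ with $F=(M^d(\phi\chi_K))^2$ gives
\[
\sum_{J\subset K}\alpha_J\av{\phi}_J^{\,2}\leq B\!\int(M^d(\phi\chi_K))^2\leqsim B\!\int(\phi\chi_K)^2=B\!\int_K\phi^{\,2},
\]
the middle inequality being $L^2$-boundedness of the dyadic maximal function. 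This establishes \eqref{star} and the lemma.
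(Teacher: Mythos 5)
Your proof is correct. Note that the paper itself does not prove this lemma: it is quoted as the dyadic Carleson embedding theorem (unweighted and weighted forms) with references to \cite{NT}, \cite{NTV}, \cite{NTV1}, where the arguments are Bellman-function proofs (the last giving the sharp constant). Your route is the elementary alternative: the layer-cake plus maximal-dyadic-subinterval decomposition of $\{F>t\}$ proves the unweighted embedding with constant $1$, and the weighted form is reduced to it through the pointwise bound $\av{w}_J^{-1}\leq \av{w^{-1/2}}_J^{\,2}$ (Cauchy--Schwarz applied twice), followed by the unweighted $L^2$ embedding for $\phi=w^{-1/2}$ and the $L^2$-boundedness of the dyadic maximal function. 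Both steps are sound: the restricted sequence $(\alpha_J\chi_{\{J\subset K\}})_J$ is indeed Carleson with the same constant $B$, and the localization $\av{\phi}_J=\av{\phi\chi_K}_J\leq\inf_J M^d(\phi\chi_K)$ for $J\subset K$ is exactly what makes the right-hand side come out as $\int_K w^{-1}$ rather than $\int_{(0,1)}w^{-1}$, so that summing over the disjoint maximal intervals inside $\{G>t\}$ and applying Fubini closes the argument. What your approach buys is a short, self-contained proof independent of the Bellman machinery; what it gives up is only the optimal numerical constant, which is irrelevant for the $\leqsim$ claimed here. The one degenerate case worth a word is $\int_K w^{-1}=\infty$, where your localized inequality holds vacuously (and which cannot occur in the paper's application, since there $w^{-1}$ is a dyadic $A_2$ weight).
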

We apply the above lemmas with 
$F=\left(M_{\lambda}^d|f|^p
\right)^{1/p}
\left(M_{\lambda^*}^d|g|^p
\right)^{1/p}$
and 
$G=\left(M_{\lambda}^d|f|^p
\right)^{2/p}$.
We get
$$
\aligned
II& \leq 
\left([w]_{A_2}^{dyadic}\right)^{1-\alpha/2}
\left([w]_{A_2}^{dyadic}\right)^{\alpha/2}
\nor{g}_{L^2(w^{-1})}
\sqrt{\int_{(0,1)}
\left(M_{\lambda}^d|f|^p
\right)^{2/p}\,d\lambda}\\
& =
[w]_{A_2}^{dyadic}
\nor{g}_{L^2(w^{-1})}
\nor{M_{\lambda}^d|f|^p}_{L^{2/p}(w)}^{1/p}\\
& \leqsim
[w]_{A_2}^{dyadic}
\nor{g}_{L^2(w^{-1})}
\nor{|f|^p}_{L^{2/p}(w)}^{1/p}\\
& =[w]_{A_2}^{dyadic}
\nor{g}_{L^2(w^{-1})}
\nor{f}_{L^{2}(w)}.
\endaligned
$$
Similarly, 
$$
\aligned
IV& \leq 
\left([w]_{A_2}^{dyadic}\right)^{1-\alpha}
\left([w]_{A_2}^{dyadic}\right)^{\alpha}
\int_{(0,1)}
\left(M_{\lambda}^d|f|^p
\right)^{1/p}
\left(M_{\lambda^*}^d|g|^p
\right)^{1/p}
\,dm\\
& \leq
[w]_{A_2}^{dyadic}
\sqrt{\int_{(0,1)}
\left(M_{\lambda}^d|f|^p
\right)^{2/p}\,d\lambda}
\sqrt{\int_{(0,1)}
\left(M_{\lambda^*}^d|g|^p
\right)^{2/p}\,d\lambda^*}
\\
& \leqsim [w]_{A_2}^{dyadic}
\nor{f}_{L^{2}(w)}.
\nor{g}_{L^2(w^{-1})}
\endaligned
$$

At the end we used that $p<2$, i.e. $2/p>1$ and that the weighted maximal function $M_{\lambda}^d$ is bounded on $L^r(\lambda)$ for any $r>1$ and with a constant that does not depend on $w$. See \cite[Exercises 2.1.2, 2.1.12, 1.3.3]{G1}. (As acknowledged in \cite{RTV}, this fact was learned from \cite{CUMP}.) Note that $d\lambda=w\,dm$ is a doubling measure when $w$ belongs to the (usual) $A_2$: by using the {\it ad hoc} notation $2I=(x-2\e,x+2\e)$ for $I=(x-\e,x+\e)$, we see that
$$
\aligned
\lambda(2I)
&=2|I|\av{w}_{2I}
\leq2|I|\frac{[w]_{A_2}}{\av{w^{-1}}_{2I}}
=4|I|^2\frac{[w]_{A_2}}{\int_{2I}{w^{-1}}}
\leq 4|I|^2\frac{[w]_{A_2}}{\int_{I}{w^{-1}}}
=4|I|\frac{[w]_{A_2}}{\av{w^{-1}}_{I}}\\
& \leq4|I|[w]_{A_2}\av{w}_{I}
=4[w]_{A_2}\lambda(I)\,.
\endaligned
$$

We still need to prove Lemma \ref{spavaj}. We do this by means of a {\bf Bellman function}.

\begin{proof}[{\bf Proof of Lemma \ref{spavaj}.}]
Let 
$
\Omega=(0,\infty)\times(0,\infty)\,.
$
Choose $\alpha\in(0,1/2)$ and define the function $b=b_\alpha:\Omega\rightarrow (0,\infty)$ by $b(x,y)=(xy)^\alpha$.
We have, at $(x,y)\in\Omega$, its Hessian matrix $d^2b(x,y)$:
$$
-d^2b(x,y)=\alpha x^\alpha y^\alpha
\left[
\begin{array}{cc}
(1-\alpha)x^{-2} & -\alpha x^{-1}y^{-1}\\
 -\alpha x^{-1}y^{-1} & (1-\alpha)y^{-2} 
 \end{array}
\right]\,.
$$
Obviously
$$
-d^2b(x,y)\geq \alpha(1-2\alpha) (xy)^\alpha \left(\frac{(dx)^2}{x^2}+\frac{(dy)^2}{y^2}\right)\,,
$$
meaning that for any $u,v\in\R$ we have
\begin{equation}
\label{kasno}
\Sk{-d^2b(x,y)\left[\begin{array}{c}
u\\ v
\end{array}\right]
}{\left[\begin{array}{c}
u\\ v
\end{array}\right]
}
\geq \alpha(1-2\alpha) (xy)^\alpha \left(\frac{u^2}{x^2}+\frac{v^2}{y^2}\right)\,.
\end{equation}
Owing to the choice of $\alpha$, the last expression is positive for any $(x,y)\in\Omega$ and any $u,v\in\R$. 
Moreover, if $Q>1$ and $xy\leq Q$, then obviously $b(x,y)\leq Q^\alpha$.

\medskip
Now fix an interval $I$. Denote $\sigma=w^{-1}$. 
Let 
$$
\aligned
a&=(\av{w}_I,\av{\sigma}_I)\\
a_j&=(\av{w}_{I_j},\av{\sigma}_{I_j})\,,
\endaligned
$$
where $j\in\{-,+\}$ and $I_-,I_+$ are the left and the right half of $I$, respectively.

Consider 
$$
\begin{array}{rccl}
c_j: & [0,1] & \longrightarrow & \Omega\\
& t & \longmapsto & (1-t)a+ta_j\,.
\end{array}
$$
and
$$
\begin{array}{rccl}
q_j=b\circ c_j: & [0,1] & \longrightarrow & (0,\infty)\\
& t & \longmapsto & b((1-t)a+ta_j)\,.
\end{array}
$$
Since $q_j$ is of order $C^2$, we have
\begin{equation}
\label{jope diskus}
q_j(0)-q_j(1)=-q_j'(0)-\int_0^1(1-t)q_j''(t)\,dt\,.
\end{equation}
The chain rule gives
\begin{equation}
\label{kisa}
\aligned
q_j'(t) & =\sk{\nabla b(c_j(t))}{a_j-a}\\
q_j''(t)& =\Sk{d^2b(c_j(t))(a_j-a)}{a_j-a}\,.
\endaligned
\end{equation}
Hence from \eqref{kisa} and \eqref{kasno} it follows that $-q_j''(t)\geq 0$, therefore \eqref{jope diskus} gives
\begin{equation}
\label{sarulja}
q_j(0)-q_j(1)\geq-q_j'(0)-\frac12\int_0^{1/2}q_j''(t)\,dt\,.
\end{equation}
Now estimate $-q_j''(t)$ again, but in a more delicate way. Namely, we use again \eqref{kisa} and \eqref{kasno}, but now observe that {\it for $t$ away from 1}, say, if $0\leq t\leq 1/2$ as in the integral above, the first component of $c_j(t)$ cannot be smaller than its own half, i.e. $(1/2)\av{w}_I$:
indeed, either draw a picture or estimate
$$
(1-t)\av{w}_I+t\av{w}_{I_j}\geq(1-t)\av{w}_I\geq\frac12\av{w}_I\,.
$$
On the other hand, since 
\begin{equation}
\label{hepek}
\av{w}_I=\frac{\av{w}_{I_-}+\av{w}_{I_+}}2\,,
\end{equation}
it is always true that the first component of $c_j(t)$ is not larger than $2\av{w}_I$. To summarize,  
for $0\leq t\leq 1/2$, the first component of $c_j(t)$ is comparable with $\av{w}_I$, while the second one is comparable with $\av{\sigma}_I$.

Therefore, when $0\leq t\leq 1/2$ we get from \eqref{kisa} and \eqref{kasno}
$$
\aligned
-q_j''(t)\,& \geqsim\,\alpha(1-2\alpha)\left(\av{w}_{I}\av{\sigma}_{I}\right)^\alpha
\left(
\frac{(\av{w}_{I}-\av{w}_{I_j})^2}{\av{w}_{I}^2}
+\frac{(\av{\sigma}_{I}-\av{\sigma}_{I_j})^2}{\av{\sigma}_{I}^2}
\right)\\
& \geqsim\,\alpha(1-2\alpha)
b(a)
\left(
\frac{|\Delta_Iw|^2}{\av{w}_{I}^2}
+\frac{|\Delta_I\sigma|^2}{\av{\sigma}_{I}^2}
\right)\,.
\endaligned
$$
For the last inequality we used again \eqref{hepek}. Now we get from here and \eqref{sarulja} that 
\begin{equation}
q_j(0)-q_j(1)\geq-q_j'(0)-
C\alpha(1-2\alpha)
b(a)
\left(
\frac{|\Delta_Iw|^2}{\av{w}_{I}^2}
+\frac{|\Delta_I\sigma|^2}{\av{\sigma}_{I}^2}
\right)\,.
\end{equation}
We want to add these inequalities for $j\in\{-,+\}$. Since $(a_+-a)+(a_--a)=0$, we see from \eqref{kisa} that what remains is 
$$
q_+(0)-q_+(1)+q_-(0)-q_-(1)\,\geqsim\,
\alpha(1-2\alpha)
b(a)
\left(
\frac{|\Delta_Iw|^2}{\av{w}_{I}^2}
+\frac{|\Delta_I\sigma|^2}{\av{\sigma}_{I}^2}
\right)\,,
$$
that is,
$$
b(a_{I})-\frac{b(a_{I_+})+b(a_{I_-})}2
\ \geqsim\ 
\alpha(1-2\alpha)
\,b(a_{I})
\left(
\frac{|\Delta_Iw|^2}{\av{w}_{I}^2}
+\frac{|\Delta_I\sigma|^2}{\av{\sigma}_{I}^2}
\right)\,.
$$
Multiply this by $|I|$ and write temporarily $\gamma_J=|J|b(a_{J})$ to get
$$
\gamma_I\geq\gamma_{I_+}+\gamma_{I_-}+C\alpha(1-2\alpha)\mu_I\,.
$$
Finally iterate this inequality. 
After the first iteration we get 
$$
\gamma_I\geq\underbrace{\gamma_{I_{++}}+\gamma_{I_{+-}}+\gamma_{I_{-+}}+\gamma_{I_{-}}}_{\geq 0}+C\alpha(1-2\alpha)[\mu_I+\mu_{I_+}+\mu_{I_-}]\,.
$$
Similarly, after $n$ iterations we obtain
$$
\gamma_I\,\geqsim\, \alpha(1-2\alpha)\sum_{{J\in\cD\atop J\subseteq I}\atop |J|\geq2^{-n}|I|}\mu_J\,.
$$
This is valid for all $n\in\N$, therefore we conclude that
$$
\gamma_I\,\geqsim\, \alpha(1-2\alpha)\sum_{J\in\cD\atop J\subseteq I}\mu_J\,.
$$
Since $\gamma_I=|I|b(a_I)\leq |I|C_\alpha(w)$, where $C_\alpha(w)$ is as in the formulation of the lemma, this finishes the proof.
\end{proof}

\subsection{Unweighted estimates for $T^n$ by means of averaging martingale transforms}

Let us briefly discuss an approach to {\it unweighted} estimates of $T$ or $T^n$ by modifying the averaging technique from Section \ref{chantee}. 

When using Proposition \ref{prostranstvima gdje svi} to estimate $T$ on a nonweighted space (cf. the problem of Iwaniec), one needs to consider a type of estimates of martingale transforms that is different from Wittwer's. For on the unweighted space the problem is very specific - one needs as small {\it numerical} constant as possible in front of the norms of the martingale transforms; the Iwaniec conjecture comprises an {\it equality}, not an inequality, namely $\nor{T}_{\cB(L^p)}=p^*-1$. 
Fortunately, there is a sharp estimate of martingale transforms. This is a famous result of Burkholder \cite{Bu1, Bu2, Bu}, see also \cite[Theorem 8.18]{MS}:
\begin{equation}
\label{balun}
\sup_\sigma\nor{T_{\sigma}}_{B(L^p)}=p^*-1\,. 
\end{equation}
Actually, for martingale transforms on the {\it plane} we need a more general version, regarding Hilbert-space-valued differentially subordinate pairs of martingales. Again it exists and was proven by  Burkholder in \cite{Bu3}. It is very useful due to its generality and sharpness. We present it here for the convenience of the reader.
\begin{lemma}
\label{Bu}
Let $(${\got W}$, {\mathcal F}, P)$ be a probability space, $\mn{{\mathcal
F}_n}{n\in\N}$ a filtration in ${\mathcal F}$ and $H$ a separable Hilbert space. Furthermore, let
$(X_n, {\mathcal F}_n, P)$ and $(Y_n, {\mathcal F}_n, P)$ be $H$-valued
martingales satisfying
\begin{equation}
\label{difsub}
\|Y_0(\omega)\|_H \leqslant \|X_0(\omega)\|_H\ \text{and }
\|Y_n(\omega)-Y_{n-1}(\omega)\|_H \leqslant
\|X_n(\omega)-X_{n-1}(\omega)\|_H
\end{equation}
for all $n\in\N$ and almost every $\omega\in$ {\got W}. Then for
any $p\in (1,\infty)$
$$
\|Y_n\|_p
\leqslant (p^*-1) \|X_n\|_p
\,.
$$
The constant $p^*-1$ is sharp.
\end{lemma}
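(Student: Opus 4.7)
The plan is to follow Burkholder's classical "special function" method, which reduces the martingale inequality to the existence of a certain function $U : H \times H \to \mathbb{R}$ with a few sign/size properties. Throughout, write $p^* = \max(p, p/(p-1))$ and let $c_p = p^*-1$.

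The first step is to introduce (or guess) Burkholder's function. In the scalar case it is
\[
U(x,y) = \alpha_p \bigl(\|y\|_H - c_p \|x\|_H\bigr)\bigl(\|x\|_H + \|y\|_H\bigr)^{p-1},
\]
with $\alpha_p = p(1 - 1/p^*)^{p-1}$. The construction extends verbatim to any Hilbert space $H$ because $U$ depends only on the norms $\|x\|_H, \|y\|_H$. The three properties I would then verify are: (i) \emph{majorization}, $U(x,y) \geq \|y\|_H^p - c_p^p \|x\|_H^p$ for all $x,y\in H$; (ii) \emph{initial vanishing}, $U(x,y) \leq 0$ whenever $\|y\|_H \leq \|x\|_H$; and (iii) a \emph{diagonal concavity}: for any $x,y,h,k \in H$ with $\|k\|_H \leq \|h\|_H$, the map $t \mapsto U(x+th, y+tk)$ is concave, or at least satisfies the "two-point" inequality
\[
U(x+h,y+k) \leq U(x,y) + \langle \partial_x U(x,y), h\rangle + \langle \partial_y U(x,y), k\rangle.
\]
Property (iii) is the genuinely hard point, because one must verify it on the singular set $\{\|x\|_H = \|y\|_H\}$ where $U$ is not smooth and handle both sub-cases $p \geq 2$ and $1 < p < 2$ (the formula for $c_p$ switches roles). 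The clean way is to reduce to a two-dimensional statement by restricting to $\mathrm{span}\{x,y,h,k\}$ and then invoke Burkholder's original $2\times 2$ computation; alternatively one can use the integral representation of $U$ as a superposition of simpler convex functions.

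Granted (i)--(iii), the martingale estimate is almost automatic. Apply the two-point inequality pointwise with $x = X_{n-1}(\omega)$, $y = Y_{n-1}(\omega)$, $h = X_n(\omega) - X_{n-1}(\omega)$, $k = Y_n(\omega) - Y_{n-1}(\omega)$; the hypothesis \eqref{difsub} supplies $\|k\|_H \leq \|h\|_H$. Taking expectations and using the martingale property to kill the linear terms (conditioning on $\cF_{n-1}$ makes $\mathbb{E}[h\mid \cF_{n-1}] = \mathbb{E}[k\mid \cF_{n-1}] = 0$) yields $\mathbb{E}\,U(X_n,Y_n) \leq \mathbb{E}\,U(X_{n-1}, Y_{n-1})$, and by induction $\mathbb{E}\,U(X_n,Y_n) \leq \mathbb{E}\,U(X_0,Y_0) \leq 0$ by property (ii) and the hypothesis $\|Y_0\|_H \leq \|X_0\|_H$. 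Combining with (i) gives $\mathbb{E}\|Y_n\|_H^p \leq c_p^p\, \mathbb{E}\|X_n\|_H^p$, which is the desired bound.

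For sharpness, the plan is to exhibit a sequence of scalar-valued martingale pairs for which the ratio $\|Y_n\|_p/\|X_n\|_p$ tends to $c_p$. The standard choice is a simple random walk $X_n$ on $\{\pm 1\}$-valued steps together with a suitable $\pm 1$-valued transforming sequence $\varepsilon_n$ (so $Y_n = \sum \varepsilon_k (X_k - X_{k-1})$), stopped at an appropriate hitting time; computing the $L^p$-norms in the limit recovers $c_p$. The hard point throughout is genuinely step (iii): once the function $U$ is proven to have the right curvature, everything else is a short martingale argument, and historical experience shows that it is the \emph{guessing} of $U$ and verifying its concavity (especially on the non-smooth locus) that carries the conceptual weight.
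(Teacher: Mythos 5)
The paper does not prove this lemma at all: it is quoted from Burkholder \cite{Bu3} ``for the convenience of the reader,'' so there is no internal proof to compare against. Your outline is precisely Burkholder's special-function method from that source — the function $U(x,y)=p(1-1/p^*)^{p-1}\bigl(\|y\|_H-(p^*-1)\|x\|_H\bigr)\bigl(\|x\|_H+\|y\|_H\bigr)^{p-1}$, the majorization / initial-sign / diagonal-concavity triad, the telescoping conditional-expectation argument, and stopped random-walk examples for sharpness — so the strategy is the correct (and essentially the only known elementary) one. Two caveats. First, you explicitly defer the single genuinely hard step, namely the concavity of $t\mapsto U(x+th,y+tk)$ whenever $\|k\|_H\le\|h\|_H$, to ``Burkholder's original computation''; as written your text is therefore a correct roadmap rather than a self-contained proof, though the reduction you indicate (restrict to $\mathrm{span}\{x,y,h,k\}$ and use that $U$ depends only on norms and inner products) is exactly how the Hilbert-space case is obtained from the low-dimensional one. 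Second, a small inaccuracy: the singular locus of $U$ is $\{x=0\}\cup\{y=0\}$, where the norms fail to be differentiable; on $\{\|x\|_H=\|y\|_H\}$ with $x,y\ne0$ the function $U$ is perfectly smooth, so the delicate boundary cases in the concavity verification are the vanishing of $x$ or $y$, not the diagonal. Finally, before killing the linear terms by conditioning one should record the routine integrability facts (that \eqref{difsub} forces $Y_n\in L^p$ once $X_0,\dots,X_n\in L^p$, and that $U(X_n,Y_n)$ and the gradient terms are integrable because $|\nabla U(x,y)|\lesssim(\|x\|_H+\|y\|_H)^{p-1}$); this is standard but should be said.
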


The property \eqref{difsub} is called {\it differential subordination}.

\begin{theorem}
\label{}
For any $Q\in \cL$ and $*\in\{0,+,-\}$ let $\sigma_Q^*$ be an arbitrary unimodular complex number. Define the operator 
$$
T_\sigma f:=
\sum_{Q\in \cL}
\left[\sigma_Q^0\sk{f}{h_Q^0}h_Q^0
+\sigma_Q^+\sk{f}{h_Q^+}h_Q^+
+\sigma_Q^-\sk{f}{h_Q^-}h_Q^-\right].
$$
Then $\nor{T_\sigma}_{p}\leqslant p^*-1$. This estimate is sharp. 
\end{theorem}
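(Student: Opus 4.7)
The plan is to realize $T_\sigma$ as a transform of differentially subordinate $\C$-valued martingales and then invoke Burkholder's Lemma~\ref{Bu} with Hilbert space $H=\C$. A subtlety is that the natural dyadic filtration $\cF_n$ generated by squares of side $2^{-n}$ adds \emph{three} Haar functions $h_Q^0,h_Q^+,h_Q^-$ at each refinement step on each square $Q$ of scale $n$; at a generic point of $Q$ two of these are nonzero (namely $h_Q^0$ and one of $h_Q^\pm$), so the pointwise differential subordination inequality is not guaranteed for arbitrary unimodular choices of $\sigma_Q^*$.

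To bypass this I would interpolate a binary filtration between $\cF_n$ and $\cF_{n+1}$: for each $Q=I\times J\in\cF_n$, let $\cF_{n,1}$ be generated by the two rectangles $I\times J_+$ and $I\times J_-$. Then $E[f|\cF_{n,1}]-E[f|\cF_n]$ on $Q$ is proportional to $h_Q^0$, while $E[f|\cF_{n+1}]-E[f|\cF_{n,1}]$ on $I\times J_\pm$ is proportional to $h_Q^\pm$. Since $h_Q^+$ and $h_Q^-$ have disjoint supports, at \emph{every} point of $\R^2$ and at every step the difference $X_{k+1}-X_k$ of the conditional-expectation martingale of $f$ is a scalar multiple of a single Haar function $h_Q^*$, while the corresponding difference $Y_{k+1}-Y_k$ of the martingale of $T_\sigma f$ is the same scalar times $\sigma_Q^*h_Q^*$. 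Unimodularity of $\sigma_Q^*$ thus yields $|Y_{k+1}-Y_k|(x)=|X_{k+1}-X_k|(x)$ pointwise, which implies both $\|Y_0\|\leq\|X_0\|$ and differential subordination; Lemma~\ref{Bu} then gives $\|Y_k\|_p\leq(p^*-1)\|X_k\|_p$. Letting $k\to\infty$ and invoking $L^p$-martingale convergence produces $\|T_\sigma f\|_p\leq(p^*-1)\|f\|_p$, first for mean-zero $f$ supported in a large dyadic square $Q_0$ (for which one works on the probability space $(Q_0,|Q_0|^{-1}dm)$ and has $X_0=Y_0=0$), and then for all $f\in L^p(\R^2)$ by letting $|Q_0|\to\infty$ and using boundedness of $T_\sigma$ on $L^p$.

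Sharpness follows by embedding the one-dimensional Haar martingale transform, whose $L^p$ operator norm equals $p^*-1$ by \eqref{balun}. Given a 1D sign pattern $\epsilon=(\epsilon_J)$, set $\sigma_{I\times J}^0=\epsilon_J$ (depending only on $J$) and $\sigma_Q^\pm=1$. For $f(x,y)=\chi_{[0,1]}(x)g(y)$ with mean-zero $g\in L^p([0,1])$, the identity $\chi_{[0,1]}\otimes h_J=|J|^{1/2}\sum_{I\subset[0,1],\,|I|=|J|}h_{I\times J}^0$ shows that $f$ has nonzero Haar coefficients only of type $0$, with $I\subset[0,1]$; a direct computation gives $T_\sigma f(x,y)=\chi_{[0,1]}(x)(T_\epsilon^{1{\rm D}}g)(y)$, so $\|T_\sigma f\|_p/\|f\|_p=\|T_\epsilon^{1{\rm D}}g\|_p/\|g\|_p$, and taking the supremum over $\epsilon$ and $g$ forces $\sup_\sigma\|T_\sigma\|_p\geq p^*-1$. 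I expect the main technical nuisance to lie precisely in this probability-space bookkeeping: fitting the $\sigma$-finite $\R^2$ setup into the framework of Lemma~\ref{Bu}, and checking that the limit $|Q_0|\to\infty$ genuinely recovers the full operator estimate.
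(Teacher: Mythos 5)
Your proof is correct and follows exactly the route the paper intends but does not write out: the theorem is stated immediately after Burkholder's Lemma \ref{Bu} precisely so that it can be deduced from differential subordination. Your insertion of the intermediate filtration (refining each square first in one coordinate, then in the other, so that each martingale difference is pointwise a scalar multiple of a single Haar function) is the right way to turn unimodularity of the $\sigma_Q^*$ into pointwise differential subordination, and the embedding of the one-dimensional transform via $\chi_{[0,1]}\otimes h_J=|J|^{1/2}\sum_{I}h_{I\times J}^0$ correctly reduces sharpness to \eqref{balun}.
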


As we saw in the weighted case, when dealing with averaging processes it is desirable to make the objects of averaging have the same norm estimate. In the unweighted case \cite{DPV} this was achieved by a construction which permitted a {\sl uniform} application of the sharp Burkholder's result on the norms of general differentially subordinated martingales. Doing so all the martingale transforms
we considered were, regardless of their symbols, on $L^p$ bounded by $p^*-1$.

It turns out, however, that the averaging method adjusted for powers $T^n$ also calls for 
Haar systems defined on rectangles. 
When using it to obtain as precise estimates as possible of $T$ on $L^p$, we can further extend and sharpen our method by considering different sets of coefficients, and also different Haar systems - ones built on rectangles, parallelograms, triangles and yet more exotic shapes (such as ``L-shaped'' tiles). See \cite{DPV, DPV1}. 
The problem we encounter is that a certain amount of precision is lost during the averaging process. The best constant 
obtained so far in this way is approximately $2^\cdot007$ \cite[p. 4]{DPV1}. That was at the time (2004) when the best known constant was 2. 

Another problem might be the fact that the Burkholder's estimate is sharp when considering {\it all} martingale transforms. Yet we typically average just {\it one} operator, and its translations, dilations and rotations should all obey pretty much the same estimates (as outlined in Section \ref{maide}). 

However, by merging the averaging method and the Burkholder estimate we managed to prove the following result \cite{DPV}: 

\begin{theorem}
\label{dreizehnte_jahrhunderts}
There is $N_0\in\N$ such that for all $n\geqslant N_0$ and $1<p<\infty$, 
\begin{equation}
\label{galobani}
\nor{T^n}_p\leqslant 2.716\,n(p^*-1)\,.
\end{equation}
\end{theorem}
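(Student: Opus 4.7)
The plan is to adapt the averaging method of Section \ref{chantee}, which represented $T$ as an average of operators of the form $\cP_t^\rho$ (built from projections onto Haar functions), to its $n$-th power $T^n$, and then estimate each constituent martingale transform by the sharp Burkholder inequality in Lemma \ref{Bu} rather than by Wittwer's weighted estimate. The first step is to build a prototype kernel $F_n$ (analogous to $F$ of Proposition \ref{F}) whose rotation-average recovers a Fourier multiplier proportional to $(\bar\xi/\xi)^n$. Since $T^n$ has Fourier multiplier $(\bar\xi/\xi)^n$ and convolution kernel homogeneous of rotational weight $-2n$, the rotation step \eqref{deveta godina} must be modified to integrate against $e^{-2in\psi}$ instead of $e^{-2i\psi}$. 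To preserve a nontrivial average after this highly oscillatory twist, the building-block projections should be taken on Haar systems supported on rectangles with aspect ratio comparable to $n$, as hinted in the paragraph following Theorem \ref{Wittwer}. After collecting translations, dilations, and rotations one obtains, in analogy with Proposition \ref{prostranstvima gdje svi}, a formula $S_n = c_n T^n$ with an explicit nonzero constant $c_n$.

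The second step is to replace the weighted estimate of each $\cP_t^\rho$ (which gave Theorem \ref{Wittwer}) with the unweighted estimate \eqref{balun} for martingale transforms. The crucial point is that the martingale transforms $\Lambda_{\rho,t,\psi}\circ \cP_0 \circ \Lambda_{\rho,t,\psi}^{-1}$ have Haar coefficients uniformly in $\overline{\Delta}$, so the pair $(f, T_\sigma f)$ forms a Hilbert-space-valued differentially subordinate martingale pair in the sense of \eqref{difsub}. By Lemma \ref{Bu}, each such operator is bounded on $L^p$ by $p^*-1$ \emph{with the same constant}, independent of $\rho,t,\psi$ and of the aspect ratio of the Haar rectangles chosen. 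Then, as in the weighted case, applying Minkowski's integral inequality to the averaging integrals and sums yields
\begin{equation*}
\nor{T^n f}_p \;\leq\; \frac{M_n}{|c_n|}\,(p^*-1)\,\nor{f}_p,
\end{equation*}
where $M_n$ is the total $L^1$-mass of the averaging measure needed to synthesize $T^n$ from the prototype.

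The main obstacle is the precise asymptotic comparison $M_n/|c_n| \leq 2.716\, n$. The linear growth in $n$ is essentially forced: the oscillatory factor $e^{-2in\psi}$ combined with the need to work on rectangles of aspect ratio $\sim n$ inflates the averaging mass by a factor of order $n$, while the normalization constant $c_n$ (the analogue of $c$ computed in the proof of Proposition \ref{prostranstvima gdje svi}) remains bounded away from $0$ uniformly in $n$. The delicate part is the numerical optimization: one must choose the prototype kernel $F_n$ and the shape of the Haar tiles so as to maximize $|c_n|$ while minimizing $M_n$, pushing the ratio below $2.716$. Only asymptotically, i.e. for $n \geq N_0$, can the lower-order error terms arising from boundary effects in the dyadic decomposition \eqref{frank martin} and from the truncated sums \eqref{mandarine-citrus} be absorbed into the main term, which is why the statement is restricted to $n \geq N_0$. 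The proof is completed once $c_n$ and $M_n$ are computed (or sharply estimated) and compared.
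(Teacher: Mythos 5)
Your outline matches the strategy the paper attributes to \cite{DPV}: synthesize $T^n$ by averaging martingale transforms built on Haar systems adapted to rectangles (so that the twist $e^{-2in\psi}$ in the rotation average does not annihilate the result), and bound every constituent transform uniformly by $p^*-1$ via Burkholder's differential-subordination theorem (Lemma \ref{Bu}), so that Minkowski's inequality reduces everything to a ratio of an averaging mass to a normalization constant. Note that the paper itself does not prove Theorem \ref{dreizehnte_jahrhunderts}; it only describes this scheme and cites \cite{DPV}.

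The genuine gap is that your proposal stops exactly where the theorem begins. The content of \eqref{galobani} is the explicit constant $2.716$ (chosen because it lies just below $e$, as the subsequent remark about weak-type bounds indicates), and even the linear growth in $n$ is not automatic: it requires exhibiting a concrete prototype kernel $F_n$, carrying out the analogue of the computation in Proposition \ref{prostranstvima gdje svi} to show that the resulting $c_n$ is nonzero with $|c_n|\geq (2.716\,n)^{-1}$ for large $n$, and verifying that the rectangle-based system still yields differentially subordinate martingale pairs with coefficients in $\overline\Delta$ after the differences are grouped tile by tile. None of this is done; you explicitly defer it ("the proof is completed once $c_n$ and $M_n$ are computed"), so what you have is a plan that yields at best $\nor{T^n}_p\leq C\,n\,(p^*-1)$ with an unspecified $C$ — and not even that until $c_n$ is actually bounded below. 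A smaller inaccuracy: in this scheme the translation and rotation averages are (twisted) probability measures and the whole dyadic sum over scales is a single martingale transform, so the total averaging mass stays of order $1$ while it is the normalization constant $c_n$ that decays like $1/n$ (the angular support of a tile of eccentricity $\sim n$ has measure $\sim 1/n$); your attribution of the factor $n$ to an inflated mass $M_n$ with $c_n$ bounded below is backwards, which signals that the decisive computation has not been attempted.
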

One may compare this estimate to the estimate of $\kappa_n(p)$ in Proposition \ref{junuz} and with Conjecture \ref{gryanyem bratci}. 

It seems to be quite difficult to derive the numerical value of $\nor{T^n}_{L^1\rightarrow L^{1,\infty}}$ in \cite{CRdF}, \cite{H}. Estimate \eqref{galobani} hints that these norms may be bounded by $en$.

\section{Unweighted estimates for $T^n$}
\label{nas na babu promenyal}

The main objective of this section is to prove the results announced in Section \ref{Mitropoulos}.
We will of course separately treat estimates from below and above.

\subsection{Lower estimates}
Take $n\in\N$, $p>2$, $\alpha<1/p$ and define $f=f_{n,\alpha}$ by
$$
f_{n,\alpha}(z)=z^n|z|^{-2\alpha}g_n(|z|)\,.
$$
Here $g_n:[0,\infty)\rightarrow [0,1]$ is a sufficiently smooth, rapidly decaying function satisfying $g_n(x)=1$ for $x\in [0,1]$. 
Its primary r\^ole is to extend $z^n|z|^{-2\alpha}$ from the unit disc to the whole plane in a way such that the outcome lies in $W^{n,p}$. 
For example, one can take
$$
g_n(x)=
\left\{
\begin{array}{lcr}
1 & ; & 0\leqslant x\leqslant 1\\
\exp(-(x-1)^{n}) & ; & x\geqslant 1
\end{array}
\right.
\,.
$$
This is a somewhat natural extension of the example considered by Lehto \cite{L} when giving the lower $L^p$ estimates on the Ahlfors-Beurling operator $T$; see also \cite{BM-S}. Note however that $f$ is not a radial stretch function, since it involves a power of $z$. We use it to prove lower estimates for the $L^p$ norms of powers of $T$. 

\begin{exercise}
\label{Stravinsky: L'Histoire du soldat}
Function $f$ belongs to the Sobolev space $W^{n,p}(\C)$. 
\end{exercise}

This ensures that $T^n(\bar\pd^n f)=\pd^n f$.
Denote by $q$ the conjugate exponent of $p$, i.e. $q=p/(p-1)$. Recall 
that 
the 
Pochhammer symbol $(a)_n=a\,(a+1)\cdot\hdots\cdot(a+n-1)$ 
was introduced in \eqref{psy}. 
Define 
\begin{equation}
\label{blanchard - in time of need}
\kappa_n(p)=\frac{(1/q)_n}{(1/p)_n}\,.
\end{equation}

\begin{proposition}[\cite{D1}]
\label{hm}
We have $\nor{T^n}_p\geqslant\kappa_n(p)$, more precisely,
\begin{equation*}
\label{cacao}
\lim_{\alpha\rightarrow 1/p}
\frac{\nor{\partial^n f_{n,\alpha}}_{L^p(\C)}}{\nor{\bar\partial^n f_{n,\alpha}}_{L^p(\C)}}=\kappa_n(p)
\,.
\end{equation*}
\end{proposition}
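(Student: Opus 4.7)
The plan is to exploit the key identity $T^n(\bar\pd^n f) = \pd^n f$, which is guaranteed by $f = f_{n,\alpha} \in W^{n,p}(\C)$ (Exercise \ref{Stravinsky: L'Histoire du soldat}) iterated with Exercise \ref{hor sretenskogo monostirya}. This immediately gives
\[
\nor{T^n}_p \;\geq\; \frac{\nor{\pd^n f}_p}{\nor{\bar\pd^n f}_p},
\]
so the proposition reduces to a computation of both sides in the limit $\alpha \nearrow 1/p$.

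First I would compute the derivatives explicitly on the unit disc, where $g_n\equiv 1$ and so $f(z) = z^{n-\alpha}\bar z^{-\alpha}$. Using that $\pd(z^a\bar z^b) = a z^{a-1}\bar z^b$ and $\bar\pd(z^a\bar z^b) = b z^a\bar z^{b-1}$, iterating $n$ times gives, on $|z|<1$,
\[
\pd^n f(z) = \Big(\prod_{k=1}^n (k-\alpha)\Big)\, |z|^{-2\alpha},
\qquad
\bar\pd^n f(z) = (-1)^n (\alpha)_n\, z^{n-\alpha}\bar z^{-\alpha-n}.
\]
Taking moduli, both $|\pd^n f|$ and $|\bar\pd^n f|$ are radial pure powers on the disc, namely constants times $r^{-2\alpha}$, with constants $\prod_{k=1}^n (k-\alpha)$ and $(\alpha)_n$ respectively.

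Next I would integrate: on the disc, $\int_0^1 r^{1-2\alpha p}\,dr = \frac{1}{2(1-\alpha p)}$, which blows up as $\alpha\nearrow 1/p$. Thus
\[
\int_{|z|<1}|\pd^n f|^p\wrt= \frac{\pi\,\prod_{k=1}^n (k-\alpha)^p}{1-\alpha p},
\qquad
\int_{|z|<1}|\bar\pd^n f|^p\wrt= \frac{\pi\,(\alpha)_n^p}{1-\alpha p}.
\]
For $|z|\geq 1$, the extension $g_n$ was chosen smooth with rapid decay precisely to make $f \in W^{n,p}$, so an application of the Leibniz/chain rule (using $\pd |z| = \bar z/(2|z|)$) expresses both $\pd^n f$ and $\bar\pd^n f$ there as finite sums of bounded, rapidly decaying terms whose $L^p$-norms remain bounded uniformly for $\alpha$ in a neighbourhood of $1/p$. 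In particular, the exterior contribution to each $L^p$-norm stays $O(1)$ as $\alpha\nearrow 1/p$, and hence is negligible compared to the interior contribution which diverges like $(1-\alpha p)^{-1}$.

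Therefore, taking the ratio of the $p$-th powers and extracting the $p$-th root, the exterior pieces drop out in the limit and
\[
\lim_{\alpha\rightarrow 1/p}\frac{\nor{\pd^n f_{n,\alpha}}_p}{\nor{\bar\pd^n f_{n,\alpha}}_p}
= \frac{\prod_{k=1}^n (k-1/p)}{(1/p)_n} = \frac{(1/q)_n}{(1/p)_n} = \kappa_n(p),
\]
where the last equality uses $1/q + (k-1) = k - 1/p$ to identify $\prod_{k=1}^n(k-1/p) = (1/q)_n$. The only genuinely delicate step is verifying the uniform boundedness of the exterior $L^p$-norms as $\alpha\nearrow 1/p$; everything else is bookkeeping of Pochhammer symbols.
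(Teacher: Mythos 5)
Your proposal is correct and follows essentially the same route as the paper: it uses $T^n(\bar\pd^n f_{n,\alpha})=\pd^n f_{n,\alpha}$, the explicit computation of $\pd^n f_{n,\alpha}$ and $\bar\pd^n f_{n,\alpha}$ on $\Delta$ (where the interior integrals diverge like $(1-\alpha p)^{-1}$), and a uniform-in-$\alpha$ bound on the exterior contributions (the paper writes the dominating majorant explicitly as $P(n,\alpha)|z|^M e^{-(|z|-1)^n}$ and invokes dominated convergence, which is exactly your ``delicate step''). The bookkeeping of Pochhammer symbols, including $\prod_{k=1}^n(k-1/p)=(1/q)_n$, matches the paper's conclusion.
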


\begin{proof}
Write 
$$
\frac{\nor{\partial^n f_{n,\alpha}}_{L^p(\C)}^p}{\nor{\bar\partial^n f_{n,\alpha}}_{L^p(\C)}^p}=
\frac{\displaystyle{\int_\Delta|\pd^nf|^p+\int_{\Delta^c}|\pd^nf|^p}}{\displaystyle{\int_\Delta|\bar\pd^nf|^p+\int_{\Delta^c}|\bar\pd^nf|^p}}
\,.
$$
The integrals over $\Delta$ can be explicitly calculated. Recall that $g_n(|z|)=1$ for $z\in\Delta$. Consequently,
for $z\in\Delta\backslash\{0\}$, 
$$
\partial^nf_{n,\alpha}(z)
=(n-\alpha)(n-1-\alpha)\hdots(1-\alpha)|z|^{-2\alpha}
$$
and
$$
\aligned
\bar\partial^nf_{n,\alpha}(z)
& =(-\alpha)(-\alpha-1)\hdots(-\alpha-n+1)\,\Big(\frac{z}{\bar z}\Big)^n|z|^{-2\alpha} 
\,.
\endaligned
$$
Hence
$$
\aligned
\int_\Delta|\partial^nf_{n,\alpha}(z)|^pdA(z)
& =\prod_{k=1}^{n}(k-\alpha)^p\cdot \int_\Delta |z|^{-2\alpha p}\,dA(z)\\ 
& =\prod_{k=1}^{n}(k-\alpha)^p\cdot2\pi\int_{0}^1r^{1-2\alpha p}\,dr\\ 
& =\frac{\pi}{1-\alpha p}\prod_{k=1}^{n}(k-\alpha)^p
\endaligned
$$
and
$$
\int_\Delta|\bar\partial^nf_{n,\alpha}(z)|^pdA(z)
=\frac{\pi}{1-\alpha p}\prod_{k=0}^{n-1}(k+\alpha)^p\,.
$$
Therefore 
\begin{equation}
\label{hildegard-von-bingen}
\frac{\nor{\partial^n f_{n,\alpha}}_{L^p(\C)}^p}{\nor{\bar\partial^n f_{n,\alpha}}_{L^p(\C)}^p}=
\frac{\displaystyle{\pi\prod_{k=1}^{n}(k-\alpha)^p+(1-\alpha    p)\int_{\Delta^c}|\pd^nf_{n,\alpha}|^p}}{\displaystyle{\pi\prod_{k=0}^{n-1}(k+\alpha)^p+(1-\alpha p)\int_{\Delta^c}|\bar\pd^nf_{n,\alpha}|^p}}
\,.
\end{equation}

As for the integrals over $\Delta^c$, it suffices to notice that they stay bounded as $\alpha\rightarrow 1/p$. Indeed, for $D\in\{\partial,\bar\pd\}$ and $z\in \Delta^c$ we have
$$
|D^nf_{n,\alpha}(z)|
\leqslant 
P(n,\alpha)|z|^M 
e^{-(|z|-1)^{n}}\,.
$$
Here $P$ is a polynomial in two variables, whereas $M>0$ is independent of $\alpha$. Thus when we raise this to the $p-$th power we can estimate the integrands from above uniformly for $\alpha$ close to $1/p$. The majorant is integrable. As a result we can apply the dominated convergence theorem and send $\alpha\rightarrow 1/p$. The integrals over $\Delta^c$ will stay away from infinity. Hence, owing to the factor $1-\alpha p$ in front of them, they will disappear from \eqref{hildegard-von-bingen}. So the result will be 
$$
\prod_{k=0}^{n-1}\left(\frac{k+1-1/p}{k+1/p}\right)^p\,.
$$
By taking the $p-$th root we get $\kappa_n(p)$.
\end{proof}

The asymptotic behaviour of $\kappa_n(p)$ is described by the following proposition.

\begin{proposition}[\cite{DPV, D1}]
\label{junuz}
We have 
$\kappa_n(p)
\sim
n^{1-2/p}(p-1)$. More precisely, for every $n\in\N$ and $p\geqslant 2$,
\begin{equation}
\label{j.l.hooker}
0^\cdot964\leqslant\frac{\kappa_n(p)}{n^{1-2/p}(p-1)}\leqslant 1\,,
\end{equation}
the upper estimate being sharp.
\end{proposition}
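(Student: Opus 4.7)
The plan is to work with the explicit Pochhammer representation and reduce the question to a monotonicity statement in $n$ combined with a one-variable analysis in the parameter $s:=1/p\in(0,1/2]$. Writing $(a)_n=\Gamma(a+n)/\Gamma(a)$ and using $p-1=(1-s)/s$, one immediately computes
\[
R_n(s):=\frac{\kappa_n(p)}{n^{1-2/p}(p-1)}
=\frac{\Gamma(1+s)}{\Gamma(2-s)}\cdot\frac{\Gamma(n+1-s)}{n^{1-2s}\,\Gamma(n+s)}.
\]
A direct substitution gives $R_1(s)=1$ for every admissible $s$, which already attains the claimed upper bound and establishes its sharpness. It therefore suffices to show that $R_n(s)$ is non-increasing in $n$, and to control its limiting value from below.

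For the monotonicity, one computes
\[
\frac{R_{n+1}(s)}{R_n(s)}=\frac{n+1-s}{n+s}\,\Bigl(\tfrac{n}{n+1}\Bigr)^{1-2s},
\]
so after taking logarithms the bound $R_{n+1}\leq R_n$ becomes
\[
\int_{n+s}^{n+1-s}\frac{dt}{t}\,\leq\,(1-2s)\int_{n}^{n+1}\frac{dt}{t}.
\]
Both integration ranges are symmetric about $n+\tfrac12$, the left one being the shorter. Since $t\mapsto 1/t$ is convex on $(0,\infty)$ and, for convex $f$, the quantity $a\mapsto\tfrac{1}{2a}\int_{c-a}^{c+a}f(t)\,dt$ is non-decreasing in $a$ (standard Jensen argument applied to the symmetrization $u\mapsto f(c+u)+f(c-u)$), the displayed inequality follows. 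Thus $R_n(s)\leq R_1(s)=1$ for every $n\in\N$, with equality iff $n=1$.

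For the lower bound, Stirling's expansion gives $\Gamma(n+1-s)/[n^{1-2s}\Gamma(n+s)]\to 1$ as $n\to\infty$, whence
\[
\lim_{n\to\infty}R_n(s)=G(s):=\frac{\Gamma(1+s)}{\Gamma(2-s)}.
\]
Since $R_n(s)$ is decreasing, $R_n(s)\geq G(s)$ for every $n\in\N$, and the task reduces to showing $G(s)\geq 0.964$ uniformly on $(0,1/2]$. The function $G$ is continuous with $G(0^+)=G(1/2)=1$, and the reflection identity $G(s)G(1-s)=1$ forces $G<1$ on $(0,1/2)$ with a strict interior minimum at the unique root $s^*\in(0,1/2)$ of $\psi(1+s)+\psi(2-s)=0$. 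Numerical evaluation at $s^*$ yields $G(s^*)\approx 0.986$, which comfortably exceeds $0.964$.

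The main obstacle is this final minimization: $G(s)$ has no closed form and must be controlled either by numerical evaluation on a sufficiently fine grid, combined with an explicit modulus of continuity for $\log G$ on $[0,1/2]$, or by analytic estimates stemming from Gautschi-type bounds $x^{1-s}<\Gamma(x+1)/\Gamma(x+s)<(x+1)^{1-s}$ together with the reflection formula $\Gamma(s)\Gamma(1-s)=\pi/\sin\pi s$. The substantial gap between the stated constant $0.964$ and the actual infimum $\approx 0.986$ is precisely what makes a crude such argument sufficient.
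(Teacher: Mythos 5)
Your argument is correct, and it takes a genuinely different route from the paper's. The paper sets $\delta=1-2/p$ and works with the product $\kappa_n(p)=\prod_{k=0}^{n-1}\frac{2k+1+\delta}{2k+1-\delta}$: for the lower bound it linearizes each factor's logarithm in $\delta$ using convexity, reducing everything to the explicit sum $\sum_{k=1}^{n-1}\frac{2}{2k+1}-\log n$, whose infimum $\gamma+2\log 2-2$ produces the closed-form constant $4e^{\gamma-2}\approx 0.964$; for the upper bound it compares the product factor by factor with $\prod_{k=1}^{n-1}\bigl(\tfrac{k+1}{k}\bigr)^{\delta}$ via the elementary inequality of Exercise \ref{napoletano}. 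You instead prove monotonicity in $n$ of the full ratio $R_n(s)$ — your symmetric-averaging argument for the convex function $t\mapsto 1/t$ is a clean and correct substitute for the factor-by-factor comparison, and it delivers the upper bound $R_n\leq R_1=1$ together with its sharpness at $n=1$ in one stroke — and then identify the lower bound as the limit $G(s)=\Gamma(1+s)/\Gamma(2-s)$. What this buys is the \emph{optimal} lower constant $\min_s G(s)\approx 0.9858$, which is precisely the quantity $\min_{x\in[0,1/2]}\Gamma(3/2-x)/\Gamma(3/2+x)$ attributed to Kalaj in the remark following the proposition (substitute $x=1/2-s$); the price is that the final step is a one-variable minimization of a Gamma quotient with no closed form, to be settled numerically or by Gautschi--Kershaw-type bounds, whereas the paper's weaker constant comes out exactly. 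Two details to tighten: the reflection identity $G(s)G(1-s)=1$ does not by itself force $G<1$ on $(0,1/2)$, and for uniqueness of the interior minimum you should observe that $(\log G)''(s)=\psi'(1+s)-\psi'(2-s)>0$ on $(0,1/2)$ because $\psi'$ is decreasing and $1+s<2-s$ there — although for the stated bound a rigorous numerical minimization of the smooth function $G$ on the compact interval $[0,1/2]$ suffices even without uniqueness.
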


\begin{proof}
Denote $\delta=1-2/p$. Then 
\begin{equation*}
\label{ksmet}
\kappa_n(p)=\prod_{k=0}^{n-1}\frac{2k+1+\delta}{2k+1-\delta}\,,
\end{equation*}
therefore \eqref{j.l.hooker} is equivalent to 
\begin{equation}
\label{and one more}
0^\cdot964\leqslant n^{-\delta}\prod_{k=1}^{n-1}\frac{2k+1+\delta}{2k+1-\delta}\leqslant 1\,, 
\hskip 40pt \forall \delta\in[0,1), \forall n\in\N\backslash\{1\}\,.
\end{equation}

First let us prove the lower estimate. 
So our aim is to find the best (i.e. largest) constant $C$ in the inequality 
$$
\log C\leqslant\sum_{k=1}^{n-1}\log\frac{2k+1+\delta}{2k+1-\delta}-\delta\log n\,.
$$
Fix $k\in\N$. The function
$$
g(\delta)=\log\frac{2k+1+\delta}{2k+1-\delta}
$$
is convex on $(0,2k+1)$: indeed,
$$
g'(\delta)=\frac{2(2k+1)}{(2k+1)^2-\delta^2}\,
$$
obviously increases with $\delta$. Therefore $g(\delta)\geqslant g(0)+\delta g'(0)$, 
that is,
$$
g(\delta)\geqslant\frac{2\delta}{2k+1}\,.
$$
Hence $C\geqslant C_1$, where $C_1$ is the best (i.e. largest) constant in the inequality
\begin{equation}
\label{citadela}
\log C_1\leqslant\delta\bigg(\sum_{k=1}^{n-1}\frac{2}{2k+1}-\log n\bigg)\,,\hskip 15pt 0\leqslant\delta<1,\ n\in\N\backslash\{1\} \,.
\end{equation}
Denote the expression in the parentheses above by $\psi(n)$. 
\begin{exercise}
\label{deneg}
Function $\psi$ is decreasing and negative on $\N\backslash\{1\}$.
\end{exercise}

Therefore, since $\psi$ is negative on $\N\backslash\{1\}$, we can in our search for the best $C_1$ in \eqref{citadela} get rid of 
$\delta$ by replacing it with 
the supremum of all of its admissible values 
(i.e. 1). And since $\psi$ is also decreasing, we conclude
\begin{equation}
\label{celtica}
\log C_1=\inf_{n\in\N\backslash\{1\}}\psi(n)=\lim_{n\rightarrow\infty}\psi(n)\,.
\end{equation}

Now introduce 
$$
a_n=\sum_{k=1}^n\frac1k
$$
and 
$b_n=a_n-\log n\,.$
Then 
$$
\aligned
\psi(n)
& =2\bigg(\frac13+\frac15+\frac17+\hdots+\frac1{2n-1}\bigg)-\log n\\
& =2\bigg(a_{2n-1}-1-\frac{a_{n-1}}2\bigg)-\log n\\
& =2b_{2n-1}-b_{n-1}+\log\bigg(4+\frac{1}{n(n-1)}\bigg)-2\,.
\endaligned
$$
It is a
classical fact that $\lim_{n\rightarrow\infty}b_n=\gamma$, where 
$
\gamma\approx 0^\cdot5772
\hdots
$ 
is the Euler--Mascheroni constant, therefore $\lim_{n\rightarrow\infty}\psi(n)=\gamma+2\log 2-2$. 
By \eqref{celtica}, $C_1=4e^{\gamma-2}\approx 0^\cdot964$.

\medskip
Now let us turn to the upper estimate in \eqref{and one more}. 
Because
$$
n^{\delta}=\prod_{k=1}^{n-1}\Big(\frac{k+1}{k}\Big)^{\delta}\,,
$$
it is enough to show that
\begin{equation*}
\label{jugzasut}
\frac{2k+1+\delta}{2k+1-\delta}\leqslant \Big(\frac{k+1}{k}\Big)^{\delta}\,,\hskip 20pt \forall\, k\in\N\,,\ \delta\in[0,1)\,. 
\end{equation*}
When $\delta=0$ the inequality clearly holds, so we may assume that $\delta\in(0,1)$.
Write 
$2k+1=\delta/w$ for some $w\in(0,\delta)$. 
Then the equality to prove becomes
$$
\frac{1+w}{1-w}\leqslant \Big(\frac{\delta+w}{\delta-w}\Big)^\delta
\,,\hskip 20pt \forall\, 0<w<\delta<1\,. 
$$
Verifying this is left as an exercise.
\end{proof}

\begin{exercise}
\label{napoletano}
If $0<w<\delta<1$ then
$$
\frac{1-w}{1+w}\Big(\frac{\delta+w}{\delta-w}\Big)^\delta>1\,.
$$
\end{exercise}

\begin{remark}
Recently D. Kalaj (personal communication, November 2016) calculated the optimal lower constant in \eqref{j.l.hooker}. It turns out to be 
$$
\min_{x\in[0,1/2]}\frac{\Gamma(3/2-x)}{\Gamma(3/2+x)}
$$
which is approximately $0^\cdot985796$.
\end{remark}

\begin{remark}
Proposition \ref{junuz} can be formulated in a slightly more symmetric way, namely
$$
\frac{(a)_n}{(b)_n}\sim
\frac{an^a}{bn^b}
\,,
$$
where $a=1/q$ and $b=1/p$. 
Note that ``without fractions" this is far from true, i.e. clearly $(a)_n\not\sim an^a$.
\end{remark}

As an immediate consequence of Propositions \ref{hm} and \ref{junuz}, we have the following estimate: 

\begin{corollary}
\label{stark}
We have $\nor{T^n}_p\,\geqsim\, n^{1-2/p^*}(p^*-1)$.
\end{corollary}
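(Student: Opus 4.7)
The plan is to derive the corollary by combining Propositions \ref{hm} and \ref{junuz} in the case $p\geq 2$, and then to reduce the case $1<p<2$ to the former via duality.

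First I would handle the range $p\geq 2$, where $p^{*}=p$. Proposition \ref{hm} gives
\[
\nor{T^{n}}_{p}\,\geq\,\kappa_{n}(p),
\]
and Proposition \ref{junuz} (specifically the lower bound in \eqref{j.l.hooker}) yields
\[
\kappa_{n}(p)\,\geq\,0{\cdot}964\cdot n^{1-2/p}(p-1)\,=\,0{\cdot}964\cdot n^{1-2/p^{*}}(p^{*}-1).
\]
So in this range the conclusion is immediate with an absolute implied constant.

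Next I would treat the range $1<p<2$, where $p^{*}=q:=p/(p-1)>2$. The reduction is a two-step duality argument. By Exercise \ref{lullaby}\eqref{A319}, $T^{*}=S$ acting between $L^{p}$ and $L^{q}$, so $(T^{n})^{*}=S^{n}$ and consequently
\[
\nor{T^{n}}_{p}\,=\,\nor{S^{n}}_{q}.
\]
On the other hand, the identity $Sf=\overline{T\bar f}$ together with the fact that complex conjugation is an isometry on every $L^{r}$ gives $\nor{S^{n}f}_{r}=\nor{T^{n}\bar f}_{r}$ for every $r\in(1,\infty)$ and $f\in L^{r}$, hence $\nor{S^{n}}_{r}=\nor{T^{n}}_{r}$. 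Applying this with $r=q$ yields $\nor{T^{n}}_{p}=\nor{T^{n}}_{q}$, and the already-proved case $q\geq 2$ gives
\[
\nor{T^{n}}_{p}\,=\,\nor{T^{n}}_{q}\,\geqsim\,n^{1-2/q}(q-1)\,=\,n^{1-2/p^{*}}(p^{*}-1),
\]
which completes the proof.

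I do not foresee a genuine obstacle here: both ingredients (the explicit lower bound from the test function $f_{n,\alpha}$ and the asymptotic behaviour of the Pochhammer ratio $\kappa_{n}(p)$) are already in place, and the only thing to verify carefully is the two-step duality/conjugation identity $\nor{T^{n}}_{p}=\nor{T^{n}}_{p^{*}}$, which is a routine check. The only small point worth flagging is that $\kappa_{n}(p)$ was established for $p\geq 2$ (so that $1/q\leq 1/2\leq 1/p$ and the Pochhammer ratio is $\geq 1$); this is precisely why the duality step is needed to cover $1<p<2$, rather than trying to push the test function calculation of Proposition \ref{hm} directly through for sub-quadratic exponents.
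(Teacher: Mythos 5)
Your proof is correct and follows exactly the route the paper intends: the text derives the corollary as an "immediate consequence" of Propositions \ref{hm} and \ref{junuz} for $p\geq 2$, with the case $1<p<2$ handled by the duality/conjugation identity $\nor{T^n}_p=\nor{T^n}_{p^*}$ that you spell out via Exercise \ref{lullaby}. Nothing further is needed.
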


\subsection{Upper estimates}
Here we will show (Theorem \ref{goin' to louisiana}) that the lower asymptotic estimate from Corollary \ref{stark} has its upper counterpart of the same order, which is valid not only for $T^n$ but for all ${\bf H}_\C^k$, i.e. even when $k$ is odd. 
Our plan is to single out constants in the weak (1,1) and strong (2,2) inequalities for ${\bf H}_\C^k$ and then interpolate.

\medskip
Weak-type (1,1) estimate for singular integrals is one of the cornerstones of the Calder\'on-Zygmund theory.  Many papers were devoted to establishing such results under various hypotheses. See e.g. \cite{Du,G1,MS} for the corresponding references. 
We are interested in 
a) precise and ``simultaneous'' $L^p$ estimates of a concrete sequence of operators and 
b) the conditions that allow such estimates in our case. 
For example, given the oscillation factor $\zeta^{-k}$ in our kernels (see \eqref{chains and things} below), usual gradient (or Lipschitz) conditions would add another factor of $k$ and thus prevent us from getting sharp $L^p$ results in terms of $k$; see e.g. \cite[Proposition 7.4]{MS} or \cite[Theorem 4.3.3]{G1}. A theorem adequate for our purpose, Theorem \ref{I'm going upstairs} below, was proven independently by Christ, Rubio de Francia \cite{CRdF} and Hofmann \cite{H}. (It should be acknowledged that both of these references were brought to the attention of the authors of \cite{DPV} by Michael Christ.) 
The formulation in \cite{H} is explicit about the behaviour of the estimates. We present it here for the reader's convenience. Note that it is valid for kernels far more general (``rough'') than ours, since no ``smoothness'' condition is assumed.

\begin{theorem}[\cite{H}]
\label{I'm going upstairs}
Suppose $\Omega\in L^q(S^1)$ for some $q>1$ and $\int_{S^1}\Omega=0$. For any $\e>0$ define the operator $T_\e$ associated with $\Omega$ and $\e$ by  
\begin{equation}
\label{vrijeme}
T_\e f(z)=\int_{\{|\zeta|>\e\}} f(z-\zeta){\Omega(\zeta/|\zeta|)\over|\zeta|^2}\,dm(\zeta).
\end{equation}
Then, for any $\alpha,\e>0$ and $f\in {\mathcal S}$, 
$$
m\{|T_\e f|>\alpha\}\,\leqsim\,\frac{\nor{\Omega}_q}{\alpha}\,\nor{f}_1.
$$
\end{theorem}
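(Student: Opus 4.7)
The plan is to follow the classical Calder\'on--Zygmund weak-type scheme, adapted to handle the rough kernel via Fourier analysis. By homogeneity I may assume $\nor{\Omega}_q=1$, so the task reduces to showing $m\{|T_\e f|>\alpha\}\leqsim\nor{f}_1/\alpha$ with implied constant depending only on $q$. Writing $K_\e(\zeta)=\chi_{\{|\zeta|>\e\}}\Omega(\zeta/|\zeta|)|\zeta|^{-2}$ so that $T_\e f=f*K_\e$, the first step is a uniform $L^2$ bound $\nor{T_\e}_{\cB(L^2)}\leqsim 1$. On the Fourier side, the mean-zero condition $\int_{S^1}\Omega=0$ converts the logarithmic divergence of the radial integral into a finite quantity of the form $\int_{S^1}\Omega(\theta)\log|\sk{\xi}{\theta}|^{-1}\,d\theta$, which is bounded by $\nor{\Omega}_q$ through H\"older in the angular variable.

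Next I run a Calder\'on--Zygmund decomposition of $f$ at height $\alpha$ to obtain $f=g+b$ with $b=\sum_Q b_Q$, $\int b_Q=0$, $\nor{b_Q}_1\leqsim\alpha|Q|$, $\sum|Q|\leqsim\nor{f}_1/\alpha$, $\nor{g}_\infty\leqsim\alpha$, $\nor{g}_1\leq\nor{f}_1$. For the good part, Chebyshev combined with the $L^2$ bound gives $m\{|T_\e g|>\alpha/2\}\leqsim\nor{g}_2^2/\alpha^2\leq\nor{g}_\infty\nor{g}_1/\alpha^2\leqsim\nor{f}_1/\alpha$. Setting $E^*=\bigcup 2Q$ one has $|E^*|\leqsim\nor{f}_1/\alpha$, so the theorem reduces to showing $\int_{(E^*)^c}|T_\e b|\leqsim\nor{f}_1$.

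The bad-part estimate is the main obstacle. For a smooth kernel it would follow from the H\"ormander condition $\int_{|y|\geq 2|z|}|K(y-z)-K(y)|\,dy\leqsim 1$, but with $\Omega$ merely in $L^q(S^1)$ no such pointwise smoothness is available. Following Christ and, independently, Hofmann, the remedy is a two-parameter dyadic decomposition: split the kernel as $K_\e=\sum_j K^{(j)}$ with $K^{(j)}$ supported in $\{2^j\leq|\zeta|<2^{j+1}\}$, and partition the bad cubes by side-length into families $\cB_k$ with $|Q|^{1/2}\sim 2^k$, setting $b^{(k)}:=\sum_{Q\in\cB_k}b_Q$. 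Each piece $K^{(j)}*b^{(k)}$ is then estimated via a Plancherel bound of the form $\nor{K^{(j)}*b^{(k)}}_2\leqsim\nor{\widehat{K^{(j)}}}_\infty\nor{b^{(k)}}_2$ combined with Cauchy--Schwarz over a set of controlled measure. When the kernel scale dominates the cube scale ($j\geq k$), the cancellation $\int b_Q=0$ supplies an extra decay factor $2^{k-j}$ through the variation of $K^{(j)}$ at scale $2^j$; when $j<k$, one invokes the decay of $\widehat{K^{(j)}}(\xi)$ for $|\xi|\geqsim 2^{-j}$, extracted from $\nor{\Omega}_q$ via H\"older in the angular variable against an oscillatory phase. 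Summing the resulting two-parameter geometric series in $|j-k|$ and invoking $\sum|Q|\leqsim\nor{f}_1/\alpha$ yields the bound.

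The delicate point throughout is to keep the dependence on $\Omega$ linear in $\nor{\Omega}_q$: a crude Cauchy--Schwarz against $|\Omega|^2$ would be quadratic, which is precisely why the $L^q$ hypothesis (rather than merely $L^1$) is used to interpolate between the cancellation and oscillation of the annular kernel pieces in a single H\"older step. The hardest technical input is the quantitative decay estimate for $\widehat{K^{(j)}}$, which must be done carefully enough to retain the sharp linear dependence required by the final weak-type inequality.
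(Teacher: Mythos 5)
First, a point of reference: the paper does not prove this theorem at all -- it is quoted verbatim from Hofmann \cite{H} (the result is due independently to Christ--Rubio de Francia \cite{CRdF} and Hofmann), so your proposal can only be measured against those sources. Against them, your outline records the correct global strategy: the uniform $L^2$ bound via the Fourier multiplier and the mean-zero hypothesis (this part of your sketch is fine, including the logarithmic expression and the H\"older step in the angular variable), the Calder\'on--Zygmund decomposition, the reduction to estimating $T_\e b$ off the dilated cubes, and the double dyadic decomposition in the kernel scale $j$ and the cube scale $k$. The gap sits exactly where the theorem is hard, in the bad-part estimate.

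Two steps as written would fail. (i) The bound $\nor{K^{(j)}*b^{(k)}}_2\leqsim\nor{\widehat{K^{(j)}}}_\infty\nor{b^{(k)}}_2$ is true but vacuous: the Calder\'on--Zygmund decomposition controls only $\nor{b_Q}_1\leqsim\alpha|Q|$, and $\nor{b^{(k)}}_2$ is not bounded by anything of the order $(\alpha\nor{f}_1)^{1/2}$ -- the bad part is precisely where the non-$L^2$ mass of $f$ is concentrated, and the estimate must be uniform over $f\in L^1$. (ii) The claim that for $j\geq k$ the cancellation $\int b_Q=0$ yields a gain $2^{k-j}$ ``through the variation of $K^{(j)}$ at scale $2^j$'' presupposes a pointwise smoothness of $K^{(j)}$ that $\Omega\in L^q(S^1)$ does not provide; if $K^{(j)}$ satisfied such a variation bound, the classical H\"ormander condition would apply and no new argument would be needed -- this is exactly the obstruction the theorem is designed to overcome. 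The actual proofs resolve both points simultaneously by a bilinear ($TT^*$) device: fixing the gap $s=j-k\geq1$ (the terms with $j\leq k+C$ cost nothing, being supported in the exceptional set), one expands $\nor{\sum_k K^{(k+s)}*b^{(k)}}_2^2$ into pairings $\sk{\widetilde K^{(k'+s)}*K^{(k+s)}*b_Q}{b_{Q'}}$ and bounds each by $\nor{b_Q}_1\nor{b_{Q'}}_1$ times an $L^\infty$/integral-regularity estimate for the \emph{composed} kernel $\widetilde K^{(j')}*K^{(j)}$, which -- unlike $K^{(j)}$ itself -- does enjoy a quantitative smoothing with a gain $2^{-\delta s}$, $\delta=\delta(q)>0$, extracted from the Fourier decay you allude to. This yields $\nor{\sum_k K^{(k+s)}*b^{(k)}}_2^2\leqsim 2^{-\delta s}\alpha\nor{f}_1$, which is summed over $s$ after Chebyshev. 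Without this bilinearization (or an equivalent device), your outline does not close.
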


\begin{proof}[{\bf Proof of Theorem \ref{goin' to louisiana}.}]
Fix $k\in\Z\backslash\{0\}$. First consider the case when $1<p\leq2$. 

By \eqref{dihat za ovratnik},
each ${\bf H}_\C^k$ is an isometry on $L^2$. 

Now let us address the weak (1,1) inequality. 
Fix  also $\alpha,\e>0$ and $f\in L^1$.
Let ${\bf H}_{\C,\e}^k$ be the ``$\e$-truncated'' version of ${\bf H}_\C^k$, in the sense of \eqref{vrijeme}.
Denote $M_\e=\{|{\bf H}_{\C,\e}^k f|>\alpha\}$ and $M=\{|{\bf H}_{\C}^k f|>\alpha\}$.

A well-known result on the almost everywhere convergence of homogeneous singular integrals (see, for example, \cite[Corollary 7.11]{MS}) implies that ${\bf H}_{\C}^k f=\lim_{\e\rightarrow 0}{\bf H}_{\C,\e}^k f$ almost everywhere. This leads to
$\chi_M\leqslant \liminf_{\e\rightarrow 0}\chi_{M_\e}$. Consequently, by Fatou's lemma,
$$
m(M)=\int_\C\chi_M\,dm
\leq\int_\C\liminf_{\e\rightarrow 0}\chi_{M_\e}\,dm
\leq\liminf_{\e\rightarrow 0}\int_\C\chi_{M_\e}\,dm
\leqsim\,\frac{\nor{\Omega_k}_{2}}{\alpha}\,\nor{f}_1
.
$$
In the last inequality we applied Proposition \ref{I'm going upstairs}, in particular the fact that the estimates there are independent of $\e$. 
So we proved
\begin{equation}
\label{shake it}
m\left\{|{\bf H}_{\C}^k f|>\alpha\right\}\leqsim \,\frac{|k|}{\alpha}\,\nor{f}_1.
\end{equation}

Now everything is set for interpolation. 
We actually do it as in \cite[Exercise 1.3.2]{G1}.
Choose $p\in (1,2)$ and $r\in (1,p)$. 
Let $N_{1,k}$ be the weak (1,1) constant for ${\bf H}_\C^k$. 
By the Marcinkiewicz interpolation theorem, applied to $1<r<2$,
\begin{equation}
\label{marc}
\nor{{\bf H}_\C^k}_{r}
\leqslant 
2r^{1/r}\bigg({1\over r-1}+{1\over 2-r}\bigg)^{1/r} 
N_{1,k}^{\frac{2-r}r}\,.
\end{equation}
The Riesz-Thorin interpolation theorem we apply to $r<p<2$ and obtain
\begin{equation}
\label{rt}
\nor{{\bf H}_\C^k}_p\leqslant\nor{{\bf H}_\C^k}_{r}
^{\frac r{2-r}\cdot\frac{2-p}p}
\end{equation}
Together \eqref{marc} and \eqref{rt} give
$$
\aligned
\nor{{\bf H}_\C^k}_p
&\leqsim
\bigg[{1\over (r-1)(2-r)}\bigg]^{{1\over p}} 
N_{1,k}^{\frac2p-1}\,.
\endaligned
$$
By choosing $r=(p+1)/2$ and using that $(p-1)^{1/p}\sim p-1$ for $p\in(1,2)$, we finally arrive at 
\begin{equation}
\label{schwach}
\nor{{\bf H}_\C^k}_p\leqsim {N_{1,k}^{\frac2p-1} \over p-1}\,. 
\end{equation}
Recall that, according to \eqref{shake it}, we have $N_{1,k}\leqsim |k|$. 
This proves the theorem for $p\in(1,2)$. 

When $p>2$ use the behaviour of ${\bf H}_\C^k$ in the Fourier domain \cite[Section 4.2]{AIM} and duality: with ${\mathbf m}_k(\zeta)=(|\zeta|/\zeta)^{k}$ we have
$$
\sk{{\bf H}_\C^kf}{g}=\sk{\widehat{{\bf H}_\C^k f}}{\hat g}=\sk{{\mathbf m}_k\hat f}{\hat g}=\sk{\hat f}{\overline {\mathbf m}_k\hat g}=\sk{\hat f}{{\mathbf m}_{-k}\hat g}=\sk{f}{{\bf H}_\C^{-k}g}\,.
$$
This completes the proof of Theorem \ref{goin' to louisiana}.
\end{proof}

\begin{proof}[{\bf Proof of Theorem \ref{izvorska}.}]
In the special case when $k$ is even we get the upper estimate for (integer) powers of the Ahlfors-Beurling operator:
$\nor{T^n}_p\,\leqsim\, n^{1-2/p^*}(p^*-1)$, for any $n\in\Z\backslash\{0\}$ and $p>1$. 
Together with Corollary \ref{stark} this constitutes Theorem \ref{izvorska}.
\end{proof}

\begin{proof}[{\bf Proof of Theorem \ref{v-o-d-a}.}]
When $k=2n$, by combining \eqref{shake it} with Corollary \ref{stark} and \eqref{schwach} we also get Theorem \ref{v-o-d-a}.
\end{proof}

\subsection{Candidates for $\nor{T^n}_p$ and some necessary conditions}
Based on the above findings one may cautiously make the following supposition:

\begin{conjecture}[\cite{DPV,D1}]
\label{gryanyem bratci}
For $p\geqslant 2$ and all $n\in\N$,
\begin{equation}
\label{ahd}
\nor{T^n}_p=\kappa_n(p)=\frac{(1/q)_n}{(1/p)_n}\,.
\end{equation}
\end{conjecture}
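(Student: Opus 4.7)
\medskip
\noindent\textbf{Proof plan for Conjecture \ref{gryanyem bratci}.} The lower bound $\nor{T^n}{p}\geq \kappa_n(p)$ has already been furnished by Propositions \ref{hm} and \ref{junuz} via the test functions $f_{n,\alpha}(z)=z^n|z|^{-2\alpha}g_n(|z|)$ and the limit $\alpha\nearrow 1/p$. Hence the task is to establish the reverse inequality $\nor{T^n}{p}\leq \kappa_n(p)$ for $p\geq 2$.

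A natural first remark is that the case $n=1$ already coincides with the Iwaniec conjecture, since
$$
\kappa_1(p)=\frac{1/q}{1/p}=\frac{p}{q}=p-1=p^*-1\qquad(p\geq 2).
$$
Thus the full conjecture subsumes a famously hard open problem, and any argument must be genuinely new. The plan is therefore not to expect a short proof, but to identify structure special to \emph{powers} of $T$ that might be exploited alongside a sharp Bellman-type technique.

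My first line of attack would be a Bellman function approach in the spirit of \cite{PV,DV2} but of ``higher order'' matched to $T^n$. The product shape
$$
\kappa_n(p)=\prod_{k=0}^{n-1}\frac{k+1/q}{k+1/p}
$$
is strongly suggestive: each factor should arise from one application of a concavity inequality. Concretely, I would seek a function $B_n$ on a suitable domain in $\R^4$ (encoding the coordinates $|f|^p$, $|T^nf|^p$ and two auxiliary quantities tracking derivatives along the heat flow) that is small at the boundary, obeys a precise Hessian estimate, and, when composed with the Poisson or heat-flow trajectory $(e^{t\Delta}|f|^p,\,e^{t\Delta}|T^nf|^p,\ldots)$, yields exactly the constant $\kappa_n(p)$ upon integration in $t$. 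As a parallel attempt, I would try to push the averaging-of-martingale-transforms strategy of Section \ref{chantee}: one uses the rotational identity $T^n=e^{2in\psi}U_{-\psi}T^nU_\psi$ (the power-$n$ analogue of Exercise \ref{tommy lee jones robert de niro}) to average dyadic shifts and Haar projections tuned to the $2n$-fold angular symmetry of the kernel $\Omega_{2n}(\zeta)=\tfrac{i^{2n}(2n)}{2\pi}\zeta^{-2n}$, then apply Burkholder's sharp estimate \eqref{balun} with an additional orthogonality gain coming from the higher oscillation.

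The main obstacle is twofold. First, the extremizer $z^n|z|^{-2\alpha}$ is \emph{not} a radial stretch: it mixes the angular structure of $T^n$ with its radial scaling, so the Bellman domain is intrinsically higher-dimensional than in the $n=1$ case, and identifying a sharp $B_n$ has evaded all attempts already for $n=1$. Second, the averaging approach of Section \ref{chantee} inevitably loses precision at each step (translations, dilations, rotations, dyadic lattices), and the constants accumulated in Theorem \ref{dreizehnte_jahrhunderts} are already well above $\kappa_n(p)$. Any successful plan will necessarily go beyond the techniques presented in this note, since a proof of \eqref{ahd} would in particular resolve the Iwaniec conjecture \eqref{roll over, roll over}.
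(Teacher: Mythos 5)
You are addressing a \emph{conjecture}, and the paper contains no proof of it: only the lower bound $\nor{T^n}_p\geq\kappa_n(p)$ is actually established (Proposition \ref{hm}; Proposition \ref{junuz} merely supplies the asymptotics $\kappa_n(p)\sim n^{1-2/p}(p-1)$), while the reverse inequality is open even for $n=1$, where it reduces to the Iwaniec conjecture (Conjecture \ref{peter hammers}). Your proposal correctly recognizes this and is candid that it does not close the upper bound, so what you have written is a research plan rather than a proof; the gap is the entire inequality $\nor{T^n}_p\leq\kappa_n(p)$, and it is not a repairable gap within the techniques of these notes. Your factual assertions are accurate: $\kappa_1(p)=p-1$ for $p\geq2$, the product form of $\kappa_n(p)$, the rotational identity $T^n=e^{2ni\psi}U_{-\psi}T^nU_\psi$, and the fact that the test functions $f_{n,\alpha}$ are not radial stretches.

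Two comparative remarks. First, the routes you sketch are exactly those the paper reports as falling short of sharpness: the averaging-of-martingale-transforms method combined with Burkholder's estimate \eqref{balun} yields only Theorem \ref{dreizehnte_jahrhunderts}, i.e. $\nor{T^n}_p\leq 2.716\,n(p^*-1)$ for large $n$, and the Bellman/heat-flow method of \cite{PV,NV,DV2} has not produced the constant $p^*-1$ even for $n=1$ (indeed the paper notes that $\Re T$ and $\Im T$ each have norm exactly $p^*-1$, which suggests any real-variable splitting is doomed). Second, the paper's own speculative attack on precisely this conjecture, which your plan omits, is of a genuinely different flavour: it packages the quantities $\sk{T^{n-1}f}{g}/\kappa_{n-1}(p)$ as Taylor coefficients of a holomorphic function $\Psi_{a,p,f,g}$ on $\Delta$ and observes (Proposition \ref{trskagora}) that injectivity of $\Psi$ for some choice of unimodular phases $e^{ia_n}$ would give $|\sk{T^nf}{g}|\leq\kappa_n(p)$ via de Branges' theorem (Theorem \ref{bieb}). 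The paper also explains why this is delicate: injectivity can fail for concrete $f,g$ (the Gaussian example with $p=2$), and coefficient bounds $0<b_n\leq n$ do not force injectivity even after phase adjustment (Proposition \ref{segovia - hotel de condes de castilla}). Neither your routes nor the paper's de Branges route is known to work; if you present this material, it should be labelled as a discussion of an open problem, not as a proof.
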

As mentioned earlier (Conjecture \ref{peter hammers}), this question is open even for $n=1$, in which case it is well-known since 1982 as the {\it Iwaniec conjecture}.

Since $\nor{T^{m+n}}_p\leqslant\nor{T^{m}}_p\nor{T^{n}}_p$ the same must be true for $\kappa_n(p)$, if \eqref{ahd} is to hold. 
Indeed, such is the case, which slightly reinforces our belief in \eqref{ahd}:

\begin{exercise}
\label{ojmostaru}
For $m,n\in\N$ and $p\geqslant 2$ we have
$$
\kappa_{m+n}(p)\leqslant\kappa_{m}(p)\kappa_{n}(p)\,.
$$
with the equality only when $p=2$.
\end{exercise}

We have seen in Proposition \ref{junuz} that the function $(n,p)\mapsto\kappa_n(p)$ tightly fits to the function $(n,p)\mapsto n^{1-2/p}(p-1)$, which makes the latter worth investigating further.

\begin{exercise}
\label{gGFGEEEEE-DCDECGECGECAG}
The analogue of Exercise \ref{ojmostaru} also holds for the function 
$$
\gamma_p(s)=s^{1-2/p}(p-1).
$$  
Actually, we have $\gamma_p(s+t)\leqslant\gamma_p(s)\gamma_p(t)$ for $p\geqslant 2$ and any $s,t\geqslant 1$. 
\end{exercise}

Let us consider further necessary conditions, arising from varying not $n$ as above, but $p$.

\begin{exercise}
\label{margin call}
Suppose $(\Omega,\mu)$ is a measurable space and $\Lambda$ a bounded operator on $L^r(\Omega,\mu)$ for any $r\geq2$. Assume also that $\nor{\Lambda}_{2\rightarrow2}\leq1$. 
Then the function 
$$
p\mapsto
\nor{\Lambda}_{p\rightarrow p}^{p/(p-2)}
$$
is increasing on $(2,\infty)$.
\end{exercise}

\begin{exercise}
\label{1984}
For $u>0$ define
$$
\f(u):=\left\{
\begin{array}{ccl}
{\displaystyle\frac{u+1}{u-1}\log u} & ; & u\ne1\\
2 & ; & u=1.
\end{array}
\right.
$$
Verify that:
\begin{itemize}
\item
$\f(1/u)=\f(u)$;
\item
$\f\in C^1(0,\infty)$;
\item
$\f\big|_{[1,\infty)}$ is strictly increasing.
\end{itemize}
Compare with Exercise \ref{u meni je sve sto imam}. 

It follows immediately that the function
$$
p\mapsto
(p-1)^{p/(p-2)}
$$
is increasing on $(2,\infty)$.
\end{exercise}

In view of Exercise \ref{margin call}, if \eqref{ahd} is to hold, then the following must also hold:

\begin{exercise}
Suppose $n\in\N$, $p>2$, $1/p+1/q=1$. Then the function
$$
p\mapsto
\left[
\frac{(1/q)_n}{(1/p)_n}\right]^{p/(p-2)}
$$
is increasing on $(2,\infty)$.
\end{exercise}

Finally observe that by Exercise \ref{gGFGEEEEE-DCDECGECGECAG} and the last statement of Exercise \ref{1984},  one may argue that $\gamma_p(n)$ could also be a candidate for $\nor{T^n}_p$ when $p\geq2$.  
Note, however, that 
it is not known whether $\nor{T^n}_p\geq\gamma_p(n)$, in contrast with Proposition \ref{hm}.

\section{Spectral theory for $T$}
\label{george szell}

The estimates of $T$ on $L^p$ and $L^p(w)$, and of $T^n$ on $L^p$, were all motivated by various aspects of the quasiconformal theory, see \cite{I,AIS,IM} or Sections \ref{Ravel piano G}, \ref{liberation sarajevo} and \ref{saso mange}, respectively. As indicated in Theorem \ref{jednamladost}, the averaging method developed in \cite{DV1} and \cite{DPV} can be equally successfully applied for estimating $T^n$ on $L^p(w)$ for arbitrary $n\in\Z$, $p>1$, $w\in A_p$.
Thus to complete the picture 
one might want
to pose a question in the ``reverse" direction, i.e., to ask about
possible complex-analytic implications of an estimate such as \eqref{prituri_se}.

\medskip
Since powers of an operator feature prominently in the spectral radius formula, we are led to inquire whether Theorem \ref{jednamladost} offers any meaningful information about the spectrum of $T$ on $L^p(w)$. 
Let us first recall a few basic notions.

Let $X$ be a Banach space, $\Lambda\in\cB(X)$, and $I$ the identity on $X$. We denote the {\it spectrum}, {\it approximate point spectrum} and {\it continuous spectrum}, respectively, by
$$
\aligned
\sigma(\Lambda) & =\{\lambda\in\C\ ; \  \Lambda-\lambda I \text{ not invertible in } X\}\\
\sigma_{ap}(\Lambda)& =\{\lambda\in\C\ ; \ \exists\ (f_n)_{n\in\N}\subset X\, : \nor{f_n}_{X}=1\text{ and } (\Lambda-\lambda I)f_n\rightarrow 0 \text{ in }X\}\\
\sigma_c(\Lambda)&=\{\lambda\in\sigma(\Lambda)\ ; \  \Lambda-\lambda I \text{ has trivial kernel and dense range in }X\}.
\endaligned
$$
It is well known that $\sigma_c(\Lambda)\subset\sigma_{ap}(\Lambda)\subset\sigma(\Lambda)$ 
\cite[Theorem 6.17]{AA}.
Furthermore, let $\rho(T)=\C\backslash\sigma(T)$ be the resolvent set and $r(T)=\max\mn{|\lambda|}{\lambda\in\sigma(T)}$ the spectral radius of $T$. Recall that $r(T)=\lim_{n\rightarrow\infty}\nor{T^n}^{1/n}$.

Fix $p>1$ and $w\in A_p$.
We have the following result.

\begin{corollary}
\label{nini_diplomirala}
For any  $p>1$ and $w\in A_p$, we have $\sigma(T)
=\partial\Delta$ on $L^p(w)$. 
\end{corollary}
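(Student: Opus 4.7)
I would prove the two inclusions $\sigma(T)\subset\pd\Delta$ and $\pd\Delta\subset\sigma(T)$ separately. For the first, recall that $T$ is invertible on $L^p(w)$: by Exercise~\ref{lullaby}, $T^{-1}=S=\overline{T\overline{\,\cdot\,}}$, and since complex conjugation preserves the class $A_p$, Theorem~\ref{jednamladost} (or even the weaker bound \eqref{prituri_se}) applies to $T^n$ for every $n\in\Z\setminus\{0\}$. Taking $n$th roots, the spectral radius formula gives $r(T)\leq 1$ and $r(T^{-1})\leq 1$. By the spectral mapping theorem applied to the invertible operator $T$, any $\lambda\in\sigma(T)$ is nonzero and satisfies $1/\lambda\in\sigma(T^{-1})$, hence $|\lambda|\geq 1$; combined with $|\lambda|\leq 1$ this forces $|\lambda|=1$.

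For the reverse inclusion I plan to produce, for each $\lambda\in\pd\Delta$, an approximate eigensequence via the modulation (Weyl) technique. The Fourier symbol of $T$ (see~\eqref{infisa}) is $m(\zeta)=\bar\zeta/\zeta$, which is homogeneous of degree zero and whose essential range is all of $\pd\Delta$. Given $\lambda\in\pd\Delta$, pick $a\in\C$ with $|a|=1$ and $m(a)=\lambda$. Fix $f\in\cS(\R^2)$ with $\widehat f\in C_c^\infty(\R^2)$ nontrivial, and for $t>0$ set
$$
f_t(x):=e^{2\pi i t\, a\cdot x}f(x).
$$
Since $|f_t|=|f|$ pointwise, $\nor{f_t}_{L^p(w)}=\nor{f}_{L^p(w)}$ is a positive constant independent of $t$. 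A short Fourier-side computation using $m(ta)=m(a)=\lambda$ produces an auxiliary $g_t\in\cS$ characterised by
$$
\widehat{g_t}(\xi)=\bigl(m(\xi+ta)-m(a)\bigr)\widehat f(\xi), \qquad \supp\widehat{g_t}\subset\supp\widehat f,
$$
and satisfying $(T-\lambda)f_t(x)=e^{2\pi i t\, a\cdot x}g_t(x)$. Consequently $\nor{(T-\lambda)f_t}_{L^p(w)}=\nor{g_t}_{L^p(w)}$.

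The crux is to show $\nor{g_t}_{L^p(w)}\to 0$ as $t\to\infty$, and this is where the main obstacle lies, since the weight $w$ transforms badly under modulation. Fortunately, $g_t$ itself turns out to be a uniformly Schwartz function with tight control. Smoothness and homogeneity of $m$ on $\C\setminus\{0\}$ yield $|D^\alpha m(\zeta)|\leqsim_\alpha|\zeta|^{-|\alpha|}$, and $|\xi+ta|\sim t$ for $\xi\in\supp\widehat f$ once $t$ is large. A mean-value estimate then gives $|m(\xi+ta)-m(a)|\leqsim 1/t$, and each derivative $D^\beta\widehat{g_t}$ is likewise of size $O(1/t)$ on the (fixed) support of $\widehat f$. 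Integration by parts therefore yields
$$
|g_t(x)|\leqsim_N t^{-1}(1+|x|)^{-N}\qquad \forall\,N\in\N.
$$
To convert this pointwise bound into $L^p(w)$ decay I would invoke the well-known polynomial growth of $A_p$ weights: applying the $A_p$ condition on large balls $B_R$ about the origin yields $\int_{B_R}w\leqsim_{p,w}R^{2p}$, so $(1+|x|)^{-Np}w\in L^1(\R^2)$ for any $N$ sufficiently large. This gives $\nor{g_t}_{L^p(w)}\leqsim_{f,w}1/t\to 0$, so $\lambda\in\sigma_{ap}(T)\subset\sigma(T)$ on $L^p(w)$. As $\lambda\in\pd\Delta$ was arbitrary, the two inclusions together give $\sigma(T)=\pd\Delta$.
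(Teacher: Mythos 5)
Your proposal is correct and follows essentially the same route as the paper: the inclusion $\sigma(T)\subset\pd\Delta$ via the weighted bounds on $T^{\pm n}$, the spectral radius formula and the spectral mapping theorem, and the reverse inclusion via modulation (Weyl) sequences, the Fourier-symbol identity $\widehat{g_t}(\xi)=(\Phi(\xi+ta)-\Phi(a))\widehat f(\xi)$, and the polynomial integrability $\int w(x)(1+|x|)^{-k}\,dx<\infty$ for $A_p$ weights. The only differences are cosmetic (a continuous modulation parameter $t$ in place of the paper's integer $n$, and phrasing the final decay through explicit Schwartz seminorm bounds rather than convergence $T_nf\to\lambda f$ in $\cS$).
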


In the unweighted case
this result
already appeared
in \cite[Proposition 2]
{AIS} and \cite[Theorem 14.1.1]{AIM}.
In order to prove it
the authors first showed (by a method different from the one in this paper) that
(a) $\sigma(T)\subset\pd\Delta$,
and then argued that
(b)
\label{soggybottom}
any $\lambda\in\pd\Delta$ is an eigenvalue for $T$.
Yet
the claim (b)
cannot
hold, which can for $p=2$ be seen either by
taking the Fourier transform or by recalling that
any normal bounded operator on a separable Hilbert space has an at most countable point spectrum.
A closer inspection of the proofs in \cite{AIS} and \cite{AIM} reveals that their candidate for an eigenvector of $T-\zeta I$
is actually
a zero function if $|\zeta|=1$.
It 
is true,
however, that 
$\sigma(T)$
consists of {\sl approximate} eigenvalues, not only in $L^p$ but in all $L^p(w)$. This is because of a simple fact that for general $\Lambda$ as above, every boundary point of $\sigma(\Lambda)$ is an approximate eigenvalue of $\Lambda$ 
\cite[Chapter VII, Proposition 6.7]{C}.

It should be emphasized 
that the proofs from \cite{AIS} and \cite{AIM} have the advantage of bringing up an explicit formula for the inverse of $T-\zeta I$; see \cite[identity (14.9)]{AIM}.
Our proof does not entail that.

These remarks motivate us to classify $\sigma(T)$ in the unweighted case, following the usual way of decomposing spectra of bounded operators on Banach spaces.

\begin{theorem}
\label{skalinada}
On any $L^p(\R^2)$, $p>1$, we have $\sigma(T)=\sigma_c(T)$.
\end{theorem}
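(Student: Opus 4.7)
Since $\sigma_c(T)\subseteq\sigma(T)$ holds automatically and Corollary~\ref{nini_diplomirala} identifies $\sigma(T)$ with $\pd\Delta$, the task is to show that each $\lambda\in\pd\Delta$ lies in $\sigma_c(T)$, i.e.\ that $T-\lambda I$ has trivial kernel on $L^p$ and dense range in $L^p$. I handle the two properties in turn.

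For the trivial kernel I proceed Fourier-analytically. Suppose $f\in L^p$ satisfies $Tf=\lambda f$. An approximation of $f$ by functions in $L^p\cap L^2$ (where the multiplier formula $\widehat{Tg}=m\hat g$ with $m(\xi):=\bar\xi/\xi$ holds as an $L^2$ identity) extends \eqref{infisa} to
\begin{equation*}
(m-\lambda)\hat f=0\qquad\text{in }\cD'(\R^2\setminus\{0\}).
\end{equation*}
Since $m(\xi)$ depends only on $\arg\xi$, the level set $\{m=\lambda\}$ is a single line $L$ through the origin, and $m-\lambda$ vanishes transversally to first order along $L\setminus\{0\}$; standard distributional division therefore forces $\supp\hat f\subseteq L$. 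Rotating coordinates so that $L$ becomes the $\xi_1$-axis, the finite-order property of tempered distributions together with the structure theorem for distributions supported on a closed one-dimensional submanifold yields a representation
\begin{equation*}
\hat f=\sum_{k=0}^N g_k(\xi_1)\otimes\delta^{(k)}(\xi_2),\qquad g_k\in\cS'(\R).
\end{equation*}
Inverting the Fourier transform gives $f=\sum_{k=0}^N c_k\,h_k(x_1)\,x_2^k$ as a tempered distribution, with $h_k=\cF^{-1}g_k$. Pairing $f$ against a product $\alpha(x_1)\beta(x_2)$ and applying Fubini (legitimate since $f\in L^p$), the slice $F_\alpha(x_2):=\int_\R f(x_1,x_2)\alpha(x_1)\,dx_1$ belongs to $L^p(\R)$, yet equals the polynomial $\sum_k c_k\sk{h_k}{\alpha}x_2^k$ as a distribution in $x_2$. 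Since the only polynomial in $L^p(\R)$ is zero, $F_\alpha\equiv 0$ for every $\alpha\in\cS(\R)$, and hence $f=0$ a.e.

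For the dense range, Exercise~\ref{berg} reduces the problem to $\cN((T-\lambda I)^*)=\{0\}$ on $L^q$. Exercise~\ref{lullaby} identifies $(T-\lambda I)^*=S-\bar\lambda I$ with $S=T^{-1}$; since $|\lambda|=1$ one has the factorisation
\begin{equation*}
S-\bar\lambda I=-\bar\lambda\,T^{-1}(T-\lambda I)
\end{equation*}
on $L^q$, so $\cN(S-\bar\lambda I)=\cN((T-\lambda I)|_{L^q})$, and the argument of the preceding paragraph, applied on $L^q$, shows this kernel is trivial. The main technical hurdle throughout is the passage from ``$\hat f$ supported on a line through the origin'' to ``$f\equiv 0$''; once the structure theorem places $\hat f$ in the required form, Fubini converts the distributional polynomial structure in the normal variable into a concrete obstruction to $L^p$-membership.
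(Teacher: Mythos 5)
Your proof is correct, but both halves diverge from the paper's argument in interesting ways. For injectivity, the paper (for $p>2$) first passes to the potential $g=Ph$, which is H\"older continuous by Lemma \ref{rumyancev}; the eigenvalue equation becomes $\pd_y g=0$ distributionally, so $\xi_2\hat g=0$ and only the simplest structure statement (Exercise \ref{Drummer's salute}, no transversal derivatives of $\delta$) is needed, after which $g(z)=g(\Re z)$ forces the periodicity $h(z)=h(z+i)$ and hence $h=0$. You instead work directly with $\hat f\in\cS'$, which costs you the full structure theorem for tempered distributions carried by a line (with the terms $g_k\otimes\delta^{(k)}$, and with the nontrivial globalization that the $g_k$ are tempered), but you then dispose of the transversal polynomial part cleanly by slicing and using that no nonzero polynomial lies in $L^p(\R)$; a pleasant byproduct is that your argument treats all $p\in(1,\infty)$ and all $\lambda\in\pd\Delta$ uniformly, whereas the paper reduces to $p\geq2$ by duality and to $\lambda=1$ by the rotation identity \eqref{nijemo}. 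The bigger difference is in the dense-range half: the paper constructs an explicit approximate preimage $G_\delta=Fu_\delta/H$ with an Urysohn cutoff along the zero line of $H(\xi)=e^{-2i\arg\xi}-1$ and estimates $\nor{F-HG_\delta}_q$, while you observe that on $L^q$
\begin{equation*}
(T-\lambda I)^*=S-\bar\lambda I=-\bar\lambda\,T^{-1}(T-\lambda I),
\end{equation*}
so density of the range is equivalent, via Exercise \ref{berg}, to the injectivity you have already established on the dual space. This makes the injectivity statement do double duty and eliminates the constructive step entirely; the paper's version, in exchange, exhibits an explicit approximate inverse on the Fourier side and keeps the injectivity proof at the level of continuous functions. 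Both routes are sound; yours is shorter and more uniform, the paper's more elementary in the distribution theory it invokes.
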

Recall that $\mathcal S$ denotes the Schwartz class on $\C$. We are still working in $L^p(w)$ with $p>1$ and $w\in A_p$ fixed.

\begin{proof}[{\bf Proof of Corollary \ref{nini_diplomirala}.}]
We get straight from Theorem \ref{jednamladost}
that $\max \{r(T),r(T^{-1})\} \leqslant 1$.
Hence $\sigma(T)\cup\sigma(T^{-1})\subset \overline\Delta$.
By the spectral mapping theorem,
$\sigma(T^{-1})=[\sigma(T)]^{-1}$. Thus we conclude that $\sigma(T)\subset \pd\Delta$.

Now let us verify the inclusion in the opposite direction. 
The author is indebted to Michael Cowling for showing him how to do that.

Take $\lambda\in\pd\Delta$ and write $\lambda=e^{-2i\vartheta}$. Our intention is to show that $\lambda$ is an approximate eigenvalue for $T$ on $L^p(w)$.
For 
$n\in\Z$ let $S_n$ be the operator of multiplication by the function $z\mapsto e^{2n\pi i\Re(e^{-i\vartheta} z)}$, i.e. 
$$
(S_ng)(z)=e^{2n\pi i\Re(e^{-i\vartheta} z)}g(z)
$$ 
for any function $g:\C\rightarrow\C$ and any $z\in\C$.
Take
$ f \in\mathcal S$ such that $\nor{ f }_{L^p(w)}=1$ and $\supp\widehat f \subset\Delta$.
We claim that
$
\lim_{n\rightarrow\infty}(T-\lambda I)(S_n f )=0 
\text{ in } L^p(w)\,.
$
This is equivalent to
\begin{equation}
\label{eq:certainmistique}
\lim_{n\rightarrow\infty}(T_n-\lambda I) f =0\hskip 10pt \text{ in } L^p(w)\,,
\end{equation}
where $T_n=S_n^{-1}TS_n$. 

It is known that \cite[p. 94]{M} that $w\in A_p$ implies $\int_{\R^2}w(x)/(1+|x|)^k\,dx<\infty$ for some sufficiently large $k>0$. Consequently, \eqref{eq:certainmistique} would follow if we knew that 
$$
|(T_n-\lambda I) f|^p(x)(1+|x|)^{k}\rightarrow 0
\hskip 30pt \text{ in } L^\infty(\C) \text{ as } n\rightarrow\infty,
$$
or 
\begin{equation}
\label{kdo so oni}
|(T_n-\lambda I) f|(x)(1+|x|)^{k/p}\rightarrow 0
\hskip 30pt \text{ in } L^\infty(\C) \text{ as } n\rightarrow\infty.
\end{equation}

By using \eqref{infisa} and a well-known property of the Fourier transform, e.g. \cite[Proposition 2.2.11.(7)]{G1}, one calculates
\begin{equation}
\label{incy wincy}
\widehat{T_n f }(\xi)=\Phi(\xi/n+e^{i\theta})\widehat f (\xi)\,,
\end{equation}
where
$$
\Phi(\zeta)=\frac{\bar\zeta}\zeta
\hskip 40pt 
\text{ for }\zeta\ne0.
$$
The assumption on $f$ implies that $\xi/n+e^{i\theta}$ stays away from zero for any $n\in\Z\backslash\{0\}$ and $\xi\in\supp\widehat f$. Therefore \eqref{incy wincy} shows that $T_nf\in{\mathcal S}$.
Now \eqref{kdo so oni} can be rephrased as a special case of 
$T_n f\rightarrow \lambda f$ in ${\mathcal S}$. 
This is in turn equivalent to $\widehat{T_n f }\rightarrow \lambda \widehat f $ in ${\mathcal S}$. 
Note that $\lambda=\Phi(e^{i\theta})$. 
So we are proving, for $g=\hat f\in C_c^\infty(\Delta)$ and $\zeta=e^{i\theta}\in\pd\Delta$,
$$
\lim_{n\rightarrow\infty}\Phi(\cdot/n+\zeta)g=\Phi(\zeta)g
\hskip 40pt 
\text{ in }\mathcal S.
$$
Owing to the assumption on $\supp\widehat f $ it is now enough to verify that $\Phi(\cdot/n+\zeta)$ tends to $\Phi(\zeta)$ in $C^\infty(\overline\Delta)$, which emerges after a computation (for ``everything happens'' in a small neighbourhood of $\zeta$, i.e. in a ball centered at $\zeta$ and of radius $1/n$).
\end{proof}

\begin{remark}
With $U_\zeta \f(z)=\f(\zeta z)$ for $\zeta\in\pd\Delta$ and $z\in\C$ we have
\begin{equation}
\label{nijemo}
T-\zeta^2I=\zeta^2U_{\zeta} (T- I)U_{\zeta}^{-1}\,.
\end{equation}
When $w$ is rotation invariant
this identity quickly
implies that $\sigma(T)$ is also rotation-invariant, hence $\sigma(T)\subset\pd\Delta$ immediately gives $\sigma(T)=\pd\Delta$.
Kari Astala pointed out (personal communication, 2009) that a similar argument is valid in many other function spaces, e.g., Besov, H\"older, Triebel-Lizorkin.
\end{remark}

Now we turn to proving Theorem \ref{skalinada}.
First we need a few simple auxiliary lemmas.

\begin{exercise}
\label{u meni je sve sto imam}
Suppose $\f\in C^\infty(\R)$ is such that $\f(0)=0$. Define
$$
\psi(x)=\left\{
\begin{array}{ccl}
\displaystyle{\frac{\f(x)}{x}} & ; & x\ne 0\\
\f'(0) & ; & x= 0\,.
\end{array}
\right.
$$
Then $\psi\in C^\infty(\R)$ and for all $n\in\N$,
\begin{equation}
\label{kao da je dio sna}
\psi^{(n)}(0)=\frac{\f^{(n+1)}(0)}{n+1}\,.
\end{equation}
\end{exercise}

Before stating the next result we need to invoke a few standard notions. 

If $G\in\cS'(\R^2)$ then by $\xi_2G$ we denote the product of the function (projection) $(x_1,x_2)\mapsto x_2$ and the tempered distribution $G$. For necessary definitions see, for example, \cite[Definition 2.3.15]{G1} or \cite[Chapter III]{Hor}.
Basic facts about convolution of a function and a tempered distribution can be found, for example, in \cite[Section 2.3]{G1} and \cite[Chapter IV]{Hor}. 
 Furthermore, 
 $\delta$ is the Dirac delta distribution (evaluation at zero), while by $1$ we mean the distribution $\phi\mapsto\int_\R\phi$. It is straightforward 
  that $1=\wh\delta$ and $\delta=\wh 1$.

\begin{exercise}
\label{scarlatti}
Suppose $u\in\cS'(\R^2)$ and that $u(\phi)=0$ for any $\phi\in\cS(\R)\otimes\cS(\R)$. Then $u=0$.
\end{exercise}

\begin{exercise}
\label{Drummer's salute}
Suppose that $G\in\cS'(\R^2)$ and that $\xi_2G=0$. Then there exists a tempered distribution $w\in\cS'(\R)$ such that $G=w\otimes\delta$. Consequently, $\wh G=\wh w\otimes 1$.
\end{exercise}

\begin{exercise}
\label{khusro}
Suppose $g$ is a H\"older continuous function on $\R^2\equiv\C$ such that, in the sense of tempered distributions, $g=W\otimes 1$ for some $W\in\cS'(\R)$. Then $g(z)=g(\Re z)$ for all $z\in\C$.
\end{exercise}

\begin{proof}[{\bf Proof of Theorem \ref{skalinada}.}]
It suffices to consider the case $p\geqslant 2$.
Indeed, since the same proofs as for $T$ also work for $T^*$ (cf. Exercise \ref{lullaby}) and since the spaces $L^p$ and $L^q$ are mutually dual, the case $1<p<2$ follows from 
Exercise \ref{berg}. 
Furthermore, by \eqref{nijemo}
it is enough to see that $1\in\sigma_c(T)$.

Thus assume $p\geqslant 2$ and write $T_1=T-I$. First we will show that $\Ker T_1=\{0\}$ on $L^p$. If $p=2$ this follows directly from \eqref{infisa}.
Now suppose $p>2$.
For every $h\in L^p$ we 
define 
$Ph$ as in \eqref{posvjecenje proljeca}. 
By Exercises \ref{ruptura partialis} and \ref{preobrazhenskij}, if $p>2$, then $g=Ph$ is well defined on $L^p$ and satisfies
\begin{equation}
\label{tomato}
h=\pd_{\bar z}g\hskip 15pt \text{ and }\hskip 15pt Th=\pd_{z}g
\end{equation}
in the distributional sense, with test functions from $C_c^1$. 
Here $\pd_{\bar z}=(\pd_{x}+i\pd_{y})/2$ and $\pd_{z}=(\pd_{x}-i\pd_{y})/2$, as usual.
If we additionally assume that $h\in \Ker T_1$, 
then $\pd_yg=0$ in the same distributional sense.
We want to show that $\pd_yg$ exists and is equal to zero in the 
usual
sense, as well. 
Note the similarity with the Weyl lemma.

Since $g$ is H\"older continuous with the exponent $1-2/p$,
it defines a tempered distribution \cite[Example 2.3.5.6]{G1}, denoted by the same letter.
The space $C_c^\infty$ is dense in $\mathcal S$, equipped with the usual topology induced by the Schwartz seminorms \cite[Lemma 7.1.8]{Hor};
therefore $\pd_yg=0$ is also valid for test functions from ${\mathcal S}$.
By taking the Fourier transform we get $\xi_2\,\hat g=0$, see \cite[Proposition 2.3.22.(8)]{G1}, where $(\xi_1,\xi_2)$ are coordinates in the Fourier domain and $\xi_2$ is also a projection onto the second coordinate.
From 
Exercise \ref{Drummer's salute} we conclude
that $\hat g=w\otimes \delta$, where $w\in{\mathcal S}'(\R)$ and $\delta$ is the Dirac distribution, i.e., the evaluation at $0$. Repeated application of the Fourier transform yields $g=\widetilde w\otimes 1$ for some $\widetilde w\in{\mathcal S}'(\R)$. Exercise \ref{khusro} implies that $g$, now again understood as a function, only depends on the real part, i.e., 
$g(z)=g(\Real z)$ for all $z\in\C$, as desired.
In particular, $Qh\equiv 0$, where
$(Qh)(z)=g(z)-g(z+i)\,.$
By means of \eqref{tomato} we 
obtain
$[\pd_{\bar z}(Qh)](z) =h(z)-h(z+i)$
in the distributional sense. 
Thus we proved that $h(z)=h(z+i)$ p.p. $z\in\C$, which for $h\in L^p$ is only possible if $h\equiv 0$ p.p. $\C$. This confirms that $\Ker T_1$ is trivial.

\medskip

We are left with proving that $\Img T_1$ is dense in $L^p$.
Take $f\in L^p$ and $\e>0$.
Our goal is to find $g\in\mathcal S$ such that $\nor{f-T_1 g}_p<\e$.
We can assume that $f\in\mathcal S$.
Take any $\f\in\mathcal S$ such that $\nor\f _q=1$.
By using \eqref{infisa} and the Plancherel identity compute, for a generic $g$ which is to be determined later,
$
\sk{f-T_1 g}{\f}
=\sk{\hat f-H\hat{g}}{\hat \f}
\,,
$
where
$H(\xi)=e^{-2i\arg\xi}-1$
for
$\xi\in\C\backslash\{0\}$.
Since $1<q\leqslant 2$, the
Hausdorff-Young
and the Cauchy-Schwarz inequalities imply that
$
|\sk{f-T_1 g}{\f}|
\leqslant\nor{\hat f-H\hat{g}}_q\,.
$
Therefore, given $F\in{\mathcal S}$, our problem reduces to finding $G\in{\mathcal S}$ such that
$
\nor{F-HG}_q<\e\,.
$
We may assume that $F\in C_c^\infty(\C)$.
Choose $R>0$ so that $\supp F\subset \mn{z\in\C}{|z|\leqslant R}=:K_R$.
For small $\delta>0$ define
$
D_\delta:=\mn{(x,y)\in K_R}{|y|\geqslant\delta}
$
and $E_\delta:=K_R\backslash D_\delta$.
There exists an Urysohn  function $u_\delta\in C_c^\infty(\C)$ such that
\begin{enumerate}[(i)]
\item
$u_\delta\in[0,1]$ everywhere on $\C$,
\item
$u_\delta\equiv 1$ on $D_\delta$,
\item
$u_\delta\equiv 0$ on $E_{\delta/2}$.
\end{enumerate}
By letting $G_\delta(\xi):=0$ for $\xi\in\R$ and $G_\delta:=F\cdot u_\delta/H$ otherwise, we see that
$G_\delta\equiv 0$ on $E_{\delta/2}$ and so $G_\delta\in C_c^\infty(\C)$. 
Now clearly $\nor{F-HG_\delta}_q\rightarrow 0$ as $\delta\rightarrow 0$.
\end{proof}

\section{Open questions}

Let us conclude this presentation by summarizing a few problems encountered in our study of the Ahlfors-Beurling operator. Most likely, in terms of difficulty these questions vary greatly, the first three being extremely difficult.

\begin{question}
Is it true that $\nor{T}_p\leq p-1$ for $p\geq 2$? This is the well-known Iwaniec conjecture, see Section \ref{Ravel piano G}. 
It is a special case of the following question:
\end{question}

\begin{question}
\label{indian tonic water}
Is it true that $\nor{T^n}_p\leq n^{1-2/p}(p-1)$ for $p\geq 2$ and $n\in\N$?
In view of Proposition \ref{j.l.hooker}, an affirmative answer to this question would follow if we could confirm the following one:
\end{question}

\begin{question}
\label{charley patton}
Is it true that 
\begin{equation}
\label{louvre}
\nor{T^n}_p\leq \frac{(1/q)_n}{(1/p)_n}
\end{equation}
for $p\geq 2$, where $1/p+1/q=1$ and $(a)_n$ is defined in \eqref{psy}?  See Conjecture \ref{gryanyem bratci}.
\end{question}

\begin{question}
Is it true that $H_p(2k+1)\,\geqsim\, |k|^{1-2/p}(p-1)$ for all $k\in\Z$ and $p\geq 2$? Recall that $H_p(m)$ was defined, following \cite{IM}, in Section \ref{saso mange}, which is also where the problem's background was presented.
\end{question}

\begin{question}
Is it true that the optimal weighted estimates of powers $T^n$ on $L^p(w)$ are linear in $n$? That is, does for 
every $p>1$ exist $C(p)>0$ such that for every $n\in\Z\backslash\{0\}$ and $w\in A_p$ we have
$$
\nor{T^n}_{B(L^p(w))}\leqslant C(p)\, |n|\,[w]_p^{p^*\!/p}\,?
$$
See Section \ref{american pie} for a related discussion.
\end{question}

\begin{question}
Is it true that the spectrum of $T$ on $L^p(w)$ is continuous for all $p>1$ and $w\in A_p$? 
This is known to be true in the unweighted case ($w\equiv 1$), see Theorem \ref{skalinada}.
\end{question}

\subsection{Hypergeometric functions and de Branges' theorem}
Here we present a point of view of the Iwaniec conjecture which was first published in \cite{D1}.

\bigskip
So far all the attempts to prove \eqref{roll over, roll over}
relied heavily on the Burkholder's sharp $L^p$ estimate for martingale transforms \eqref{balun}. Although this theorem proved to be extremely important for obtaining very good estimates for $\nor{T}_p$, see \cite{BJ} and the references listed there, up to now there has not been a definite proof of the Iwaniec conjecture, as far as we are aware. 
 One of the obstacles in applying Burkholder's theorem to that purpose is that one is indirectly prompted to work with 
  the real and the imaginary part of $T$
 (see \cite{DV2}, for example).
 Nazarov and Volberg \cite{NV} proved that $\text{Re\,}T$ and $\text{Im\,}T$ have $L^p$ norms bounded from above 
 by $p^*-1$. 
Recently Geiss, Montgomery-Smith and Saksman \cite{GM-SS} and then Boros, Sz\'ekelyhidi Jr. and Volberg \cite{BSV} gave the same estimate from below, thus proving $\nor{\text{Re\,}T}_p=\nor{\text{Im\,}T}_p=p^*-1$. This additionally hints that one should 
try not to treat 
the real and the imaginary part of $T$ separately when determining $\nor{T}_p$.
 
Here we bring up a perspective of the $p-1$ problem different from the ones pursued so far in that it is entirely ``complex" and does not involve Burkholder's theorem. Instead, we construct some kind of a generating function whose Taylor coefficients involve $T^n$ and then discuss possibilities of applying 
other sharp estimates, such as 
de Branges' theorem about {\it schlicht} functions. The latter has, to our knowledge, so far not been connected to the $p-1$ problem. \\

\framebox{\ensuremath{\boldsymbol{Caveat\ lector}}:} 
This section should be viewed merely as a short discussion. The approach presented here has so far brought no definitive results regarding the operator $T$ and it may likely be that it actually cannot. Still, we decided to include it, in case it might raise some of the readers' attention and motivate  
thoughts in this direction that would reach beyond the level of simple remarks that can be found here.\\

Remember that $\kappa_n(p)$ was defined in \eqref{blanchard - in time of need}. 
The fact that it is a quotient of Pochhammer symbols makes us recall the standard hypergeometric function 
$$
F(a,b,c;z)=1+\sum_{n=1}^\infty\frac{(a)_n(b)_n}{(c)_n\,n!}\,z^n\,.
$$
Thus \eqref{ahd} is the same as to say that the $L^p$ norms of $T^n$ are the coefficients of the hypergeometric function $F(1,q^{-1},p^{-1};z)$. This suggests considering 
a sort of a
power series 
involving quotients of $T^n$ and $\kappa_n(p)$, and then trying to estimate its coefficients.
In such a manner we might think about approaching 
\eqref{louvre}, 
which would in view of Proposition \ref{hm} immediately establish the $L^p$ norms of $T^n$, i.e. \eqref{ahd}. 
To summarize, the idea is to take all powers at the same time and to think of them as 
coefficients of some generating function which are then to be estimated by means of some sharp theorem. 

All said invokes the following celebrated theorem, which was already mentioned in Section \ref{leeves}.
\begin{theorem}[de Branges \cite{dB}]
\label{bieb}
Suppose the function
$$
g(z)=\sum_{n=1}^\infty b_nz^n
$$
is holomorphic and injective in $\Delta$. Then $|b_n|\leqslant n|b_1|$ and this estimate is sharp.
\end{theorem}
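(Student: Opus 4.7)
The plan is to derive the statement from the full Bieberbach conjecture $|b_n|\le n|b_1|$ settled by de Branges \cite{dB}. First I would normalize: since $g$ is holomorphic and injective with $g(0)=0$, necessarily $b_1=g'(0)\ne 0$ (an injective holomorphic map is locally biholomorphic), so dividing by $b_1$ reduces matters to proving $|a_n|\le n$ for a {\it schlicht} function $f(z)=z+\sum_{n\ge 2}a_nz^n$. The approach is the now-classical chain of implications {\it Milin} $\Rightarrow$ {\it Robertson} $\Rightarrow$ {\it Bieberbach}. Introduce the logarithmic coefficients $\gamma_k$ via
$$
\log\frac{f(z)}{z}=2\sum_{k=1}^\infty \gamma_k z^k,
$$
which is well defined on $\Delta$ since $f(z)/z$ does not vanish there. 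Milin's conjecture is the assertion
$$
\sum_{k=1}^n (n-k+1)\Bigl(k|\gamma_k|^2-\tfrac{4}{k}\Bigr)\le 0,\qquad n\in\N.
$$

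The passage Milin $\Rightarrow$ Robertson $\Rightarrow$ Bieberbach is comparatively soft and I would dispatch it via the Lebedev--Milin exponentiation inequalities: the second of these translates Milin's estimate into the statement that the odd square root $h$ of $f$, defined by $h(z)^2=f(z^2)$ and expanded as $h(z)=z+\sum c_{2k+1}z^{2k+1}$, obeys $\sum_{k=0}^{n-1}|c_{2k+1}|^2\le n$ (this is Robertson). Expanding $f(z^2)=h(z)^2$ yields $a_n=\sum_{j+k=n-1}c_{2j+1}c_{2k+1}$, whence Cauchy--Schwarz gives $|a_n|\le n$.

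The core of the work is thus to establish Milin. Embed $f$ in a Loewner chain $F(z,t)$, $t\ge 0$, with $F(z,0)=f(z)$ and $F(z,t)=e^tz+O(z^2)$, solving the Loewner PDE
$$
\frac{\pd F}{\pd t}(z,t)=z\,\frac{\pd F}{\pd z}(z,t)\cdot\frac{1+\kappa(t)z}{1-\kappa(t)z}
$$
for some measurable $\kappa:[0,\infty)\to \pd\Delta$ (existence rests on the density of single-slit mappings in the schlicht class). Let $\gamma_k(t)$ denote the logarithmic coefficients of $e^{-t}F(z,t)/z$, so $\gamma_k(0)=\gamma_k$. Following de Branges, introduce the functional
$$
\Phi_n(t)=\sum_{k=1}^n \tau_k(t)\Bigl(k|\gamma_k(t)|^2-\tfrac{4}{k}\Bigr),
$$
where the weights $\tau_k:[0,\infty)\to\R$ are selected as the solutions of an explicit linear ODE system, prescribed so that $\tau_k(0)=n-k+1$, $\tau_k(\infty)=0$, and the cross terms arising in $\Phi_n'(t)$ cancel, leaving a ``sum of squares'' expression
$$
\Phi_n'(t)=-\sum_{k=1}^n \dot\tau_k(t)\,|B_k(t)|^2
$$
for certain $B_k(t)$ built from $\gamma_j(t)$ and $\kappa(t)$. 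Since $\Phi_n(0)-\Phi_n(\infty)=-\int_0^\infty \Phi_n'(t)\,dt$, Milin will follow at once from $\dot\tau_k(t)\le 0$ for all $1\le k\le n$ and almost every $t\ge 0$.

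The genuine obstacle is precisely this monotonicity of the $\tau_k$. Writing the solutions of de Branges' ODE system in closed form expresses $-\dot\tau_k(t)$ as a partial sum of Jacobi polynomials, and the required nonnegativity reduces to the Askey--Gasper inequality
$$
\sum_{k=0}^n P_k^{(\alpha,0)}(x)\ge 0,\qquad \alpha>-1,\ x\in[-1,1],
$$
a highly nontrivial fact whose proof proceeds by recognizing the left-hand side as a manifestly nonnegative ${}_3F_2$ hypergeometric value through a special-function identity. Once Askey--Gasper is granted, the bookkeeping of Loewner theory, coefficient expansions and the ODE cancellations goes through, yielding $\Phi_n(0)\le 0$, i.e.\ Milin, and thence the theorem. (One may alternatively invoke Weinstein's later proof of Milin, which replaces Askey--Gasper by a different -- but comparably delicate -- identity derived from a direct integral representation along the Loewner flow.)
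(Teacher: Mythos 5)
The paper does not prove this statement at all: it is quoted as de Branges' celebrated theorem with a citation to \cite{dB}, and only the case $n=2$ (Bieberbach's inequality, via Gronwall's area theorem) is proved earlier in the text. So there is no in-paper argument to compare yours against; what you have written is an outline of the genuine de Branges proof, and as an outline it is structurally correct: the normalization $b_1=g'(0)\ne0$, the chain Milin $\Rightarrow$ Robertson $\Rightarrow$ Bieberbach via the second Lebedev--Milin inequality and Cauchy--Schwarz on $a_n=\sum_{j+k=n-1}c_{2j+1}c_{2k+1}$, the Loewner-chain embedding, the de Branges weight system $\tau_k$ with $\tau_k(0)=n-k+1$, $\tau_k(\infty)=0$, and the reduction of $\dot\tau_k\le0$ to the Askey--Gasper positivity of $\sum_{k=0}^n P_k^{(\alpha,0)}(x)$ is exactly how the theorem is proved. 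Two caveats. First, this is a plan rather than a proof: the two genuinely hard ingredients --- the existence of a Loewner chain through an arbitrary schlicht function (density of single-slit maps) and the Askey--Gasper inequality itself --- are invoked, not established, and the "bookkeeping" of the ODE cancellations producing the sum-of-squares form of $\Phi_n'$ is precisely where most of the work lives. Second, there is a normalization slip: having defined the logarithmic coefficients by $\log\bigl(f(z)/z\bigr)=2\sum_k\gamma_kz^k$, Milin's conjecture reads $\sum_{k=1}^n(n-k+1)\bigl(k|\gamma_k|^2-\tfrac1k\bigr)\le0$ (equality for the Koebe function, where $\gamma_k=1/k$); the constant $4/k$ you wrote belongs to the convention without the factor $2$ in the definition. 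Neither point undermines the outline, but as written the proposal should be read as a correct roadmap of \cite{dB}, not a self-contained proof.
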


The sharpness part is equally important for us, because we can hardly hope to obtain another sharp result, i.e. \eqref{louvre}, by applying an unsharp one. The sharpness in the theorem above is obtained (only) by considering the Koebe function or its rotations, explicitly, 
$$
g_{\beta}(z)=\frac{\beta^2 z}{(\beta- z)^2}\,,
$$
where $\beta\in\pd\Delta$.
Functions $g_\beta$ somewhat resemble the integral kernel of $T$, which is another -- though very circumstantial -- hint
that there might be some connection.

Thus we wonder whether Theorem \ref{bieb} (or some other sharp estimate of Taylor coefficients) can be applied in this context. 
Take a real sequence $a=(a_n)_{n\in\N}$, functions $f,g$ from the Schwartz class ${\mathcal S}$ such that $\nor{f}_p=\nor{g}_q=1$ and define function $\Psi=\Psi_{a,p,f,g}$ as
\begin{equation*}
\label{sczerbiecz}
\Psi(z)=z+\sum_{n=2}^\infty e^{ia_n}\frac{\sk{T^{n-1}f}{g}}{\kappa_{n-1}(p)}\,nz^n.
\end{equation*}
We wonder whether it is injective for some sequence $a$.
Alternatively, by targeting Question \ref{indian tonic water} instead of Question \ref{charley patton} we may define
$$
\widetilde\Psi(z)=(p-1)z+\sum_{n=2}^\infty e^{ia_n}\sk{T^{n-1}f}{g}\,n^{2/p}z^n\,.
$$
Cauchy-Hadamard formula and Theorem \ref{izvorska} imply that every such $\Psi$ (or $\widetilde\Psi$) is holomorphic in $\Delta$.

Theorem \ref{bieb} immediately yields:
\begin{proposition}
\label{trskagora}
Let $f,g\in\mathcal S$ satisfy $\nor{f}_p=\nor{g}_q=1$. If there exists a real sequence $a=(a_n)_{n\in\N}$ such that the function $\Psi=\Psi_{a,f,g}$ is injective on $\Delta$, then $|\sk{T^nf}{g}|\leqslant \kappa_n(p)$ for all $n\in\N$. 
\end{proposition}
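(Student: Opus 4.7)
The strategy is straightforward: $\Psi$ is designed precisely so that its Taylor coefficients encode the rescaled pairings $\sk{T^{n-1}f}{g}/\kappa_{n-1}(p)$, weighted by the factor $n$ that appears in the de Branges estimate. Given this, the proposition is a one-line transcription of Theorem~\ref{bieb}, so the work is entirely in checking the hypotheses of that theorem.

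First I would record that $\Psi$ is holomorphic on $\Delta$ and satisfies $\Psi(0)=0$ and $\Psi'(0)=1$. Holomorphy on $\Delta$ is already noted in the remark preceding the proposition: by Theorem~\ref{izvorska} and Proposition~\ref{junuz}, the ratio $\nor{T^{n-1}}_p/\kappa_{n-1}(p)$ grows at most polynomially in $n$, so combined with the trivial bound $|\sk{T^{n-1}f}{g}|\leq \nor{T^{n-1}}_p$ (from H\"older and $\nor{f}_p=\nor{g}_q=1$) the Cauchy--Hadamard formula yields $\limsup_n|b_n|^{1/n}\leq 1$, where $b_n$ is the $n$-th Taylor coefficient of $\Psi$. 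The normalizations $b_1=1$ and $\Psi(0)=0$ are immediate from the definition.

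Combined with the assumed injectivity on $\Delta$, these normalizations mean $\Psi$ is \emph{schlicht}. De Branges' theorem (Theorem~\ref{bieb}) then gives $|b_n|\leq n|b_1|=n$ for every $n\geq 1$, and for $n\geq 2$ this reads
$$
n\cdot\frac{|\sk{T^{n-1}f}{g}|}{\kappa_{n-1}(p)}\,\leq\, n,
$$
that is, $|\sk{T^{n-1}f}{g}|\leq \kappa_{n-1}(p)$. After the reindexing $n-1\mapsto n$ this is the conclusion for all $n\geq 1$; the phase factors $e^{ia_n}$ drop out since only $|b_n|$ enters. If one wishes to include $n=0$, it is simply H\"older's inequality together with the empty-product convention $\kappa_0(p)=1$.

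The deduction itself is therefore essentially trivial once the hypothesis is granted; the \emph{caveat lector} in the preceding paragraph is well placed. The genuine difficulty -- and the reason this proposition remains hypothetical -- lies entirely in the hypothesis: namely, producing, for given $f,g\in\cS$ with $\nor{f}_p=\nor{g}_q=1$, a real sequence $a=(a_n)$ for which $\Psi_{a,f,g}$ is univalent on $\Delta$. No such construction is presently known, and it is not even clear that one should expect one for generic $f,g$. The factor $n$ built into each coefficient is exactly the ``correct'' magnification that matches the de~Branges bound to the conjectural norm $\kappa_n(p)$; passing afterwards to the supremum over $f,g$ would give Conjecture~\ref{gryanyem bratci}.
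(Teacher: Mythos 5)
Your proof is correct and is exactly the paper's argument: the paper states that Proposition \ref{trskagora} follows immediately from Theorem \ref{bieb}, with holomorphy of $\Psi$ on $\Delta$ established beforehand via Cauchy--Hadamard and Theorem \ref{izvorska}, precisely as you do. Your verification of the normalizations and the reindexing is a faithful (and slightly more detailed) transcription of that one-line deduction.
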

Surely, the problem with verifying \eqref{ahd} through this approach is 
that it is difficult
to check whether the function $\Psi$ is injective on $\Delta$.

\begin{question}
By de Branges' theorem, we know that if the function
$$
z\longmapsto z+\sum_{n=2}^\infty b_n\,z^n
$$
is injective on $\Delta$, then $|b_n|\leq n$ for all $n\in\N$. We also know that the converse is not necessarily true, even if all $b_n$ are strictly positive (Proposition \ref{segovia - hotel de condes de castilla}). 

Suppose $0<|b_n|\leq n$ for all $n\in\N$. Does there exist a real sequence $a=(a_n)_{n\in\N}$ such that the function
$$
z\longmapsto z+\sum_{n=2}^\infty e^{ia_n}b_n\,z^n
$$
is injective on $\Delta$?
\end{question}

If the answer is affirmative, then we have an alternative (equivalent) formulation of Conjecture \ref{gryanyem bratci}, in the sense that for any $p>2$ and test functions $f,g$ of unit norm in $L^p$ and $L^q$, respectively, we have a real sequence $a$ such that $\Psi_{a,f,g}$ is injective.

\bigskip
Certainly this scheme can be repeated in a much more general setting. 
Assume we are given operators $S_n$ on a Banach space $X$ and positive numbers $c_n$ such that $\nor{S_n}\leqslant Mc_n$ for some absolute $M>0$ and all $n\in\N$. Suppose we want to improve this estimate to $\nor{S_n}\leqslant c_n$. 
Surely we may consider the series
$$
z+\sum_{n=2}^\infty\frac{\sk{S_nx}{\f}}{c_n}\,nz^n\,,
$$
where $x\in X$ and $\f\in X^*$ are such that $\nor{x}=\nor{\f}=1$. 
Yet it is but impossible to expect that such an approach would work in such generality.
One must take into account the special structure of the operator(s) involved (in our case, $T$).

Our situation, however, is somewhat special, for: 
\begin{itemize}
\item
we dealing with powers of a single operator, $T$, which should somehow naturally correspond to power series; 
\item
the conjectured sequence of norms -- $(1/q)_n/(1/p)_n$ -- reminds one of power (hypergeometric) functions, as noted earlier; 
\item
 we also have the property of $T$ being a Fourier multiplier \eqref{infisa}. 
\end{itemize}
Then one can apply the Plancherel theorem for an alternative description of $\Psi$ involving hypergeometric functions, namely
$$
\Psi_{a,p,f,g}(z)=z\left(
1+\sk{G_z\cdot\hat f}{\hat g}_{L^2(\C)}
\right)\,,
$$
where
$$
G_z(\zeta)=G_{z,a,p}(\zeta)=\sum_{n=1}^\infty e^{ia_n}\frac{(1/p)_n}{(1/q)_n}\,(n+1)\left(\frac{\bar\zeta}\zeta\,z\right)^n\,.
$$

\begin{remark}
One may remark that neither the Bellman-function-heat-flow method nor the stochastic methods, that both yield quite good asymptotic estimates for $T$, reproduce the simple fact that $\nor{T}_2=1$. So one may test the observation discussed now first by taking $p=2$ in the definition of $\Psi$ (or $\widetilde\Psi$) and ask whether for given $f,g\in \cS$ with $L^2$-norm equal to one there exists a real sequence $(a_n)_n$ such that
$$
\Psi(z)=z+\sum_{n=2}^\infty e^{ia_n}\sk{T^{n-1}f}{g}\,nz^n\,
$$
is injective. Certainly,  in this case the corresponding $G_z$ simplifies to
$$
G_z(\zeta)=G_{z,a,p}(\zeta)=\sum_{n=1}^\infty e^{ia_n}(n+1)\left(\frac{\bar\zeta}\zeta\,z\right)^n\,.
$$
Hence by defining $h:=\hat f\,\overline{\hat g}$ we have
$$
\aligned
\Psi_{a,p,f,g}(z) & =z\left(
1+\int_\C G_z(\zeta)h(\zeta)\,dA(\zeta)
\right)\\
& = z\left(
1+\sum_{n=1}^\infty e^{ia_n}(n+1)\int_\C \left(\frac{\bar\zeta}\zeta\,z\right)^n h(\zeta)\,dA(\zeta)
\right)\,.
\endaligned
$$
\end{remark}

However, injectivity still turns out to be an elusive property in general.

\subsubsection*{Lack of injectivity}

Indeed, assume $f,g\in\mathcal S$ satisfy 
\begin{equation*}
\label{labung}
\nor{f}_p=\nor{g}_q=1,\ \sk{Tf}{g}\ne 0, \text{  but  }\sk{T^nf}{g}= 0 \text{  for all } n\in\N\backslash\{1\}. 
\end{equation*}
Then
$\Psi(z)=z+2az^2$, where 
\begin{equation*}
\label{quell}
a=\frac{\sk{Tf}{g}}{p-1}\,.
\end{equation*}
It is trivial to verify that polynomials of the form $z+2az^2$ are injective on $\Delta$ (if and) only if $|a|\leqslant 1/4$, which is to say that if $|a|\in(1/4,1]$ then the conclusion of Theorem \ref{bieb} is still true, but the assumption is not.

\begin{example}
Let us construct a concrete $\Psi$ which is not injective.

For $z=(x,y)\equiv x+iy$ define $f(z)=g(z)=\pd_xe^{-\pi|z|^2}=-2\pi xe^{-\pi|z|^2}$. Then $\hat f(z)=2\pi ixe^{-\pi|z|^2}$. Compute 
$$
\aligned
\sk{T^nf}{g}&=\sk{\widehat{T^nf}}{\hat g}=\int_\C e^{-2ni\arg z}\cdot 2\pi ixe^{-\pi|z|^2}\cdot (-2\pi i)\,xe^{-\pi|z|^2}\,dA(z)\\
& =4\pi^2\int_0^\infty r^3e^{-2\pi r^2}\,dr \int_0^{2\pi}e^{-2ni\f}\cos^2\f \,d\f\\
& = \frac12\int_0^{2\pi}e^{-2ni\f}\cos^2\f \,d\f\,.
\endaligned
$$
By writing $\cos\f$ as the average of $e^{i\f}$ and $e^{-i\f}$ we immediately see that the only $n\in\N$ for which the last integral is nonzero is $n=1$. In that case $\sk{Tf}{g}=\frac\pi4$. We still have to normalize functions $f$ and $g$ in $L^p$ and $L^q$, respectively. We calculate
\begin{equation*}
\label{nedjelja}
\aligned
\nor{f}_p^p 
&=\frac{2^p\pi^{\frac{p-1}2}\Gamma\big(\frac{p+1}2\big)}{p^{1+\frac p2}}
\,.
\endaligned
\end{equation*}
It suffices to choose $p=q=2$. Then $\nor{f}_2^2=\nor{g}_2^2=\frac\pi2$.
Consequently, for $f_1=f/\nor{f}_2$ and $g_1=g/\nor{g}_2$, 
$$
a=\sk{Tf_1}{g_1}=\frac12>\frac14.
$$
Hence the corresponding function $\Psi$ is not injective. 
\end{example}

The situation described above, namely when $g$ belongs to the intersection of almost all annihilators $(T^nf)^\bot$, seems rather odd. It might still be that for sufficiently many pairs $f,g$ the function $\Psi_{f,g}$ is injective. 

\subsubsection{More on de Branges' theorem and injectivity}

Here we present an example showing that the function $z+\sum_2^\infty b_n z^n$ may have coefficients satisfying $0<b_n\leq n$, yet not be injective.

\begin{proposition}
\label{segovia - hotel de condes de castilla}
Let $K$ be the Koebe function from Exercise \ref{Koebe}. If $\alpha\in[0,1]$ then 
the convex combination $(1-\alpha)z+\alpha K$ is injective if and only if $\alpha\in\{0\}\cup[2/3,1]$.
\end{proposition}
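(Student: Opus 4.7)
The endpoints $\alpha = 0$ (identity) and $\alpha = 1$ (Koebe, cf.\ Exercise \ref{Koebe}) are classical. For $\alpha \in (0, 1)$, the plan has two complementary parts: for $\alpha \in (0, 2/3)$ I exhibit a critical point of $f_\alpha$ in $\Delta$ (which rules out injectivity, since a holomorphic function with a critical point in a connected domain is not injective there); for $\alpha \in [2/3, 1)$ I extend $f_\alpha$ continuously to $\overline\Delta \to \widehat\C$ by $f_\alpha(1) := \infty$, show that the boundary map is a topological embedding onto a Jordan curve in $\widehat\C$, and conclude via the standard argument-principle / degree argument that $f_\alpha$ is injective on $\Delta$.

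\emph{Critical-point analysis.} Differentiation yields $f_\alpha'(z) = (1-\alpha) + \alpha(1+z)/(1-z)^3$, so the critical points of $f_\alpha$ are the roots of the cubic
\begin{equation*}
P_\alpha(z) \;:=\; (1-\alpha)(1-z)^3 + \alpha(1+z);
\end{equation*}
inverting, $z$ is critical for the unique parameter $\alpha(z) := -(1-z)^3/[z(z^2 - 3z + 4)]$. Restricting $z = e^{i\theta}$ to $\partial\Delta$ and using $(1-e^{i\theta})^3 = 8ie^{3i\theta/2}\sin^3(\theta/2)$ together with the sum-to-product identity, one computes
\begin{equation*}
\Real\bigl(i e^{-i\theta/2}(e^{2i\theta} - 3e^{i\theta} + 4)\bigr) = 2\cos\theta\cos(\theta/2),
\end{equation*}
and $\alpha(e^{i\theta}) \in \R$ precisely when this quantity vanishes. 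So $\alpha(e^{i\theta}) \in \R$ only for $\theta \in \{\pi/2, \pi, 3\pi/2\}$, with values $2/3, 1, 2/3$ respectively; consequently, for $\alpha \in (0, 1) \setminus \{2/3\}$ no root of $P_\alpha$ lies on $\partial\Delta$, and the count of roots of $P_\alpha$ inside $\Delta$ is constant on each of the intervals $(0, 2/3)$ and $(2/3, 1)$ by continuity. Endpoint asymptotics pin down this count: as $\alpha \to 0^+$, $(1-z)^3 + 2\alpha \approx 0$ yields three roots $z_k \approx 1 - (2\alpha)^{1/3} e^{i(2k+1)\pi/3}$, of which two ($k = 0, 2$) satisfy $|z_k|^2 \approx 1 - (2\alpha)^{1/3} < 1$; at $\alpha = 2/3$, $P_{2/3}$ factors with roots $3, \pm i$, and an implicit-function expansion at $(i, 2/3)$ gives $dz/d\alpha = -\tfrac{9}{10}(2 - i)$, whence $|z(\alpha)|^2 = 1 + \tfrac{9}{5}(\alpha - 2/3) + O((\alpha - 2/3)^2)$. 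Hence $f_\alpha$ has at least one critical point in $\Delta$ for every $\alpha \in (0, 2/3)$ and none for any $\alpha \in [2/3, 1)$.

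\emph{Boundary analysis for $\alpha \in [2/3, 1)$.} The identity $K(e^{i\theta}) = -1/(2(1-\cos\theta))$ gives
\begin{equation*}
\gamma_\alpha(\theta) \;:=\; f_\alpha(e^{i\theta}) = (1-\alpha)\cos\theta - \tfrac{\alpha}{2(1-\cos\theta)} + i(1-\alpha)\sin\theta.
\end{equation*}
Since $f_\alpha$ has real Taylor coefficients, $\gamma_\alpha(-\theta) = \overline{\gamma_\alpha(\theta)}$, and matching imaginary parts forces any coincidence $\gamma_\alpha(\theta_1) = \gamma_\alpha(\theta_2)$ with $\theta_1 \neq \theta_2 \in (0, 2\pi)$ to satisfy (WLOG) $\theta_2 = \pi - \theta_1 \in (0, \pi)$; the difference of real parts simplifies to
\begin{equation*}
\gamma_\alpha(\theta) - \gamma_\alpha(\pi - \theta) = \cos\theta\Bigl[2(1-\alpha) - \tfrac{\alpha}{\sin^2\theta}\Bigr],
\end{equation*}
which vanishes (for $\theta \neq \pi/2$) precisely when $\sin^2\theta = \alpha/(2(1-\alpha))$, admitting no solution in $(0, \pi) \setminus \{\pi/2\}$ when $\alpha \geq 2/3$. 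Thus for $\alpha \in [2/3, 1)$ the extended boundary map $\partial\Delta \to \widehat\C$ is a continuous injection, its image $J_\alpha$ is a Jordan curve in $\widehat\C$, and applying the argument principle to $f_\alpha - w$ for $w \notin J_\alpha$ shows that the winding number of $J_\alpha$ about $w$ (which is $0$ or $1$) equals the number of $f_\alpha$-preimages of $w$ in $\Delta$, yielding that $f_\alpha$ is a bijection from $\Delta$ onto one component of $\widehat\C \setminus J_\alpha$.

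The main obstacle is the trigonometric identity $\Real(ie^{-i\theta/2}(e^{2i\theta} - 3e^{i\theta} + 4)) = 2\cos\theta\cos(\theta/2)$, which isolates the three real values of $\alpha(e^{i\theta})$ on $\partial\Delta$ and thereby propagates the critical-point count by continuity across $(0, 2/3)$ and $(2/3, 1)$; beyond this, the cusps at $f_{2/3}(\pm i)$ (where $f'_{2/3}$ vanishes on $\partial\Delta$) do not spoil the Jordan-curve property, since a continuous injection from a circle into the sphere is automatically a topological embedding.
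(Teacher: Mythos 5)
Your argument is correct in substance, but it takes a genuinely different route from the paper. The paper is purely algebraic: writing $g=(1-\alpha)z+\alpha K$ and $\beta=\alpha/(1-\alpha)$, it factors $g(z)-g(w)=(z-w)\bigl[(1-\alpha)+\alpha\frac{1-zw}{(1-z)^2(1-w)^2}\bigr]$, so failure of injectivity is equivalent to solving $(1-z)^2(1-w)^2+\beta(1-zw)=0$ with $z\ne w$ in $\Delta$; the substitutions $a=1/(1-z)$, $b=1/(1-w)$ turn this into $ab(1-a-b)=1/\beta$ with $\Re a,\Re b>1/2$, $a\ne b$, and two elementary lemmas show solvability holds exactly when $1/\beta>1/2$, i.e. $\alpha<2/3$. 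You instead split the problem: for $\alpha\in(0,2/3)$ you produce an interior critical point by tracking the roots of the cubic $(1-\alpha)(1-z)^3+\alpha(1+z)$, using that roots can sit on $\partial\Delta$ only when $\alpha\in\{2/3,1\}$ together with the endpoint asymptotics and the expansion at $(i,2/3)$ (your formulas $\alpha(i)=\alpha(-i)=2/3$, $\alpha(-1)=1$, $dz/d\alpha=-\tfrac{9}{10}(2-i)$, $\tfrac{d}{d\alpha}|z|^2=\tfrac95$ all check out); for $\alpha\in[2/3,1)$ you prove injectivity of the boundary curve and conclude by a boundary-correspondence (Darboux--Picard/degree) argument. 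Your route costs more machinery (argument principle/degree, Jordan curve theorem on the sphere) but yields more: for $\alpha<2/3$ the map is not even locally injective, and for $\alpha\ge 2/3$ one sees that $f_\alpha$ maps $\Delta$ onto a Jordan domain in $\widehat\C$; the paper's computation is shorter and entirely self-contained.

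Two points to tighten. First, the displayed identity should read $\Re\bigl(e^{-i\theta/2}(e^{2i\theta}-3e^{i\theta}+4)\bigr)=\cos(3\theta/2)+\cos(\theta/2)=2\cos\theta\cos(\theta/2)$ (equivalently the imaginary part of the expression you wrote with the factor $i$); as written, the real part with the $i$ equals $7\sin(\theta/2)-\sin(3\theta/2)$, which is not the reality criterion. The conclusion — reality of $\alpha(e^{i\theta})$ only at $\theta\in\{\pi/2,\pi,3\pi/2\}$ with values $2/3,1,2/3$ — is nevertheless correct. Second, since $J_\alpha$ passes through $\infty$ at $z=1$, the ``winding number of $J_\alpha$ about $w$'' is not literally defined as a planar winding number; you need either to run the argument principle on circles $|z|=r\nearrow 1$, noting that $|f_\alpha|\to\infty$ at $z=1$ so preimages of a fixed $w\notin J_\alpha$ cannot accumulate on $\partial\Delta$, or to use the Brouwer degree of the continuous extension $\overline\Delta\to\widehat\C$, which depends only on the boundary embedding and equals $1$ on one complementary Jordan domain and $0$ on the other. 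Both fixes are standard, so this is a presentational rather than structural gap.
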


Before proving the proposition we have to confirm a few simple auxiliary statements.

\begin{lemma}
\label{sam cooke}
If $\Re a,\Re b>1/2$, then 
$$
\bigg|\frac{ab}{1-\bar a-\bar b}\bigg|>\frac12\,.
$$
\end{lemma}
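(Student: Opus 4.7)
The plan is to reduce the claim, via the Cayley transform $a\mapsto 2a-1$, to a statement purely about the open right half-plane, and then use the standard Möbius map onto the unit disc to reduce further to a trivial estimate.

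First I would set $u=2a-1$ and $v=2b-1$, so that the hypothesis $\Re a,\Re b>1/2$ becomes $\Re u,\Re v>0$. A short computation gives
$$2ab=\frac{(u+1)(v+1)}{2}\,,\qquad 1-\bar a-\bar b=-\frac{\bar u+\bar v}{2}\,,$$
so
$$\left|\frac{ab}{1-\bar a-\bar b}\right|=\frac{|(u+1)(v+1)|}{2\,|u+v|}\,.$$
Hence the lemma is equivalent to the inequality
\begin{equation}\label{sam cooke eq}
|(u+1)(v+1)|>|u+v|\qquad \text{for all } u,v\in\C \text{ with } \Re u,\Re v>0\,.
\end{equation}

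Next, to prove \eqref{sam cooke eq} I would pass to the unit disc by the classical Möbius transformation $u\mapsto\widetilde u:=(u-1)/(u+1)$, which maps the right half-plane bijectively onto $\Delta$, with inverse $u=(1+\widetilde u)/(1-\widetilde u)$. In particular $u+1=2/(1-\widetilde u)$. A direct computation, expanding $(1+\widetilde u)(1-\widetilde v)+(1+\widetilde v)(1-\widetilde u)=2(1-\widetilde u\widetilde v)$, yields
$$\frac{(u+1)(v+1)}{u+v}=\frac{4/[(1-\widetilde u)(1-\widetilde v)]}{2(1-\widetilde u\widetilde v)/[(1-\widetilde u)(1-\widetilde v)]}=\frac{2}{1-\widetilde u\widetilde v}\,.$$

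The main (and only) estimate now is completely trivial: since $|\widetilde u|,|\widetilde v|<1$, we have $|\widetilde u\widetilde v|<1$, and the triangle inequality gives $|1-\widetilde u\widetilde v|\leq 1+|\widetilde u\widetilde v|<2$. Consequently $|(u+1)(v+1)|/|u+v|=2/|1-\widetilde u\widetilde v|>1$, which is \eqref{sam cooke eq}. There is no real obstacle here; the only thing to be careful about is the bookkeeping of the substitution $u=2a-1$, so that the factor $1/2$ in the conclusion emerges correctly from the factors of $2$ in $2ab$ and in $1-\bar a-\bar b$.
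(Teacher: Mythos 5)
Your proof is correct, and it is essentially the paper's argument in different packaging: the paper substitutes $u=1/a$, $v=1/b$ (so that $u-1,v-1\in\Delta$) and reduces to $|(u-1)(v-1)-1|<2$, which is exactly your reduction to $|1-\widetilde u\widetilde v|<2$, since your disc variables are $\widetilde u=1-1/a$, $\widetilde v=1-1/b$, i.e.\ the negatives of the paper's. Both arguments conformally map $\{\Re z>1/2\}$ onto the unit disc and conclude with the same triangle-inequality estimate, so nothing further is needed.
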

\begin{proof}
Set
$$
\eta:=\frac{1-\bar a-\bar b}{ab}=\frac1a\cdot\frac1b-\frac{\bar a}a\cdot\frac1b-\frac{\bar b}b\cdot\frac1a\,.
$$
By writing $u=1/a$ and $v=1/b$, we see that 
$$
\eta=uv-\frac{u}{\bar u}\cdot v-\frac{v}{\bar v}\cdot u\,.
$$
Multiply this expression by $(\bar u/u)(\bar v/v)$. We obtain
$$
\eta\frac{\bar u}{u}\frac{\bar v}{v}=\bar u\bar v-\bar v-\bar u=\overline{uv-u-v}=\overline{(u-1)(v-1)-1}\,.
$$
Since
\begin{equation}
\label{povela je jelka}
\left|\frac1z-1\right|^2=1-\frac2{|z|^2}\left(\Re z-\frac12\right)
\end{equation}
for $z\in\C\backslash\{0\}$, we have $u,v\in1+\Delta$ and
hence
\[
|\eta|=\bigg|\eta\frac{\bar u}{u}\frac{\bar v}{v}\bigg|=|\underbrace{(u-1)}_{\in\Delta}\underbrace{(v-1)}_{\in\Delta}-1|\leq|u-1||v-1|+1<2\,.
\]
This finishes the proof of the lemma.
\end{proof}

Set $P:=\mn{\zeta\in\C}{\Re\zeta>1/2}$ and, for $\gamma,\lambda\in\C$,
$$
\aligned
\cM_\gamma &=\mn{(a,b)\in P\times P}{a\ne b\text{ {\rm and} } ab(1-a-b)=\gamma}\\
\cN_\lambda &=\mn{(a,b)\in P\times P}{a\ne b\text{ {\rm and} } ab=\lambda (1-\bar a-\bar b)}\,.
\endaligned
$$
The two sets (or equations) are related via the identity
\begin{equation}
\label{kolasinac}
ab(1-a-b)=\frac{ab}{1-\bar a-\bar b}\cdot|1-a-b|^2\,.
\end{equation}

\begin{lemma}
\label{nobody knows the sorrow}
For all $\lambda>1/2$ we have $\cN_\lambda\ne\emptyset$. If $(a,b)\in\cN_\lambda$ then $|1-a-b|>1$.
\end{lemma}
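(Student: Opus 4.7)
For the first assertion I would just exhibit a concrete member of $\cN_\lambda$. Inspection of the equation $ab=\lambda(1-\bar a-\bar b)$ suggests pairs with equal real parts; the choice $(a,b)=(\lambda+i\lambda,\,\lambda+i(3\lambda-1))$ lies in $P\times P$ (both real parts equal $\lambda>1/2$), is a distinct pair since $\lambda\neq 3\lambda-1$ when $\lambda>1/2$, and a direct multiplication verifies the defining identity.

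For the strict bound $|1-a-b|>1$, my plan is to parametrize the \emph{hypothetical} circle of equality. Set $R:=|a+b-1|$, write $a+b=1+Re^{i\psi}$, and observe that conjugating the defining equation yields $ab=-\lambda R e^{-i\psi}$. Splitting $a=\alpha+i\gamma$, $b=\beta+i\delta$ and equating real and imaginary parts of $a+b$ and of $ab$ gives four real equations in $(\alpha,\beta,\gamma,\delta)$ with parameters $R,\psi,\lambda$. From this system I would extract the auxiliary identity
\[
(\alpha-\tfrac12)(\beta-\tfrac12)=\gamma\delta-R\cos\psi\,(\lambda+\tfrac12)-\tfrac14.
\]

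The hypothesis $a,b\in P$ now supplies two strict positivities: $(\alpha-1/2)+(\beta-1/2)>0$ forces $R\cos\psi>0$, and $(\alpha-1/2)(\beta-1/2)>0$ combined with the identity above gives $\gamma\delta>R\cos\psi(\lambda+1/2)+1/4$. An AM--GM application to the real pair $\gamma,\delta$ will then close the loop: $(\gamma+\delta)^2\ge 4\gamma\delta$ translates into
\[
R^2\sin^2\psi>4R\cos\psi\,(\lambda+\tfrac12)+1,
\]
and upon writing $\sin^2\psi=1-\cos^2\psi$ and setting $C:=R\cos\psi>0$, this rearranges to $R^2>C^2+4C(\lambda+\tfrac12)+1$. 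Since $C>0$ and $\lambda>-1/2$, the right-hand side strictly exceeds $1$, giving $R>1$.

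The delicate step is the precise bookkeeping of the constant $-1/4$ arising in $(\alpha-1/2)(\beta-1/2)$; it is this constant, amplified by the factor $4$ inherent in AM--GM, that yields the additive ``$+1$'' on the right-hand side, without which one would only conclude $R^2>\text{(something positive)}$ rather than $R^2>1$.
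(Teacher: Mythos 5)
Your proof is correct. The existence part coincides with the paper's: the paper produces the family $x_1=x_2=\lambda$, $y_1y_2=3\lambda^2-\lambda$, and your pair $(\lambda+i\lambda,\,\lambda+i(3\lambda-1))$ is just one explicit member of it (and it does satisfy the equation, as a direct check confirms). For the inequality $|1-a-b|>1$, however, you take a genuinely different route. The paper solves the system completely: writing $a=x_1+iy_1$, $b=x_2+iy_2$ it splits into the cases $x_1=x_2=\lambda$ and $x_1\ne\lambda\ne x_2$, parametrizes all solutions in each case, and then verifies $|a+b-1|^2>1$ separately for each family by explicit computation. You instead prove the bound for an \emph{arbitrary} $(a,b)\in\cN_\lambda$ with no classification: the identity
$(\alpha-\tfrac12)(\beta-\tfrac12)=\gamma\delta-R\cos\psi\,(\lambda+\tfrac12)-\tfrac14$
(which I have checked; it follows from $\alpha+\beta=1+R\cos\psi$ and $\alpha\beta-\gamma\delta=-\lambda R\cos\psi$), together with the positivity of both factors on the left and $(\gamma+\delta)^2\ge4\gamma\delta$, yields $R^2>C^2+4C(\lambda+\tfrac12)+1>1$ with $C=R\cos\psi>0$. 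This is shorter and more transparent about where the hypothesis $\Re a,\Re b>1/2$ enters; it also shows the conclusion needs neither $a\ne b$ nor the full strength of $\lambda>1/2$. What the paper's longer computation buys in exchange is a complete description of $\cN_\lambda$, in the same spirit as the explicit construction it later carries out for $\cM_\gamma$ in Lemma \ref{sukrija}. Either argument is acceptable for the lemma as stated.
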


\begin{proof}
Fix $\lambda>1/2$. We will describe all the elements in $\cN_\lambda$ and for each of them verify the last inequality.

We are solving the equation
\begin{equation}
\label{nalepka oceanomania}
ab=\lambda(1-\bar a-\bar b)\,;
\hskip 30pt \Re a,\Re b>1/2\ \text{ and }\ a\ne b\,.
\end{equation}
Write
$$
\aligned
a&=x_1+y_1i\\
b&=x_2+y_2i
\endaligned
$$
for some $x_1,x_2>1/2$ and $y_1,y_2\in\R$ to be determined, and plug into \eqref{nalepka oceanomania}. By considering separately the real and imaginary part we get the system
\begin{equation}
\label{kratke hlace}
\aligned
y_1y_2&=x_1x_2+\lambda(x_1+x_2-1)\\
(x_2-\lambda)y_1+(x_1-\lambda)y_2&=0\,.
\endaligned
\end{equation}
One obvious family of solutions is obtained by taking $x_1=x_2=\lambda$, whereupon the first equation means that $(y_1,y_2)$ lie on the curve $y_1y_2=3\lambda^2-\lambda$ and off the diagonal, i.e.
$$
\aligned
y_1&=c\sqrt{3\lambda^2-\lambda}\\
y_2&=c^{-1}\sqrt{3\lambda^2-\lambda}
\endaligned
$$
for some $c\in\R\backslash\{0,1\}$. Then 
$$
\aligned
|a+b-1|^2
&=(2\lambda-1)^2+(c^2+2+c^{-2})(3\lambda^2-\lambda)\\
&>(2\lambda-1)^2+4(3\lambda^2-\lambda)\\
&=(4\lambda-1)^2\\
&>1\,.
\endaligned
$$

If, alternatively, $x_1-\lambda$ and $x_2-\lambda$ are both nonzero, they must be of different sign, since the first equation in \eqref{kratke hlace} implies that $y_1,y_2$ must be of the same (nonzero) sign. By symmetry of \eqref{nalepka oceanomania} we may thus choose $1/2<x_1<\lambda<x_2$ and write 
$$
\delta:=\sqrt{\frac{\lambda-x_1}{x_2-\lambda}}
\hskip 30pt\text{and}\hskip 30pt 
\eta:=\sqrt{x_1x_2+\lambda(x_1+x_2-1)}\,.
$$
Now \eqref{kratke hlace} reduces to 
$$
\aligned
y_1y_2&=\eta^2\\
y_1/y_2&=\delta^2\,,
\endaligned
$$
which has obvious solutions, namely $(y_1,y_2)=\pm\eta(\delta,1/\delta)$. 
In this case 
$$
\aligned
|a+b-1|^2
&=(x_1+x_2-1)^2+\eta^2(\delta^2+2+\delta^{-2})\\
&>0+(x_1x_2+\lambda(x_1+x_2-1))\cdot4\\
&>4x_1x_2\\
&>1\,.
\endaligned
$$
This finishes the proof.
\end{proof}

\begin{corollary}
\label{norge}
If $\Re a,\Re b>1/2$ and $ab(1-a-b)>0$, then $ab(1-a-b)>1/2$.
\end{corollary}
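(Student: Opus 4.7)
The plan is to combine the two preceding lemmas via the algebraic identity \eqref{kolasinac}. Set $\gamma := ab(1-a-b)$, assumed $>0$, and introduce the auxiliary quantity $\lambda := ab/(1-\bar a - \bar b)$ (well defined, since $1-a-b = 0$ would make $\gamma = 0$). Then \eqref{kolasinac} reads $\gamma = \lambda \cdot |1-a-b|^2$. Because $\gamma > 0$ and $|1-a-b|^2 > 0$ are both real positive, $\lambda$ must itself be real positive. Lemma \ref{sam cooke} then upgrades this to $\lambda > \tfrac12$.

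Substituting back into \eqref{kolasinac} reduces the corollary to the estimate $|1-a-b|^2 \geq 1$. Under the additional hypothesis $a \ne b$, the pair $(a,b)$ belongs to $\cN_\lambda$ with $\lambda > 1/2$, and Lemma \ref{nobody knows the sorrow} delivers exactly $|1-a-b| > 1$. Combining, $\gamma = \lambda|1-a-b|^2 > \tfrac12 \cdot 1 = \tfrac12$, which is the desired conclusion.

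The only subtle point is the degenerate case $a = b$, which is formally excluded from $\cN_\lambda$. I would dispatch it by a direct calculation: write $a = x + yi$ with $x > 1/2$. If $y = 0$, then $a^2(1-2a) = x^2(1-2x) < 0$, contradicting positivity, so $y \neq 0$. Vanishing of the imaginary part of $a^2(1-2a)$ forces $y^2 = 3x^2 - x$, and substituting gives
\[
a^2(1-2a) \;=\; x(4x-1)^2.
\]
The function $f(x) := x(4x-1)^2$ satisfies $f(1/2) = 1/2$ and $f'(x) = (4x-1)(12x-1) > 0$ for $x > 1/4$; hence $f(x) > 1/2$ for $x > 1/2$, as required.

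The main obstacle is essentially cosmetic: tracking whether the reference to Lemma \ref{nobody knows the sorrow} is legitimate when $a = b$, and plugging the borderline case back in by hand. No computationally demanding step is involved.
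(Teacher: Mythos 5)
Your proof is correct and follows essentially the same route as the paper: deduce $\lambda:=ab/(1-\bar a-\bar b)>0$ from \eqref{kolasinac}, upgrade to $\lambda>1/2$ via Lemma \ref{sam cooke}, and invoke Lemma \ref{nobody knows the sorrow} to get $|1-a-b|>1$. Your explicit treatment of the degenerate case $a=b$ (which the definition of $\cN_\lambda$ formally excludes and which the paper's proof passes over in silence) is a welcome extra, and your computation $a^2(1-2a)=x(4x-1)^2>1/2$ for $x>1/2$ checks out.
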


\begin{proof}
From \eqref{kolasinac} and from our assumption we get that
$$
\lambda:=\frac{ab}{1-\bar a-\bar b}>0\,.
$$
By Lemma \ref{sam cooke} it follows $\lambda>1/2$. Since $(a,b)\in\cN_\lambda$ by the definition of $\lambda$, Lemma \ref{nobody knows the sorrow} and \eqref{kolasinac} finish the proof.
\end{proof}

\begin{lemma}
\label{sukrija}
For every $\gamma>1/2$ we have $\cM_\gamma\ne \emptyset$.
\end{lemma}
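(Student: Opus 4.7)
My plan is to exploit the parametric family of elements of $\cN_\lambda$ that was already exhibited in the proof of Lemma \ref{nobody knows the sorrow} and use the identity \eqref{kolasinac} to turn membership in $\cN_\lambda$ into membership in $\cM_\gamma$. The key observation is that if $\lambda > 0$ is real and $(a,b) \in \cN_\lambda$, then $ab = \lambda(1-\bar a - \bar b)$ forces
$$ab(1-a-b) \;=\; \lambda\,(1-\bar a - \bar b)(1-a-b) \;=\; \lambda\,|1-a-b|^2,$$
which is real and positive. So the task reduces to choosing $\lambda > 1/2$ and a pair in $\cN_\lambda$ for which $\lambda |1-a-b|^2 = \gamma$.

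For fixed $\lambda>1/2$, recall the explicit ``diagonal'' family from the proof of Lemma \ref{nobody knows the sorrow}: take $x_1 = x_2 = \lambda$ and
$$a \;=\; \lambda + i c\sqrt{3\lambda^2-\lambda}, \qquad b \;=\; \lambda + i c^{-1}\sqrt{3\lambda^2-\lambda},$$
with $c \in \R \setminus \{-1,0,1\}$. A direct expansion confirms $ab = \lambda(1-\bar a - \bar b)$, so $(a,b) \in \cN_\lambda$, and
$$|1-a-b|^2 \;=\; (2\lambda-1)^2 + (c^2 + 2 + c^{-2})(3\lambda^2-\lambda).$$
As $c$ ranges over $\R \setminus \{-1,0,1\}$, the quantity $c^2 + 2 + c^{-2}$ takes every value in $(4,\infty)$, hence $|1-a-b|^2$ takes every value in $((4\lambda-1)^2,\infty)$ by the intermediate value theorem.

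It therefore suffices to pick $\lambda \in (1/2,\infty)$ with $\lambda(4\lambda-1)^2 < \gamma$, since then $\gamma/\lambda$ lies in the range $((4\lambda-1)^2,\infty)$ and we can select $c$ with $|1-a-b|^2 = \gamma/\lambda$, giving $ab(1-a-b) = \gamma$ as required. The function $\lambda \mapsto \lambda(4\lambda-1)^2$ is continuous and tends to $1/2$ as $\lambda \to 1/2^+$, so for any $\gamma > 1/2$ such a $\lambda$ exists by continuity. I do not anticipate a serious obstacle here: the technical content has already been absorbed into Lemma \ref{nobody knows the sorrow}, and what remains is the elementary limit $\lambda(4\lambda-1)^2 \to 1/2$ as $\lambda \searrow 1/2$, which matches perfectly the threshold $1/2$ in the hypothesis $\gamma > 1/2$ (and in fact reveals why that threshold is sharp in Corollary \ref{norge}).
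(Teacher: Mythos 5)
Your proof is correct. The witnesses you produce are in fact the same pairs the paper uses (both real parts equal to $\lambda$, imaginary parts with product $3\lambda^2-\lambda$), and the decisive numerical fact is the same in both arguments, namely that $\lambda(4\lambda-1)^2$ can be pushed below any $\gamma>1/2$ by taking $\lambda\searrow 1/2$. What differs is the route: the paper re-solves the equation $ab(1-a-b)=\gamma$ from scratch, separating real and imaginary parts for $a=x+iy_1$, $b=x+iy_2$ and bookkeeping an auxiliary $\e>0$, whereas you transfer directly from the diagonal family in $\cN_\lambda$ constructed in Lemma \ref{nobody knows the sorrow} via the identity \eqref{kolasinac}, noting that $ab=\lambda(1-\bar a-\bar b)$ with $\lambda>0$ real forces $ab(1-a-b)=\lambda|1-a-b|^2>0$, and then hitting the value $\gamma/\lambda$ with $|1-a-b|^2$ by the intermediate value theorem in the parameter $c$. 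Your version is slightly more economical (no new system to solve, the computation of $|1-a-b|^2$ is recycled) and makes transparent why the threshold $1/2$ is exactly the one appearing in Corollary \ref{norge}; the paper's version has the minor advantage of being self-contained within the lemma and of exhibiting the solutions $(y_1,y_2)$ explicitly as roots of a quadratic. The only points to keep an eye on, both of which you handle implicitly, are that $1-\bar a-\bar b\ne 0$ (true since its real part is $1-2\lambda<0$) and that $c\ne\pm1$ guarantees $a\ne b$.
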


\begin{proof}
We saw that some pairs from $\cN_\lambda$ have the same real parts. As we know by \eqref{kolasinac}, the sets $\cN_\lambda$ and $\cM_\gamma$ are related, therefore let us try finding $(a,b)\in\cM_\gamma$ with 
$$
\aligned
a&=x+y_1i\\
b&=x+y_2i
\endaligned
$$
where $x>1/2$ and $y_1,y_2\in\R$, $y_1\ne y_2$, are the quantities we are looking for. 
By considering in $ab(1-a-b)=\gamma$ separately the real and imaginary part we get the system
\begin{equation}
\label{sami khedira}
\aligned
(2x-1)(y_1y_2-x^2)+xy^2&=\gamma\\
y\big[3x^2-x-y_1y_2\big]&=0\,,
\endaligned
\end{equation}
where $y=y_1+y_2$. If $y=0$ then the first equation gives 
$-(2x-1)(x^2+y_1^2)=\gamma$, which is impossible, since $2x-1>0$. Therefore $y\ne 0$, thus $3x^2-x-y_1y_2=0$. Now the first equation in \eqref{sami khedira} can be expressed as
\begin{equation}
\label{loew}
x\big[(2x-1)^2+y^2\big]=\gamma\,.
\end{equation}
Since we require that $y_1\ne y_2$, the inequality between arithmetic and geometric mean gives 
$(y_1+y_2)/2>\sqrt{y_1y_2}$, i.e. $y^2>4(3x^2-x)$, i.e.
\begin{equation}
\label{bravo}
y^2=4(3x^2-x)+\e
\end{equation}
for some $\e>0$. Then \eqref{loew} reads
\begin{equation}
\label{falafel}
x\big[(4x-1)^2+\e\big]=\gamma\,.
\end{equation}
Observe that $\inf\mn{x(4x-1)^2}{x>1/2}=1/2<\gamma$.

So we may finally construct a solution to our problem. First choose $x>1/2$ such that $x(4x-1)^2<\gamma$ 
and then $\e>0$ so that \eqref{falafel} is valid. In view of \eqref{bravo} this determines $y$ (up to a sign). Again by \eqref{bravo} we see that the intersection of the curves $y_1+y_2=y$ and $y_1y_2=3x^2-x$ is nonvoid and lies off the diagonal in the plane $(y_1,y_2)$.
\end{proof}

\begin{proof}[{\bf Proof of Proposition \ref{segovia - hotel de condes de castilla}.}]
Fix $\alpha\in(0,1)$ and write $g(z)=(1-\alpha)z+\alpha K(z)$. Clearly $g$ is holomorphic on $\Delta$ and $g(0)=0$, $g'(0)=1$. Thus if $g$ were injective, it would be {\it schlicht}. We aim to show this is not the case precisely for $\alpha\in(0,2/3)$. We have, for $z,w\in\Delta$,
$$
\aligned
g(z)-g(w)
&=(1-\alpha)(z-w)+\alpha \bigg(\frac z{(1-z)^2}-\frac w{(1-w)^2}\bigg)\\
&=(z-w)\bigg[(1-\alpha)+\alpha\frac{1-zw}{(1-z)^2(1-w)^2}\bigg]\,.
\endaligned
$$ 
We are wondering whether $g(z)-g(w)=0$ for some $z,w\in\Delta$ such that $z\ne w$. Thus we want to find different $z,w\in\Delta$ for which
\begin{equation}
\label{kocholino}
(1-z)^2(1-w)^2+\beta(1-zw)=0\,,
\end{equation}
where 
$$
\beta:=\frac\alpha{1-\alpha}\in(0,\infty)\,.
$$
Write $u=1-z$ and $v=1-w$. Then $u,v\in 1+\Delta$ and \eqref{kocholino} becomes 
$$
(uv)^2+\beta(u+v-uv)=0\,.
$$
Since $u,v\ne 0$ we may divide by $uv$ and get
\begin{equation}
\label{Bra:Ger 1:7}
uv+\beta\bigg(\frac1u+\frac1v-1\bigg)=0\,.
\end{equation}
Now write $a=1/u$, $b=1/v$. Then \eqref{povela je jelka} gives $a,b\in 1/(1+\Delta)=\mn{\zeta\in\C}{\Re\zeta>1/2}=P$ and the equation \eqref{Bra:Ger 1:7} becomes $1/(ab)+\beta(a+b-1)=0$, or
\begin{equation*}
\label{oceanomania}
ab(1-a-b)=\frac1\beta\,.
\end{equation*}
In other words, $g$ is not injective if and only if $\cM_{1/\beta}\ne\emptyset$.
However by Corollary \ref{norge} and Lemma \ref{sukrija} this happens if and only if $\beta<2$, i.e. $\alpha<2/3$.
\end{proof}

\subsubsection{Few other conjectures on Taylor coefficients of holomorphic functions in $\Delta$}
It might be interesting to ask about other possible sets of assumptions on $\Psi$ (other than injectivity) which would imply sharp upper estimates of its Taylor coefficients. We list a few of them. \\

\noindent
{\bf Rogosinski conjecture} \cite[\S 1.3, p. 29]{Gong}.
{\it If 
$$
g(z)=\sum_{n=1}^\infty b_nz^n
$$
is a holomorphic function in $\Delta$ whose image is contained in the image of some schlicht function, 
then $|b_n|\leq n$ for all $n\in\N$.
}\\

\noindent
Clearly the Rogosinski Conjecture implies the Bieberbach Conjecture.\\

\noindent
{\bf Krzy\.z conjecture} \cite{Krz}.
{\it If
$$
f(z)=\sum_{n=0}^\infty a_nz^n
$$
is holomorphic in $\Delta$ and maps $\Delta\rightarrow\Delta\backslash\{0\}$, then for all $n\in\N$ we have
$|a_n|\leq 2/e$. The extremals are precisely the functions of the form $\alpha F(\beta z^n)$ 
where $\alpha,\beta\in\pd\Delta$, $n\in\N$ and
$$
F(z)=\exp\left(\frac{z-1}{z+1}
\right)\,.
$$
}

The conjecture was presented in 1967 in Krak\'ow \cite{Z}. 
See also a very recent paper by Mart\'in, Sawyer, Uriarte-Tuero and Vukoti\'c \cite{MSUTV}.\\

\noindent
{\bf Zalcman conjecture}. 
{\it Any {\it schlicht} function $f(z) = z + \sum_2^\infty a_n z^n$ on $\Delta$ satisfies 
$$
|a_n^2 - a_{2n-1}| \leq (n-1)^2
$$ 
for all $n\in\N\backslash\{1\}$. The equality holds if and only if $f$ is the Koebe function. }\\

The Zalcman conjecture implies the Bieberbach conjecture.


\end{document}